\documentclass[oneside,english]{amsart}
\usepackage[T1]{fontenc}
\usepackage[latin9]{inputenc}
\usepackage{array}
\usepackage{float}
\usepackage{enumitem}
\usepackage{amstext}
\usepackage{amsthm}
\usepackage{amssymb}
\usepackage{stmaryrd}
\usepackage{graphicx}

\makeatletter

\providecommand{\tabularnewline}{\\}

\numberwithin{equation}{section}
\numberwithin{figure}{section}

\usepackage{enumitem}
\usepackage{tikz}
\usepackage{graphicx}
\setlist[enumerate,1]{label=(\arabic*)}
\setlist[enumerate,2]{label=(\alph*)}
\setlist[enumerate,3]{label=(\roman*)}

\usepackage{etoolbox}

\AtBeginEnvironment{proof}{%
  \setlist[enumerate,1]{
    label=\textup{(\roman*)},
    ref=\textup{(\roman*)},
    leftmargin=*
  }%
}

\DeclareMathOperator{\con}{Con}
\DeclareMathOperator{\dom}{dom}
\DeclareMathOperator{\cg}{Cg}

\DeclareMathOperator{\rgic}{RGICon}
\DeclareMathOperator{\mic}{MICon}
\DeclareMathOperator{\cmic}{CMICon}
\DeclareMathOperator{\maxc}{maxCon}
\DeclareMathOperator{\sdw}{SDwidth}
\DeclareMathOperator{\cov}{cov}

\theoremstyle{definition}
\newtheorem{claimx}{Claim}
\newenvironment{customclaim}[1]
  {\renewcommand{\theclaimx}{#1}\begin{claimx}}
  {\end{claimx}}

\makeatother

\theoremstyle{plain}
\newtheorem{thm}{\protect\theoremname}
\newtheorem{lem}[thm]{\protect\lemmaname}
\theoremstyle{remark}
\newtheorem{rem}[thm]{\protect\remarkname}
\theoremstyle{plain}
\newtheorem{prop}[thm]{\protect\propositionname}
\newtheorem{cor}[thm]{\protect\corollaryname}
\usepackage{babel}
\providecommand{\corollaryname}{Corollary}
\providecommand{\lemmaname}{Lemma}
\providecommand{\propositionname}{Proposition}
\providecommand{\remarkname}{Remark}
\providecommand{\theoremname}{Theorem}

\begin{document}
\global\long\def\sub{\subseteq}%
\global\long\def\mins{\trianglelefteq}%
\global\long\def\S{\mathsf{S}}%
\global\long\def\Sup{\mathsf{S}^{\shortuparrow}}%
\global\long\def\T{\mathsf{T}}%
\global\long\def\te{\tau_{\mathrm{e}}}%
\global\long\def\tE{\tau_{\mathrm{E}}}%
\global\long\def\ted{\tau_{\mathrm{ed}}}%
\global\long\def\qrsi{\mathcal{Q}_{\mathrm{RSI}}}%
\global\long\def\nqrsi{\mathcal{Q}_{n\mathrm{RSI}}}%
\global\long\def\cp{\oplus}%

\title{Global representations of algebras with a near-unanimity term}
\author{Miguel Campercholi}
\begin{abstract}
Global representations are subdirect representations satisfying a
sheaf-like local-to-global patching principle. We develop a unified
and simplified theory of such representations. For quasivarieties
with a near-unanimity term, this framework recovers the main classical
representation theorems through a common argument and yields a general
representation by factors of bounded subdirect width. When the relatively
subdirectly irreducible members form a universal class, we obtain
an optimal result: every relatively congruence-distributive algebra
admits a global representation by relatively globally indecomposable
factors, which we characterize explicitly. We also provide a converse:
global representations by factors of bounded relative subdirect width
force the existence of a near-unanimity term.

Adding semisimplicity to the hypotheses of the main theorem yields
optimal representation results for filtral quasivarieties and dual
discriminator varieties, together with a simple description of the
globally indecomposable algebras.

The technical engine behind these results is a new infinitary extension
of the congruence-system component of the Baker--Pixley theorem.
\end{abstract}

\maketitle

\section{Introduction\label{sec:introduction}}

The purpose of this paper is to provide a unified and simplified theory
of global representations building on the congruence-system approach
that underlies much of the subject. For quasivarieties admitting a
near-unanimity term, the theory developed here both unifies and strengthens
the existing results. In particular, it sharpens Vaggione's general
theorem for arbitrary quasivarieties \cite{vaggione_2022} and yields
an optimal new representation theorem for relatively congruence-distributive
algebras in quasivarieties whose relatively subdirectly irreducible
members form a universal class up to trivial algebras.

Global subdirect products were introduced by Krauss and Clark \cite{krauss_clark_1979}
as a universal-algebraic counterpart of sheaf representations. Informally,
a subdirect product is global when elements defined on compatible
open pieces of the index space can be patched into a single element
of the represented algebra. Krauss and Clark showed that, for disjoint
factors, this is equivalent to the existence of a sheaf whose global
sections form a subdirect product of its stalks. Thus, global representations
refine Birkhoff's subdirect representations by supplementing point
separation with a local-to-global principle.

The power of global subdirect products comes precisely from this patching
property, which allows certain existential properties to be transferred
from the factors to the represented algebra. This is not generally
possible for ordinary subdirect products. For instance, every bounded
distributive lattice is a subdirect power of the two-element chain,
but complements need not transfer from the factors. In contrast, any
global subdirect product of complemented distributive lattices is
complemented, since complements can be constructed by patching.

This ability to preserve existential properties has made sheaf representations
a valuable tool in universal algebra, model theory, and logic. Most
classical applications rely on Boolean products, a highly structured
form of global subdirect product with particularly strong preservation
properties. Yet Boolean product representations are available only
in rather special settings. This motivates studying global subdirect
products in their bare-bones form: although they preserve fewer properties,
they retain the patching principle and arise much more broadly. Such
representations have led to significant applications of their own,
including the infinitary Baker--Pixley theorem for classes, Nachbin-type
results on congruence permutability, and descriptions of algebraic
functions \cite{Vaggione2018,GramagliaVaggione1996,GramagliaVaggione1997,CampercholiVaggione2011}.
The contrast is especially clear for finite families: every Boolean
product over a finite index set is a direct product, whereas a global
subdirect product over such a set may still be proper.

A central problem in the theory, formulated explicitly by Vaggione
\cite[Problem~17]{vaggione_2022}, is the following: given a quasivariety
$\mathcal{Q}$, find a class $\mathcal{F}$, as small and manageable
as possible and close to the relatively subdirectly irreducible members
of $\mathcal{Q}$, such that every algebra in $\mathcal{Q}$ admits
a global representation with factors in $\mathcal{F}$. In this situation,
we say that $\mathcal{F}$ globally represents $\mathcal{Q}$. The
natural optimal target is the class of relatively globally indecomposable
algebras, since its members admit no further nontrivial global decomposition.

An $(r+1)$-ary term $M$, with $r\geq2$, is a near-unanimity term
if
\[
M(x,\ldots,x,y,x,\ldots,x)\approx x
\]
for every possible position of $y$. If $\mathcal{Q}$ has such a
term, Vaggione proved that $\mathcal{Q}$ is globally represented
by the class of subalgebras of $r$-fold products of algebras in the
ultraproduct closure of its relatively subdirectly irreducible members
\cite[Theorem~19]{vaggione_2022}. The framework developed here yields,
as an almost immediate consequence, a strengthening of Vaggione's
theorem, in which all factors satisfying a natural criterion for global
decomposability are removed from the representing class (Theorem~\ref{thm:representacion global para NU}).
When $r=2$, this criterion simply says that the two projection kernels
permute.

Our main result, Theorem~\ref{thm:espectro global para RCD almost univ},
provides the optimal solution to the representation problem in a stronger,
algebra-by-algebra form. More precisely, if $\mathcal{Q}$ has a near-unanimity
term and its relatively subdirectly irreducible members form a universal
class up to trivial algebras, then every relatively congruence-distributive
algebra in $\mathcal{Q}$ admits a global representation with relatively
globally indecomposable factors. Consequently, the class of relatively
globally indecomposable algebras globally represents $\mathcal{Q}$
whenever $\mathcal{Q}$ is relatively congruence distributive. In
the varietal setting, no additional distributivity assumption is needed,
since a variety with a near-unanimity term is automatically congruence
distributive. We also give a concrete description of the relatively
globally indecomposable algebras involved. For $r=2$, this description
is especially simple: apart from the relatively subdirectly irreducible
members, they are exactly those whose relative congruence lattice
is atomic with exactly two nonpermuting atoms.

We determine rather precisely how far the hypotheses of the main theorem
can be weakened. Most importantly, the near-unanimity assumption is
necessary: the availability of global representations of the form
described above forces the quasivariety to have a near-unanimity term
(Theorem \ref{thm:global ancho finito sii NU}). Moreover, we construct
a counterexample showing that the condition on the relatively subdirectly
irreducible members cannot be replaced by the corresponding condition
on the relatively finitely subdirectly irreducible members.

When relative semisimplicity is added to the hypotheses of the main
theorem, one obtains the filtral setting \cite{CamRaf17-RelCongForm},
where the congruential characterization above can be sharpened to
a concrete description in terms of irredundant subdirect products
of simple algebras. This description becomes particularly explicit
for dual discriminator varieties: apart from the simple algebras,
their globally indecomposable members are precisely the simple crosses,
namely subalgebras of $\mathbf{S}\times\mathbf{S}'$ of the form
\[
(\{s\}\times S')\cup(S\times\{s'\}),
\]
where $\mathbf{S}$ and $\mathbf{S}'$ are simple. In discriminator
varieties, congruence permutability rules out these crosses, and the
classical Boolean representation theorem of Bulman-Fleming, Keimel
and Werner follows as an immediate consequence \cite{KeimelWerner1974,BulmanFlemingWerner1977,Werner1978}.

The main technical engine behind these results is an infinitary extension
of the congruence-system component of the Baker--Pixley theorem \cite[Theorem~2.1]{BakerPixley1975}.
It provides a compactness-based method for handling infinite systems
of congruences: for an algebra with an $(r+1)$-ary near-unanimity
term, the solvability of a suitably compact infinite system is determined
by its subsystems involving at most $r$ congruences. Besides powering
the representation results of the paper, this infinitary Baker--Pixley
theorem is of independent interest.

The paper is organized as follows. Section~\ref{sec:preliminaries}
fixes notation and collects the required compactness results. Section~\ref{sec:systems-of-congruences}
develops the theory of congruence systems and proves the infinitary
extension of the Baker--Pixley theorem. Section~\ref{sec:global-subdirect-products}
reviews global subdirect products and their connection with congruence
systems. Section~\ref{sec:general-global-representation} establishes
the general representation theorem for quasivarieties with a near-unanimity
term. Section~\ref{sec:finite-subdirect-width} studies relatively
globally indecomposable algebras of finite subdirect width, and Section~\ref{sec:almost-universal-rsi}
contains the main representation theorem and its converse, together
with the applications to congruence permutability and semisimple classes.
Finally, Section~\ref{sec:ado-semilattices} applies the main theorems
to ado-semilattices, a filtral variety of algebras of partial functions
under intersection and override, yielding an optimal global representation
result and a clean characterization of its globally indecomposable
members.

\section{Preliminaries\label{sec:preliminaries}}

In this section, we fix notation and review basic definitions and
facts needed throughout the paper.

An \emph{algebra} is a structure in a first-order language containing
only function and constant symbols. All classes of algebras considered
below consist of algebras in the same language. Throughout the paper,
boldface capital letters denote algebras (e.g., $\mathbf{A}$), and
the corresponding non-bold letters denote their universes (e.g., $A$).
Lowercase bold letters denote tuples. Unless otherwise stated, the
letters $k,\ell,m$, and $n$ denote non-negative integers, and $[m,n]$
denotes the interval $\{m,m+1,...,n\}$.

\medskip{}

For the remainder of this paper, $\mathcal{Q}$ denotes a given quasivariety,
and all relative notions are understood with respect to $\mathcal{Q}$.
Whenever a relative notion is applied to an algebra, that algebra
is tacitly assumed to belong to $\mathcal{Q}$.

\medskip{}

Given a class $\mathcal{K}$ of algebras, we write $\mathbb{I}(\mathcal{K})$,
$\mathbb{S}(\mathcal{K})$, and $\mathbb{P}_{u}(\mathcal{K})$ for
the classes of isomorphic copies, subalgebras, and ultraproducts of
members of $\mathcal{K}$, respectively. We denote by $\mathcal{K}^{+}$
the class obtained from $\mathcal{K}$ by adjoining all trivial algebras.

Let $\mathbf{A}$ be an algebra. By $\con(\mathbf{A})$ we denote
both the set and the lattice of congruences of $\mathbf{A}$. We write
$\Delta_{A}$ and $\nabla_{A}$ for the diagonal and total congruences
of $\mathbf{A}$, respectively, and omit the subscript $A$ when it
is clear from the context.

For a class $\mathcal{K}$ of algebras, define 
\[
\con_{\mathcal{K}}(\mathbf{A}):=\{\theta\in\con(\mathbf{A}):\mathbf{A}/\theta\in\mathbb{I}(\mathcal{K})\}.
\]

Recall that $\con_{\mathcal{Q}}(\mathbf{A})$, ordered by inclusion,
is an algebraic lattice whose arbitrary meets are intersections. We
denote its join operation by $\sqcup$. Its elements are called $\mathcal{Q}$\emph{-congruences},
or \emph{relative congruences}, of $\mathbf{A}$. For $F\subseteq A^{2}$,
let $\cg^{\mathbf{A}}_{\mathcal{Q}}(F)$ denote the least $\mathcal{Q}$-congruence
of $\mathbf{A}$ containing $F$. For $\Sigma\subseteq\con_{\mathcal{Q}}(\mathbf{A})$,
the up-set generated by $\Sigma$ is denoted by $[\Sigma)$. If $\Sigma=\{\theta\}$,
we may write $[\theta)$.

Let $\mic_{\mathcal{Q}}(\mathbf{A})$, $\cmic_{\mathcal{Q}}(\mathbf{A})$,
and $\maxc_{\mathcal{Q}}(\mathbf{A})$ denote, respectively, the sets
of meet-irreducible, completely meet-irreducible, and maximal proper
elements of $\con_{\mathcal{Q}}(\mathbf{A})$. By convention, $\nabla$
is not meet-irreducible.

A nontrivial algebra $\mathbf{A}\in\mathcal{Q}$ is \emph{relatively
finitely subdirectly irreducible} (RFSI),\emph{ relatively subdirectly
irreducible} (RSI), or \emph{relatively simple} (RS) if 
\[
\Delta\in\mic_{\mathcal{Q}}(\mathbf{A}),\qquad\Delta\in\cmic_{\mathcal{Q}}(\mathbf{A}),\qquad\text{or}\qquad\Delta\in\maxc_{\mathcal{Q}}(\mathbf{A}),
\]
respectively. We denote the corresponding classes by $\mathcal{Q}_{\mathrm{RFSI}}$,
$\mathcal{Q}_{\mathrm{RSI}}$, and $\mathcal{Q}_{\mathrm{RS}}$. The
quasivariety $\mathcal{Q}$ is relatively semisimple if 
\[
\mathcal{Q}_{\mathrm{RSI}}\subseteq\mathcal{Q}_{\mathrm{RS}}.
\]

An algebra $\mathbf{A}\in\mathcal{Q}$ is\emph{ relatively congruence-distributive}
(RCD) if $\con_{\mathcal{Q}}(\mathbf{A})$ is distributive. We shall
frequently use the fact that meet-irreducible elements of a distributive
lattice are meet-prime. Thus, if $\mathbf{A}$ is RCD, $\gamma\in\mic_{\mathcal{Q}}(\mathbf{A})$,
and $\Omega\subseteq\con_{\mathcal{Q}}(\mathbf{A})$ is finite, then
\[
\bigcap\Omega\subseteq\gamma\Longrightarrow\theta\subseteq\gamma\quad\text{for some }\theta\in\Omega.
\]
The quasivariety $\mathcal{Q}$ is relatively congruence-distributive
if each of its members is RCD.

We say that $\mathbf{A}$ is relatively congruence-permutable (RCP)
if 
\[
\theta\sqcup\delta=\theta\circ\delta
\]
for all $\theta,\delta\in\con_{\mathcal{Q}}(\mathbf{A})$. In this
case, the join operation of $\con_{\mathcal{Q}}(\mathbf{A})$ agrees
with that of $\con(\mathbf{A})$. An algebra that is both RCD and
RCP is called \emph{relatively arithmetic}.

A class $\mathcal{K}$ is \emph{universal} if it is axiomatizable
by universal first-order sentences. Recall that the least universal
class containing $\mathcal{S}$ is $\mathbb{ISP}_{u}(\mathcal{S})$.
Hence, $\mathcal{K}$ is universal if and only if 
\[
\mathbb{ISP}_{u}(\mathcal{K})=\mathcal{K}.
\]
We say that $\mathcal{K}$ is \emph{almost universal} if $\mathcal{K}^{+}$
is universal.

When a variety is under consideration, relative and absolute congruences
coincide; accordingly, we omit the adjective ``relative'' and the
prefix $\mathrm{R}$ from the corresponding terminology and notation.
For example, we write $\mathcal{V}_{\mathrm{SI}}$ instead of $\mathcal{V}_{\mathrm{RSI}}$.

An $(r+1)$-ary term $M(x_{1},\ldots,x_{r+1})$, where $r\geq2$,
is a \emph{near-unanimity term} (NU term) for a class $\mathcal{K}$
if
\[
\mathcal{K}\models M(y,x,\ldots,x)\approx M(x,y,x,\ldots,x)\approx\cdots\approx M(x,\ldots,x,y)\approx x.
\]
Whenever a near-unanimity term is under consideration, $r$ denotes
one less than its arity.

\subsection{Compactness}

The two topologies on the congruence lattice of an algebra introduced
below provide the compactness framework used throughout the paper
to pass from local congruence data to global solvability.

For $a,b\in A$ put 
\[
\mathrm{e}(a,b):=\{\theta\in\con(\mathbf{A}):\langle a,b\rangle\in\theta\}\qquad\mathrm{d}(a,b):=\{\theta\in\con(\mathbf{A}):\langle a,b\rangle\notin\theta\},
\]
and define $\tau_{\mathrm{e}}$ and $\tau_{\mathrm{ed}}$ as the topologies
on $\con(\mathbf{A})$ generated by $\{\mathrm{e}(a,b):a,b\in A\}$
and $\{\mathrm{e}(a,b):a,b\in A\}\cup\{\mathrm{d}(a,b):a,b\in A\}$,
respectively. Given $\Sigma\sub\con(\mathbf{A})$ we write $\overline{\Sigma}^{\mathrm{ed}}$
to denote its $\tau_{\mathrm{ed}}$-closure.
\begin{lem}
\label{lem:ted es booleana}The following hold:
\begin{enumerate}
\item The space $\langle\con(\mathbf{A}),\ted\rangle$ is a Boolean space.
In particular, it is compact and Hausdorff.
\item The set $\con_{\mathcal{Q}}(\mathbf{A})$ is $\ted$-closed in $\con(\mathbf{A})$.
Consequently, with the induced topology, $\con_{\mathcal{Q}}(\mathbf{A})$
is a Boolean space.
\item If $\Sigma$ is a $\ted$-closed subset of $\con_{\mathcal{Q}}(\mathbf{A})$,
then $\Sigma\setminus\{\nabla\}$ is $\te$-compact.
\item If $\Sigma$ is a $\te$-compact subset of $\con_{\mathcal{Q}}(\mathbf{A})$,
then $[\Sigma)$ is $\tau_{\mathrm{ed}}$-closed.
\end{enumerate}
\end{lem}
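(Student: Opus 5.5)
We identify $\con(\mathbf{A})$ with a subset of the Cantor space $2^{A^{2}}$ by sending $\theta$ to its characteristic function. Under this identification, $\mathrm{e}(a,b)$ and $\mathrm{d}(a,b)$ are exactly the subbasic clopen sets of the product topology restricted to $\con(\mathbf{A})$. Hence $\ted$ is the subspace topology. Being an equivalence relation compatible with the operations is expressed by conditions each involving finitely many coordinates: $\langle a,a\rangle\in\theta$; $\langle a,b\rangle\in\theta\Rightarrow\langle b,a\rangle\in\theta$; transitivity; and, for each basic operation $f$ and tuples $\mathbf{a},\mathbf{b}$, the implication that $\langle a_{i},b_{i}\rangle\in\theta$ for all $i$ gives $\langle f\mathbf{a},f\mathbf{b}\rangle\in\theta$. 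Each such condition defines a clopen set, so $\con(\mathbf{A})$ is an intersection of clopen sets and therefore closed. A closed subspace of the Cantor space $2^{A^{2}}$ (compact, Hausdorff, zero-dimensional) is Boolean, which gives (1).

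For (2) we use that $\mathcal{Q}$ is axiomatized by quasi-identities $\bigwedge_{i<n}s_{i}\approx t_{i}\rightarrow s\approx t$. Then $\theta\in\con_{\mathcal{Q}}(\mathbf{A})$ iff $\mathbf{A}/\theta$ satisfies each of them. For a fixed quasi-identity and a fixed assignment $\mathbf{c}$ of elements of $A$, this says: if $\langle s_{i}(\mathbf{c}),t_{i}(\mathbf{c})\rangle\in\theta$ for all $i<n$, then $\langle s(\mathbf{c}),t(\mathbf{c})\rangle\in\theta$. That set is
\[
\bigcup_{i<n}\mathrm{d}(s_{i}(\mathbf{c}),t_{i}(\mathbf{c}))\cup\mathrm{e}(s(\mathbf{c}),t(\mathbf{c})),
\]
which is clopen. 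Intersecting over all quasi-identities and assignments shows $\con_{\mathcal{Q}}(\mathbf{A})$ is closed. The trivial algebra lies in $\mathcal{Q}$, so $\nabla$ belongs to it and no emptiness issue arises. A closed subspace of a Boolean space is Boolean.

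For (3) we take an open cover of $\Sigma\setminus\{\nabla\}$ by basic $\te$-open sets. Basic $\te$-open sets are finite intersections of sets $\mathrm{e}(a,b)$; each is also $\ted$-open. The key idea is to add the $\ted$-open set
\[
U=\bigcup_{a,b}\mathrm{d}(a,b)^{c}{}^{\text{-free}}?
\]
More simply: $\{\nabla\}=\bigcap_{a,b}\mathrm{e}(a,b)$, so its complement is $\bigcup_{a,b}\mathrm{d}(a,b)$, an open set that does not help directly. Instead we handle $\nabla$ as follows.

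Fix any basic $\te$-open set $W_{0}$ from the cover; if $\Sigma\setminus\{\nabla\}$ is empty there is nothing to prove. Every $\te$-open nonempty basic set $\bigcap_{j}\mathrm{e}(a_{j},b_{j})$ contains $\nabla$, since $\nabla$ contains all pairs. Hence $W_{0}\cup(\text{cover})$ covers all of $\Sigma$. By (2) and the hypothesis, $\Sigma$ is $\ted$-compact, and these sets are $\ted$-open. A finite subcover therefore exists, and it covers $\Sigma\setminus\{\nabla\}$ as well. The main subtlety is exactly this point: $\nabla$ lies in every nonempty $\te$-open set, so removing it costs nothing. Indeed, the same argument shows $\Sigma$ itself is $\te$-compact, and removing the single point $\nabla$ keeps compactness because every open neighbourhood in a cover already contains $\nabla$.

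For (4) we show the complement of $[\Sigma)$ in $\con_{\mathcal{Q}}(\mathbf{A})$ is $\ted$-open. Let $\delta\notin[\Sigma)$. Then for each $\sigma\in\Sigma$ we have $\sigma\not\subseteq\delta$, so we can pick $\langle a_{\sigma},b_{\sigma}\rangle\in\sigma\setminus\delta$. The sets $\mathrm{e}(a_{\sigma},b_{\sigma})$ form a $\te$-open cover of $\Sigma$. By $\te$-compactness, finitely many suffice, say for $\sigma_{1},\dots,\sigma_{k}$. Now consider the neighbourhood
\[
N=\bigcap_{i\le k}\mathrm{d}(a_{\sigma_{i}},b_{\sigma_{i}})
\]
of $\delta$, which is $\ted$-open. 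Suppose some $\gamma\in N$ satisfies $\gamma\supseteq\sigma$ for some $\sigma\in\Sigma$. Then $\sigma\in\mathrm{e}(a_{\sigma_{i}},b_{\sigma_{i}})$ for some $i$, so $\langle a_{\sigma_{i}},b_{\sigma_{i}}\rangle\in\sigma\subseteq\gamma$, contradicting $\gamma\in N$. Thus $N$ is disjoint from $[\Sigma)$, and $[\Sigma)$ is closed. This step is routine once the compactness is used to make the avoiding pair set finite.
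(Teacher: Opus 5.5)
Your proof is correct. Items (3) and (4) follow the paper's arguments essentially verbatim: every nonempty $\tau_{\mathrm{e}}$-open set contains $\nabla$, so a cover of $\Sigma\setminus\{\nabla\}$ covers the $\tau_{\mathrm{ed}}$-compact set $\Sigma$; and finitely many pairs $p_{\sigma}\in\sigma\setminus\delta$ give a $\tau_{\mathrm{ed}}$-neighbourhood $\bigcap_{i}\mathrm{d}(p_{\sigma_{i}})$ of $\delta$ missing $[\Sigma)$. For (1) and (2) the paper only cites Vaggione's Lemma~5 and Corollary~7, whereas you give the standard self-contained proof: embed $\con(\mathbf{A})$ in $2^{A^{2}}$ and describe $\con_{\mathcal{Q}}(\mathbf{A})$ by the quasi-identities. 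You should delete the abandoned, garbled definition of $U$ in (3). In (4), add the one-line remark that closedness in $\con_{\mathcal{Q}}(\mathbf{A})$ gives closedness in $\con(\mathbf{A})$ by (2).
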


\begin{proof}
(1) and (2) follow from \cite[Lemma~5 and Corollary~7]{vaggione_2022},
respectively.

(3). If $\Sigma\sub\con_{\mathcal{Q}}(\mathbf{A})$ is $\ted$-closed,
then $\Sigma$ is $\ted$-compact by (2), and hence $\Sigma$ is $\te$-compact.
Now (3) follows from the fact that any $\te$ cover of $\Sigma\setminus\{\nabla\}$
also covers $\Sigma$.

(4). Suppose $\Sigma\sub\con_{\mathcal{Q}}(\mathbf{A})$ is $\te$-compact;
by item~(2), it suffices to show that $[\Sigma)$ is closed in $\con_{\mathcal{Q}}(\mathbf{A})$.
Fix $\theta\in\con_{\mathcal{Q}}(\mathbf{A})\setminus[\Sigma)$, and
for each $\delta\in\Sigma$ choose $p_{\delta}\in\delta\setminus\theta$.
As 
\[
\Sigma\sub\bigcup_{\delta\in\Sigma}\mathrm{e}(p_{\delta}),
\]
by compactness, there is a finite $\Sigma_{0}\sub\Sigma$ such that
\[
\Sigma\sub\bigcup_{\delta\in\Sigma_{0}}\mathrm{e}(p_{\delta}),
\]
and since equalizers are upsets
\[
[\Sigma)\sub\bigcup_{\delta\in\Sigma_{0}}\mathrm{e}(p_{\delta}).
\]
 Hence, $\bigcap_{\delta\in\Sigma_{0}}\mathrm{d}(p_{\delta})$ is
a $\ted$-open set containing $\theta$ and disjoint from $[\Sigma)$.
\end{proof}

\begin{lem}
\label{lem:compacidad 1}Let $\mathcal{S}$ be an almost universal
class. The following hold:
\begin{enumerate}
\item $\con_{\mathcal{S}}(\mathbf{A})\cup\{\nabla\}$ is $\tau_{\mathrm{ed}}$-closed.
\item $\con_{\mathcal{S}}(\mathbf{A})$ is $\te$-compact.
\item If $\Sigma\sub\con_{\mathcal{Q}}(\mathbf{A})$ is $\te$-compact,
then $[\Sigma)\cap\con_{\mathcal{S}}(\mathbf{A})$ is $\te$-compact.
\end{enumerate}
\end{lem}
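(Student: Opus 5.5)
For (1) I will show that the complement of $\con_{\mathcal{S}}(\mathbf{A})\cup\{\nabla\}$ in $\con(\mathbf{A})$ is $\ted$-open. Since $\mathcal{S}^{+}$ is universal, it is axiomatized by universal sentences $\forall\mathbf{x}\,\varphi(\mathbf{x})$ with $\varphi$ quantifier-free. Fix $\theta\in\con(\mathbf{A})$ with $\theta\neq\nabla$ and $\mathbf{A}/\theta\notin\mathbb{I}(\mathcal{S})$. Because $\mathbf{A}/\theta$ is nontrivial, $\mathbf{A}/\theta\notin\mathcal{S}^{+}$. Hence some axiom fails: there are $\mathbf{a}\in A^{n}$ with $\mathbf{A}/\theta\models\neg\varphi(\mathbf{a}/\theta)$.

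Put $\neg\varphi$ in disjunctive normal form and pick a disjunct true at $\mathbf{a}/\theta$. This disjunct is a finite conjunction of atomic formulas $t\approx s$ and negated atomic formulas $t\not\approx s$. For any $\delta\in\con(\mathbf{A})$, the conjunct $t(\mathbf{a}/\delta)=s(\mathbf{a}/\delta)$ holds iff $\delta\in\mathrm{e}(t^{\mathbf{A}}(\mathbf{a}),s^{\mathbf{A}}(\mathbf{a}))$. Likewise the negated conjuncts correspond to sets $\mathrm{d}(\cdot,\cdot)$. So the set of $\delta$ satisfying that disjunct at $\mathbf{a}/\delta$ is a finite intersection of $\ted$-subbasic opens. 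It contains $\theta$, and every member $\delta$ has $\mathbf{A}/\delta\notin\mathcal{S}^{+}$.

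To also exclude $\nabla$ from the neighbourhood, I intersect with some $\mathrm{d}(b,c)$ where $\langle b,c\rangle\notin\theta$. This gives an open neighbourhood of $\theta$ disjoint from $\con_{\mathcal{S}}(\mathbf{A})\cup\{\nabla\}$, so the set is closed.

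For (2) I apply item (3) of Lemma~\ref{lem:ted es booleana} to $\Sigma=\con_{\mathcal{S}}(\mathbf{A})\cup\{\nabla\}$, which is $\ted$-closed by (1). That lemma is stated for $\ted$-closed subsets of $\con_{\mathcal{Q}}(\mathbf{A})$, but its proof only uses $\ted$-compactness of a $\ted$-closed set in the Boolean space $\con(\mathbf{A})$. By item (1) of that lemma the same argument works verbatim in $\con(\mathbf{A})$. It yields that $\Sigma\setminus\{\nabla\}$ is $\te$-compact. Since $\nabla\notin\con_{\mathcal{S}}(\mathbf{A})$ (trivial algebras are not in $\mathcal{S}$, as $\mathcal{S}^{+}$ adjoins them), $\Sigma\setminus\{\nabla\}=\con_{\mathcal{S}}(\mathbf{A})$. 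If $\mathcal{S}$ happens to contain trivial algebras, then $\nabla$ lies in $\con_{\mathcal{S}}(\mathbf{A})$ but belongs to every nonempty $\mathrm{e}(a,b)$ cover element, so compactness is unaffected either way.

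For (3), item (4) of Lemma~\ref{lem:ted es booleana} makes $[\Sigma)$ $\ted$-closed. Then $[\Sigma)\cap(\con_{\mathcal{S}}(\mathbf{A})\cup\{\nabla\})$ is $\ted$-closed by (1), hence $\ted$-compact. Removing $\nabla$ as in (2), and noting that any $\te$-open cover of a set minus $\nabla$ covers $\nabla$ as well, gives $\te$-compactness of $[\Sigma)\cap\con_{\mathcal{S}}(\mathbf{A})$.

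The main obstacle is (1): translating failure of a universal axiom in $\mathbf{A}/\theta$ into a basic $\ted$-open set, and handling $\nabla$ (the trivial quotient) correctly. The rest reduces to Lemma~\ref{lem:ted es booleana}.
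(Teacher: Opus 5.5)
Your proof is correct. Items (2) and (3) follow the paper's argument: item (3) of Lemma~\ref{lem:ted es booleana} for (2), and item (4) of that lemma intersected with the closed set from (1), then discarding $\nabla$, for (3).

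The difference is in (1). The paper does not prove it and simply cites \cite[Cor.~6]{vaggione_2022}. You give a self-contained argument instead. A universal axiom of $\mathcal{S}^{+}$ failing in $\mathbf{A}/\theta$ is witnessed by a disjunct of $\neg\varphi$. Its literals, evaluated at $\mathbf{a}/\delta$, translate exactly into sets $\mathrm{e}(\cdot,\cdot)$ and $\mathrm{d}(\cdot,\cdot)$. This yields a basic $\tau_{\mathrm{ed}}$-neighbourhood of $\theta$ that misses $\con_{\mathcal{S}}(\mathbf{A})\cup\{\nabla\}$. Your route makes the lemma independent of the external reference and shows that universality of $\mathcal{S}^{+}$ is precisely what is used; the citation is merely shorter.

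Two small remarks on your version. First, the extra intersection with $\mathrm{d}(b,c)$ is unnecessary. Every $\delta$ in your neighbourhood has $\mathbf{A}/\delta\notin\mathcal{S}^{+}$, and all trivial algebras lie in $\mathcal{S}^{+}$, so $\delta\neq\nabla$ automatically. Second, your parenthetical claim that $\mathcal{S}$ contains no trivial algebras is unjustified. For instance, $\mathbb{ISP}_{u}(\mathcal{Q}_{\mathrm{RSI}})$ contains trivial algebras whenever some RSI member has a one-element subalgebra. You do, however, handle that case correctly anyway.

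Finally, your observation that item (3) of Lemma~\ref{lem:ted es booleana} works verbatim in all of $\con(\mathbf{A})$ is a minor but genuine tightening. The paper's application tacitly needs $\con_{\mathcal{S}}(\mathbf{A})\subseteq\con_{\mathcal{Q}}(\mathbf{A})$, that is, $\mathcal{S}\subseteq\mathcal{Q}$. This holds in all of the paper's uses but is not stated in the lemma.
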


\begin{proof}
(1). This follows from \cite[Cor. 6]{vaggione_2022}.

(2). Combine (1) and item (3) of Lemma~\ref{lem:ted es booleana}.

(3). Assume $\Sigma\sub\con_{\mathcal{Q}}(\mathbf{A})$ is $\te$-compact.
Item (4) of Lemma~\ref{lem:ted es booleana} says that $[\Sigma)$
is $\ted$-closed, and $\con_{\mathcal{S}}(\mathbf{A})\cup\{\nabla\}$
is $\ted$-closed by item (1). Hence, 
\[
[\Sigma)\cap(\con_{\mathcal{S}}(\mathbf{A})\cup\{\nabla\})=([\Sigma)\cap\con_{\mathcal{S}}(\mathbf{A}))\cup\{\nabla\}
\]
 is $\ted$-closed, and it follows from item (3) of Lemma~\ref{lem:ted es booleana}
that $[\Sigma)\cap\con_{\mathcal{S}}(\mathbf{A})$ is $\te$-compact.
\end{proof}

We shall also need two closure properties of $\tau_{\mathrm{ed}}$-closed
families.

Given $k>0$ and $\Gamma\sub\con_{\mathcal{Q}}(\mathbf{A})$ define
\[
\Gamma^{(k)}:=\{\bigcap X:X\sub\Gamma\text{ and }0<|X|\leq k\}.
\]

\begin{lem}
\label{lem:compacidad 2}Suppose $\Gamma\sub\con_{\mathcal{Q}}(\mathbf{A})$
is $\tau_{\mathrm{ed}}$-closed. The following hold:
\begin{enumerate}
\item $\Gamma^{(k)}$ is $\tau_{\mathrm{ed}}$-closed for every $k>0$.
\item $\Gamma$ is closed under $\sub$-directed unions.
\end{enumerate}
\end{lem}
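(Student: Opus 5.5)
For item (1), I will realize $\Gamma^{(k)}$ as the continuous image of a compact space. The source is $\Gamma^{k}$ with the product of the induced $\ted$ topologies; it is compact because $\Gamma$ is $\ted$-closed in the compact space $\con_{\mathcal{Q}}(\mathbf{A})$ (Lemma~\ref{lem:ted es booleana}(2)). The map is the meet map
\[
\mu:\Gamma^{k}\to\con_{\mathcal{Q}}(\mathbf{A}),\qquad\mu(\theta_{1},\ldots,\theta_{k})=\theta_{1}\cap\cdots\cap\theta_{k}.
\]
Since tuples with repetitions are allowed, every $\bigcap X$ with $0<|X|\leq k$ is of the form $\mu(\theta_{1},\ldots,\theta_{k})$, so the image of $\mu$ is exactly $\Gamma^{(k)}$. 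Intersections of $\mathcal{Q}$-congruences are $\mathcal{Q}$-congruences, so $\mu$ lands in the right space.

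The remaining point is that $\mu$ is $\ted$-continuous, which I check on subbasic sets. First,
\[
\mu^{-1}(\mathrm{e}(a,b))=\bigcap_{i}\{\bar\theta:\langle a,b\rangle\in\theta_{i}\},
\]
a finite intersection of open sets. Second,
\[
\mu^{-1}(\mathrm{d}(a,b))=\bigcup_{i}\{\bar\theta:\langle a,b\rangle\notin\theta_{i}\},
\]
a finite union of open sets. Hence the image $\Gamma^{(k)}$ is compact. Since the space is Hausdorff (Lemma~\ref{lem:ted es booleana}(1)), $\Gamma^{(k)}$ is closed. This step is entirely routine.

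For item (2), let $D\sub\Gamma$ be nonempty and $\sub$-directed, and set $\theta=\bigcup D$. The union of a directed family of congruences is a congruence, and $\theta$ is the $\con(\mathbf{A})$-supremum of $D$. I will show $\theta$ lies in the $\ted$-closure of $D$, and hence in $\Gamma$.

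Take a basic $\ted$-neighbourhood $U$ of $\theta$. It has the form
\[
U=\bigcap_{j\le m}\mathrm{e}(a_{j},b_{j})\cap\bigcap_{l\le n}\mathrm{d}(c_{l},d_{l}).
\]
Each pair $\langle a_{j},b_{j}\rangle\in\theta$ lies in some member of $D$. By directedness, a single $\delta\in D$ contains all the $\langle a_{j},b_{j}\rangle$. Each $\langle c_{l},d_{l}\rangle\notin\theta\supseteq\delta$, so $\delta\in U$. Thus $U$ meets $D\sub\Gamma$. Since $\Gamma$ is closed, $\theta\in\Gamma$.

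I expect no genuine obstacle here. The only points to watch are that the arguments happen inside $\con(\mathbf{A})$ with its topology, and that closedness in $\con_{\mathcal{Q}}(\mathbf{A})$ and in $\con(\mathbf{A})$ agree by Lemma~\ref{lem:ted es booleana}(2).
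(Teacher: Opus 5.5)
Your proof is correct, and it differs from the paper's only in packaging.

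For item (1) you follow the paper's strategy in a cleaner form. The sets $\mathrm{e}(p)^{k}$ and $V_{q}$ that the paper uses to cover $\Gamma^{k}$ are exactly the preimages of $\mathrm{e}(p)$ and $\mathrm{d}(q)$ under your meet map. Phrasing this as continuity of the meet map, so that $\Gamma^{(k)}$ is a continuous image of the compact space $\Gamma^{k}$, makes the paper's appeal to Alexander's subbase lemma unnecessary.

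For item (2) the paper argues by contradiction. Assuming $\bigcup D\notin\Gamma$, it covers $\Gamma$ by sets $\mathrm{e}(p_{\alpha})$ and $\mathrm{d}(q_{\beta})$, each missing $\bigcup D$. It then extracts a finite subcover by compactness and uses directedness to find a member of $D$ outside that finite union. The directedness step is the same as yours. However, the compactness there only serves to produce a basic neighbourhood of $\bigcup D$ disjoint from $\Gamma$, which closedness already provides. Your direct argument, that every basic neighbourhood of $\bigcup D$ meets $D$, makes this explicit. It shows that $\bigcup D$ lies in the $\tau_{\mathrm{ed}}$-closure of $D$ for any nonempty directed $D\subseteq\con(\mathbf{A})$. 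Hence every $\tau_{\mathrm{ed}}$-closed set is closed under directed unions, with no compactness needed.
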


\begin{proof}
(1). Since $\langle\con(\mathbf{A}),\ted\rangle$ is Hausdorff, it
suffices to prove that $\Gamma^{(k)}$ is $\ted$-compact. By Alexander's
lemma, we may only consider coverings by open sets from a sub-base
of $\tau_{\mathrm{ed}}$. So, take $R,S\sub A^{2}$ such that 
\[
\Gamma^{(k)}\sub\bigcup_{R}\mathrm{e}(p)\cup\bigcup_{S}\mathrm{d}(q).
\]
For $q\in S$ and $i\in[1,k]$ define 
\begin{align*}
V_{q,i} & :=\{\langle\theta_{1},\ldots,\theta_{k}\rangle\in\con_{\mathcal{Q}}(\mathbf{A})^{k}:q\notin\theta_{i}\};\\
V_{q} & :=\bigcup^{k}_{i=1}V_{q,i}.
\end{align*}
Note that each $V_{q}$ is an open set in the product space $\langle\con_{\mathcal{Q}}(\mathbf{A}),\tau_{\mathrm{ed}}\rangle^{k}$.
Furthermore, 
\[
\Gamma^{k}\sub\bigcup_{p\in R}\mathrm{e}(p)^{k}\cup\bigcup_{q\in S}V_{q}.
\]
By compactness there are $R_{0}\sub R$ and $S_{0}\sub S$, both finite,
such that 
\[
\Gamma^{k}\sub\bigcup_{p\in R_{0}}\mathrm{e}(p)^{k}\cup\bigcup_{q\in S_{0}}V_{q}.
\]
 Then, 
\[
\Gamma^{(k)}\sub\bigcup_{p\in R_{0}}\mathrm{e}(p)\cup\bigcup_{q\in S_{0}}\mathrm{d}(q),
\]
as desired.

(2). We start by recalling that $\con_{\mathcal{Q}}(\mathbf{A})$
is closed under $\sub$-directed unions. Let $D\sub\Gamma$ be directed
and put $\delta:=\bigcup D$. Aiming at a contradiction, suppose $\delta\notin\Gamma$.
Then $\Gamma=X\cup Y$ where $X:=\{\alpha\in\Gamma:\alpha\nsubseteq\delta\}$
and $Y:=\{\beta\in\Gamma:\delta\nsubseteq\beta\}$. For each $\alpha\in X$
take $p_{\alpha}\in\alpha\setminus\delta$, and for each $\beta\in Y$
pick $q_{\beta}\in\delta\setminus\beta$. By compactness there are
finite sets $X_{0}\sub X$ and $Y_{0}\sub Y$, such that 
\[
\Gamma\sub\bigcup_{\alpha\in X_{0}}\mathrm{e}(p_{\alpha})\cup\bigcup_{\beta\in Y_{0}}\mathrm{d}(q_{\beta}).
\]
Note that $D$ is also contained in this finite union, and, since
none of the $p_{\alpha}$'s is in $\delta$, we have 
\[
D\sub\bigcup_{\beta\in Y_{0}}\mathrm{d}(q_{\beta}).
\]
For each $\beta\in Y_{0}$ we have $q_{\beta}\in\delta$, so there
is $\gamma_{\beta}\in D$ with $q_{\beta}\in\gamma_{\beta}$. Then,
as $D$ is directed, there is $\gamma\in D$ such that $\{q_{\beta}:\beta\in Y_{0}\}\sub\gamma$.
But this is not possible, since $\gamma\in\bigcup_{\beta\in Y_{0}}\mathrm{d}(q_{\beta})$.
\end{proof}

Here is a useful fact about $\tau_{\mathrm{e}}$-compact sets.
\begin{lem}[{\cite[Lem. 8]{vaggione_2022}}]
\label{lem:completamente meet-prime}Suppose $\Gamma\sub\con_{\mathcal{Q}}(\mathbf{A})$
is $\tau_{\mathrm{e}}$-compact and $\gamma\in\con_{\mathcal{Q}}(\mathbf{A})$
is meet-prime, with $\bigcap\Gamma\sub\gamma$. Then there is $\delta\in\Gamma$
such that $\delta\sub\gamma$.
\end{lem}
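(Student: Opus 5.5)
Suppose toward a contradiction that no $\delta\in\Gamma$ satisfies $\delta\sub\gamma$. The plan is to turn this into an open cover of $\Gamma$ by equalizer sets. The finite subcover it yields can then be played off against meet-primeness.

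First, for each $\delta\in\Gamma$ we have $\delta\nsubseteq\gamma$, so we can choose a pair $p_{\delta}\in\delta\setminus\gamma$. Then $\delta\in\mathrm{e}(p_{\delta})$, and hence
\[
\Gamma\sub\bigcup_{\delta\in\Gamma}\mathrm{e}(p_{\delta}).
\]
Each $\mathrm{e}(p_{\delta})$ is $\te$-open. Since $\Gamma$ is $\te$-compact, there is a finite $\Gamma_{0}\sub\Gamma$ with $\Gamma\sub\bigcup_{\delta\in\Gamma_{0}}\mathrm{e}(p_{\delta})$. Note that $\Gamma_{0}$ is nonempty unless $\Gamma=\emptyset$. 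The empty case needs separate handling: then $\bigcap\Gamma=\nabla\sub\gamma$ forces $\gamma=\nabla$. This would contradict meet-primeness under the convention that $\nabla$ is not meet-prime, or else the statement is vacuous. I will note this convention rather than dwell on it.

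Next, set $\theta_{\delta}:=\cg^{\mathbf{A}}_{\mathcal{Q}}(p_{\delta})$ for $\delta\in\Gamma_{0}$. These are finitely many relative congruences, none contained in $\gamma$, because $p_{\delta}\notin\gamma$. Every $\alpha\in\Gamma$ lies in some $\mathrm{e}(p_{\delta})$ with $\delta\in\Gamma_{0}$, so $p_{\delta}\in\alpha$, and therefore $\theta_{\delta}\sub\alpha$ because $\alpha$ is a $\mathcal{Q}$-congruence. Hence every $\alpha\in\Gamma$ contains $\bigcap_{\delta\in\Gamma_{0}}\theta_{\delta}$, which gives
\[
\bigcap_{\delta\in\Gamma_{0}}\theta_{\delta}\sub\bigcap\Gamma\sub\gamma.
\]

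Finally, $\gamma$ is meet-prime in $\con_{\mathcal{Q}}(\mathbf{A})$, and finite meets there are intersections. So some $\theta_{\delta}\sub\gamma$, and hence $p_{\delta}\in\gamma$, contradicting the choice of $p_{\delta}$.

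The only delicate step is the passage from the finite subcover to the inequality $\bigcap\theta_{\delta}\sub\bigcap\Gamma$. The point is that the cover is indexed by pairs rather than by members of $\Gamma$, so we must pass to the principal relative congruences $\theta_{\delta}$. Once that is done, meet-primeness finishes the argument.
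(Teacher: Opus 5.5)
Your argument is correct and is the standard one. The paper gives no proof of its own here (it defers to Vaggione's Lemma~8), but your device of choosing $p_{\delta}\in\delta\setminus\gamma$, covering $\Gamma$ by the sets $\mathrm{e}(p_{\delta})$ and extracting a finite subcover is exactly the one the paper uses to prove item~(4) of Lemma~\ref{lem:ted es booleana}, and passing to $\cg^{\mathbf{A}}_{\mathcal{Q}}(p_{\delta})$ before invoking meet-primeness is the right way to finish.

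One small correction to your aside on $\Gamma=\emptyset$: if $\nabla$ were counted as meet-prime, then $\Gamma=\emptyset$, $\gamma=\nabla$ would make the statement false rather than vacuous. So the convention that meet-prime elements are proper is genuinely needed there; this is harmless in the paper, where the lemma is only applied with $\gamma$ completely meet-irreducible.
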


\section{Systems of congruences\label{sec:systems-of-congruences}}

In this section we present our treatment of systems of congruence
equations. These constitute a key technical tool in the remainder
of the paper.

Let $\Sigma\subseteq\con(\mathbf{A})$. A \emph{system} on $\Sigma$
is a map $\mathsf{S}\colon\Sigma\rightarrow A$. Let $\S$ be a system
on $\Sigma$. If $\Sigma\subseteq\con_{\mathcal{Q}}(\mathbf{A})$
and 
\[
\langle\mathsf{S}(\theta),\mathsf{S}(\delta)\rangle\in\theta\sqcup\delta\qquad\text{for all }\theta,\delta\in\Sigma,
\]
we say that $\mathsf{S}$ is a $\sqcup$\emph{-system}.

A \emph{solution} for $\mathsf{S}$ is an element $a\in A$ such that
\[
\langle\mathsf{S}(\theta),a\rangle\in\theta\qquad\text{for all }\theta\in\Sigma.
\]
We say that $\mathsf{S}$ is \emph{solvable} if it has a solution.

For $u\subseteq\con(\mathbf{A})$, we denote by $\mathsf{S}|_{u}$
the restriction of $\mathsf{S}$ to $u\cap\Sigma$. We call $\mathsf{S}$
a $\tau_{\mathrm{e}}$\emph{-system} if there is $\mathcal{C}\subseteq\tau_{\mathrm{e}}$
such that $\mathcal{C}$ covers $\Sigma$ and $\mathsf{S}|_{u}$ is
solvable for every $u\in\mathcal{C}$. Such a cover $\mathcal{C}$
is said to be \emph{compatible} with $\mathsf{S}$.

Given $\Sigma,\Sigma^{+}\sub\con(\mathbf{A})$, we say that $\Sigma$
\emph{minorizes} $\Sigma^{+}$, in symbols $\Sigma\trianglelefteq\Sigma^{+}$,
if for every $\delta\in\Sigma^{+}$ there is $\theta\in\Sigma$ such
that $\theta\sub\delta$. If $\Sigma\trianglelefteq\Sigma^{+}$ but
$\Sigma^{+}\ntrianglelefteq\Sigma$, we say that $\Sigma$ \emph{strictly
minorizes} $\Sigma^{+}$, and write $\Sigma\vartriangleleft\Sigma^{+}$.

It is convenient to extend this notion to systems. Let $\mathsf{S}$
and $\mathsf{S}^{+}$ be systems on $\Sigma$ and $\Sigma^{+}$, respectively.
We say that $\mathsf{S}$ \emph{minorizes} $\mathsf{S}^{+}$, and
write $\mathsf{S}\mins\mathsf{S}^{+}$, provided that: 
\begin{itemize}
\item $\Sigma\trianglelefteq\Sigma^{+}$; 
\item for all $\theta\in\Sigma$ and $\delta\in\Sigma^{+}$ such that $\theta\sub\delta$,
we have $\langle\mathsf{S}(\theta),\mathsf{S}^{+}(\delta)\rangle\in\delta$.
\end{itemize}
\begin{lem}
\label{lem:minos}Let $\mathsf{S},\mathsf{S}^{+}$ be systems such
that $\S\mins\mathsf{S}^{+}$. Then, any solution for $\mathsf{S}$
is a solution for $\mathsf{S}^{+}$.
\end{lem}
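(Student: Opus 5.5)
The argument is a direct unwinding of the definitions, using only that congruences are transitive. Let $\mathsf{S}$ be a system on $\Sigma$ and $\mathsf{S}^{+}$ a system on $\Sigma^{+}$ with $\S\mins\mathsf{S}^{+}$. Let $a\in A$ be a solution for $\mathsf{S}$, so that $\langle\mathsf{S}(\theta),a\rangle\in\theta$ for every $\theta\in\Sigma$. We must show that $\langle\mathsf{S}^{+}(\delta),a\rangle\in\delta$ for every $\delta\in\Sigma^{+}$.

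Fix $\delta\in\Sigma^{+}$. Since $\Sigma\trianglelefteq\Sigma^{+}$, the first clause of the definition of minorization gives some $\theta\in\Sigma$ with $\theta\sub\delta$. We then collect two facts:
\begin{itemize}
\item Because $a$ solves $\mathsf{S}$, we have $\langle\mathsf{S}(\theta),a\rangle\in\theta\sub\delta$.
\item By the second clause of $\S\mins\mathsf{S}^{+}$, applied to this pair $\theta\sub\delta$, we have $\langle\mathsf{S}(\theta),\mathsf{S}^{+}(\delta)\rangle\in\delta$.
\end{itemize}
Since $\delta$ is symmetric and transitive, these two memberships give $\langle\mathsf{S}^{+}(\delta),a\rangle\in\delta$. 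As $\delta\in\Sigma^{+}$ was arbitrary, $a$ is a solution for $\mathsf{S}^{+}$.

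There is no real obstacle here. The only point needing care is that the first clause of minorization must be invoked for each $\delta$ separately to produce a witness $\theta$, before the second clause can be applied to that pair. No compactness or near-unanimity hypotheses are needed.
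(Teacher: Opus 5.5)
Your proposal is correct and is exactly the paper's argument: for each $\delta$ in the domain of $\mathsf{S}^{+}$, pick $\theta\subseteq\delta$ in the domain of $\mathsf{S}$. Then combine $\langle a,\mathsf{S}(\theta)\rangle\in\theta\subseteq\delta$ with $\langle\mathsf{S}(\theta),\mathsf{S}^{+}(\delta)\rangle\in\delta$ by transitivity.
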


\begin{proof}
Suppose $\S\mins\mathsf{S}^{+}$, and let $a\in A$ be a solution
for $\S$. Fix $\delta\in\dom\mathsf{S}^{+}$ and choose $\theta\in\dom\S$
with $\theta\sub\delta$. Then $\langle\S(\theta),\mathsf{S}^{+}(\delta)\rangle\in\delta$,
and since $\langle a,\S(\theta)\rangle\in\theta\sub\delta$, we have
$\langle a,\mathsf{S}^{+}(\delta)\rangle\in\delta$.
\end{proof}

Our next lemma shows that it is easy to extend a $\sqcup$-system
upwards.
\begin{lem}
\label{lem:extender sistema para arriba}Let $\mathsf{S}$ be a $\sqcup$-system
on $\Sigma\sub\con_{\mathcal{Q}}(\mathbf{A})$, and let $\Sigma^{+}\sub\con_{\mathcal{Q}}(\mathbf{A})$
be such that $\Sigma\mins\Sigma^{+}$. Then, there is a $\sqcup$-system
$\mathsf{S}^{+}$ on $\Sigma^{+}$ satisfying $\S\mins\mathsf{S}^{+}$.
Moreover, if $\Sigma\sub\Sigma^{+}$, we can choose $\mathsf{S}^{+}$
so that $\mathsf{S}^{+}|_{\Sigma}=\mathsf{S}$.
\end{lem}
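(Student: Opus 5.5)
The plan is to define $\mathsf{S}^{+}$ by choosing, for each $\delta\in\Sigma^{+}$, a witness $\theta_{\delta}\in\Sigma$ with $\theta_{\delta}\sub\delta$, which exists because $\Sigma\mins\Sigma^{+}$. We then set $\mathsf{S}^{+}(\delta):=\mathsf{S}(\theta_{\delta})$. For the ``moreover'' clause, when $\Sigma\sub\Sigma^{+}$ we choose $\theta_{\delta}:=\delta$ for every $\delta\in\Sigma$. This is admissible since $\delta\sub\delta$, and it gives $\mathsf{S}^{+}|_{\Sigma}=\mathsf{S}$ directly.

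First we check that $\mathsf{S}\mins\mathsf{S}^{+}$. The condition $\Sigma\mins\Sigma^{+}$ is a hypothesis. Now take $\theta\in\Sigma$ and $\delta\in\Sigma^{+}$ with $\theta\sub\delta$. Since $\mathsf{S}$ is a $\sqcup$-system, we have $\langle\mathsf{S}(\theta),\mathsf{S}(\theta_{\delta})\rangle\in\theta\sqcup\theta_{\delta}$. Both $\theta$ and $\theta_{\delta}$ are contained in $\delta$, and $\delta$ is a $\mathcal{Q}$-congruence, so $\theta\sqcup\theta_{\delta}\sub\delta$. Hence $\langle\mathsf{S}(\theta),\mathsf{S}^{+}(\delta)\rangle\in\delta$, as required.

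Next we check that $\mathsf{S}^{+}$ is a $\sqcup$-system. Take $\delta,\delta'\in\Sigma^{+}$. Then
\[
\langle\mathsf{S}^{+}(\delta),\mathsf{S}^{+}(\delta')\rangle=\langle\mathsf{S}(\theta_{\delta}),\mathsf{S}(\theta_{\delta'})\rangle\in\theta_{\delta}\sqcup\theta_{\delta'}.
\]
By monotonicity of $\sqcup$ in $\con_{\mathcal{Q}}(\mathbf{A})$, we have $\theta_{\delta}\sqcup\theta_{\delta'}\sub\delta\sqcup\delta'$, which gives the required membership.

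The only point needing care is that joins are taken in $\con_{\mathcal{Q}}(\mathbf{A})$, so we must use that $\Sigma^{+}\sub\con_{\mathcal{Q}}(\mathbf{A})$. This is what ensures that $\delta$ is an upper bound of $\theta$ and $\theta_{\delta}$ in that lattice; otherwise the argument is routine. The choice of the witnesses $\theta_{\delta}$ uses the axiom of choice.
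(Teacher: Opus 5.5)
Your proposal is correct and follows the paper's proof: the paper likewise sets $\mathsf{S}^{+}(\theta):=\mathsf{S}(\delta_{\theta})$ for a chosen $\delta_{\theta}\in\Sigma$ with $\delta_{\theta}\sub\theta$, and uses $\mathsf{S}(\theta)$ itself when $\theta\in\Sigma$. The paper leaves the verification as ``straightforward''; your check of $\mathsf{S}\mins\mathsf{S}^{+}$ and of the $\sqcup$-system property via monotonicity of $\sqcup$ fills it in correctly.
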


\begin{proof}
For each $\theta\in\Sigma^{+}\setminus\Sigma$ choose $\delta_{\theta}\in\Sigma$
with $\delta_{\theta}\subseteq\theta$. Define 
\[
\mathsf{S}^{+}(\theta):=\begin{cases}
\mathsf{S}(\delta_{\theta}) & \text{if }\theta\in\Sigma^{+}\setminus\Sigma;\\
\mathsf{S}(\theta) & \text{if }\theta\in\Sigma.
\end{cases}
\]
It is straightforward to check that $\mathsf{S}^{+}$ satisfies the
desired properties.
\end{proof}

\begin{lem}
\label{lem:min por sistema finito implica te sistema}Let $\S$ be
a finite $\sqcup$-system and suppose $\S^{+}$ is a system such that
$\S\mins\S^{+}$. Then $\S^{+}$ is a $\tau_{\mathrm{e}}$-system.
In particular, every finite $\sqcup$-system is a $\tau_{\mathrm{e}}$-system.
\end{lem}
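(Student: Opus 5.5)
The plan is to build the compatible cover by hand. The key facts are that $\Sigma:=\dom\S$ is finite, and that every $\delta\in\Sigma^{+}:=\dom\S^{+}$ lies above some member of $\Sigma$. I will assume, as the $\sqcup$-system context suggests, that $\Sigma^{+}\sub\con_{\mathcal{Q}}(\mathbf{A})$. The consequence I use is: whenever $\theta,\theta'\in\Sigma$ are both contained in $\delta\in\Sigma^{+}$, we get $\theta\sqcup\theta'\sub\delta$, and therefore $\langle\S(\theta),\S(\theta')\rangle\in\delta$.

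For each $\delta\in\Sigma^{+}$ I record the finite set of relations between the values of $\S$ that hold modulo $\delta$,
\[
P_{\delta}:=\{\langle\S(\theta),\S(\theta')\rangle:\theta,\theta'\in\Sigma,\ \langle\S(\theta),\S(\theta')\rangle\in\delta\},
\]
and define $u_{\delta}:=\bigcap_{p\in P_{\delta}}\mathrm{e}(p)$. This is a finite intersection of subbasic sets, so it is $\te$-open, and it contains $\delta$. The cover will be $\mathcal{C}:=\{u_{\delta}:\delta\in\Sigma^{+}\}$. Every $\delta'\in u_{\delta}$ identifies all pairs that $\delta$ identifies. For the candidate solution I fix $\theta_{\delta}\in\Sigma$ with $\theta_{\delta}\sub\delta$ and take $a:=\S(\theta_{\delta})$.

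To verify that $a$ solves $\S^{+}|_{u_{\delta}}$, take $\delta'\in u_{\delta}\cap\Sigma^{+}$ and $\theta'\in\Sigma$ with $\theta'\sub\delta'$. By $\S\mins\S^{+}$ we have $\langle\S(\theta'),\S^{+}(\delta')\rangle\in\delta'$. So it suffices to show $\langle\S(\theta'),\S(\theta_{\delta})\rangle\in\delta'$. This holds whenever the pair lies in $P_{\delta}$, for instance when $\theta'\sub\delta$.

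The main obstacle is the remaining case, where $\theta'\sub\delta'$ but $\theta'\nsubseteq\delta$, so that $\delta$ need not identify $\S(\theta')$ with $\S(\theta_{\delta})$. My first remedy is to stratify by the finite set $\Theta_{\delta'}:=\{\theta\in\Sigma:\theta\sub\delta'\}$. There are only finitely many possible values $X\sub\Sigma$. For $\delta'$ with $\Theta_{\delta'}=X$, all values $\S(\theta)$ with $\theta\in X$ are pairwise $\delta'$-related, by the $\sqcup$-property. I would then refine the choice of $a$: among the finitely many $\theta'\in\Sigma$, choose the candidate solution using only pairs certified by the $\sqcup$-condition inside $\theta'\sqcup\theta_{\delta}$. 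If that still fails, I would enlarge $P_{\delta}$ by finitely many witnesses $p\in\theta_{\delta}$ drawn from a finite generating set for the relevant joins. The intended effect is that membership in $u_{\delta}$ forces the needed relation $\langle\S(\theta'),\S(\theta_{\delta})\rangle\in\delta'$ for each of the finitely many $\theta'$.

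For the final clause, take $\S^{+}:=\S$. Then $\S\mins\S$ holds trivially, so every finite $\sqcup$-system is a $\te$-system.
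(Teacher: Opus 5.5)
Your proposal stops exactly where the content of the lemma lies. The case $\theta'\nsubseteq\delta$ is left to three tentative remedies, and none of them is carried out. Moreover, the neighborhoods you start from are the wrong kind of object. $P_\delta$ records only which values of $\S$ are identified by $\delta$, so $u_\delta$ can be all of $\con(\mathbf{A})$. Compatibility would then require $\S^{+}$ itself to be solvable.

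For instance, work in the variety of sets (empty language) with $A=\{0,1,2,3\}$. Let $\theta_1$ have blocks $\{0,2\},\{1,3\}$ and let $\theta_2$ have blocks $\{2,3\},\{0\},\{1\}$. Put $\S(\theta_1)=0$, $\S(\theta_2)=1$ and $\S^{+}=\S$.
\begin{itemize}
\item Then $\theta_1\sqcup\theta_2=\nabla$, so $\S$ is a $\sqcup$-system.
\item It is unsolvable, since $0/\theta_1\cap 1/\theta_2=\{0,2\}\cap\{1\}=\emptyset$.
\item Neither $\theta_i$ identifies $0$ and $1$, so both $P_{\theta_i}$ consist of diagonal pairs, and $u_{\theta_1}=u_{\theta_2}=\con(\mathbf{A})$.
\end{itemize}
Hence neither stratifying by $\Theta_{\delta'}$ nor re-choosing $a$ can rescue the construction. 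Only your third remedy points in the right direction. But you do not say which pairs to add or why finitely many suffice. Also, ``a finite generating set for the relevant joins'' is not available in general, since $\theta'\sqcup\theta_\delta$ need not be finitely generated.

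The missing idea is to apply algebraicity of $\con_{\mathcal{Q}}(\mathbf{A})$ to the members of $\dom\S$, not to $\delta$. Since $\dom\S$ is finite and each $\langle\S(\theta),\S(\theta')\rangle$ lies in $\theta\sqcup\theta'$, you can choose, for each $\theta\in\dom\S$, a finite $F_\theta\sub\theta$ such that $\langle\S(\theta),\S(\theta')\rangle\in\cg^{\mathbf{A}}_{\mathcal{Q}}(F_\theta)\sqcup\cg^{\mathbf{A}}_{\mathcal{Q}}(F_{\theta'})$ for all $\theta,\theta'\in\dom\S$. Then take $u_\theta:=\bigcap_{p\in F_\theta}\mathrm{e}(p)$ with local solution $\S(\theta)$.
\begin{itemize}
\item These are up-sets containing $\theta$, so they cover $\dom\S^{+}$ because $\dom\S\mins\dom\S^{+}$.
\item Let $\delta\in u_\theta\cap\dom\S^{+}$ and $\theta'\in\dom\S$ with $\theta'\sub\delta$. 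Then $F_\theta\cup F_{\theta'}\sub\delta$, which gives $\langle\S(\theta),\S(\theta')\rangle\in\delta$.
\item Combined with $\langle\S(\theta'),\S^{+}(\delta)\rangle\in\delta$, this shows that $\S(\theta)$ solves $\S^{+}|_{u_\theta}$.
\end{itemize}
This is the paper's argument, and $P_\delta$ plays no role in it. A minor point: $\S\mins\S$ is not automatic. It uses the $\sqcup$-condition, since $\theta\sub\delta$ gives $\theta\sqcup\delta=\delta$.
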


\begin{proof}
Suppose $\Omega$ is a finite subset of $\con_{\mathcal{Q}}(\mathbf{A})$.
Let $\mathsf{S}$ be a $\sqcup$-system on $\Omega$ and let $\S^{+}$
be a system satisfying $\S\mins\S^{+}$. By definition we have 
\[
\langle\mathsf{S}(\theta),\mathsf{S}(\theta')\rangle\in\theta\sqcup\theta'\qquad\text{for all }\theta,\theta'\in\Omega.
\]
Then, since $\con_{\mathcal{Q}}(\mathbf{A})$ is algebraic, for each
$\sigma\in\Omega$ there is a finite subset $F_{\sigma}\subseteq\sigma$
such that
\begin{equation}
\langle\mathsf{S}(\theta),\mathsf{S}(\theta')\rangle\in\cg^{\mathbf{A}}_{\mathcal{Q}}(F_{\theta})\sqcup\cg^{\mathbf{A}}_{\mathcal{Q}}(F_{\theta'})\qquad\text{for all }\theta,\theta'\in\Omega.\label{eq:sub finito}
\end{equation}
For $\sigma\in\Omega$ define 
\[
u_{\sigma}:=\bigcap_{p\in F_{\sigma}}\mathrm{e}(p).
\]
Note that $\mathcal{C}:=\{u_{\sigma}:\sigma\in\Omega\}$ is a $\tau_{\mathrm{e}}$-cover
of $\Omega$, since $\sigma\in u_{\sigma}$ for all $\sigma\in\Omega$.
We claim that $\mathcal{C}$ is compatible with $\mathsf{S}^{+}$.
First note that $\mathcal{C}$ covers $\dom\mathsf{S}^{+}$, because
$\Omega\mins\dom\mathsf{S}^{+}$ and each member of $\mathcal{C}$
is an up-set. Fix $\theta\in\Omega$; we show that $\mathsf{S}(\theta)$
is a solution for $\mathsf{S}^{+}|_{u_{\theta}}$. Let $\delta\in u_{\theta}\cap\dom\mathsf{S}^{+}$
and choose $\theta'\in\Omega$ with $\theta'\sub\delta$. Then, $F_{\theta}\cup F_{\theta'}\sub\delta$,
and it follows from (\ref{eq:sub finito}) that 
\[
\langle\mathsf{S}(\theta),\mathsf{S}(\theta')\rangle\in\delta.
\]
Now, as $\S\mins\S^{+}$ and $\theta'\sub\delta$, we have 
\[
\langle\mathsf{S}(\theta'),\mathsf{S}^{+}(\delta)\rangle\in\delta,
\]
therefore $\langle\mathsf{S}(\theta),\mathsf{S}^{+}(\delta)\rangle\in\delta$.
\end{proof}

We introduce two completeness properties that play a key role in this
article. A subset $\Sigma\sub\con(\mathbf{A})$ is \emph{$\tau_{\mathrm{e}}$-complete}
if every $\tau_{\mathrm{e}}$-system on $\Sigma$ is solvable.

If $\Sigma\sub\con_{\mathcal{Q}}(\mathbf{A})$, we say that $\Sigma$
is a \emph{Chinese remainder set} (CRS) if every $\sqcup$-system
on $\Sigma$ is solvable.

Note that if $\mathcal{R}$ is the variety of rings, and $\mathbf{Z}$
is the ring of integers, then the Chinese Remainder Theorem entails
that every finite subset of $\con(\mathcal{\mathbf{Z}})$ is a CRS.
This was generalized as follows by A.~Pixley in \cite{Pixley1972Completeness}.
\begin{lem}
\label{lem: relativamente aritmetica sii todo finito es CRS}The following
are equivalent:
\begin{enumerate}
\item $\mathbf{A}$ is relatively arithmetic.
\item Every finite subset of $\con_{\mathcal{Q}}(\mathbf{A})$ is a CRS.
\end{enumerate}
\end{lem}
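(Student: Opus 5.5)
We prove the two implications separately. The main tool is the lattice identity characterizing distributivity in the presence of permutability, together with a direct check of the Chinese remainder condition on small systems.

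For (1)$\Rightarrow$(2), assume $\mathbf{A}$ is relatively arithmetic and argue by induction on $|\Omega|$ for finite $\Omega\sub\con_{\mathcal{Q}}(\mathbf{A})$. Sets with one element are trivially CRS.

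For two congruences $\theta,\delta$, let $\S$ be a $\sqcup$-system with $\langle\S(\theta),\S(\delta)\rangle\in\theta\sqcup\delta=\theta\circ\delta$. Then any $c$ with $\S(\theta)\,\theta\,c\,\delta\,\S(\delta)$ is a solution.

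For the inductive step, let $\Omega=\{\theta_{1},\dots,\theta_{n+1}\}$ and let $\S$ be a $\sqcup$-system on $\Omega$. By the induction hypothesis the restriction to $\{\theta_{1},\dots,\theta_{n}\}$ has a solution $b$. We want to solve the two-element system on $\{\beta,\theta_{n+1}\}$, where $\beta:=\bigcap_{i\le n}\theta_{i}$ and the values are $b$ and $\S(\theta_{n+1})$. Any solution $c$ of it satisfies $c\,\beta\,b\,\theta_{i}\,\S(\theta_{i})$ for $i\le n$ and $c\,\theta_{n+1}\,\S(\theta_{n+1})$, so it solves $\S$.

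It remains to check that this two-element system is a $\sqcup$-system, that is, $\langle b,\S(\theta_{n+1})\rangle\in\beta\sqcup\theta_{n+1}$.
\begin{itemize}
\item By distributivity, $\beta\sqcup\theta_{n+1}=\bigcap_{i\le n}(\theta_{i}\sqcup\theta_{n+1})$.
\item For each $i\le n$ we have $b\,\theta_{i}\,\S(\theta_{i})$ and $\langle\S(\theta_{i}),\S(\theta_{n+1})\rangle\in\theta_{i}\sqcup\theta_{n+1}$.
\item Hence $\langle b,\S(\theta_{n+1})\rangle\in\theta_{i}\sqcup\theta_{n+1}$ for every $i\le n$, and therefore it lies in the intersection.
\end{itemize}
The two-element case now gives the required $c$.

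For (2)$\Rightarrow$(1), we first get permutability from two-element sets. Let $\langle a,b\rangle\in\theta\sqcup\delta$. The system $\theta\mapsto a$, $\delta\mapsto b$ is a $\sqcup$-system, so it has a solution $c$. Then $a\,\theta\,c\,\delta\,b$, which shows $\theta\sqcup\delta\sub\theta\circ\delta$. The reverse inclusion is trivial, so $\mathbf{A}$ is RCP.

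For distributivity it suffices to prove $(\theta\sqcup\alpha)\cap(\theta\sqcup\beta)\sub\theta\sqcup(\alpha\cap\beta)$, since the other inclusion always holds. Take $\langle a,b\rangle$ in the left-hand side, and consider the system on $\{\theta,\alpha,\beta\}$ given by $\theta\mapsto a$, $\alpha\mapsto b$, $\beta\mapsto b$. This is a $\sqcup$-system: the pairs $(\theta,\alpha)$ and $(\theta,\beta)$ work by the hypothesis on $\langle a,b\rangle$, and the pair $(\alpha,\beta)$ has equal values. Note that if two of $\theta,\alpha,\beta$ coincide the inclusion is immediate, so we may treat them as distinct and the system is well defined. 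A solution $c$ then satisfies $a\,\theta\,c$ and $c\,(\alpha\cap\beta)\,b$, so $\langle a,b\rangle\in\theta\circ(\alpha\cap\beta)\sub\theta\sqcup(\alpha\cap\beta)$.

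The only step needing real care is verifying that the reduced two-element system in the inductive step is a $\sqcup$-system. That is exactly where distributivity is used, via $\beta\sqcup\theta_{n+1}=\bigcap_{i\le n}(\theta_{i}\sqcup\theta_{n+1})$; the rest of the argument is routine.
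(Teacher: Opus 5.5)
Your proof is correct and is essentially the classical argument of Pixley that the paper invokes; the paper merely notes that Pixley's proof for absolute congruences carries over verbatim to $\con_{\mathcal{Q}}(\mathbf{A})$, and you carry it out: induction via distributivity and permutability for (1)$\Rightarrow$(2), and two- and three-element systems giving permutability and distributivity for (2)$\Rightarrow$(1). The only cosmetic point is that when congruences coincide (e.g.\ $\theta=\delta$ in the permutability step, or $\beta=\theta_{n+1}$ in the inductive step) your ``two-element systems'' are not well-defined maps, but then the conclusion is trivial or follows directly from $\langle b,\mathsf{S}(\theta_{n+1})\rangle\in\beta\sqcup\theta_{n+1}=\beta\circ\theta_{n+1}$.
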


\begin{proof}
Even though the result is stated and proved for absolute congruences
in \cite{Pixley1972Completeness}, the exact same proof works for
relative congruence lattices.
\end{proof}

Next we make precise the connection between finite CRSs and $\te$-complete
sets. Although it can be extracted from the arguments in~\cite[Lemma~5]{CampercholiVaggioneZigaran:RCDGlobal},
the terminology and formulation used here are slightly different,
so we include a proof.
\begin{lem}
\label{lem:CRS finito sii los joins son global spec}For a finite
$\Omega\subseteq\con_{\mathcal{Q}}(\mathbf{A})$ the following are
equivalent:
\begin{enumerate}
\item $\Omega$ is a CRS.
\item $\{\theta\sqcup\delta:\theta,\delta\in\Omega\}$ is $\te$-complete.
\end{enumerate}
\end{lem}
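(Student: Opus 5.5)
The plan is to prove the two implications separately. Write $J:=\{\theta\sqcup\delta:\theta,\delta\in\Omega\}$. Note that $\Omega\sub J$, since $\sigma=\sigma\sqcup\sigma$, and therefore $\Omega\mins J$. We will also use repeatedly that every $\te$-open set is an up-set, because each $\mathrm{e}(a,b)$ is one.

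(2)$\Rightarrow$(1). This direction follows from the lemmas on systems already proved.
\begin{enumerate}
\item Let $\S$ be a $\sqcup$-system on $\Omega$.
\item By Lemma~\ref{lem:extender sistema para arriba}, applied with $\Sigma^{+}=J\supseteq\Omega$, there is a $\sqcup$-system $\S^{+}$ on $J$ with $\S\mins\S^{+}$ and $\S^{+}|_{\Omega}=\S$.
\item Since $\Omega$ is finite, Lemma~\ref{lem:min por sistema finito implica te sistema} shows that $\S^{+}$ is a $\te$-system.
\item By (2), $J$ is $\te$-complete, so $\S^{+}$ has a solution $a$.
\item Because $\S^{+}$ extends $\S$, the element $a$ is also a solution of $\S$. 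Hence $\Omega$ is a CRS.
\end{enumerate}

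(1)$\Rightarrow$(2). This direction uses the compatible cover directly.
\begin{enumerate}
\item Let $\T$ be a $\te$-system on $J$, with compatible cover $\mathcal{C}$.
\item For each $\sigma\in\Omega\sub J$, choose $u_{\sigma}\in\mathcal{C}$ with $\sigma\in u_{\sigma}$. Then choose a solution $a_{\sigma}$ of $\T|_{u_{\sigma}}$, and set $\S(\sigma):=a_{\sigma}$.
\item We check that $\S$ is a $\sqcup$-system on $\Omega$. For $\theta,\delta\in\Omega$, the congruence $\theta\sqcup\delta$ lies in $J$. It also lies in both $u_{\theta}$ and $u_{\delta}$, since these are up-sets. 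Hence $a_{\theta}$ and $a_{\delta}$ are both congruent to $\T(\theta\sqcup\delta)$ modulo $\theta\sqcup\delta$. It follows that $\langle a_{\theta},a_{\delta}\rangle\in\theta\sqcup\delta$.
\item By (1), $\S$ has a solution $a$.
\item We check that $a$ solves $\T$. Take $\theta\sqcup\delta\in J$. We have $\langle a,a_{\theta}\rangle\in\theta\sub\theta\sqcup\delta$. We also have $\langle a_{\theta},\T(\theta\sqcup\delta)\rangle\in\theta\sqcup\delta$, because $\theta\sqcup\delta\in u_{\theta}$. By transitivity, $\langle a,\T(\theta\sqcup\delta)\rangle\in\theta\sqcup\delta$.
\end{enumerate}

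The only point needing care is the up-set property of $\te$-open sets used in step (3) of the second direction. This property is what places the joins in the chosen cover members, and everything else is routine.
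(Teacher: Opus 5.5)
Your proof is correct and follows essentially the same route as the paper's: for (2)$\Rightarrow$(1) you extend the $\sqcup$-system to the finite set of joins and apply Lemma~\ref{lem:min por sistema finito implica te sistema}; for (1)$\Rightarrow$(2) you build the $\sqcup$-system $\theta\mapsto a_{\theta}$ from local solutions, using that $\te$-open sets are up-sets. The only cosmetic difference is that you invoke Lemma~\ref{lem:min por sistema finito implica te sistema} through $\mathsf{S}\trianglelefteq\mathsf{S}^{+}$, whereas the paper uses that $\mathsf{S}^{+}$ is itself a finite $\sqcup$-system; both are valid.
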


\begin{proof}
Put $\Sigma:=\{\theta\sqcup\delta:\theta,\delta\in\Omega\}$.

(1)$\Rightarrow$(2). Let $\S$ be a $\te$-system on $\Sigma$, and
suppose $\mathcal{C}\sub\te$ is a cover of $\Sigma$ compatible with
$\S$. For each $\theta\in\Omega$ choose $u_{\theta}\in\mathcal{C}$
such that $\theta\in u_{\theta}$, and fix a solution $a_{\theta}$
for $\S|_{u_{\theta}}$. We claim that $\T:\theta\mapsto a_{\theta}$
is a $\sqcup$-system on $\Omega$. Fix $\theta,\delta\in\Omega$
and note that, since $u_{\theta}$ and $u_{\delta}$ are up-sets,
we have $\theta\sqcup\delta\in u_{\theta}\cap u_{\delta}$. Now, as
$a_{\theta}$ and $a_{\delta}$ are solutions for the restrictions
of $\S$ to $u_{\theta}$ and $u_{\delta}$, respectively, it follows
that 
\[
\langle a_{\theta},\S(\theta\sqcup\delta)\rangle,\langle a_{\delta},\S(\theta\sqcup\delta)\rangle\in\theta\sqcup\delta.
\]
Hence, $\langle a_{\theta},a_{\delta}\rangle\in\theta\sqcup\delta$,
as desired. Since $\Omega$ is a CRS, there is a solution $a\in A$
for $\T$. We show that $a$ is a solution for $\S$. Fix $\theta\sqcup\delta\in\Sigma$
with $\theta,\delta\in\Omega$. As $a_{\theta}$ is a solution for
$\S|_{u_{\theta}}$ and $\theta\sqcup\delta\in u_{\theta}$, we have
\[
\langle a_{\theta},\S(\theta)\rangle\in\theta\text{ and }\langle a_{\theta},\S(\theta\sqcup\delta)\rangle\in\theta\sqcup\delta.
\]
 Also, since $a$ is a solution for $\T$, we obtain 
\[
\langle a,a_{\theta}\rangle\in\theta.
\]
Combining these facts we get $\langle a,\S(\theta\sqcup\delta)\rangle\in\theta\sqcup\delta$.

(2)$\Rightarrow$(1). Let $\S$ be a $\sqcup$-system on $\Omega$.
By Lemma \ref{lem:extender sistema para arriba}, there is a $\sqcup$-system
$\S^{+}$ on $\Sigma$ such that $\S^{+}|_{\Omega}=\mathsf{S}$. Since
$\S^{+}$ is finite, Lemma \ref{lem:min por sistema finito implica te sistema}
implies that $\S^{+}$ is a $\te$-system. Thus, $\S^{+}$ has a solution,
and therefore so does its restriction $\S$.
\end{proof}

We next recall the Correspondence Lemma for relative congruence lattices,
together with the corresponding facts for systems and

completeness.
\begin{lem}
\label{lem:correspondencia} Let $\theta\in\con_{\mathcal{Q}}(\mathbf{A})$. 
\begin{enumerate}
\item The map 
\[
\delta\mapsto\delta/\theta:=\{\langle a/\theta,b/\theta\rangle:\langle a,b\rangle\in\delta\}
\]
 is an isomorphism of bounded lattices from the sublattice of $\con_{\mathcal{Q}}(\mathbf{A})$
with universe $[\theta)$ onto $\con_{\mathcal{Q}}(\mathbf{A}/\theta)$.
\item Let $\Sigma\sub[\theta)$ and put $\Sigma/\theta:=\{\sigma/\theta:\sigma\in\Sigma\}$.
Let $\mathsf{S}$ be a system on $\Sigma$, and let $\mathsf{S}/\theta:\Sigma/\theta\rightarrow A/\theta$
be given by
\[
\mathsf{S}/\theta(\sigma/\theta):=\mathsf{S}(\sigma)/\theta.
\]

Then:
\begin{enumerate}
\item An element $a\in A$ is a solution for $\mathsf{S}$ if and only if
$a/\theta$ is a solution for $\mathsf{S}/\theta$.
\item The function $\mathsf{S}$ is a $\sqcup$-system on $\Sigma$ if and
only if $\mathsf{S}/\theta$ is a $\sqcup$-system on $\Sigma/\theta$.
Moreover, the same holds for $\te$-systems.
\item The set $\Sigma$ is a CRS if and only if $\Sigma/\theta$ is a CRS.
Moreover, the same holds for $\te$-complete sets.
\end{enumerate}
\end{enumerate}
\end{lem}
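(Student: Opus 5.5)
Item (1) is the classical Correspondence Lemma for $\con(\mathbf{A})$, lifted to relative congruences. The map $\delta\mapsto\delta/\theta$ is an order isomorphism from $[\theta)\cap\con(\mathbf{A})$ onto $\con(\mathbf{A}/\theta)$, and it preserves intersections. It then suffices to check that it maps $\mathcal{Q}$-congruences onto $\mathcal{Q}$-congruences. This follows from the Second Isomorphism Theorem, $(\mathbf{A}/\theta)/(\delta/\theta)\cong\mathbf{A}/\delta$: hence $\delta\in\con_{\mathcal{Q}}(\mathbf{A})$ iff $\delta/\theta\in\con_{\mathcal{Q}}(\mathbf{A}/\theta)$, which uses that $\theta$ itself is relative so $\mathbf{A}/\theta\in\mathcal{Q}$. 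An order isomorphism between lattices is a lattice isomorphism, so joins $\sqcup$ are preserved as well. The bounds are $\theta\mapsto\Delta$ and $\nabla\mapsto\nabla$.

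For (2)(a) I unfold the definitions. $\langle\mathsf{S}(\sigma),a\rangle\in\sigma$ iff $\langle\mathsf{S}(\sigma)/\theta,a/\theta\rangle\in\sigma/\theta$, because $\sigma\supseteq\theta$. Note also that $\mathsf{S}/\theta$ is well defined, since $\sigma\mapsto\sigma/\theta$ is injective on $[\theta)$.

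For the $\sqcup$ part of (b), I combine the same membership equivalence with the fact from (1) that $(\sigma\sqcup\sigma')/\theta=\sigma/\theta\sqcup\sigma'/\theta$. For the $\te$ part, the key observation is that $\mathrm{e}(a,b)\cap[\theta)$ corresponds under the isomorphism to $\mathrm{e}(a/\theta,b/\theta)$ in $\con_{\mathcal{Q}}(\mathbf{A}/\theta)$. Every pair in $A/\theta$ has this form, so the traces of $\te$-open sets on $[\theta)$ correspond exactly to $\te$-open sets of the quotient, i.e.\ the map is a homeomorphism for the relevant traces. Given this, a compatible cover $\mathcal{C}$ for $\mathsf{S}$ transports to one for $\mathsf{S}/\theta$: for each $u\in\mathcal{C}$ I use (a) to see that a solution of $\mathsf{S}|_u$ yields a solution of the corresponding restriction of $\mathsf{S}/\theta$. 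Conversely, a cover of $\Sigma/\theta$ pulls back, via the basic open sets $\mathrm{e}(a,b)$ chosen with representatives, to a cover of $\Sigma$, and solutions lift to any representative by (a).

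For (c), every system on $\Sigma/\theta$ has the form $\mathsf{S}/\theta$ for some system $\mathsf{S}$ on $\Sigma$, obtained by choosing representatives. By (b), $\mathsf{S}$ is a $\sqcup$-system (resp.\ $\te$-system) exactly when $\mathsf{S}/\theta$ is, and by (a) it is solvable exactly when $\mathsf{S}/\theta$ is. Both CRS and $\te$-completeness are universal statements over such systems, so they transfer in both directions.

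The only step needing genuine care is the $\te$ clause of (b). There one must argue that the topology matches under the correspondence, and not merely the order, and that unions and finite intersections of basic sets behave well. I would handle it by working with subbasic sets directly: a compatible cover can always be refined to finite intersections of sets $\mathrm{e}(p)$ without losing solvability of restrictions, because restricting a solvable system to a smaller set keeps it solvable.
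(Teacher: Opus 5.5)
The paper gives no proof of this lemma. It is simply recalled as the standard relative Correspondence Lemma together with its routine consequences for systems, and your argument is a correct and complete verification along exactly those lines: the Correspondence and Second Isomorphism Theorems for (1), then transport of systems, covers, and solutions via (a). You also correctly isolate the one point needing care: $\delta\mapsto\delta/\theta$ is a $\tau_{\mathrm{e}}$-homeomorphism from $\{\delta\in\con(\mathbf{A}):\theta\subseteq\delta\}$ onto $\con(\mathbf{A}/\theta)$, because it carries the trace of $\mathrm{e}(a,b)$ onto $\mathrm{e}(a/\theta,b/\theta)$, so compatible covers transfer in both directions.
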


The next lemma allows us to pass to the $\tau_{\mathrm{ed}}$-closure
without losing $\tau_{\mathrm{e}}$-completeness.
\begin{lem}
\label{lem:global en la clausura}If $\Sigma\sub\con(\mathbf{A})$
is $\te$-complete, then the $\tau_{\mathrm{ed}}$-closure of $\Sigma$
is $\te$-complete.
\end{lem}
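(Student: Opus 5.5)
Let $\overline{\Sigma}:=\overline{\Sigma}^{\mathrm{ed}}$ and let $\S$ be a $\te$-system on $\overline{\Sigma}$ with compatible cover $\mathcal{C}\sub\te$. For each $u\in\mathcal{C}$ fix a solution $a_{u}$ of $\S|_{u}$. The plan is to restrict to $\Sigma$, solve there using the hypothesis, and then show that a suitably chosen solution on $\Sigma$ also solves $\S$ on the closure. Restricting $\S$ directly to $\Sigma$ is not enough, because the values $\S(\theta)$ at limit points $\theta\in\overline{\Sigma}\setminus\Sigma$ are not controlled by values on $\Sigma$. So I would first replace $\S$ by a better-behaved system.

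First, reduce the cover. We may assume each $u\in\mathcal{C}$ is a basic open set $\bigcap_{p\in F}\mathrm{e}(p)$ with $F$ finite, since shrinking members of a cover keeps solutions valid. By Lemma~\ref{lem:ted es booleana}, $\overline{\Sigma}$ is $\ted$-compact. Basic $\te$-open sets are $\ted$-open, so finitely many $u_{1},\ldots,u_{n}\in\mathcal{C}$ cover $\overline{\Sigma}$.

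Next, define an auxiliary system $\T$ on $\Sigma$. For $\theta\in\Sigma$, choose some $i$ with $\theta\in u_{i}$ and set $\T(\theta):=a_{u_{i}}$. The cover $\{u_{1},\ldots,u_{n}\}$ is compatible with $\T$. Indeed, if $\theta\in u_{j}\cap\Sigma$ and $\T(\theta)=a_{u_{i}}$, then both $a_{u_{i}}$ and $a_{u_{j}}$ solve $\S$ at $\theta$, so $\langle a_{u_{i}},a_{u_{j}}\rangle\in\theta$. Hence $a_{u_{j}}$ solves $\T|_{u_{j}}$. Thus $\T$ is a $\te$-system on $\Sigma$, and by hypothesis it has a solution $a$. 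Then for each $j$ and each $\theta\in u_{j}\cap\Sigma$ we have $\langle a,a_{u_{j}}\rangle\in\theta$.

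The key step is passing to the closure. Fix $\delta\in\overline{\Sigma}$ and pick $j$ with $\delta\in u_{j}$; it suffices to show $\langle a,a_{u_{j}}\rangle\in\delta$, since $a_{u_{j}}$ solves $\S$ at $\delta$. Suppose instead that $\langle a,a_{u_{j}}\rangle\notin\delta$. Then
\[
u_{j}\cap\mathrm{d}(a,a_{u_{j}})
\]
is a $\ted$-open neighbourhood of $\delta$, because $u_{j}$ is a finite intersection of sets $\mathrm{e}(p)$. Since $\delta$ lies in the $\ted$-closure of $\Sigma$, this neighbourhood meets $\Sigma$ in some $\theta\in u_{j}\cap\Sigma$ with $\langle a,a_{u_{j}}\rangle\notin\theta$. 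This contradicts the previous paragraph, so $a$ solves $\S$.

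The main obstacle is exactly this last step: the argument needs the sets $u_{j}$ to be $\ted$-open, which is why the cover is reduced to basic $\te$-open sets. Once that reduction is made, the finite subcover is convenient but not strictly essential, because the density argument works for any member of the cover.
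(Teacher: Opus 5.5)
Your proof is correct and follows the paper's argument: solve on $\Sigma$ using $\te$-completeness, then use that $u\cap\mathrm{d}(a,a_{u})$ is a $\ted$-open neighbourhood to extend $\langle a,a_{u}\rangle\in\theta$ from $u\cap\Sigma$ to $u\cap\overline{\Sigma}^{\mathrm{ed}}$. The preliminary reductions are unnecessary. Since $\te\sub\ted$, every member of $\mathcal{C}$ is already $\ted$-open, and $\S|_{\Sigma}$ is itself a $\te$-system with compatible cover $\mathcal{C}$ (your $\T$ agrees with it modulo each $\theta$); the paper solves $\S|_{\Sigma}$ directly.
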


\begin{proof}
Suppose $\Sigma\sub\con(\mathbf{A})$ is $\te$-complete. Take a $\tau_{\mathrm{e}}$-system
$\mathsf{S}$ on $\overline{\Sigma}^{\mathrm{ed}}$, and let $\mathcal{C}\sub\tau_{\mathrm{e}}$
be a cover of $\overline{\Sigma}^{\mathrm{ed}}$ compatible with $\mathsf{S}$.
Note that $\mathcal{C}$ is compatible with $\mathsf{S}|_{\Sigma}$.
Since $\Sigma$ is $\te$-complete, there is $a\in A$ satisfying
\begin{equation}
\langle a,\mathsf{S}(\theta)\rangle\in\theta\qquad\text{for all }\theta\in\Sigma.\label{eq:a sol en Sigma}
\end{equation}
We claim that $a$ is a solution for $\mathsf{S}$. Indeed, fix $\delta\in\overline{\Sigma}^{\mathrm{ed}}$
and take $u\in\mathcal{C}$ such that $\delta\in u$. Since $\mathcal{C}$
is compatible with $\mathsf{S}$, there exists $b\in A$ with
\begin{equation}
\langle b,\mathsf{S}(\theta)\rangle\in\theta\qquad\text{for all }\theta\in u\cap\overline{\Sigma}^{\mathrm{ed}}.\label{eq:b sol en u}
\end{equation}
It follows from (\ref{eq:a sol en Sigma}) and (\ref{eq:b sol en u})
that
\[
u\cap\Sigma\sub\mathrm{e}(a,b),
\]
and hence
\[
u\cap\overline{\Sigma}^{\mathrm{ed}}\sub\overline{\mathrm{e}(a,b)}^{\mathrm{ed}}=\mathrm{e}(a,b).
\]
So $\langle a,b\rangle\in\delta$, which in view of (\ref{eq:b sol en u})
yields $\langle a,\mathsf{S}(\delta)\rangle\in\delta$, as desired.

The next notion identifies families for which system-solvability can
be tested on the completely meet-irreducible congruences lying above
them.
\end{proof}

We say that a subset $\Sigma\sub\con_{\mathcal{Q}}(\mathbf{A})$ is
\emph{CMI-saturated} if $[\Sigma)\cap\cmic_{\mathcal{Q}}(\mathbf{A})\sub\Sigma$.
\begin{lem}
\label{lem:CMI alcanza}If $\Sigma\sub\con_{\mathcal{Q}}(\mathbf{A})$
is CMI-saturated, then:
\begin{enumerate}
\item Every $\te$-system on $\Sigma$ is a $\sqcup$-system.
\item If $\S$ is a $\sqcup$-system on $\Sigma$ such that $\S|_{[\Sigma)\cap\cmic_{\mathcal{Q}}(\mathbf{A})}$
is solvable, then $\S$ is solvable.
\end{enumerate}
\end{lem}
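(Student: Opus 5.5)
For both items the working tool is the standard fact that every relative congruence is the intersection of the completely meet-irreducible $\mathcal{Q}$-congruences above it. This holds because $\con_{\mathcal{Q}}(\mathbf{A})$ is algebraic: every $\delta\neq\nabla$ equals $\bigcap([\delta)\cap\cmic_{\mathcal{Q}}(\mathbf{A}))$, and for $\delta=\nabla$ we get the empty intersection $\nabla$.

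For (1), let $\S$ be a $\te$-system on $\Sigma$ with compatible cover $\mathcal{C}$, and fix $\theta,\delta\in\Sigma$. I will show $\langle\S(\theta),\S(\delta)\rangle\in\gamma$ for every $\gamma\in\cmic_{\mathcal{Q}}(\mathbf{A})$ with $\theta\sqcup\delta\sub\gamma$. By the fact above, this gives $\langle\S(\theta),\S(\delta)\rangle\in\theta\sqcup\delta$.

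Fix such a $\gamma$. Since $\gamma\supseteq\theta$, CMI-saturation gives $\gamma\in\Sigma$. Choose $u\in\mathcal{C}$ with $\gamma\in u$; $u$ is $\te$-open, hence a union of finite intersections of sets $\mathrm{e}(a,b)$, so $\gamma$ lies in some basic $v=\bigcap_{p\in F}\mathrm{e}(p)\sub u$ with $F\sub\gamma$ finite. Let $a$ be a solution of $\S|_{u}$. The key step is to bring $\theta$ and $\delta$ into the picture even though they need not lie in $u$. For this, write $\theta=\bigcap([\theta)\cap\cmic_{\mathcal{Q}}(\mathbf{A}))$. I will try to find $\gamma'\in[\theta)\cap\cmic_{\mathcal{Q}}(\mathbf{A})\cap u$ with $\langle\S(\theta),\S(\gamma')\rangle$ controlled.

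This route looks delicate, so I will use a cleaner argument: show directly that $\langle\S(\theta),a\rangle\in\gamma$. Suppose not. Then, working inside the congruences above $\theta$, compactness of $\gamma$'s finite data should give a contradiction. Concretely, consider $\theta\sqcup\cg_{\mathcal{Q}}(F)$. This lies in $v\sub u$ and is $\sub\gamma$. Every CMI congruence $\gamma''$ above $\theta\sqcup\cg_{\mathcal{Q}}(F)$ is in $\Sigma\cap u$ by saturation and upward closure of $v$, so $a$ solves $\S$ there. Also $\langle\S(\theta),\S(\gamma'')\rangle\in\gamma''$, which holds provided $\theta\sub\gamma''$. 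Here is the real obstacle: the system $\S$ gives no a priori relation between $\S(\theta)$ and $\S(\gamma'')$.

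To overcome this I will use a cover element $w\in\mathcal{C}$ containing $\theta$, with a solution $b$ of $\S|_w$. The set $w$ is an up-set, so every $\gamma''\supseteq\theta$ lies in $w$. Hence $\langle b,\S(\theta)\rangle\in\theta$ and $\langle b,\S(\gamma'')\rangle\in\gamma''$. Combining with $\langle a,\S(\gamma'')\rangle\in\gamma''$ for $\gamma''$ above $\theta\sqcup\cg_{\mathcal{Q}}(F)$ yields $\langle\S(\theta),a\rangle\in\gamma''$ for all such CMI $\gamma''$, hence $\langle\S(\theta),a\rangle\in\theta\sqcup\cg_{\mathcal{Q}}(F)\sub\gamma$. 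Symmetrically $\langle\S(\delta),a\rangle\in\gamma$, so $\langle\S(\theta),\S(\delta)\rangle\in\gamma$, as required.

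For (2), let $a$ solve $\S|_{[\Sigma)\cap\cmic_{\mathcal{Q}}(\mathbf{A})}$; this restriction's domain lies in $\Sigma$ by saturation. Fix $\theta\in\Sigma$ and a CMI $\gamma\supseteq\theta$. Since $\gamma\in\Sigma$, the $\sqcup$-condition gives $\langle\S(\theta),\S(\gamma)\rangle\in\theta\sqcup\gamma=\gamma$, and $\langle\S(\gamma),a\rangle\in\gamma$, so $\langle\S(\theta),a\rangle\in\gamma$. Intersecting over all such $\gamma$ yields $\langle\S(\theta),a\rangle\in\theta$, so $a$ solves $\S$.
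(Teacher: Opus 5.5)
Your proof is correct, and once the detour is removed it is the paper's argument. Fix a CMI $\gamma\supseteq\theta\sqcup\delta$. CMI-saturation puts $\gamma$ in $\Sigma$, and since $\tau_{\mathrm{e}}$-open sets are up-sets, the cover elements containing $\theta$ and $\delta$ also contain $\gamma$. This gives $\langle\mathsf{S}(\theta),\mathsf{S}(\gamma)\rangle,\langle\mathsf{S}(\delta),\mathsf{S}(\gamma)\rangle\in\gamma$. Part (2) is the same as the paper's proof.

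The auxiliary $u$, $F$, $a$ and $\theta\sqcup\cg_{\mathcal{Q}}(F)$ are superfluous: taking $\gamma''=\gamma$ in your own chain already yields the conclusion.
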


\begin{proof}
(1) Let $\S$ be a $\te$-system on $\Sigma$. Fix $\theta,\delta\in\Sigma$
and $\gamma\in\cmic_{\mathcal{Q}}(\mathbf{A})$ such that $\theta,\delta\sub\gamma$.
Note that $\gamma\in\Sigma$ since $\Sigma$ is CMI-saturated. We
show that $\langle\S(\theta),\S(\delta)\rangle\in\gamma$. Let $\mathcal{C}\sub\tau_{\mathrm{e}}$
be a cover of $\Sigma$ compatible with $\mathsf{S}$, and choose
$u,v\in\mathcal{C}$ such that $\theta\in u$ and $\delta\in v$.
Let $a_{u},a_{v}\in A$ be solutions for $\S|_{u}$ and $\S|_{v}$,
respectively. Since $u$ and $v$ are increasing sets, it follows
that $\gamma\in u\cap v$, and hence 
\[
\langle a_{u},\S(\gamma)\rangle,\langle a_{v},\S(\gamma)\rangle\in\gamma.
\]
So, as $\langle a_{u},\S(\theta)\rangle$ and $\langle a_{v},\S(\delta)\rangle$
both belong to $\gamma$, we have that $\langle\S(\theta),\S(\delta)\rangle\in\gamma$.
Finally, since $\gamma$ is an arbitrary congruence in $\cmic_{\mathcal{Q}}(\mathbf{A})$
containing $\theta\sqcup\delta$, we conclude that $\langle\S(\theta),\S(\delta)\rangle\in\theta\sqcup\delta$.

(2) Let $a\in A$ be a solution for $\S|_{[\Sigma)\cap\cmic_{\mathcal{Q}}(\mathbf{A})}$
and fix $\theta\in\Sigma$. If $\gamma\in[\theta)\cap\cmic_{\mathcal{Q}}(\mathbf{A})$,
then $\langle\S(\gamma),\S(\theta)\rangle\in\gamma\sqcup\theta=\gamma$.
Also, as $\langle a,\S(\gamma)\rangle\in\gamma$, we have $\langle a,\S(\theta)\rangle\in\gamma$.
Thus,
\[
\langle a,\S(\theta)\rangle\in\bigcap([\theta)\cap\cmic_{\mathcal{Q}}(\mathbf{A}))=\theta.
\]
\end{proof}

\subsection{The Baker-Pixley theorem for infinite congruence systems}

In their classical paper, Baker and Pixley showed that the existence
of a near-unanimity term yields, among other consequences, a term-interpolation
result for functions with finite domain and a local-to-global solvability
result for finite congruence systems \cite{BakerPixley1975}. Infinitary
versions of the interpolation result were subsequently obtained in
\cite{Vaggione2018,CampercholiVaggione2023}. We now establish an
infinitary counterpart of the congruence-system result.

Let $\Sigma\sub\con(\mathbf{A})$ and $\mathsf{S}$ be a system on
$\Sigma$. For $r>0$, we say that $\mathsf{S}$ is $r$\emph{-solvable}
if $\mathsf{S}|_{\Omega}$ is solvable for each $\Omega\sub\Sigma$
with $|\Omega|\leq r$.
\begin{thm}
\label{thm:BP congruencial}Suppose $\mathbf{A}$ has a $(r+1)$-ary
NU term, and let $\Sigma\sub\con(\mathbf{A})$ be $\tau_{\mathrm{e}}$-compact.
If $\mathsf{S}$ is a $r$-solvable $\te$-system on $\Sigma$, then
$\mathsf{S}$ is solvable.
\end{thm}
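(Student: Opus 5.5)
The plan is to reduce the infinite system to a finite congruence system and then run the classical Baker--Pixley near-unanimity argument on it.

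\textbf{Step 1 (finite reduction).} Let $\mathcal{C}\sub\te$ be a cover of $\Sigma$ compatible with $\S$. Since $\Sigma$ is $\te$-compact, finitely many $u_{1},\ldots,u_{n}\in\mathcal{C}$ cover $\Sigma$. Fix a solution $b_{i}$ of $\S|_{u_{i}}$ and put $\alpha_{i}:=\bigcap(u_{i}\cap\Sigma)$. For $a\in A$, being a solution of $\S|_{u_{i}}$ is equivalent to $\langle a,b_{i}\rangle\in\alpha_{i}$: for each $\theta\in u_{i}\cap\Sigma$ we have $\langle b_{i},\S(\theta)\rangle\in\theta$. Hence $\S$ is solvable if and only if the finite system $\alpha_{i}\mapsto b_{i}$, $i\in[1,n]$, is solvable.

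\textbf{Step 2 (finite Baker--Pixley).} I would solve this finite system by the classical induction on the number of constraints. Suppose every family of $m-1\geq r$ constraints is solvable, and pick $m$ constraints. For $j\in[1,r+1]$ let $a_{j}$ solve the family with the $j$-th constraint omitted. Then $M(a_{1},\ldots,a_{r+1})$ solves all $m$ constraints. Indeed, each constraint is violated by at most one of the $a_{j}$, so near-unanimity applied inside the relevant congruence gives the required relation. Everything therefore reduces to the base case: for any $i_{1},\ldots,i_{r}$, the restriction of $\S$ to $(u_{i_{1}}\cup\cdots\cup u_{i_{r}})\cap\Sigma$ is solvable.

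\textbf{Step 3 (the base case, main obstacle).} Here one must pass from $r$-solvability at the level of single congruences to solvability on unions of $r$ open pieces, and compactness has to be spent a second time. I would organize this as an induction on $k\in[0,r]$ of the following statement. For any $\te$-compact $K_{1},\ldots,K_{k}\sub\Sigma$ and any points $\theta_{k+1},\ldots,\theta_{r}\in\Sigma$, the restriction of $\S$ to $K_{1}\cup\cdots\cup K_{k}\cup\{\theta_{k+1},\ldots,\theta_{r}\}$ is solvable. The case $k=0$ is exactly $r$-solvability.

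For the step from $k$ to $k+1$, replace the point $\theta_{k+1}$ by a compact $K$. For each $\theta\in K$ the induction hypothesis gives a solution $c_{\theta}$ of the system with $\theta$ in place of $\theta_{k+1}$. Choose $u\in\mathcal{C}$ with $\theta\in u$. Then $c_{\theta}$ also solves $\S$ on the $\te$-open neighbourhood $u\cap\mathrm{e}(c_{\theta},b_{u})$ of $\theta$, where $b_{u}$ solves $\S|_{u}$. Compactness of $K$ yields finitely many such neighbourhoods $V_{1},\ldots,V_{p}$ with witnesses $c_{1},\ldots,c_{p}$. Each $c_{l}$ is a common solution on the fixed part $R:=K_{1}\cup\cdots\cup K_{k}\cup\{\theta_{k+2},\ldots\}$ together with $V_{l}$.

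I then want to glue the $c_{l}$ by the same NU induction on $p$ as in Step 2, applied to the finite system with constraints ``$\equiv c_{l}$ modulo $\bigcap(V_{l}\cap K)$'' and ``$\equiv c_{1}$ modulo $\bigcap R$''. The delicate point, which I expect to be the real obstacle, is verifying the small-subfamily instances of this gluing, since these again involve a few open pieces at once. I would try to absorb them into the induction by strengthening the hypothesis so that the points $\theta_{j}$ may themselves be replaced by finitely many neighbourhoods. If that bookkeeping becomes unwieldy, I would instead apply Tychonoff to the compact power $\Sigma^{r}$ with the product of $\te$. One covers it by boxes $w_{1}\times\cdots\times w_{r}$ on which a single element solves $\S$, and extracts a finite subcover. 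Steps 1 and 2 then run on the resulting finitely many boxes, using the $(r+1)$-ary NU term to merge boxes that agree in all but one coordinate.

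Once the case $k=r$ is established, take $K_{j}:=\overline{u_{i_{j}}\cap\Sigma}$ or a suitable compact subset covering the same constraints. This gives the base case of Step 2, and Steps 1 and 2 finish the proof.
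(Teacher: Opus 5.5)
Your Steps 1 and 2 are correct, but they only reformulate the problem. The base case you reduce to, solvability of $\S$ on $(u_{i_1}\cup\cdots\cup u_{i_r})\cap\Sigma$, is of the same nature as the theorem itself, and Step 3 does not break this circle.

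In fact, your level-$k$ statement for $k=1$, applied with $K_1:=\Sigma$ (which is $\te$-compact by hypothesis), already says that $\S$ is solvable. So the very first inductive step $0\to1$ is the entire theorem. The gluing you propose for that step needs, as you yourself note, a common solution on several of the pieces $V_l\cap K$ (together with $R$) at once. That is exactly what is being proved, and the suggested strengthening of the hypothesis is never specified. As written, the main line of the argument has a gap precisely at the point you flag as the obstacle.

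Your Tychonoff fallback starts exactly as the paper does. For $\mathbf{u}\in\mathcal{C}^{r}$ and $c\in A$, the boxes $\prod_{j}(u_j\cap\mathrm{e}(c,b_{u_j}))$ are open and, by $r$-solvability, cover $\Sigma^{r}$. Compactness of $\Sigma^{r}$ then gives finitely many boxes, with witnesses in a finite set $C\sub A$.

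However, ``running Steps 1 and 2 on the boxes'' does not work as stated. If you take the box sides as your finite cover, the base case of Step 2 asks for a common solution on $r$ sides drawn from different boxes, and the box cover gives no such thing. ``Merging boxes that agree in all but one coordinate'' has no evident meaning for an arbitrary finite subcover.

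The missing idea is to finitize $\Sigma$ itself:
\begin{enumerate}
\item For $\theta\in\Sigma$, let $C_\theta$ be the set of $c\in C$ with $\langle\S(\theta),c\rangle\in\theta$. Partition $\Sigma$ into the finitely many classes $\Sigma_X:=\{\theta\in\Sigma:C_\theta=X\}$. (Equivalently, you could use the atoms of the Boolean algebra generated by the box sides.)
\item Given at most $r$ classes, pick one representative in each. The resulting tuple lies in some box, so that box's witness $c$ solves $\S$ at every representative. Hence $c\in X_k$ for each $k$, and $c$ solves $\S$ on the whole union of those classes. This is the base case.
\item The near-unanimity induction of your Step 2, run over these finitely many classes rather than over open pieces, then produces a solution on all of $\Sigma$.
\end{enumerate}
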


\begin{proof}
Let $\mathcal{C}\sub\tau_{\mathrm{e}}$ be a cover of $\Sigma$ compatible
with $\mathsf{S}$. For each $u\in\mathcal{C}$ choose a solution
$a_{u}$ for $\mathsf{S}|_{u}$. Given $\mathbf{u}=\langle u_{1},\ldots,u_{r}\rangle\in\mathcal{C}^{r}$
and $c\in A$ let 
\[
V_{\mathbf{u},c}:=(u_{1}\cap\mathrm{e}(a_{u_{1}},c))\times\dots\times(u_{r}\cap\mathrm{e}(a_{u_{r}},c)).
\]
Note that members of $V_{\mathbf{u},c}$ are exactly those $\langle\theta_{1},\ldots,\theta_{r}\rangle\in u_{1}\times\dots\times u_{r}$
for which $c$ solves $\S|_{\{\theta_{1},\ldots,\theta_{r}\}}$. Also,
note that each of these sets is open in the topological space $\langle\con(\mathbf{A}),\tau_{\mathrm{e}}\rangle^{r}$.
Moreover, since $\mathcal{C}$ covers $\Sigma$ and $\mathsf{S}$
is $r$-solvable, we have 
\[
\Sigma^{r}\sub\bigcup\{V_{\mathbf{u},c}:\langle\mathbf{u},c\rangle\in\mathcal{C}^{r}\times A\}.
\]
As $\Sigma$ is $\tau_{\mathrm{e}}$-compact, we have that $\Sigma^{r}$
is compact in $\langle\con(\mathbf{A}),\tau_{\mathrm{e}}\rangle^{r}$,
and hence there is a finite subset $P\sub\mathcal{C}^{r}\times A$
with 
\begin{equation}
\Sigma^{r}\sub\bigcup\{V_{\mathbf{u},c}:\langle\mathbf{u},c\rangle\in P\}.\label{eq:P cubre}
\end{equation}
Let $C\sub A$ be the set of second coordinates of pairs in $P$.
It follows from (\ref{eq:P cubre}) that: 
\begin{equation}
\text{For each }\Omega\sub\Sigma\text{ with }|\Omega|\leq r\text{ there is a solution }c\in C\text{ for }\S|_{\Omega}.\label{eq: S r-interpola}
\end{equation}
For each $\theta\in\Sigma$ define
\[
C_{\theta}:=\{c\in C:\langle\mathsf{S}(\theta),c\rangle\in\theta\}.
\]
For $X\sub C$ put 
\[
\Sigma_{X}:=\{\theta\in\Sigma:C_{\theta}=X\},
\]
and set
\[
\mathcal{X}:=\{X\sub C:\Sigma_{X}\neq\emptyset\}.
\]
We prove by induction on $n$ that for any $X_{1},\ldots,X_{n}\in\mathcal{X}$
there exists $a\in A$ such that
\[
\langle\mathsf{S}(\theta),a\rangle\in\theta\quad\text{whenever }\;C_{\theta}\in\{X_{1},\ldots,X_{n}\}.
\]
Suppose first that $n\leq r$. For each $k\in[1,n]$ pick $\theta_{k}\in\Sigma_{X_{k}}$
and note that by (\ref{eq: S r-interpola}) there is $c\in C$ with
$\langle\mathsf{S}(\theta_{k}),c\rangle\in\theta_{k}$ for all $k$.
Thus $a:=c$ works in this case. Suppose next that $n>r$, and fix
$X_{1},\ldots,X_{n}\in\mathcal{X}$. By our inductive hypothesis,
for each $k\in[1,r+1]$ there is $a_{k}\in A$ satisfying 
\[
\langle\mathsf{S}(\theta),a_{k}\rangle\in\theta\text{ for all }\theta\text{ with }C_{\theta}\in\{X_{1},\ldots,X_{n}\}\setminus\{X_{k}\}.
\]
Let $M$ be a $(r+1)$-ary NU term for $\mathbf{A}$. We claim that
$a:=M^{\mathbf{A}}(a_{1},\ldots,a_{r+1})$ is the desired element.
Indeed, take $\theta\in\Sigma$ with $C_{\theta}=X_{\ell}$ for some
$\ell\in[1,n]$. For each $k\in[1,r+1]\setminus\{\ell\}$ we have
$\langle\mathsf{S}(\theta),a_{k}\rangle\in\theta$. Therefore, 
\[
a/\theta=M^{\mathbf{A}/\theta}(a_{1}/\theta,\ldots,a_{r+1}/\theta)=\mathsf{S}(\theta)/\theta,
\]
since at least $r$ of $a_{1}/\theta,\ldots,a_{r+1}/\theta$ agree
with $\mathsf{S}(\theta)/\theta$. 

To conclude, observe that, as $\mathcal{X}$ is finite, there is $a\in A$
with $\langle\mathsf{S}(\theta),a\rangle\in\theta$ for all $\theta\in\Sigma$.
\end{proof}

Since every $r$-solvable system with $r\geq2$ is a $\sqcup$-system,
Lemma~\ref{lem:min por sistema finito implica te sistema} allows
us to recover the original Baker--Pixley theorem from Theorem~\ref{thm:BP congruencial}.

\section{Global subdirect products\label{sec:global-subdirect-products}}

In this section we recall the definition of global subdirect products
and show how this notion can be recast in terms of congruence systems.

Let $\langle\mathbf{A}_{i}:i\in I\rangle$ be a family of algebras.
For $a,b\in\prod_{I}A_{i}$ put 
\[
\mathrm{E}(a,b):=\{i\in I:a(i)=b(i)\}.
\]
Fix a subdirect product $\mathbf{A}\leq\prod_{i\in I}\mathbf{A}_{i}$.
We introduce terminology relative to this representation, omitting
explicit reference to it. For each $i\in I$ let $\pi^{A}_{i}:A\to A_{i}$
denote the canonical projection.

Let $\tau$ be a topology on $I$. A \emph{$\tau$-patching system}
is a family 
\[
\langle a_{U}:U\in\mathcal{C}\rangle,
\]
 with $\mathcal{C}\subseteq\tau$ an open cover of $I$ and $\{a_{U}:U\in\mathcal{C}\}\sub A$,
satisfying 
\[
U\cap V\subseteq\mathrm{E}(a_{U},a_{V})\quad\text{for all }U,V\in\mathcal{C}.
\]
A \emph{solution} to such a patching system is an element $a\in A$
satisfying 
\[
U\subseteq\mathrm{E}(a,a_{U})\quad\text{for all }U\in\mathcal{C}.
\]
We say that $A$ \emph{patches over} $\tau$ if every patching system
over $\tau$ has a solution. 

We can now introduce the key definitions of this section:
\begin{itemize}
\item The subdirect product $\mathbf{A}\leq\prod_{i\in I}\mathbf{A}_{i}$
is \emph{global} if it patches over the topology generated by $\{\mathrm{E}(a,b):a,b\in A\}$.
(This topology is denoted by $\tau_{\mathrm{E}}$.) 
\item A \emph{global representation} of $\mathbf{A}$ is an embedding $\gamma:\mathbf{A}\hookrightarrow\prod_{i\in I}\mathbf{A}_{i}$
whose image is a global subdirect product.
\item A set $\Sigma\sub\con(\mathbf{A})$ is a \emph{global spectrum} of
$\mathbf{A}$ if the natural map $\mathbf{A}\rightarrow\prod_{\theta\in\Sigma}\mathbf{A}/\theta$
is a global representation.
\end{itemize}
Note that if $\Sigma$ is a global spectrum of $\mathbf{A}$, then
$\bigcap\Sigma=\Delta$.

The following lemma translates the patching property of a subdirect
product into the solvability of congruence systems. Since the verification
is routine, we include only a proof sketch.
\begin{lem}
\label{lem:prod subd es global sii el espectro es global}Let $\mathbf{A}\leq\prod_{I}\mathbf{A}_{i}$
be a subdirect product. The following are equivalent:
\begin{enumerate}
\item The subdirect product $\mathbf{A}\leq\prod_{I}\mathbf{A}_{i}$ is
global.
\item The set $\{\ker\pi^{A}_{i}:i\in I\}$ is $\te$-complete.
\end{enumerate}
\end{lem}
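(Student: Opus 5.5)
The plan is to set up a dictionary between the two sides and check that patching systems correspond to $\te$-systems on $\Sigma:=\{\ker\pi^{A}_{i}:i\in I\}$, with solutions matching solutions. Write $\theta_{i}:=\ker\pi^{A}_{i}$ and let $\kappa\colon I\to\Sigma$ be the surjection $i\mapsto\theta_{i}$. For $a,b\in A$ we have $i\in\mathrm{E}(a,b)$ if and only if $\langle a,b\rangle\in\theta_{i}$, that is,
\[
\mathrm{E}(a,b)=\kappa^{-1}(\mathrm{e}(a,b)).
\]
Hence $\kappa$ pulls the subbasis of $\te$ (restricted to $\Sigma$) back exactly onto the subbasis of $\tE$. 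Since preimages commute with finite intersections and arbitrary unions, the open sets of $\tE$ are exactly the sets $\kappa^{-1}(u)$ with $u\in\te$. Every such set is saturated with respect to $\kappa$: if $\theta_{i}=\theta_{j}$ then $i$ and $j$ lie in the same $\tE$-open sets.

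For (2)$\Rightarrow$(1), I will start from a $\tE$-patching system $\langle a_{U}:U\in\mathcal{C}\rangle$. For each $U\in\mathcal{C}$ I choose $u_{U}\in\te$ with $U=\kappa^{-1}(u_{U})$, and replace it by $u_{U}\cap\Sigma$ where that is convenient. For $\theta\in\Sigma$ I define $\S(\theta):=a_{U}$ for any $U$ containing some $i$ with $\theta_{i}=\theta$. The patching condition $U\cap V\sub\mathrm{E}(a_{U},a_{V})$ makes this well defined modulo $\theta$, because any two choices agree at $i$, i.e.\ they are $\theta_{i}$-related. The cover $\{u_{U}\}$ is compatible with $\S$: $a_{U}$ solves $\S|_{u_{U}}$, again by the patching condition. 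By (2) there is a solution $a$, and then $\langle a,a_{U}\rangle\in\theta_{i}$ for every $i\in U$, so $U\sub\mathrm{E}(a,a_{U})$.

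For (1)$\Rightarrow$(2), I will start from a $\te$-system $\S$ on $\Sigma$ with compatible cover $\mathcal{C}$, choose solutions $a_{u}$ of $\S|_{u}$, and set $U_{u}:=\kappa^{-1}(u)$. These sets form a $\tE$-open cover of $I$. For $i\in U_{u}\cap U_{v}$, both $a_{u}$ and $a_{v}$ are $\theta_{i}$-related to $\S(\theta_{i})$, so $i\in\mathrm{E}(a_{u},a_{v})$ and the family is a patching system. A patching solution $a$ then satisfies $\langle a,\S(\theta_{i})\rangle\in\theta_{i}$ for all $i$, so it solves $\S$.

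The only point requiring care is the first step: identifying $\tE$ with the pullback of $\te$, including the fact that every $\tE$-open set has the form $\kappa^{-1}(u)$. Once that is in place, the rest is bookkeeping, the main check being that the chosen values of $\S$ are well defined up to $\theta$.
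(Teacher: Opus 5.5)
Your proposal is correct and follows the same route as the paper's proof sketch. The paper likewise uses the map $p\colon i\mapsto\ker\pi^{A}_{i}$ and the identity $p^{-1}(\mathrm{e}(a,b)\cap\Sigma)=\mathrm{E}(a,b)$ to identify $\tE$ with the pullback of $\te$, notes that $\tE$-open sets are unions of fibers so repeated kernels cause no trouble, and translates patching systems and $\te$-systems (with their solutions) into one another. Your write-up in fact supplies details the paper leaves as routine, notably that $\S$ is well defined up to $\theta$ in (2)$\Rightarrow$(1).
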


\begin{proof}[Proof sketch]
Put 
\[
\eta_{i}:=\ker\pi^{\mathbf{A}}_{i},\qquad\Sigma:=\{\eta_{i}:i\in I\},
\]
and let $p\colon I\to\Sigma$ be defined by $p(i)=\eta_{i}$. The
key observation is that, for every $a,b\in A$, 
\[
p^{-1}\bigl(\mathrm{e}(a,b)\cap\Sigma\bigr)=\mathrm{E}(a,b).
\]
Hence $p$ identifies the topology induced by $\tau_{\mathrm{e}}$
on $\Sigma$ with $\tau_{\mathrm{E}}$: a set $u\subseteq\Sigma$
corresponds to $p^{-1}(u)\subseteq I$. Under this correspondence,
compatible local solutions of a system on $\Sigma$ correspond precisely
to patching systems, since 
\[
i\in\mathrm{E}(a,b)\Longleftrightarrow\langle a,b\rangle\in\eta_{i}.
\]
Moreover, in both settings the condition for $a\in A$ to be a global
solution is the same. Possible repetitions among the congruences $\eta_{i}$
cause no difficulty, since every $\tau_{\mathrm{E}}$-open set is
a union of fibers of $p$. The equivalence now follows by translating
systems and their solutions along $p$. 
\end{proof}

\begin{lem}
\label{lem:global iff te-complete}For $\Sigma\sub\con(\mathbf{A})$
the following are equivalent:
\begin{enumerate}
\item $\Sigma$ is a global spectrum of $\mathbf{A}$.
\item $\Sigma$ is $\te$-complete and $\bigcap\Sigma=\Delta$.
\end{enumerate}
\end{lem}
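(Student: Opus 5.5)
The plan is to reduce the statement to Lemma~\ref{lem:prod subd es global sii el espectro es global}, applied to the natural map
\[
\nu\colon\mathbf{A}\to\prod_{\theta\in\Sigma}\mathbf{A}/\theta,\qquad a\mapsto\langle a/\theta:\theta\in\Sigma\rangle .
\]
The first step is to record two standard facts about $\nu$. Its image is always a subdirect product, because each coordinate map $\mathbf{A}\to\mathbf{A}/\theta$ is onto. Moreover $\ker\nu=\bigcap\Sigma$, so $\nu$ is an embedding if and only if $\bigcap\Sigma=\Delta$. Write $\mathbf{B}:=\nu(\mathbf{A})$. For each $\theta\in\Sigma$, the kernel of the projection $\pi^{\mathbf{B}}_{\theta}$, pulled back along $\nu$, is $\theta$. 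Since $\nu$ is an isomorphism $\mathbf{A}\cong\mathbf{B}$ under the assumption $\bigcap\Sigma=\Delta$, the set $\{\ker\pi^{\mathbf{B}}_{\theta}:\theta\in\Sigma\}$ corresponds exactly to $\Sigma$.

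For (1)$\Rightarrow$(2), assume $\Sigma$ is a global spectrum. Then $\nu$ is an embedding, so $\bigcap\Sigma=\Delta$ (this was already noted after the definition). The subdirect product $\mathbf{B}\leq\prod_{\Sigma}\mathbf{A}/\theta$ is global, so Lemma~\ref{lem:prod subd es global sii el espectro es global} says that $\{\ker\pi^{\mathbf{B}}_{\theta}\}$ is $\te$-complete in $\con(\mathbf{B})$. Transporting along the isomorphism $\nu$ then shows that $\Sigma$ is $\te$-complete in $\con(\mathbf{A})$. For (2)$\Rightarrow$(1), the hypothesis $\bigcap\Sigma=\Delta$ makes $\nu$ an embedding. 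Transporting $\te$-completeness of $\Sigma$ to $\mathbf{B}$ and applying Lemma~\ref{lem:prod subd es global sii el espectro es global} in the other direction shows the image is global, so $\Sigma$ is a global spectrum.

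The only point needing care is the transport step. An isomorphism $h\colon\mathbf{A}\to\mathbf{B}$ induces a lattice isomorphism $\con(\mathbf{A})\to\con(\mathbf{B})$, given by $\theta\mapsto h(\theta)$, and this map carries $\mathrm{e}(a,b)$ onto $\mathrm{e}(h(a),h(b))$. Hence it is a homeomorphism for $\te$, and it sends systems, compatible covers, and solutions to their counterparts. Consequently $\te$-completeness is preserved under this map. All of this is routine, and no real obstacle is expected; the proof is bookkeeping around the already established Lemma~\ref{lem:prod subd es global sii el espectro es global}.
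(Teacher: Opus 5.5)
Your proposal is correct and follows essentially the same route as the paper. Both arguments note that the natural map is injective exactly when $\bigcap\Sigma=\Delta$ and identify $\mathbf{A}$ with its image, so that the projection kernels are the members of $\Sigma$; both then invoke Lemma~\ref{lem:prod subd es global sii el espectro es global}. Your explicit transport of $\te$-completeness along the induced homeomorphism of congruence lattices simply spells out what the paper does by identification.
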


\begin{proof}
Let 
\[
\eta_{\Sigma}\colon\mathbf{A}\longrightarrow\prod_{\theta\in\Sigma}\mathbf{A}/\theta
\]
be the natural map. This map is injective if and only if $\bigcap\Sigma=\Delta$.
When this condition holds, we identify $\mathbf{A}$ with $\eta_{\Sigma}(\mathbf{A})$;
under this identification, the kernels of the coordinate projections
are precisely the members of $\Sigma$. Hence, by Lemma~\ref{lem:prod subd es global sii el espectro es global},
$\eta_{\Sigma}(\mathbf{A})$ is global if and only if $\Sigma$ is
$\te$-complete. The result follows.
\end{proof}

It is worth noting that if $\Sigma$ is $\te$-complete and $\bigcap\Sigma\neq\Delta$,
it still induces a global representation of the quotient $\mathbf{A}/\bigcap\Sigma$.

The algebra $\mathbf{A}$ is \textit{relatively globally indecomposable}
(RGI) provided that $\mathbf{A}\in\mathcal{Q}$ and for every global
representation $\gamma:\mathbf{A}\hookrightarrow\prod_{I}\mathbf{A}_{i}$,
with each $\mathbf{A}_{i}\in\mathcal{Q}$, there exists $i\in I$
such that $\pi_{i}\circ\gamma$ is injective. We write $\mathcal{Q}_{\mathrm{RGI}}$
to denote the class of all RGI algebras in $\mathcal{Q}$.

The following congruential characterization of globally indecomposables
follows at once from Lemmas \ref{lem:global iff te-complete} and
\ref{lem:prod subd es global sii el espectro es global}.
\begin{lem}
\label{lem:carac congruencial de RGI} The following are equivalent:
\begin{enumerate}
\item The algebra $\mathbf{F}$ is relatively globally indecomposable.
\item If $\Sigma\sub\con_{\mathcal{Q}}(\mathbf{F})$ is $\te$-complete
and $\bigcap\Sigma=\Delta$, then $\Delta\in\Sigma$.
\end{enumerate}
\end{lem}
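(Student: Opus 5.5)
The plan is to prove the two implications separately, translating each one through Lemma~\ref{lem:global iff te-complete} (global spectra are exactly the $\te$-complete sets with intersection $\Delta$) and Lemma~\ref{lem:prod subd es global sii el espectro es global}. The only care needed is to match the quantifier over all global representations with factors in $\mathcal{Q}$ against the quantifier over subsets of $\con_{\mathcal{Q}}(\mathbf{F})$.

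(1)$\Rightarrow$(2). Assume $\mathbf{F}$ is RGI. Let $\Sigma\sub\con_{\mathcal{Q}}(\mathbf{F})$ be $\te$-complete with $\bigcap\Sigma=\Delta$.
\begin{enumerate}
\item By Lemma~\ref{lem:global iff te-complete}, $\Sigma$ is a global spectrum, so the natural map $\gamma\colon\mathbf{F}\hookrightarrow\prod_{\theta\in\Sigma}\mathbf{F}/\theta$ is a global representation.
\item Each factor $\mathbf{F}/\theta$ lies in $\mathcal{Q}$, since $\theta$ is a $\mathcal{Q}$-congruence.
\item Relative global indecomposability therefore gives $\theta\in\Sigma$ with $\pi_{\theta}\circ\gamma$ injective.
\item That composite is the canonical map $\mathbf{F}\to\mathbf{F}/\theta$, whose kernel is $\theta$. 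Hence $\theta=\Delta$, and so $\Delta\in\Sigma$.
\end{enumerate}

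(2)$\Rightarrow$(1). Assume (2), and let $\gamma\colon\mathbf{F}\hookrightarrow\prod_{I}\mathbf{A}_{i}$ be a global representation with every $\mathbf{A}_{i}\in\mathcal{Q}$. Identify $\mathbf{F}$ with its image, which is a global subdirect product, and put $\eta_{i}:=\ker(\pi_{i}\circ\gamma)$.
\begin{enumerate}
\item Since $\mathbf{F}/\eta_{i}\cong\mathbf{A}_{i}\in\mathcal{Q}$, each $\eta_{i}$ is a $\mathcal{Q}$-congruence. This is the one point needing care, and it holds because the image of $\pi_i\circ\gamma$ is all of $\mathbf{A}_{i}$ by subdirectness.
\item Lemma~\ref{lem:prod subd es global sii el espectro es global} shows that $\Sigma:=\{\eta_{i}:i\in I\}$ is $\te$-complete.
\item Injectivity of $\gamma$ gives $\bigcap\Sigma=\Delta$.
\item By (2), $\Delta\in\Sigma$, so $\Delta=\eta_{i}$ for some $i\in I$, and $\pi_{i}\circ\gamma$ is injective.
\end{enumerate}

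I expect no real obstacle. The only mild subtlety is the identification of the kernels with relative congruences, where subdirectness of the image is exactly what is needed.
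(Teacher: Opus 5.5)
Your proof is correct and is essentially the paper's own argument. The paper simply states that the lemma follows at once from Lemmas~\ref{lem:global iff te-complete} and~\ref{lem:prod subd es global sii el espectro es global}, and you have written out both directions of that translation, including the check that subdirectness makes each kernel $\eta_i$ a $\mathcal{Q}$-congruence.
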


\begin{rem}
\label{rem:CRS implica descomponible}Note that Lemmas \ref{lem:CRS finito sii los joins son global spec}
and \ref{lem:carac congruencial de RGI}, in combination, imply that
if there is a finite CRS $\Sigma\sub\con_{\mathcal{Q}}(\mathbf{A})$
such that $\bigcap\Sigma\notin\Sigma$, then $\mathbf{A}/\bigcap\Sigma$
has nontrivial global decomposition.
\end{rem}

A relative congruence $\theta\in\con_{\mathcal{Q}}(\mathbf{A})$ is
an \emph{RGI-congruence} of $\mathbf{A}$ if $\mathbf{A}/\theta\in\mathcal{Q}_{\mathrm{RGI}}$.
We write $\rgic_{\mathcal{Q}}(\mathbf{A})$ for the set of all RGI-congruences
of $\mathbf{A}$. Note that $\cmic_{\mathcal{Q}}(\mathbf{A})\sub\rgic_{\mathcal{Q}}(\mathbf{A})$.

An algebra $\mathbf{A}\in\mathcal{Q}$ is \emph{RGI-spectral} if $\rgic_{\mathcal{Q}}(\mathbf{A})$
is a global spectrum of $\mathbf{A}$. The quasivariety $\mathcal{Q}$
is \emph{RGI-spectral} if every algebra in $\mathcal{Q}$ is RGI-spectral.

\subsection{Boolean products}

A subdirect product 
\[
\mathbf{A}\leq\prod_{i\in I}\mathbf{A}_{i}
\]
is a \emph{Boolean product} if there is a Boolean space topology $\tau$
on $I$ satisfying the following conditions:
\begin{itemize}
\item $\mathrm{E(}a,b)$ is clopen for all $a,b\in A$; 
\item for all $a,b,c_{0},c_{1}\in A$, the patching system which assigns
$c_{0}$ to $\mathrm{E(}a,b)$ and $c_{1}$ to its complement has
a solution.
\end{itemize}
See \cite{BurrisSankappanavar2012} for a thorough introduction to
Boolean products.

A \emph{Boolean representation} of an algebra $\mathbf{B}$ is an
embedding $\gamma:\mathbf{B}\hookrightarrow\prod_{i\in I}\mathbf{B}_{i}$
whose image is a Boolean product. We say that a subset $\Sigma\sub\con(\mathbf{A})$
is a \emph{Boolean spectrum} of $\mathbf{A}$ if the natural map $\mathbf{A}\rightarrow\prod_{\theta\in\Sigma}\mathbf{A}/\theta$
is a Boolean representation of $\mathbf{A}$. Note that this requires
$\bigcap\Sigma=\Delta$.
\begin{lem}
\label{lem:boolean spectra}Let $\mathbf{A}\in\mathcal{Q}$ and suppose
$\Sigma\sub\con(\mathbf{A})$ satisfies:
\begin{itemize}
\item $\bigcap\Sigma=\Delta$;
\item $\Sigma$ is $\ted$-closed;
\item for all $a,b,c_{0},c_{1}\in A$ the system $\S:\Sigma\rightarrow A$
given by 
\[
\S(\theta):=\begin{cases}
c_{0} & \text{if }\theta\in\mathrm{e}(a,b),\\
c_{1} & \text{otherwise},
\end{cases}
\]
has a solution.
\end{itemize}
Then, $\Sigma$ is a Boolean spectrum of $\mathbf{A}$.

\end{lem}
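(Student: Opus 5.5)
The plan is to work with the index space $I:=\Sigma$ itself, with the natural map $\mathbf{A}\to\prod_{\theta\in\Sigma}\mathbf{A}/\theta$. It is injective because $\bigcap\Sigma=\Delta$, and its image is a subdirect product whose projection kernels are exactly the members of $\Sigma$. As the Boolean space topology on $I$ I take the topology induced on $\Sigma$ by $\ted$. By Lemma~\ref{lem:ted es booleana}(1), $\langle\con(\mathbf{A}),\ted\rangle$ is a Boolean space. Since $\Sigma$ is $\ted$-closed, it is a closed subspace, and a closed subspace of a Boolean space is again Boolean (compact, Hausdorff, with a clopen base). So $\langle\Sigma,\ted\rangle$ is a Boolean space.

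Next I identify the equalizers. For $a,b\in A$, viewed as elements of the product, the set $\mathrm{E}(a/\cdot,b/\cdot)$ of indices where they agree is exactly $\{\theta\in\Sigma:\langle a,b\rangle\in\theta\}=\mathrm{e}(a,b)\cap\Sigma$. Its complement in $\Sigma$ is $\mathrm{d}(a,b)\cap\Sigma$. Both $\mathrm{e}(a,b)$ and $\mathrm{d}(a,b)$ are subbasic $\ted$-open sets, so $\mathrm{E}(a,b)$ is clopen in $\Sigma$. This verifies the first condition in the definition of Boolean product.

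For the patching condition, fix $a,b,c_0,c_1\in A$ and consider the patching system assigning $c_0$ to $\mathrm{E}(a,b)$ and $c_1$ to its complement. An element $d\in A$ solves it exactly when $\langle d,c_0\rangle\in\theta$ for all $\theta\in\Sigma\cap\mathrm{e}(a,b)$ and $\langle d,c_1\rangle\in\theta$ for all $\theta\in\Sigma\setminus\mathrm{e}(a,b)$. This says precisely that $d$ is a solution of the system $\S$ in the third hypothesis, which exists by assumption. Hence the image is a Boolean product, and $\Sigma$ is a Boolean spectrum.

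There is no real obstacle here; the only point needing care is the bookkeeping between indices and congruences. Indexing by $\Sigma$ itself means there are no repeated kernels, so the translation is literal. The definition of Boolean product also does not require the topology to coincide with anything canonical, so the induced $\ted$-topology suffices.
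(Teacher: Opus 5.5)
Your proof is correct and follows the paper's argument: closedness of $\Sigma$ together with item~(1) of Lemma~\ref{lem:ted es booleana} makes $\Sigma$ a Boolean space in the induced $\tau_{\mathrm{ed}}$-topology. The equalizers of the natural representation are the clopen sets $\mathrm{e}(a,b)\cap\Sigma$, and the third hypothesis is literally the required patching condition; you simply spell out details the paper leaves implicit, such as closed subspaces of Boolean spaces being Boolean and complements of equalizers being $\mathrm{d}(a,b)\cap\Sigma$.
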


\begin{proof}
Since $\Sigma$ is $\ted$-closed, item~(1) of Lemma~\ref{lem:ted es booleana}
implies that $\Sigma$, endowed with the induced topology, is a Boolean
space. Furthermore, since the equalizers of the natural representation
are the sets $\mathrm{e}(a,b)\cap\Sigma$, the three assumptions say
precisely that this representation is a Boolean product.
\end{proof}

Under additional assumptions, a global spectrum yields a Boolean spectrum.
\begin{prop}
\label{prop:global incomparable es booleano}Let $\Sigma\sub\con(\mathbf{A})$
be a global spectrum of $\mathbf{A}$ such that $\Sigma\cup\{\nabla\}$
is $\ted$-closed and the congruences in $\Sigma$ are pairwise incomparable.
Then $\Sigma\cup\{\nabla\}$ is a Boolean spectrum of $\mathbf{A}$.
\end{prop}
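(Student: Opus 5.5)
We apply Lemma~\ref{lem:boolean spectra} to $\Sigma':=\Sigma\cup\{\nabla\}$. Two of its hypotheses are immediate. First, $\bigcap\Sigma'=\bigcap\Sigma=\Delta$, because $\Sigma$ is a global spectrum. Second, $\Sigma'$ is $\ted$-closed by assumption. So the whole task is the third hypothesis. Fix $a,b,c_{0},c_{1}\in A$ and let $\S$ be the system on $\Sigma'$ that takes the value $c_{0}$ on $\mathrm{e}(a,b)$ and $c_{1}$ off it. We must show that $\S$ is solvable.

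Note that $\nabla\in\mathrm{e}(a,b)$ and every element is congruent to $c_{0}$ modulo $\nabla$. Hence the $\nabla$-coordinate imposes no constraint, and it suffices to solve $\S|_{\Sigma}$. Since $\Sigma$ is $\te$-complete (Lemma~\ref{lem:global iff te-complete}), it is enough to show that $\S|_{\Sigma}$ is a $\te$-system. That is, we need a $\te$-open cover of $\Sigma$ on each member of which $\S$ is solvable.

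The cover is built as follows.
\begin{enumerate}
\item The open set $\mathrm{e}(a,b)$ covers the congruences in $\Sigma\cap\mathrm{e}(a,b)$, and $c_{0}$ solves $\S$ there.
\item Now take $\theta\in\Sigma$ with $\langle a,b\rangle\notin\theta$. We want a $\te$-open neighbourhood $u_{\theta}$ of $\theta$ with $u_{\theta}\cap\Sigma\cap\mathrm{e}(a,b)=\emptyset$; then $c_{1}$ solves $\S|_{u_{\theta}}$.
\end{enumerate}

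Step (2) is where the two extra hypotheses enter, and it is the main obstacle. We argue by contradiction. Suppose that for every finite $F\subseteq\theta$, the basic open set $\bigcap_{p\in F}\mathrm{e}(p)$ meets the set
\[
K:=\Sigma\cap\mathrm{e}(a,b)=\Sigma'\cap\mathrm{e}(a,b)\setminus\{\nabla\}.
\]
The set $K\cup\{\nabla\}=\Sigma'\cap\mathrm{e}(a,b)$ is $\ted$-closed, since $\mathrm{e}(a,b)$ is $\ted$-clopen. So the closed sets
\[
(K\cup\{\nabla\})\cap\bigcap_{p\in F}\mathrm{e}(p),\qquad F\subseteq\theta\text{ finite},
\]
form a family with the finite intersection property in a compact space. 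By compactness (Lemma~\ref{lem:ted es booleana}) they have a common point $\delta\supseteq\theta$ with $\delta\in K\cup\{\nabla\}$.

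This alone gives nothing, since $\delta=\nabla$ is allowed. To rule that out we refine the argument. Choose a pair $q\notin\theta$; by $\ted$-openness of $\mathrm{d}(q)$ we may intersect additionally with $\mathrm{d}(q)$ to exclude $\nabla$. This works provided that $\bigcap_{p\in F}\mathrm{e}(p)\cap\mathrm{d}(q)$ still meets $K$ for each finite $F\subseteq\theta$. If it does, we obtain $\delta\in K$ with $\theta\subseteq\delta$. Incomparability then forces $\delta=\theta$, contradicting $\langle a,b\rangle\notin\theta$.

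The natural choice is $q=\langle a,b\rangle$, but that gives $\mathrm{d}(q)\cap K=\emptyset$, which is useless. So the delicate point is to get a nonempty intersection avoiding $\nabla$. If that fails, the first thing I would try is the following. Fix a finite $F_{0}\subseteq\theta$ and $q\notin\theta$ such that $\bigcap_{F_{0}}\mathrm{e}(p)\cap\mathrm{d}(q)$ misses $K$. Then, for each member $\delta$ of $K$ lying in $\bigcap_{F_{0}}\mathrm{e}(p)$, we have $q\in\delta$. Now use the global patching of $\Sigma$ (Lemma~\ref{lem:CMI alcanza}-style arguments are unavailable here, so patching itself must do the work) to modify $c_{1}$ on these $\delta$. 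Concretely, patch a $\te$-system taking value $c_{1}$ on $\mathrm{e}(F_{0})\cap\mathrm{d}(q)$ and $c_{0}$ elsewhere.

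With the cover established, $\te$-completeness yields a solution of $\S|_{\Sigma}$, hence of $\S$, and Lemma~\ref{lem:boolean spectra} finishes the proof.
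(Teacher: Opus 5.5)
There is a genuine gap in your step (2): the neighbourhood $u_\theta$ you are looking for does not exist in exactly the case the proposition is about, so no refinement of the compactness argument can produce it. Basic $\te$-neighbourhoods of $\theta$ have the form $\bigcap_{p\in F}\mathrm{e}(p)$ with $F\sub\theta$ finite, and these sets are also basic $\ted$-neighbourhoods of $\nabla$. Now suppose $\nabla\in\overline{\Sigma}^{\mathrm{ed}}$, which is precisely when adjoining $\nabla$ matters. Since $\mathrm{e}(a,b)$ is a $\ted$-neighbourhood of $\nabla$, this is equivalent to $\nabla\in\overline{K}^{\mathrm{ed}}$. In that case every $\te$-open set containing $\theta$ meets $K=\Sigma\cap\mathrm{e}(a,b)$.

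For a concrete instance, let $\mathbf{A}$ be the Boolean ring without unit of finite subsets of $\omega$. Put $\Sigma=\{\theta_n:n\in\omega\}$ with $S\mathrel{\theta_n}T$ iff $n\notin S\triangle T$; one checks that this $\Sigma$ satisfies all the hypotheses. Take $a=\emptyset$, $b=\{0\}$ and $\theta=\theta_0$. Every $\bigcap_{p\in F}\mathrm{e}(p)$ with $F$ finite contains $\theta_n$ for all large $n$, and these $\theta_n$ lie in $K$. Your fallback paragraph does not repair this. Declaring that a system with value $c_1$ on $\bigcap_{p\in F_0}\mathrm{e}(p)\cap\mathrm{d}(q)$ and $c_0$ elsewhere can be patched presupposes that it is a $\te$-system, which is the same kind of claim you are trying to prove.

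The missing idea is that you never need $u_\theta\cap K=\emptyset$. The element $c_1$ solves $\S$ at every congruence containing $\langle c_0,c_1\rangle$, whether or not that congruence lies in $\mathrm{e}(a,b)$. So the right choice is $q:=\langle c_0,c_1\rangle$, and only the congruences $\theta\in\Sigma\cap\mathrm{d}(a,b)\cap\mathrm{d}(c_0,c_1)$ need special neighbourhoods. For such $\theta$,
\[
(\Sigma\cup\{\nabla\})\cap\bigcap_{p\in\theta}\mathrm{e}(p)\cap\mathrm{d}(c_0,c_1)\cap\mathrm{e}(a,b)=\emptyset ,
\]
because $\mathrm{d}(c_0,c_1)$ excludes $\nabla$ and pairwise incomparability excludes every member of $\Sigma$. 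By $\ted$-compactness of $\Sigma\cup\{\nabla\}$ there is a finite $F\sub\theta$ such that $u_\theta:=\bigcap_{p\in F}\mathrm{e}(p)$ satisfies $\Sigma\cap u_\theta\cap\mathrm{d}(c_0,c_1)\sub\mathrm{d}(a,b)$. Hence $c_1$ solves $\S$ on $\Sigma\cap u_\theta$. The remaining members of $\Sigma$ are covered by $\mathrm{e}(a,b)\cup\mathrm{e}(c_0,c_1)$, on which $c_0$ is a solution. With this cover the rest of your plan goes through unchanged: reduce to $\S|_\Sigma$, invoke $\te$-completeness, and conclude with Lemma~\ref{lem:boolean spectra}. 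This corrected argument is exactly the paper's proof.
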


\begin{proof}
Let $\Sigma$ be as in the statement and put $\Sigma^{+}:=\Sigma\cup\{\nabla\}$.
Observe that, as $\Sigma^{+}$ is $\ted$-closed, it is $\ted$-compact.
We apply Lemma~\ref{lem:boolean spectra} to prove that $\Sigma^{+}$
is a Boolean spectrum of $\mathbf{A}$. Fix $a,b,c_{0},c_{1}\in A$,
and consider the system $\S\colon\Sigma^{+}\rightarrow A$ given by
\[
\S(\theta):=\begin{cases}
c_{0} & \text{if }\theta\in\mathrm{e}(a,b),\\
c_{1} & \text{otherwise.}
\end{cases}
\]
We show that $\S|_{\Sigma}$ is a $\te$-system and hence solvable.

\noindent\begin{customclaim}{I}For each $\delta\in\mathrm{d}(a,b)\cap\mathrm{d}(c_{0},c_{1})\cap\Sigma$
there is a $\te$-open set $u_{\delta}$ such that $\delta\in u_{\delta}$
and 
\[
\Sigma\cap u_{\delta}\cap\mathrm{d}(c_{0},c_{1})\sub\mathrm{d}(a,b).
\]
\end{customclaim}

\noindent Suppose $\delta\in\mathrm{d}(a,b)\cap\mathrm{d}(c_{0},c_{1})\cap\Sigma$;
we claim that
\[
\Sigma^{+}\cap\bigcap_{p\in\delta}\mathrm{e}(p)\cap\mathrm{d}(c_{0},c_{1})\cap\mathrm{e}(a,b)=\emptyset.
\]
Indeed, if $\theta$ belongs to the intersection on the left-hand
side, then $\theta\neq\nabla$, since $\langle c_{0},c_{1}\rangle\notin\theta$.
Also, as $\theta\in\bigcap_{p\in\delta}\mathrm{e}(p)$, we have $\delta\sub\theta$.
But $\langle a,b\rangle\in\theta\setminus\delta$ , and hence $\delta\subsetneq\theta$,
which contradicts the fact that the congruences in $\Sigma$ are pairwise
incomparable. Thus,
\[
\Sigma^{+}\sub\bigcup_{p\in\delta}\mathrm{d}(p)\cup\mathrm{d}(c_{0},c_{1})\cup\mathrm{e}(a,b).
\]
By the $\ted$-compactness of $\Sigma^{+}$, there is a finite $F\sub\delta$
such that
\[
\Sigma^{+}\sub\bigcup_{p\in F}\mathrm{d}(p)\cup\mathrm{d}(c_{0},c_{1})\cup\mathrm{e}(a,b).
\]
Hence, taking $u_{\delta}:=\bigcap_{p\in F}\mathrm{e}(p)$ we have
\[
\Sigma^{+}\cap u_{\delta}\cap\mathrm{d}(c_{0},c_{1})\cap\mathrm{e}(a,b)=\emptyset,
\]
which entails $\Sigma\cap u_{\delta}\cap\mathrm{d}(c_{0},c_{1})\sub\mathrm{d}(a,b).$

For each $\delta\in\mathrm{d}(a,b)\cap\mathrm{d}(c_{0},c_{1})\cap\Sigma$
let $u_{\delta}$ be as in Claim I and define 
\begin{align*}
u:= & \bigcup\{u_{\delta}:\delta\in\mathrm{d}(a,b)\cap\mathrm{d}(c_{0},c_{1})\cap\Sigma\};\\
v:= & \mathrm{e}(a,b)\cup\mathrm{e}(c_{0},c_{1}).
\end{align*}
Note that $\mathcal{C}:=\{u,v\}$ is a $\te$-cover of $\Sigma$.
Indeed, if $\theta\in\Sigma\setminus v$, then $\theta\in\mathrm{d}(a,b)\cap\mathrm{d}(c_{0},c_{1})\cap\Sigma$,
and hence $\theta\in u_{\theta}\sub u$. 

We show next that $\mathcal{C}$ is compatible with $\S|_{\Sigma}$.
It is easy to see that $c_{0}$ is a solution for $\S|_{\Sigma\cap v}$.
Suppose $\theta\in\Sigma\cap u_{\delta}$ for some $\delta\in\mathrm{d}(a,b)\cap\mathrm{d}(c_{0},c_{1})\cap\Sigma$.
If $\theta\in\mathrm{e}(c_{0},c_{1})$, then $\langle c_{1},\S(\theta)\rangle\in\theta$.
Otherwise, $\theta\in\Sigma\cap u_{\delta}\cap\mathrm{d}(c_{0},c_{1})\sub\mathrm{d}(a,b)$,
and thus $\S(\theta)=c_{1}$. This shows that $c_{1}$ solves $\S|_{\Sigma\cap u}$.

Finally, since $\Sigma$ is a global spectrum, it is $\te$-complete
by Lemma~\ref{lem:global iff te-complete}. Hence, there is a solution
$s\in A$ for $\S|_{\Sigma}$. Clearly, $s$ also solves $\S$.
\end{proof}

\section{Global representation for quasivarieties with a near-unanimity term\label{sec:general-global-representation}}

We now apply the framework developed in the previous sections to obtain
a general global representation result for quasivarieties with a near-unanimity
term. The key idea for constructing a global spectrum $\Sigma$ for
an algebra $\mathbf{A}$ is the following. Start with a $\ted$-closed
set $\Gamma$ containing all completely meet-irreducible congruences
of $\mathbf{A}$, and enlarge it by adding a lower bound for each
$r$-element subset of $\Gamma$, where $r+1$ is the arity of the
near-unanimity term. The restriction to $\Gamma$ of any $\te$-system
on $\Sigma$ is then automatically $r$-solvable, and hence solvable
by the infinitary Baker--Pixley theorem.
\begin{thm}
\label{thm:espectro global para NU}Suppose $\mathcal{Q}$ has a $(r+1)$-ary
NU term and let $\mathcal{S}\subseteq\mathcal{Q}$ be an almost universal
class containing $\mathcal{Q}_{\mathrm{RSI}}$. Then, for every $\mathbf{A}\in\mathcal{Q}$
\[
\con_{\mathcal{S}}(\mathbf{A})\cup\{\bigcap\Omega:\Omega\sub\con_{\mathcal{S}}(\mathbf{A}),\left|\Omega\right|\in[2,r]\text{ and }\Omega\text{ is not a CRS}\}
\]
is a global spectrum of $\mathbf{A}$.
\end{thm}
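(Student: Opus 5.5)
The plan is to apply Lemma~\ref{lem:global iff te-complete}. Write $\Gamma:=\con_{\mathcal{S}}(\mathbf{A})$ and let $\Sigma$ be the set displayed in the statement. We must show two things: $\bigcap\Sigma=\Delta$, and $\Sigma$ is $\te$-complete.

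For the first, note that $\cmic_{\mathcal{Q}}(\mathbf{A})\sub\Gamma$. Indeed, if $\gamma$ is completely meet-irreducible, then $\mathbf{A}/\gamma\in\mathcal{Q}_{\mathrm{RSI}}\sub\mathcal{S}$. Since $\con_{\mathcal{Q}}(\mathbf{A})$ is algebraic, every relative congruence is the intersection of the completely meet-irreducible relative congruences above it. In particular $\Delta=\bigcap\cmic_{\mathcal{Q}}(\mathbf{A})\supseteq\bigcap\Sigma$.

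For $\te$-completeness, fix a $\te$-system $\S$ on $\Sigma$ with compatible cover $\mathcal{C}$. The strategy is to solve $\S|_{\Gamma}$ using Theorem~\ref{thm:BP congruencial}, and then check that the solution also works on the added meets. The restriction $\S|_{\Gamma}$ is a $\te$-system, with the same cover $\mathcal{C}$. By item~(2) of Lemma~\ref{lem:compacidad 1}, $\Gamma$ is $\te$-compact. So it remains to show that $\S|_{\Gamma}$ is $r$-solvable. Take $\Omega\sub\Gamma$ with $|\Omega|\le r$; the case $|\Omega|=1$ is immediate from the cover.

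If $|\Omega|\ge2$ and $\Omega$ is not a CRS, then $\sigma:=\bigcap\Omega\in\Sigma$. Choose $u\in\mathcal{C}$ with $\sigma\in u$. Since $u$ is an up-set, $\Omega\sub u$, so the local solution for $\S|_{u}$ solves $\S|_{\Omega}$.

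If $\Omega$ is a CRS, it suffices to show that $\S|_{\Omega}$ is a $\sqcup$-system. More generally, I will show that $\langle\S(\theta),\S(\delta)\rangle\in\theta\sqcup\delta$ for all $\theta,\delta\in\Gamma$. This repeats the proof of item~(1) of Lemma~\ref{lem:CMI alcanza}. Take any $\gamma\in\cmic_{\mathcal{Q}}(\mathbf{A})$ with $\theta,\delta\sub\gamma$; then $\gamma\in\Gamma\sub\Sigma$. Pick $u,v\in\mathcal{C}$ containing $\theta$ and $\delta$ respectively; both contain $\gamma$, being up-sets. Comparing the local solutions $a_{u}$ and $a_{v}$ through $\S(\gamma)$ gives $\langle\S(\theta),\S(\delta)\rangle\in\gamma$. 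Intersecting over all such $\gamma$ yields membership in $\theta\sqcup\delta$. I expect this step to be the only delicate point, because $\theta\sqcup\delta$ itself need not lie in $\Sigma$. It is resolved by the saturation $\cmic_{\mathcal{Q}}(\mathbf{A})\sub\Gamma$ together with the fact that relative congruences are intersections of the relative CMI congruences above them.

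Theorem~\ref{thm:BP congruencial} now yields a solution $a$ of $\S|_{\Gamma}$. For a remaining $\sigma=\bigcap\Omega\in\Sigma$, choose $u\in\mathcal{C}$ with $\sigma\in u$ and a local solution $a_{u}$. Then $\Omega\sub u$, so for each $\theta\in\Omega$ we have $a\equiv_{\theta}\S(\theta)\equiv_{\theta}a_{u}$. In addition, $a_{u}\equiv_{\sigma}\S(\sigma)$, hence $a\equiv_{\theta}\S(\sigma)$ for every $\theta\in\Omega$. Therefore $\langle a,\S(\sigma)\rangle\in\bigcap\Omega=\sigma$, so $a$ solves $\S$. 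Hence $\Sigma$ is $\te$-complete, and by Lemma~\ref{lem:global iff te-complete} it is a global spectrum of $\mathbf{A}$.
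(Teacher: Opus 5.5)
Your proof is correct and follows essentially the same route as the paper: you restrict to $\con_{\mathcal{S}}(\mathbf{A})$, use its $\te$-compactness and the infinitary Baker--Pixley theorem, and check $r$-solvability via the CRS/non-CRS dichotomy, with the $\sqcup$-system property coming from CMI-saturation. The differences are cosmetic: you re-derive item (1) of the CMI-saturation lemma inline, and you solve the non-CRS subsystems with the local solution at $\bigcap\Omega$ rather than with $\S(\bigcap\Omega)$. You also extend the solution to the added meets directly, whereas the paper cites item (2) of that lemma.
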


\begin{proof}
Let us write $\Sigma$ for the set of congruences in the statement.
Fix a $\tau_{\mathrm{e}}$-system $\mathsf{S}$ on $\Sigma$. By (2)
of Lemma \ref{lem:CMI alcanza}, to prove that $\S$ is solvable,
it suffices to show that $\mathsf{S}|_{\con_{\mathcal{S}}(\mathbf{A})}$
is solvable. Moreover, item (2) of Lemma~\ref{lem:compacidad 1}
says that $\con_{\mathcal{S}}(\mathbf{A})$ is $\tau_{\mathrm{e}}$-compact.
So, in view of Theorem~\ref{thm:BP congruencial}, it is enough to
show that $\mathsf{S}|_{\con_{\mathcal{S}}(\mathbf{A})}$ is $r$-solvable.
Before we proceed to establish this fact, let us point out that $\S$
is a $\sqcup$-system, due to (1) of Lemma~\ref{lem:CMI alcanza}.

Let $\Lambda\sub\con_{\mathcal{S}}(\mathbf{A})$ with $\left|\Lambda\right|\leq r$.
If $\Lambda$ is a CRS, then $\mathsf{S}|_{\Lambda}$ is solvable.
Otherwise, we have that $\bigcap\Lambda\in\Sigma$, and hence 
\[
\langle\mathsf{S}(\bigcap\Lambda),\mathsf{S}(\lambda)\rangle\in\bigcap\Lambda\sqcup\lambda=\lambda\quad\text{for all }\lambda\in\Lambda.
\]
So $\mathsf{S}(\bigcap\Lambda)$ is a solution for $\mathsf{S}|_{\Lambda}$.
It follows that $\mathsf{S}|_{\con_{\mathcal{S}}(\mathbf{A})}$ is
$r$-solvable, as desired. Thus, $\Sigma$ is $\te$-complete. Moreover,
since $\mathcal{S}$ contains $\mathcal{Q}_{\mathrm{RSI}}$, we have
$\bigcap\Sigma=\Delta$. Therefore, Lemma~\ref{lem:global iff te-complete}
implies that $\Sigma$ is a global spectrum of $\mathbf{A}$.
\end{proof}

A class $\mathcal{F}\sub\mathcal{Q}$ is said to \emph{globally represent}
$\mathcal{Q}$ if every member of $\mathcal{Q}$ is isomorphic to
a global subdirect product with factors in $\mathbb{I}(\mathcal{F})$.
\begin{thm}
\label{thm:representacion global para NU}Let $\mathcal{Q}$ be a
quasivariety with a $(r+1)$-ary NU term, and let $\mathcal{K}:=\mathbb{P}_{u}(\mathcal{Q}_{\mathrm{RSI}})$.
Then the class
\[
\mathbb{S}(\mathcal{K})\cup\{\mathbf{F}\leq\mathbf{A}_{1}\times\cdots\times\mathbf{A}_{r}:\text{each }\mathbf{A}_{i}\in\mathcal{K}\text{ and }\{\ker\pi^{F}_{i}:i\in[1,r]\}\text{ is not a CRS}\}
\]
globally represents $\mathcal{Q}$.
\end{thm}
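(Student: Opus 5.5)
We derive this from Theorem~\ref{thm:espectro global para NU}, applied with $\mathcal{S}:=\mathbb{IS}(\mathcal{K})\cap\mathcal{Q}$.

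\textbf{Step 1: $\mathcal{S}$ is admissible.} Since $\mathcal{Q}$ is closed under $\mathbb{S}$ and $\mathbb{P}_{u}$, we have $\mathcal{K}\sub\mathcal{Q}$ and $\mathbb{S}(\mathcal{K})\sub\mathcal{Q}$. The class $\mathbb{ISP}_{u}(\mathcal{Q}_{\mathrm{RSI}})$ is the least universal class containing $\mathcal{Q}_{\mathrm{RSI}}$. It coincides with $\mathbb{IS}(\mathcal{K})$, because $\mathbb{P}_{u}\mathbb{P}_{u}\sub\mathbb{IP}_{u}$. Hence $\mathcal{S}$ is universal, so in particular almost universal, and it contains $\mathcal{Q}_{\mathrm{RSI}}$.

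Fix $\mathbf{A}\in\mathcal{Q}$ and let $\Sigma$ be the global spectrum given by Theorem~\ref{thm:espectro global para NU}. By Lemma~\ref{lem:global iff te-complete}, $\mathbf{A}$ is isomorphic to a global subdirect product of the quotients $\mathbf{A}/\theta$ with $\theta\in\Sigma$. It therefore suffices to show that every such quotient lies in $\mathbb{I}$ of the displayed class.

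\textbf{Step 2: the two kinds of quotient.} If $\theta\in\con_{\mathcal{S}}(\mathbf{A})$, then $\mathbf{A}/\theta\in\mathbb{IS}(\mathcal{K})$, as required.

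Otherwise $\theta=\bigcap\Omega$, where $\Omega=\{\theta_{1},\ldots,\theta_{k}\}\sub\con_{\mathcal{S}}(\mathbf{A})$, $k\in[2,r]$, and $\Omega$ is not a CRS. The natural map embeds $\mathbf{F}:=\mathbf{A}/\theta$ into $\prod_{i}\mathbf{A}/\theta_{i}$, and each $\mathbf{A}/\theta_{i}$ embeds into some $\mathbf{B}_{i}\in\mathcal{K}$. Composing gives an embedding of $\mathbf{F}$ into $\mathbf{B}_{1}\times\cdots\times\mathbf{B}_{k}$.

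The kernels of the composed projections are $\theta_{i}/\theta$, since injective maps do not change kernels. By Lemma~\ref{lem:correspondencia}(2)(c), the set $\{\theta_{i}/\theta\}$ is not a CRS, because $\Omega$ is not one.

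\textbf{Step 3: padding to exactly $r$ factors.} The displayed class requires exactly $r$ factors, while $k$ may be smaller than $r$. To fix this, set $\mathbf{B}_{k+1}=\cdots=\mathbf{B}_{r}:=\mathbf{B}_{k}$ and extend the embedding by repeating the $k$-th coordinate. The resulting kernel set is the same set $\{\theta_{i}/\theta\}$, so it is still not a CRS.

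Two things must be checked here. First, the kernel set is taken as a set, so repetition does not change it. Second, the CRS property depends only on that set, since $\sqcup$-systems are maps on the set. This padding is the only mildly delicate point.

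We also need the image of $\mathbf{F}$ to be treated up to isomorphism. This is fine because the class globally represents $\mathcal{Q}$ via $\mathbb{I}$, and the non-CRS condition is stated on kernels of the subalgebra's projections, which are invariant under isomorphism.

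Combining Steps 2 and 3, every factor of the global representation of $\mathbf{A}$ is isomorphic to a member of the displayed class, which proves the theorem.
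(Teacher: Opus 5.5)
Your proposal is correct and follows the paper's proof essentially step for step: apply Theorem~\ref{thm:espectro global para NU} with $\mathcal{S}=\mathbb{ISP}_{u}(\mathcal{Q}_{\mathrm{RSI}})=\mathbb{IS}(\mathcal{K})$, embed each quotient $\mathbf{A}/\bigcap\Omega$ into the product of the $\mathbf{A}/\theta_{i}$, pad to exactly $r$ factors by repeating the last one, and transfer the non-CRS property to the projection kernels via Lemma~\ref{lem:correspondencia}. Two minor remarks that do not affect correctness: the equality $\mathbb{IS}(\mathcal{K})=\mathbb{ISP}_{u}(\mathcal{Q}_{\mathrm{RSI}})$ holds directly by the definition of $\mathcal{K}$, so the appeal to $\mathbb{P}_{u}\mathbb{P}_{u}\sub\mathbb{IP}_{u}$ is not what justifies it, and intersecting with $\mathcal{Q}$ is redundant.
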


\begin{proof}
Let $\mathcal{F}$ denote the class displayed in the statement. Applying
Theorem~\ref{thm:espectro global para NU} with $\mathcal{S}=\mathbb{ISP}_{u}(\mathcal{Q}_{\mathrm{RSI}})$,
we obtain that $\mathcal{Q}$ is globally represented by the class
\[
\mathcal{F}':=\mathcal{S}\cup\{\mathbf{A}/\bigcap\Lambda:\mathbf{A}\in\mathcal{Q},\Lambda\sub\con_{\mathcal{S}}(\mathbf{A}),\left|\Lambda\right|\in[2,r]\text{ and }\Lambda\text{ is not a CRS}\},
\]
so it suffices to show that $\mathcal{F}'\sub\mathbb{I}(\mathcal{F})$.
Fix $\mathbf{F}\in\mathcal{F}'$. Since $\mathcal{S}=\mathbb{IS}(\mathcal{K})\sub\mathbb{I}(\mathcal{F})$
, we assume 
\[
\mathbf{F}=\mathbf{A}/(\gamma_{1}\cap\cdots\cap\gamma_{n}),
\]
for some $\mathbf{A}\in\mathcal{Q}$, $\gamma_{1},\dots,\gamma_{n}\in\con_{\mathcal{S}}(\mathbf{A})$,
$n\leq r$ and $\{\gamma_{1},\dots,\gamma_{n}\}$ not a CRS. If $n<r$
put $\gamma_{k}:=\gamma_{n}$ for $k\in[n+1,r]$, and note that $\mathbf{F}$
embeds in $\mathbf{A}/\gamma_{1}\times\cdots\times\mathbf{A}/\gamma_{r}$
via the natural map. Since $\mathbf{A}/\gamma_{1},\ldots,\mathbf{A}/\gamma_{r}\in\mathbb{IS}(\mathcal{K})$,
there are $\mathbf{A}_{1},\ldots,\mathbf{A}_{r}\in\mathcal{K}$ and
$\mathbf{G}\leq\mathbf{A}_{1}\times\cdots\times\mathbf{A}_{r}$ such
that $\mathbf{F}\cong\mathbf{G}$, and, under this isomorphism, $\delta_{i}:=\gamma_{i}/(\gamma_{1}\cap\cdots\cap\gamma_{r})$
corresponds to $\ker\pi^{G}_{i}$ for all $i$. As $\{\gamma_{1},\dots,\gamma_{r}\}$
is not a CRS, Lemma~\ref{lem:correspondencia} implies that neither
is $\{\delta_{1},\ldots,\delta_{r}\}$, and hence $\{\ker\pi^{G}_{i}:i\in[1,r]\}$
is not a CRS. Thus $\mathbf{G}\in\mathcal{F}$, which implies $\mathbf{F}\in\mathbb{I}(\mathcal{F})$.
\end{proof}

\begin{cor}
\label{cor:localizacion de RGI para nu a secas}Suppose that $\mathcal{Q}$
has a $(r+1)$-ary NU term. If $\mathbf{F}\in\mathcal{Q}_{\mathrm{RGI}}$,
then either 
\[
\mathbf{F}\in\mathbb{ISP}_{u}(\mathcal{Q}_{\mathrm{RSI}})
\]
or there exist $\mathbf{A}_{1},\ldots,\mathbf{A}_{r}\in\mathbb{P}_{u}(\mathcal{Q}_{\mathrm{RSI}})$
and 
\[
\mathbf{G}\leq\mathbf{A}_{1}\times\cdots\times\mathbf{A}_{r}
\]
such that $\mathbf{F}\cong\mathbf{G}$ and $\{\ker\pi^{G}_{i}:i\in[1,r]\}$
is not a CRS.
\end{cor}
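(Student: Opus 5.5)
The plan is to apply Theorem~\ref{thm:espectro global para NU} directly to $\mathbf{F}$, with $\mathcal{S}:=\mathbb{ISP}_{u}(\mathcal{Q}_{\mathrm{RSI}})$, and then use the relative global indecomposability of $\mathbf{F}$ through Lemma~\ref{lem:carac congruencial de RGI}. This choice of $\mathcal{S}$ is legitimate: it is universal, hence almost universal. It also contains $\mathcal{Q}_{\mathrm{RSI}}$. Finally, it is contained in $\mathcal{Q}$, because a quasivariety is closed under $\mathbb{I}$, $\mathbb{S}$ and $\mathbb{P}_{u}$. The theorem therefore gives the set
\[
\Sigma:=\con_{\mathcal{S}}(\mathbf{F})\cup\{\bigcap\Omega:\Omega\sub\con_{\mathcal{S}}(\mathbf{F}),\ |\Omega|\in[2,r],\ \Omega\text{ not a CRS}\}
\]
as a global spectrum of $\mathbf{F}$. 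Every member of $\Sigma$ is a $\mathcal{Q}$-congruence: members of $\con_{\mathcal{S}}(\mathbf{F})$ have quotients in $\mathcal{S}\sub\mathcal{Q}$, and $\con_{\mathcal{Q}}(\mathbf{F})$ is closed under intersections. By Lemma~\ref{lem:global iff te-complete}, $\Sigma$ is $\te$-complete and $\bigcap\Sigma=\Delta$. Lemma~\ref{lem:carac congruencial de RGI} then yields $\Delta\in\Sigma$.

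We now split into two cases according to where $\Delta$ lies in $\Sigma$. In the first case $\Delta\in\con_{\mathcal{S}}(\mathbf{F})$. Then $\mathbf{F}\cong\mathbf{F}/\Delta\in\mathcal{S}=\mathbb{ISP}_{u}(\mathcal{Q}_{\mathrm{RSI}})$, which is the first alternative. In the second case $\Delta=\gamma_{1}\cap\cdots\cap\gamma_{n}$ for some $n\in[2,r]$ and some $\gamma_{1},\dots,\gamma_{n}\in\con_{\mathcal{S}}(\mathbf{F})$ with $\{\gamma_{1},\dots,\gamma_{n}\}$ not a CRS. Here we repeat the argument in the proof of Theorem~\ref{thm:representacion global para NU}.
\begin{enumerate}
\item Pad the list by setting $\gamma_{k}:=\gamma_{n}$ for $k\in[n+1,r]$. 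The set of congruences is unchanged, so it is still not a CRS.
\item Embed $\mathbf{F}$ into $\mathbf{F}/\gamma_{1}\times\cdots\times\mathbf{F}/\gamma_{r}$ via the natural map. This is injective because $\bigcap_{k}\gamma_{k}=\Delta$.
\item Each quotient $\mathbf{F}/\gamma_{i}$ lies in $\mathbb{ISP}_{u}(\mathcal{Q}_{\mathrm{RSI}})$, so it embeds into some $\mathbf{A}_{i}\in\mathbb{P}_{u}(\mathcal{Q}_{\mathrm{RSI}})$. Composing these embeddings gives $\mathbf{G}\leq\mathbf{A}_{1}\times\cdots\times\mathbf{A}_{r}$ with $\mathbf{F}\cong\mathbf{G}$.
\end{enumerate}
Under this isomorphism $\ker\pi^{G}_{i}$ corresponds to $\gamma_{i}$. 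The reason is that the $i$-th coordinate of $\mathbf{G}$ is the image of $\mathbf{F}/\gamma_{i}$ under an injective map, so two elements have equal $i$-th coordinates exactly when they are $\gamma_{i}$-related.

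The only point needing care is the transfer of the non-CRS property. Being a CRS is defined purely in terms of the lattice $\con_{\mathcal{Q}}$ and membership of pairs, so it is invariant under isomorphism. Hence $\{\ker\pi^{G}_{i}:i\in[1,r]\}$ is not a CRS. One may also argue this via Lemma~\ref{lem:correspondencia} with $\theta=\Delta$, exactly as in the proof of Theorem~\ref{thm:representacion global para NU}. I do not expect a real obstacle, since the corollary is essentially Theorem~\ref{thm:espectro global para NU} combined with Lemma~\ref{lem:carac congruencial de RGI}.
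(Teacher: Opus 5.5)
Your argument is correct and essentially matches the paper's. The paper derives the corollary immediately from Theorem~\ref{thm:representacion global para NU}, whose proof is exactly your application of Theorem~\ref{thm:espectro global para NU} with $\mathcal{S}=\mathbb{ISP}_{u}(\mathcal{Q}_{\mathrm{RSI}})$ followed by the padding-and-embedding step. The only difference is that you inline this for the single algebra $\mathbf{F}$ and use Lemma~\ref{lem:carac congruencial de RGI} to force $\Delta\in\Sigma$, whereas the paper applies the definition of RGI to the resulting global representation to make $\mathbf{F}$ isomorphic to one of its factors.
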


\begin{proof}
Immediate from Theorem~\ref{thm:representacion global para NU}.
\end{proof}

\begin{rem}
In~\cite[Problem~17]{vaggione_2022}, the following general problem
is posed: find a class $\mathcal{F}\sub\mathcal{Q}$, as small and
manageable as possible, that globally represents $\mathcal{Q}$. Observe
that the optimal situation would be to have $\mathbb{I}(\mathcal{F})=\mathcal{Q}_{\mathrm{RGI}}$.
Theorem~19 of \cite{vaggione_2022} asserts that if $\mathcal{Q}$
has a $(r+1)$-ary near-unanimity term, then the class 
\[
\mathcal{F}=\mathbb{S}(\{\mathbf{A}_{1}\times\cdots\times\mathbf{A}_{r}:\mathbf{A}_{i}\in\mathbb{P}_{u}(\mathcal{Q}_{\mathrm{RSI}}),\ i\in[1,r]\})
\]
globally represents $\mathcal{Q}$. If $\mathbf{F}\in\mathcal{F}\setminus\mathbb{ISP}_{u}(\mathcal{Q}_{\mathrm{RSI}})$,
say $\mathbf{F}\leq\mathbf{A}_{1}\times\cdots\times\mathbf{A}_{r}$,
with each $\mathbf{A}_{i}\in\mathbb{P}_{u}(\mathcal{Q}_{\mathrm{RSI}})$,
and $\{\ker\pi^{F}_{i}:1\leq i\leq r\}$ is a CRS, then Remark~\ref{rem:CRS implica descomponible}
says that $\mathbf{F}$ is globally decomposable. Thus, Theorem~\ref{thm:representacion global para NU}
yields a refinement of Vaggione's result, since it shows that all
these algebras can be discarded from $\mathcal{F}$ while still obtaining
a class that globally represents the quasivariety.
\end{rem}

We conclude this section by specializing Theorem~\ref{thm:representacion global para NU}
to arithmetical varieties. The resulting statement, already obtained
by Vaggione in \cite[Corollary~2 to Theorem~6.2]{Vaggione1992}, takes
a particularly sharp form; here it follows immediately from our general
representation theorem.
\begin{cor}
If $\mathcal{V}$ is an arithmetical variety, then $\mathbb{SP}_{u}(\mathcal{V}_{\mathrm{SI}})$
globally represents $\mathcal{V}$.
\end{cor}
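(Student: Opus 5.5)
We derive the corollary from Theorem~\ref{thm:representacion global para NU}. An arithmetical variety $\mathcal{V}$ has a Pixley term $p(x,y,z)$, so $M(x,y,z):=p(x,p(x,y,z),z)$ is a ternary near-unanimity term. Hence $r=2$, and we may take $\mathcal{Q}=\mathcal{V}$, where relative and absolute notions coincide. With $\mathcal{K}:=\mathbb{P}_{u}(\mathcal{V}_{\mathrm{SI}})$, Theorem~\ref{thm:representacion global para NU} says that
\[
\mathbb{S}(\mathcal{K})\cup\{\mathbf{F}\leq\mathbf{A}_{1}\times\mathbf{A}_{2}:\mathbf{A}_{i}\in\mathcal{K}\text{ and }\{\ker\pi^{F}_{1},\ker\pi^{F}_{2}\}\text{ is not a CRS}\}
\]
globally represents $\mathcal{V}$. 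We will show that the second class in this union is empty.

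Let $\mathbf{F}\leq\mathbf{A}_{1}\times\mathbf{A}_{2}$ and put $\Omega:=\{\ker\pi^{F}_{1},\ker\pi^{F}_{2}\}$. The algebra $\mathbf{F}$ lies in $\mathcal{V}$, which is an arithmetical variety, so $\mathbf{F}$ is arithmetic, and since we are in the varietal setting this is the same as being relatively arithmetic. Lemma~\ref{lem: relativamente aritmetica sii todo finito es CRS} then says that every finite subset of $\con(\mathbf{F})$ is a CRS; in particular $\Omega$ is a CRS. (The two-element case can also be checked directly: a $\sqcup$-system on $\{\theta,\delta\}$ is a pair $a,b$ with $\langle a,b\rangle\in\theta\vee\delta=\theta\circ\delta$, and an element $c$ with $a\mathrel{\theta}c\mathrel{\delta}b$ is a solution.) So the second class is empty, and $\mathbb{S}(\mathcal{K})=\mathbb{S}\mathbb{P}_{u}(\mathcal{V}_{\mathrm{SI}})$ globally represents $\mathcal{V}$.

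No step here is a genuine obstacle. The points to state explicitly are that an arithmetical variety has a ternary NU term, and that ``arithmetic'' applies to every member of the variety, so Pixley's lemma can be applied to each $\mathbf{F}$.
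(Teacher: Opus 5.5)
Your proof is correct and follows the paper's route. The paper likewise gets a majority term for the arithmetical variety (citing Burris--Sankappanavar instead of writing down $p(x,p(x,y,z),z)$) and applies Theorem~\ref{thm:representacion global para NU} with $r=2$; you also make explicit the step the paper leaves implicit, namely that congruence permutability makes every pair $\{\ker\pi^{F}_{1},\ker\pi^{F}_{2}\}$ a CRS, so the second class in that theorem is empty.
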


\begin{proof}
Since every arithmetical variety has a majority term \cite[Theorem~II.12.5]{BurrisSankappanavar2012},
the result follows immediately from Theorem~\ref{thm:representacion global para NU},
applied with $r=2$.
\end{proof}

The corollary applies to a wide range of familiar algebraic structures.
In particular, the variety of residuated lattices is arithmetical
\cite{JipsenTsinakis2002}, and hence the result applies to all its
subvarieties and standard expansions, including the varieties of $\ell$-groups,
Heyting algebras, BL-algebras, MTL-algebras, and MV-algebras. Further
classical examples of arithmetical varieties include $f$-rings and
vector groups. It is also worth noting that many of the mentioned
varieties have an almost universal class of FSIs. Hence, in this case,
the FSIs already globally represent the variety.

The generality of Theorem~\ref{thm:representacion global para NU}
comes at a price: the resulting representing class may be unwieldy
and may contain globally decomposable algebras. Under additional assumptions
on the ambient quasivariety, however, this class can be substantially
refined. This is precisely what we do in the sequel.

\section{RCD globally indecomposable algebras of finite subdirect width\label{sec:finite-subdirect-width}}

Let us recall some lattice-theoretic definitions. Let $\mathbf{L}$
be a lattice. For $a,b\in L$ we say that $b$ is a \emph{cover} of
$a$ if $a<b$ and there is no element strictly between $a$ and $b$.
We write $\cov(a)$ to denote the set of all covers of $a$. If $\mathbf{L}$
has bottom element $0$, an \emph{atom} of $\mathbf{L}$\emph{ }is
an element in $\cov(0)$. The lattice $\mathbf{L}$ is \emph{atomic}
if for every $b\in L\setminus\{0\}$ there is an atom $a$ of $\mathbf{L}$
with $a\leq b$. For a positive integer $n$, we say that a lattice
is $n$\emph{-atomic} if it is atomic and has exactly $n$ atoms.

Note that $\mathbf{A}\in\mathcal{Q}_{\mathrm{RSI}}$ if and only if
$\con_{\mathcal{Q}}(\mathbf{A})$ is 1-atomic. Given $\gamma\in\cmic_{\mathcal{Q}}(\mathbf{A})$
we write $\hat{\gamma}$ to denote the unique cover of $\gamma$ in
$\con_{\mathcal{Q}}(\mathbf{A})$.
\begin{lem}
\label{lem:atomos de Con A inducido por un conjunto finito de CMIs}Let
$\mathbf{A}\in\mathcal{Q}$ be RCD and suppose $\Omega\sub\cmic_{\mathcal{Q}}(\mathbf{A})$
is finite, its members are pairwise incomparable, and $\bigcap\Omega=\Delta$.
Then, the lattice $\con_{\mathcal{Q}}(\mathbf{A})$ is $\left|\Omega\right|$-atomic,
and $\hat{\gamma}\cap\bigcap(\Omega\setminus\{\gamma\})$ is an atom
of $\con_{\mathcal{Q}}(\mathbf{A})$ for each $\gamma\in\Omega$.
\end{lem}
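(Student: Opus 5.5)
Write $\Omega=\{\gamma_{1},\ldots,\gamma_{n}\}$ and, for each $i$, put
\[
\alpha_{i}:=\hat{\gamma}_{i}\cap\bigcap_{j\neq i}\gamma_{j},\qquad\beta_{i}:=\bigcap_{j\neq i}\gamma_{j}.
\]
I will show three things in order: each $\alpha_{i}$ is an atom; every nonzero $\theta\in\con_{\mathcal{Q}}(\mathbf{A})$ lies above some $\alpha_{i}$; and the $\alpha_{i}$ are pairwise distinct and exhaust the atoms. Since meet-irreducibles of a distributive lattice are meet-prime, I will use meet-primeness of each $\gamma_{j}$ throughout.

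\emph{Step 1: $\alpha_{i}\neq\Delta$.} Suppose $\hat{\gamma}_{i}\cap\beta_{i}=\Delta\subseteq\gamma_{i}$. By meet-primeness of $\gamma_{i}$, either $\hat{\gamma}_{i}\subseteq\gamma_{i}$, which is absurd, or $\gamma_{j}\subseteq\gamma_{i}$ for some $j\neq i$, which contradicts pairwise incomparability. Thus $\alpha_{i}\neq\Delta$. The same argument also gives $\beta_{i}\not\subseteq\gamma_{i}$.

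\emph{Step 2: $\alpha_{i}$ is an atom.} Let $\Delta\neq\theta\subseteq\alpha_{i}$. For $j\neq i$ we have $\theta\subseteq\gamma_{j}$. Since $\bigcap\Omega=\Delta$ and $\theta\neq\Delta$, we get $\theta\not\subseteq\gamma_{i}$. Then $\gamma_{i}\subsetneq\gamma_{i}\sqcup\theta\subseteq\hat{\gamma}_{i}$, so $\gamma_{i}\sqcup\theta=\hat{\gamma}_{i}$ because $\gamma_{i}\in\cmic_{\mathcal{Q}}(\mathbf{A})$ has the unique cover $\hat{\gamma}_{i}$. By distributivity,
\[
\alpha_{i}=\beta_{i}\cap(\gamma_{i}\sqcup\theta)=(\beta_{i}\cap\gamma_{i})\sqcup(\beta_{i}\cap\theta)=\Delta\sqcup\theta=\theta.
\]

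\emph{Step 3: atomicity.} Let $\theta\neq\Delta$. Since $\bigcap\Omega=\Delta$, some $\gamma_{i}$ does not contain $\theta$; choose $i$ so that the set $J:=\{j:\theta\not\subseteq\gamma_{j}\}$ is nonempty. Consider $\theta':=\theta\cap\bigcap_{j\notin J}\gamma_{j}=\theta$. It is not enough to pick one $\gamma_{i}$ with $\theta\not\subseteq\gamma_{i}$, so I instead shrink $\theta$ to $\theta\cap\bigcap_{j\neq i}\gamma_{j}$ for a suitable $i$. I need this meet to be nonzero. Pick a minimal $K\subseteq\{1,\ldots,n\}$ with $\theta\cap\bigcap_{j\in K}\gamma_{j}=\Delta$; such a $K$ exists and is nonempty, since $K=[1,n]$ works and $\theta\neq\Delta$. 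For $i\in K$, the congruence $\theta_{0}:=\theta\cap\bigcap_{j\in K\setminus\{i\}}\gamma_{j}$ is nonzero and satisfies $\theta_{0}\cap\gamma_{i}=\Delta$. For every $j\neq i$, meet-primeness of $\gamma_{j}$ applied to $\theta_{0}\cap\gamma_{i}=\Delta\subseteq\gamma_{j}$ gives $\theta_{0}\subseteq\gamma_{j}$, since $\gamma_{i}\not\subseteq\gamma_{j}$. Hence $\theta_{0}\subseteq\beta_{i}$, and $\theta_{0}\not\subseteq\gamma_{i}$ because $\theta_{0}\neq\Delta$. As in Step 2, $\gamma_{i}\sqcup\theta_{0}=\hat{\gamma}_{i}$ gives $\alpha_{i}=\theta_{0}\subseteq\theta$. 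So every nonzero relative congruence lies above some $\alpha_{i}$, and every atom equals some $\alpha_{i}$.

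\emph{Step 4: distinctness.} For $k\neq i$ we have $\alpha_{i}\subseteq\gamma_{k}$ but $\alpha_{k}\not\subseteq\gamma_{k}$, since $\alpha_{k}\neq\Delta$ and $\alpha_{k}\subseteq\gamma_{j}$ for all $j\neq k$. Hence the atoms $\alpha_{1},\ldots,\alpha_{n}$ are distinct and there are exactly $|\Omega|$ of them. The step I expect to be the main obstacle is Step 3, namely locating an atom below an arbitrary $\theta$; the minimal-subfamily trick is meant to handle it.
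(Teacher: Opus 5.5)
Your proof works after one repair in Step 3, and it takes a different route from the paper.

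\textbf{The error in Step 3.} The sentence ``as in Step 2, $\gamma_{i}\sqcup\theta_{0}=\hat{\gamma}_{i}$ gives $\alpha_{i}=\theta_{0}$'' does not carry over. In Step 2 the bound $\gamma_{i}\sqcup\theta\subseteq\hat{\gamma}_{i}$ came from $\theta\subseteq\alpha_{i}$, and nothing places $\theta_{0}$ below $\hat{\gamma}_{i}$. The equality is in fact false in general. Take $\mathbf{A}=\mathbf{B}\times\mathbf{C}$ in a congruence-distributive variety with $\mathbf{B},\mathbf{C}$ subdirectly irreducible and $\mathbf{B}$ not simple (e.g.\ $\mathbf{B}$ the three-element Heyting chain). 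Let $\Omega=\{\gamma_{1},\gamma_{2}\}$ be the projection kernels and $\theta=\gamma_{2}$. The only minimal $K$ is $\{1\}$, so $i=1$ and $\theta_{0}=\gamma_{2}$. But $\gamma_{1}\sqcup\gamma_{2}=\nabla\neq\hat{\gamma}_{1}$, and $\gamma_{2}$ properly contains $\alpha_{1}$.

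\textbf{The fix.} Step 3 only needs the inclusion, and that does hold. From $\theta_{0}\not\subseteq\gamma_{i}$ you get $\hat{\gamma}_{i}\subseteq\gamma_{i}\sqcup\theta_{0}$, hence
\[
\alpha_{i}=\beta_{i}\cap\hat{\gamma}_{i}\subseteq\beta_{i}\cap(\gamma_{i}\sqcup\theta_{0})=(\beta_{i}\cap\gamma_{i})\sqcup(\beta_{i}\cap\theta_{0})=\Delta\sqcup\theta_{0}=\theta_{0}\subseteq\theta .
\]
If $\theta$ is an atom, this forces $\theta=\alpha_{i}$.

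\textbf{Smaller points.} The lines introducing $J$ and $\theta'$ are drafting leftovers and should be deleted. Your remark that one $\gamma_{i}$ with $\theta\not\subseteq\gamma_{i}$ is ``not enough'' is mistaken. For such an $i$, meet-primeness of $\gamma_{i}$ applied to $\theta\cap\bigcap_{j\neq i}\gamma_{j}$, exactly as in Step 1, gives $\theta\cap\beta_{i}\neq\Delta$. Then $\theta_{0}:=\theta\cap\beta_{i}$ works, and the minimal-$K$ device is unnecessary.

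\textbf{Comparison with the paper.} The paper proves $\mu_{\delta}\neq\Delta$ as you do. It then identifies each candidate atom globally: $\hat{\delta}\cap\bigcap(\Omega\setminus\{\delta\})=\bigcap(\cmic_{\mathcal{Q}}(\mathbf{A})\setminus\{\delta\})$. Every completely meet-irreducible $\gamma\neq\delta$ contains the left side by meet-primeness. The reverse inclusion uses that $\hat{\delta}$ is the intersection of the completely meet-irreducibles above it. Atomicity is then immediate: for $\theta\neq\Delta$, the completely meet-irreducibles above $\theta$ miss some $\gamma\in\Omega$, and $\theta$ is their intersection. That route relies on every relative congruence being an intersection of completely meet-irreducible ones, i.e.\ on algebraicity of $\con_{\mathcal{Q}}(\mathbf{A})$. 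In return it shows that $\delta$ is the only completely meet-irreducible relative congruence not lying above the atom $\mu_{\delta}$.

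Your argument never represents congruences through completely meet-irreducibles. It uses only distributivity, meet-primeness of the finitely many $\gamma_{j}$, and the fact that $\hat{\gamma}_{i}$ lies below everything strictly above $\gamma_{i}$. So it is valid in any distributive lattice with a least element. It also proves directly that each $\alpha_{i}$ is an atom, rather than obtaining this as a by-product of atomicity.
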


\begin{proof}
In the following argument, we rely (without explicit mention) on the
fact that every congruence in $\cmic_{\mathcal{Q}}(\mathbf{A})$ is
meet-prime.

For each $\gamma\in\Omega$ put
\[
\mu_{\gamma}:=\hat{\gamma}\cap\bigcap(\Omega\setminus\{\gamma\}).
\]
Fix $\delta\in\Omega$. Since $\hat{\delta}\nsubseteq\delta$ and
the members of $\Omega$ are pairwise incomparable, it follows that
$\mu_{\delta}\nsubseteq\delta$. In particular, we have $\mu_{\delta}\neq\Delta.$
Let 
\[
\Gamma_{\delta}:=\cmic_{\mathcal{Q}}(\mathbf{A})\setminus\{\delta\};
\]
we claim that $\mu_{\delta}=\bigcap\Gamma_{\delta}$. Indeed, suppose
$\gamma\in\cmic_{\mathcal{Q}}(\mathbf{A})\setminus\{\delta\}$. Then,
as $\bigcap\Omega\sub\gamma$, we have that either $\delta\subsetneq\gamma$
or $\bigcap(\Omega\setminus\{\delta\})\sub\gamma$; hence $\mu_{\delta}\sub\gamma$.
The reverse inclusion follows from $\Omega\setminus\{\delta\}\sub\Gamma_{\delta}$
and $\hat{\delta}=\bigcap\{\gamma\in\cmic_{\mathcal{Q}}(\mathbf{A}):\hat{\delta}\sub\gamma\}$.
Indeed, since $\hat{\delta}\nsubseteq\delta$, every congruence in
the latter family belongs to $\Gamma_{\delta}$, and hence $\bigcap\Gamma_{\delta}\sub\hat{\delta}\cap\bigcap(\Omega\setminus\{\delta\})=\mu_{\delta}$.

Now, if $\theta$ is a non-diagonal congruence of $\mathbf{A}$, then
the set 
\[
\Gamma_{\theta}:=\{\gamma\in\cmic_{\mathcal{Q}}(\mathbf{A}):\theta\sub\gamma\}
\]
must omit some $\gamma$ in $\Omega$, and hence $\Gamma_{\theta}\sub\Gamma_{\gamma}$.
That is, $\mu_{\gamma}\sub\theta$ for some $\gamma\in\Omega$. It
follows that $\con_{\mathcal{Q}}(\mathbf{A})$ is atomic and its atoms
are the $\mu_{\gamma}$'s. Moreover, these must be pairwise distinct,
since for $\gamma\neq\delta$ we have $\mu_{\gamma}\sub\delta$ and
$\mu_{\delta}\nsubseteq\delta$.
\end{proof}

The preceding lemma relates finite irredundant families of relatively
completely meet-irreducible congruences to the atomic structure of
the relative congruence lattice. This motivates the following invariant.

For $\mathbf{A}\in\mathcal{Q}$, define the \emph{relative subdirect
width} of $\mathbf{A}$, denoted by $\sdw_{\mathcal{Q}}(\mathbf{A})$,
as the least cardinal $\kappa$ such that $\mathbf{A}$ has a subdirect
representation with exactly $\kappa$ factors, each of which is relatively
subdirectly irreducible. Equivalently, 
\[
\sdw_{\mathcal{Q}}(\mathbf{A})=\min\{|\Gamma|:\Gamma\subseteq\cmic_{\mathcal{Q}}(\mathbf{A})\text{ and }\bigcap\Gamma=\Delta\}.
\]

A finite subdirect product 
\[
\mathbf{A}\leq_{\mathrm{sd}}\mathbf{A}_{1}\times\cdots\times\mathbf{A}_{n}
\]
is irredundant if, for every $i\in[1,n]$, the induced map $\mathbf{A}\rightarrow\prod_{j\neq i}\mathbf{A}_{j}$
is not injective. Equivalently, no projection onto a proper subproduct
is injective. In this case, we write 
\[
\mathbf{A}\leq_{\mathrm{isd}}\mathbf{A}_{1}\times\cdots\times\mathbf{A}_{n}.
\]

\begin{lem}
\label{lem:equivalencias RCD ancho finito}Let $\mathbf{F}\in\mathcal{Q}$
be RCD, and let $n>0$. The following are equivalent:
\begin{enumerate}
\item $\sdw_{\mathcal{Q}}(\mathbf{F})=n$.
\item $\mathbf{F}$ has an irredundant subdirect representation with exactly
$n$ relatively subdirectly irreducible factors.
\item $\con_{\mathcal{Q}}(\mathbf{F})$ is $n$-atomic.
\end{enumerate}
\end{lem}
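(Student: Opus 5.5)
We will prove (1)$\Rightarrow$(2)$\Rightarrow$(3)$\Rightarrow$(1), translating throughout between subdirect representations with RSI factors and subsets $\Gamma\sub\cmic_{\mathcal{Q}}(\mathbf{F})$ with $\bigcap\Gamma=\Delta$. Under this translation, a finite representation is irredundant exactly when no proper subfamily of $\Gamma$ has intersection $\Delta$.

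(1)$\Rightarrow$(2). Take $\Gamma\sub\cmic_{\mathcal{Q}}(\mathbf{F})$ with $|\Gamma|=n$ and $\bigcap\Gamma=\Delta$. By minimality of $n$, no proper subset of $\Gamma$ has intersection $\Delta$. The natural map into $\prod_{\gamma\in\Gamma}\mathbf{F}/\gamma$ is therefore an irredundant subdirect representation, and each factor $\mathbf{F}/\gamma$ is RSI. There are exactly $n$ factors, since the members of $\Gamma$ are distinct.

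(2)$\Rightarrow$(3). The kernels of such a representation give $\Omega\sub\cmic_{\mathcal{Q}}(\mathbf{F})$ with $|\Omega|=n$, $\bigcap\Omega=\Delta$, and $\Omega$ irredundant. We must check that the members of $\Omega$ are pairwise incomparable. If $\gamma\subsetneq\delta$ held for some $\gamma,\delta\in\Omega$, then $\delta$ could be dropped without changing the intersection, contradicting irredundance. We also need the $n$ kernels to be distinct, which irredundance gives as well. Lemma~\ref{lem:atomos de Con A inducido por un conjunto finito de CMIs} then yields that $\con_{\mathcal{Q}}(\mathbf{F})$ is $n$-atomic.

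(3)$\Rightarrow$(1). This direction carries the real content. Let $\mu_1,\dots,\mu_n$ be the atoms. For each $i$, pick $\gamma_i\in\cmic_{\mathcal{Q}}(\mathbf{F})$ with $\mu_i\nsubseteq\gamma_i$; this is possible because $\mu_i\neq\Delta$ and $\bigcap\cmic_{\mathcal{Q}}(\mathbf{F})=\Delta$.

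First we show $\bigcap_i\gamma_i=\Delta$. If not, atomicity gives an atom $\mu_j\sub\bigcap_i\gamma_i\sub\gamma_j$, a contradiction. Hence $\sdw_{\mathcal{Q}}(\mathbf{F})\le n$.

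For the reverse inequality, let $\Gamma\sub\cmic_{\mathcal{Q}}(\mathbf{F})$ with $\bigcap\Gamma=\Delta$. For each $i$ there is some $\gamma\in\Gamma$ with $\mu_i\nsubseteq\gamma$. Assign to each $i$ such a $\gamma$; we claim the assignment is injective, which gives $|\Gamma|\ge n$. Suppose $\gamma$ omits both $\mu_i$ and $\mu_j$ with $i\neq j$. Since $\mu_i\cap\mu_j=\Delta\sub\gamma$ and $\gamma$ is meet-prime by RCD, one of $\mu_i,\mu_j$ lies in $\gamma$, a contradiction. So each $\gamma$ omits at most one atom, and $\sdw_{\mathcal{Q}}(\mathbf{F})=n$.

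The main point to watch is using distributivity, via meet-primeness of CMI congruences, to bound the number of atoms a single CMI can omit. This is what forces width at least $n$.
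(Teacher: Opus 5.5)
Your proof is correct and follows the paper's route: (1)$\Rightarrow$(2) by minimality of $n$, (2)$\Rightarrow$(3) via Lemma~\ref{lem:atomos de Con A inducido por un conjunto finito de CMIs} once the kernels are shown to be distinct and pairwise incomparable, and the upper bound in (3)$\Rightarrow$(1) by choosing $\gamma_i\in\cmic_{\mathcal{Q}}(\mathbf{F})$ with $\mu_i\nsubseteq\gamma_i$. The only difference is in the lower bound $\sdw_{\mathcal{Q}}(\mathbf{F})\ge n$: the paper argues by contradiction, applying the already-proved (1)$\Rightarrow$(3) to a width $m<n$ to get an $m$-atomic lattice, whereas you count directly, using meet-primeness to show that each CMI congruence omits at most one atom; your version is slightly more self-contained, and it also makes explicit why $\bigcap_i\gamma_i=\Delta$, which the paper leaves unstated.
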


\begin{proof}
(1)$\Rightarrow$(2). This follows directly from the definitions.

(2)$\Rightarrow$(3). Note that if (2) holds, then there is an $n$-element
set $\Omega\sub\cmic_{\mathcal{Q}}(\mathbf{F})$ such that $\bigcap\Omega=\Delta$,
and the congruences in $\Omega$ are pairwise incomparable. So (3)
now follows from Lemma~\ref{lem:atomos de Con A inducido por un conjunto finito de CMIs}.

(3)$\Rightarrow$(1). Let $\mu_{1},\ldots,\mu_{n}$ be the atoms of
$\con_{\mathcal{Q}}(\mathbf{F})$. For each $i$ choose $\gamma_{i}\in\cmic_{\mathcal{Q}}(\mathbf{F})$
such that $\mu_{i}\nsubseteq\gamma_{i}$. Then $\bigcap^{n}_{i=1}\gamma_{i}=\Delta$,
and therefore $\sdw_{\mathcal{Q}}(\mathbf{F})\leq n$. If $\sdw_{\mathcal{Q}}(\mathbf{F})=m<n$,
the already established implication (1)$\Rightarrow$(3) would say
that $\con_{\mathcal{Q}}(\mathbf{F})$ is $m$-atomic, which is not
the case. Hence $\sdw_{\mathcal{Q}}(\mathbf{F})=n$.
\end{proof}

\begin{lem}
\label{lem: minoriza esp glo rcd implica crs}Let $\mathbf{A}$ be
RCD and let $\Sigma\sub\con_{\mathcal{Q}}(\mathbf{A})$ be $\te$-complete,
with $\bigcap\Sigma=\Delta$. Then, every finite $\Omega\sub\con_{\mathcal{Q}}(\mathbf{A})$
that minorizes $\Sigma$ is a CRS.
\end{lem}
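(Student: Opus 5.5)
The plan is to extend a given $\sqcup$-system on $\Omega$ up to a suitable enlargement of $\Sigma$, solve it there using $\te$-completeness, and then push the solution back down to $\Omega$ through the completely meet-irreducible congruences. Fix a finite $\Omega\sub\con_{\mathcal{Q}}(\mathbf{A})$ with $\Omega\mins\Sigma$, and let $\S$ be a $\sqcup$-system on $\Omega$; we must show $\S$ is solvable.

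\emph{Step 1: pass to the closure.} Put $\overline{\Sigma}:=\overline{\Sigma}^{\mathrm{ed}}$. By Lemma~\ref{lem:global en la clausura}, $\overline{\Sigma}$ is $\te$-complete, and $\bigcap\overline{\Sigma}\sub\bigcap\Sigma=\Delta$. Since $\Omega$ is finite, it is $\te$-compact, so item~(4) of Lemma~\ref{lem:ted es booleana} shows that $[\Omega)$ is $\ted$-closed. As $\Sigma\sub[\Omega)$, we get $\overline{\Sigma}\sub[\Omega)$, that is, $\Omega\mins\overline{\Sigma}$. By item~(2) of Lemma~\ref{lem:ted es booleana}, $\overline{\Sigma}\sub\con_{\mathcal{Q}}(\mathbf{A})$, and by item~(3) of the same lemma, $\overline{\Sigma}\setminus\{\nabla\}$ is $\te$-compact.

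\emph{Step 2: extend and solve.} By Lemma~\ref{lem:extender sistema para arriba}, there is a $\sqcup$-system $\S^{+}$ on $\overline{\Sigma}$ with $\S\mins\S^{+}$. Since $\S$ is finite, Lemma~\ref{lem:min por sistema finito implica te sistema} shows that $\S^{+}$ is a $\te$-system. By the $\te$-completeness of $\overline{\Sigma}$, $\S^{+}$ has a solution $a\in A$.

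\emph{Step 3: descend to $\Omega$ via CMIs (the main point).} Fix $\theta\in\Omega$ and $\gamma\in\cmic_{\mathcal{Q}}(\mathbf{A})$ with $\theta\sub\gamma$.
\begin{enumerate}
\item Since $\mathbf{A}$ is RCD, $\gamma$ is meet-prime.
\item We have $\bigcap(\overline{\Sigma}\setminus\{\nabla\})=\Delta\sub\gamma$, so Lemma~\ref{lem:completamente meet-prime} yields $\delta\in\overline{\Sigma}$ with $\delta\sub\gamma$.
\item Since $\Omega\mins\overline{\Sigma}$, choose $\theta'\in\Omega$ with $\theta'\sub\delta$.
\item From $\S\mins\S^{+}$ we get $\langle\S(\theta'),\S^{+}(\delta)\rangle\in\delta$. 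Since $a$ solves $\S^{+}$, also $\langle a,\S^{+}(\delta)\rangle\in\delta$. Hence $\langle a,\S(\theta')\rangle\in\delta\sub\gamma$.
\item Because $\S$ is a $\sqcup$-system and $\theta,\theta'\sub\gamma$, we have $\langle\S(\theta'),\S(\theta)\rangle\in\theta\sqcup\theta'\sub\gamma$.
\item Combining the last two items gives $\langle a,\S(\theta)\rangle\in\gamma$.
\end{enumerate}
Intersecting over all such $\gamma$, and using that every relative congruence is the intersection of the completely meet-irreducible relative congruences above it, we obtain $\langle a,\S(\theta)\rangle\in\theta$. Thus $a$ solves $\S$, and $\Omega$ is a CRS.

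The only delicate step is Step~3(2). Lemma~\ref{lem:completamente meet-prime} requires a $\te$-compact family, but $\Sigma$ itself need not be $\te$-compact. This is why we replace $\Sigma$ by its $\ted$-closure in Step~1, and why we check there that the closure remains $\te$-complete and is still minorized by $\Omega$.
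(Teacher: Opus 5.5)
Your proof is correct and follows the paper's argument: pass to the $\ted$-closure (which remains $\te$-complete and is $\te$-compact), extend the $\sqcup$-system upward, solve it there, and descend to $\Omega$ through the completely meet-irreducible congruences via Lemma~\ref{lem:completamente meet-prime}. The only difference is a small streamlining. You obtain $\Omega\mins\overline{\Sigma}^{\mathrm{ed}}$ directly from item~(4) of Lemma~\ref{lem:ted es booleana}, and the $\te$-system property from Lemma~\ref{lem:min por sistema finito implica te sistema}. The paper instead first replaces $\Omega$ by a family $\Omega^{*}$ of compact congruences and builds the compatible cover by hand.
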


\begin{proof}
Let $\Sigma$ be as in the statement and suppose $\Omega\sub\con_{\mathcal{Q}}(\mathbf{A})$
is finite and minorizes $\Sigma$. Fix a $\sqcup$-system $\T$ on
$\Omega$. For each $\mu\in\Omega$ choose a compact congruence $\mu^{*}\sub\mu$
such that 
\[
\langle\T(\upsilon_{1}),\T(\upsilon_{2})\rangle\in\upsilon^{*}_{1}\sqcup\upsilon^{*}_{2}\quad\text{for all }\upsilon_{1},\upsilon_{2}\in\Omega.
\]
Note that, since $\Omega$ is finite, the congruences $\mu^{*}$ may
be chosen pairwise distinct. Let $\Omega^{*}:=\{\mu^{*}:\mu\in\Omega\}$,
and observe that $\Omega^{*}\mins\Omega\mins\Sigma$. Define $\S:\Omega^{*}\rightarrow A$
by $\S(\mu^{*}):=\T(\mu)$. Then, $\S$ is a $\sqcup$-system and
$\S\mins\T.$

For each $\mu^{*}\in\Omega^{*}$ take a finite $F_{\mu}\sub\mu^{*}$
such that $\mu^{*}=\cg^{\mathbf{A}}_{\mathcal{Q}}(F_{\mu})$, and
define
\[
u_{\mu}:=\bigcap_{p\in F_{\mu}}\mathrm{e}(p).
\]
Note that, as $\Omega^{*}\mins\Sigma$, we have
\[
\Sigma\sub\bigcup_{\mu\in\Omega}u_{\mu},
\]
and hence
\[
\overline{\Sigma}^{\mathrm{ed}}\sub\overline{\bigcup_{\mu\in\Omega}u_{\mu}}^{\mathrm{ed}}=\bigcup_{\mu\in\Omega}\overline{u_{\mu}}^{\mathrm{ed}}=\bigcup_{\mu\in\Omega}u_{\mu}.
\]
That is, $\mathcal{C}:=\{u_{\mu}:\mu\in\Omega\}$ covers $\overline{\Sigma}^{\mathrm{ed}}$,
and it follows that $\Omega^{*}\mins\overline{\Sigma}^{\mathrm{ed}}$.
So, by Lemma \ref{lem:extender sistema para arriba}, there is a $\sqcup$-system
$\S^{+}$ on $\overline{\Sigma}^{\mathrm{ed}}$ satisfying $\S\mins\S^{+}$.
We claim that $\S(\mu^{*})$ is a solution for $\S^{+}|_{u_{\mu}}$
for every $\mu\in\Omega$. Indeed, fix $\mu\in\Omega$ and let $\theta\in\overline{\Sigma}^{\mathrm{ed}}\cap u_{\mu}$.
Then $\mu^{*}\sub\theta$, and hence $\langle\S(\mu^{*}),\S^{+}(\theta)\rangle\in\theta$.
So $\mathcal{C}$ is a $\te$-cover of $\overline{\Sigma}^{\mathrm{ed}}$
compatible with $\S^{+}$, which entails that $\S^{+}$ is a $\te$-system.
Thus, since $\overline{\Sigma}^{\mathrm{ed}}$ is $\te$-complete
by Lemma \ref{lem:global en la clausura}, there is a solution $a\in A$
for $\S^{+}$. We show that $a$ is a solution for $\S$. Take $\mu^{*}\in\Omega^{*}$,
and let $\gamma\in\cmic_{\mathcal{Q}}(\mathbf{A})$ such that $\mu^{*}\sub\gamma$.
Since $\bigcap\overline{\Sigma}^{\mathrm{ed}}=\Delta\sub\gamma$ and
$\gamma$ is meet-prime, by Lemma \ref{lem:completamente meet-prime},
there is $\theta\in\overline{\Sigma}^{\mathrm{ed}}$ such that $\theta\sub\gamma$.
Furthermore, since $\Omega^{*}\mins\overline{\Sigma}^{\mathrm{ed}},$
there is $\upsilon^{*}\in\Omega^{*}$ with $\upsilon^{*}\sub\theta$.
Now, as $\S\mins\Sup$ and $a$ is a solution for $\S^{+}$, we have
\[
\langle\S(\upsilon^{*}),\S^{+}(\theta)\rangle\in\theta\text{ and }\langle a,\S^{+}(\theta)\rangle\in\theta;
\]
hence 
\[
\langle a,\S(\upsilon^{*})\rangle\in\theta\sub\gamma.
\]
Also, since $\S$ is an $\sqcup$-system, 
\[
\langle\S(\mu^{*}),\S(\upsilon^{*})\rangle\in\mu^{*}\sqcup\upsilon^{*}\sub\gamma,
\]
and it follows that
\[
\langle a,\S(\mu^{*})\rangle\in\gamma.
\]
Thus, since $\gamma$ is an arbitrary congruence in $\cmic_{\mathcal{Q}}(\mathbf{A})$
containing $\mu^{*}$, we conclude that $\langle a,\S(\mu^{*})\rangle\in\mu^{*}$.
So, $a$ is a solution for $\S$. Finally, since $\S\mins\T$, $a$
is a solution for $\T$ by Lemma \ref{lem:minos}.
\end{proof}

We can now characterize RGI algebras whose relative congruence lattice
is distributive and has finitely many atoms. The following proposition
generalizes \cite[Corollary 7]{Vaggione2019}.
\begin{prop}
\label{prop:RGI RCD ancho finite}Let $\mathbf{F}$ be an RCD algebra
in $\mathcal{Q}$ such that $\con_{\mathcal{Q}}(\mathbf{F})$ is $n$-atomic
for $n\geq2$. The following are equivalent:
\begin{enumerate}
\item The algebra $\mathbf{F}$ is relatively globally indecomposable.
\item The set of atoms of $\con_{\mathcal{Q}}(\mathbf{F})$ is not a CRS.
\end{enumerate}
\end{prop}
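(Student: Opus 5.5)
The plan is to prove the two implications separately. In each, the only things to check are that the set of atoms behaves correctly with respect to $\Delta$; after that, Remark~\ref{rem:CRS implica descomponible} (via Lemma~\ref{lem:carac congruencial de RGI}) and Lemma~\ref{lem: minoriza esp glo rcd implica crs} do the rest. Let $\Omega=\{\mu_{1},\ldots,\mu_{n}\}$ denote the set of atoms of $\con_{\mathcal{Q}}(\mathbf{F})$.

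(1)$\Rightarrow$(2). I argue by contraposition and assume $\Omega$ is a CRS. Since $n\geq2$ and distinct atoms meet to the bottom, $\mu_{i}\cap\mu_{j}=\Delta$ for $i\neq j$, so $\bigcap\Omega=\Delta$. Because atoms are nonzero, $\Delta\notin\Omega$; this is the step where $n\geq2$ is needed. Now apply Lemma~\ref{lem:CRS finito sii los joins son global spec}: the set $\Sigma:=\{\theta\sqcup\delta:\theta,\delta\in\Omega\}$ is $\te$-complete. Since $\Sigma\supseteq\Omega$ (take $\theta=\delta$) and every member of $\Sigma$ lies above a member of $\Omega$, we get $\bigcap\Sigma=\bigcap\Omega=\Delta$. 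Moreover $\Delta\notin\Sigma$, because each join of atoms lies above a nonzero atom. Lemma~\ref{lem:carac congruencial de RGI} then shows $\mathbf{F}$ is not RGI. This is exactly Remark~\ref{rem:CRS implica descomponible} applied with $\bigcap\Omega=\Delta$.

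(2)$\Rightarrow$(1). Again by contraposition, assume $\mathbf{F}$ is not RGI. By Lemma~\ref{lem:carac congruencial de RGI}, there is a $\te$-complete $\Sigma\sub\con_{\mathcal{Q}}(\mathbf{F})$ with $\bigcap\Sigma=\Delta$ and $\Delta\notin\Sigma$. Every $\theta\in\Sigma$ is then a non-bottom element of the atomic lattice $\con_{\mathcal{Q}}(\mathbf{F})$, so it contains some atom. Hence the finite set $\Omega$ minorizes $\Sigma$. Since $\mathbf{F}$ is RCD, Lemma~\ref{lem: minoriza esp glo rcd implica crs} yields that $\Omega$ is a CRS, contradicting (2).

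I do not expect a genuine obstacle here, since the heavy lifting is done by Lemma~\ref{lem: minoriza esp glo rcd implica crs}. The points needing care are the bookkeeping in (1)$\Rightarrow$(2): checking $\bigcap\Sigma=\Delta$ and $\Delta\notin\Sigma$, both of which rely on $n\geq2$.
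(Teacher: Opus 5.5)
Your proof is correct and follows the paper's argument essentially verbatim. If the atoms form a CRS, their pairwise joins form a $\te$-complete set by Lemma~\ref{lem:CRS finito sii los joins son global spec}; that set has intersection $\Delta$ and omits $\Delta$, so $\mathbf{F}$ is not RGI. Conversely, the atoms minorize the spectrum of any nontrivial global representation, and Lemma~\ref{lem: minoriza esp glo rcd implica crs} then makes them a CRS. One small correction to your commentary: $n\geq2$ is what gives $\bigcap\Omega=\Delta$, whereas $\Delta\notin\Omega$ holds for any $n$.
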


\begin{proof}
Let $\Omega$ be the set of atoms of $\con_{\mathcal{Q}}(\mathbf{F})$.
We prove that $\Omega$ is a CRS if and only if $\mathbf{F}$ has
a nontrivial global representation with factors in $\mathcal{Q}$.

Suppose first that $\Omega$ is a CRS, and put 
\[
\Sigma:=\{\theta\sqcup\delta:\theta,\delta\in\Omega\}.
\]
By Lemma~\ref{lem:CRS finito sii los joins son global spec}, the
set $\Sigma$ is $\te$-complete. Since $\Omega\sub\Sigma$, we have
$\bigcap\Sigma=\Delta.$ Moreover, $\Delta\notin\Sigma$, since every
member of $\Sigma$ contains an atom. Hence, Lemma~\ref{lem:global iff te-complete}
implies that $\Sigma$ is a global spectrum, and therefore yields
a nontrivial global representation of $\mathbf{F}$.

Conversely, suppose that $\mathbf{F}$ has a nontrivial global representation
with factors in $\mathcal{Q}$, and let $\Sigma\sub\con_{\mathcal{Q}}(\mathbf{F})$
be its induced spectrum. Then $\Sigma$ is $\te$-complete, $\bigcap\Sigma=\Delta,$
and $\Delta\notin\Sigma$. Since $\con_{\mathcal{Q}}(\mathbf{F})$
is atomic, $\Omega$ minorizes $\Sigma$. Therefore, Lemma~\ref{lem: minoriza esp glo rcd implica crs}
implies that $\Omega$ is a CRS.
\end{proof}

\subsection*{g-algebras}

We next translate the preceding congruence-theoretic characterization
of RGIs into a combinatorial one for finite irredundant subdirect
representations over relatively simple factors.

Let $n\geq2$ , let $S_{1},\ldots,S_{n}$ be nonempty sets, and let
$G\subseteq S_{1}\times\cdots\times S_{n}$. A tuple 
\[
\langle s_{1},\ldots,s_{n}\rangle\in S_{1}\times\cdots\times S_{n}
\]
 is \emph{almost in} $G$ if $\langle s_{1},\ldots,s_{n}\rangle\notin G$
and, for every $i\in[1,n]$, there exists $a_{i}\in S_{i}$ such that
\[
\langle s_{1},\ldots,s_{i-1},a_{i},s_{i+1},\ldots,s_{n}\rangle\in G.
\]
Note that if $G\sub S_{1}\times S_{2}$, there is a tuple almost in
$G$ if and only if G is a proper subdirect relation.
\begin{lem}
\label{lem:RGI semisimple RCD}Suppose $\mathbf{G}$ is RCD, and 
\[
\mathbf{G}\leq_{\mathrm{isd}}\mathbf{S}_{1}\times\ldots\times\mathbf{S}_{n}
\]
 for some $n\geq2$ and $\mathbf{S}_{1},\ldots,\mathbf{S}_{n}\in\mathcal{Q}_{\mathrm{RS}}$.
The following are equivalent:
\begin{enumerate}
\item $\mathbf{G}$ is RGI.
\item There is a tuple that is almost in $G$.
\end{enumerate}
\end{lem}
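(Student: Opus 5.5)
The plan is to reduce the statement to Proposition~\ref{prop:RGI RCD ancho finite} and then unwind what a $\sqcup$-system on the atoms means concretely in terms of tuples.

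Put $\eta_i:=\ker\pi^{G}_i$. Since each $\mathbf{S}_i$ is relatively simple, $\eta_i\in\maxc_{\mathcal{Q}}(\mathbf{G})\sub\cmic_{\mathcal{Q}}(\mathbf{G})$. Irredundance makes the $\eta_i$ pairwise distinct, and being maximal they are pairwise incomparable. Moreover $\bigcap_i\eta_i=\Delta$.

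By Lemma~\ref{lem:atomos de Con A inducido por un conjunto finito de CMIs}, $\con_{\mathcal{Q}}(\mathbf{G})$ is $n$-atomic. Its atoms are $\mu_i:=\hat\eta_i\cap\bigcap_{j\neq i}\eta_j=\bigcap_{j\neq i}\eta_j$, since $\hat\eta_i=\nabla$. So Proposition~\ref{prop:RGI RCD ancho finite} applies, and (1) holds iff $\Omega:=\{\mu_1,\ldots,\mu_n\}$ is not a CRS. It remains to show that $\Omega$ fails to be a CRS iff some tuple is almost in $G$.

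Next I compute the joins. For $i\neq j$ the congruence $\mu_i\sqcup\mu_j$ is contained only in those $\eta_k$ with $k\notin\{i,j\}$. By distributivity and meet-primeness of the $\eta_k$, I expect
\[
\mu_i\sqcup\mu_j=\bigcap_{k\neq i,j}\eta_k .
\]
To justify this I would write $\mu_i\sqcup\mu_j$ as the intersection of the CMI congruences above it. The CMI congruences above $\Delta$ that contain $\mu_i\sqcup\mu_j$ are among the $\eta_k$ or above them, because $\bigcap\Omega$-style meet-primeness forces every $\gamma\in\cmic$ to contain some $\eta_k$. If $\gamma\supseteq\eta_i$ and $\gamma\supseteq\mu_i$, then $\gamma\supseteq\eta_i\sqcup\mu_i=\nabla$, which is impossible. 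Here I use $\eta_i\sqcup\mu_i=\nabla$, since $\mu_i\not\sub\eta_i$ and $\eta_i$ is maximal.

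Thus a $\sqcup$-system $\T$ on $\Omega$ is a choice of elements $g^{(i)}\in G$ such that $g^{(i)}(k)=g^{(j)}(k)$ for all $k\neq i,j$. A solution is some $g\in G$ with $g(k)=g^{(i)}(k)$ for all $k\neq i$, for each $i$.

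Now I translate in both directions.

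Given such a system, define $s(k):=g^{(i)}(k)$ for any $i\neq k$; this is well defined by the compatibility condition. The system is solvable iff $s\in G$, because a solution must agree with $s$ at every coordinate (using $n\ge2$). Also, for each $i$, the tuple $s$ with coordinate $i$ replaced by $g^{(i)}(i)$ is exactly $g^{(i)}\in G$. Hence an unsolvable system yields a tuple $s$ that is almost in $G$.

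Conversely, suppose $s$ is almost in $G$, witnessed by $a_i$. Let $g^{(i)}$ be $s$ with coordinate $i$ replaced by $a_i$. These $g^{(i)}$ form a $\sqcup$-system, and any solution would have to equal $s\notin G$, so the system is unsolvable.

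The main obstacle is the join computation $\mu_i\sqcup\mu_j=\bigcap_{k\neq i,j}\eta_k$ in the relative lattice. The case $n=2$ must be handled separately: there $\mu_1\sqcup\mu_2=\nabla$ and the condition on $i\neq j$ is vacuous, but the translation still works directly.
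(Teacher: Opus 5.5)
Your proposal is correct and follows the paper's proof essentially step for step: you identify the atoms as $\mu_i=\bigcap_{j\neq i}\eta_j$ via Lemma~\ref{lem:atomos de Con A inducido por un conjunto finito de CMIs}, compute $\mu_i\sqcup\mu_j=\bigcap_{k\neq i,j}\eta_k$, translate $\sqcup$-systems on the atoms into a tuple $\mathbf{s}$ (solvable iff $\mathbf{s}\in G$, and almost in $G$ otherwise), and conclude with Proposition~\ref{prop:RGI RCD ancho finite}. The only difference is in the join formula: the paper just cites distributivity, whereas you derive it from meet-primeness of the completely meet-irreducible congruences, and both arguments are valid.
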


\begin{proof}
For each $i\in[1,n]$ put $\gamma_{i}:=\ker\pi^{\mathbf{G}}_{i}$.
Note that $\Omega:=\{\gamma_{i}:i\in[1,n]\}$ satisfies all assumptions
of Lemma~\ref{lem:atomos de Con A inducido por un conjunto finito de CMIs},
so the lattice $\con_{\mathcal{Q}}(\mathbf{G})$ is $n$-atomic, and
$\mu_{i}:=\hat{\gamma}_{i}\cap\bigcap_{j\neq i}\gamma_{j}$ is an
atom for every $i\in[1,n]$. Since each $\gamma_{i}$ is a maximal
$\mathcal{Q}$-congruence of $\mathbf{G}$, we have $\mu_{i}=\bigcap_{j\neq i}\gamma_{j}$.
Moreover, distributivity yields $\mu_{i}\sqcup\mu_{j}=\bigcap_{k\neq i,j}\gamma_{k}$
whenever $i\neq j$.

We claim that there is an unsolvable $\sqcup$-system on $\{\mu_{i}:i\in[1,n]\}$
if and only if there is a tuple in $S_{1}\times\cdots\times S_{n}$
that is almost in $G$.

Let $\S\colon\mu_{1},\ldots,\mu_{n}\mapsto\mathbf{g}_{1},\ldots,\mathbf{g}_{n}$
be a $\sqcup$-system. For each $k\in[1,n]$, define $\mathbf{s}(k):=\mathbf{g}_{i}(k)$
for any $i\neq k$. This is well defined: if $i,j\neq k$, then $\langle\mathbf{g}_{i},\mathbf{g}_{j}\rangle\in\mu_{i}\sqcup\mu_{j}\subseteq\gamma_{k}$,
and hence $\mathbf{g}_{i}(k)=\mathbf{g}_{j}(k)$.

For each $i\in[1,n]$, the tuples $\mathbf{s}$ and $\mathbf{g}_{i}$
agree at every coordinate other than $i$. We now observe that $\S$
is solvable if and only if $\mathbf{s}\in G$. Indeed, if $\mathbf{s}\in G$,
then $\langle\mathbf{s},\mathbf{g}_{i}\rangle\in\mu_{i}$ for every
$i\in[1,n]$, so $\mathbf{s}$ is a solution of $\S$. Conversely,
if $\mathbf{g}\in G$ is a solution of $\S$, then, for every $k\in[1,n]$,
choosing $i\neq k$ gives $\mathbf{g}(k)=\mathbf{g}_{i}(k)=\mathbf{s}(k)$.
Thus $\mathbf{g}=\mathbf{s}$, and consequently $\mathbf{s}\in G$.
It follows that if $\S$ is unsolvable, then $\mathbf{s}\notin G$,
and therefore $\mathbf{s}$ is almost in $G$.

Conversely, suppose that $\mathbf{s}\in S_{1}\times\cdots\times S_{n}$
is almost in $G$. For each $i\in[1,n]$, choose $\mathbf{g}_{i}\in G$
that differs from $\mathbf{s}$ only at the $i$th coordinate. Then,
for $i\neq j$, the tuples $\mathbf{g}_{i}$ and $\mathbf{g}_{j}$
agree outside the coordinates $i$ and $j$, and hence $\langle\mathbf{g}_{i},\mathbf{g}_{j}\rangle\in\bigcap_{k\neq i,j}\gamma_{k}=\mu_{i}\sqcup\mu_{j}$.
Thus $\mu_{1},\ldots,\mu_{n}\mapsto\mathbf{g}_{1},\ldots,\mathbf{g}_{n}$
is a $\sqcup$-system. Its associated tuple is $\mathbf{s}$, so,
since $\mathbf{s}\notin G$, the preceding argument shows that this
system is unsolvable.

The result now follows from Proposition~\ref{prop:RGI RCD ancho finite}.
\end{proof}

Lemma~\ref{lem:RGI semisimple RCD} motivates the following definition.
An algebra $\mathbf{F}\in\mathcal{Q}$ is a relative $g$-algebra\footnote{The terminology is motivated by the $g$-relations of \cite{Vaggione2019}. }
if it is RCD and admits an irredundant subdirect representation 
\[
\mathbf{F}\cong\mathbf{G}\leq_{\mathrm{isd}}\mathbf{S}_{1}\times\cdots\times\mathbf{S}_{n}
\]
for $n\geq2$ and $\mathbf{S}_{1},\ldots,\mathbf{S}_{n}\in\mathcal{Q}_{\mathrm{RS}}$,
such that some tuple is almost in $G$.
\begin{rem}
\label{rem:g-algebras de ancho 2}Note that the $g$-algebras of width
two are exactly those admitting an irredundant subdirect representation
with two simple factors whose image is properly contained in their
product. Indeed, for a subdirect relation $G\sub S_{1}\times S_{2}$,
the existence of a tuple almost in $G$ is equivalent to $G\neq S_{1}\times S_{2}$.
\end{rem}

\begin{cor}
\label{cor:toda g-algebra es RGI}Every relative $g$-algebra is RGI.
\end{cor}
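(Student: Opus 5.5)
The plan is to unwind the definition and apply Lemma~\ref{lem:RGI semisimple RCD}. Let $\mathbf{F}$ be a relative $g$-algebra. By definition, $\mathbf{F}$ is RCD, and there is an isomorphism $\mathbf{F}\cong\mathbf{G}$ onto some
\[
\mathbf{G}\leq_{\mathrm{isd}}\mathbf{S}_{1}\times\cdots\times\mathbf{S}_{n},
\]
where $n\geq2$, each $\mathbf{S}_{i}\in\mathcal{Q}_{\mathrm{RS}}$, and some tuple is almost in $G$.

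First I will check that the hypotheses of Lemma~\ref{lem:RGI semisimple RCD} are met by $\mathbf{G}$ rather than by $\mathbf{F}$. Both being RCD and belonging to $\mathcal{Q}$ are invariant under isomorphism. Indeed, an isomorphism $\mathbf{F}\to\mathbf{G}$ induces a lattice isomorphism $\con_{\mathcal{Q}}(\mathbf{F})\cong\con_{\mathcal{Q}}(\mathbf{G})$, since quotients of isomorphic algebras by corresponding congruences are isomorphic. Hence $\con_{\mathcal{Q}}(\mathbf{G})$ is distributive, and $\mathbf{G}\in\mathcal{Q}$ because quasivarieties are closed under $\mathbb{I}$.

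Next I apply the implication (2)$\Rightarrow$(1) of Lemma~\ref{lem:RGI semisimple RCD}, which gives that $\mathbf{G}$ is RGI. It remains to transfer this to $\mathbf{F}$. The RGI property is isomorphism invariant: if $\gamma:\mathbf{F}\hookrightarrow\prod_{I}\mathbf{A}_{i}$ is a global representation and $\iota:\mathbf{G}\to\mathbf{F}$ is an isomorphism, then $\gamma\circ\iota$ is a global representation of $\mathbf{G}$ with the same image. So some $\pi_{i}\circ\gamma\circ\iota$ is injective, and therefore $\pi_{i}\circ\gamma$ is injective. Alternatively, Lemma~\ref{lem:carac congruencial de RGI} can be transported along the lattice isomorphism, using that $\te$-completeness is preserved.

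There is no real obstacle. The only point needing care is this isomorphism invariance, which is routine.
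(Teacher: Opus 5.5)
Your proposal is correct and matches the paper's proof, which is simply ``immediate from Lemma~\ref{lem:RGI semisimple RCD}'' via the implication (2)$\Rightarrow$(1). You additionally spell out the routine transfer of the RCD and RGI properties along the isomorphism $\mathbf{F}\cong\mathbf{G}$, which the paper leaves implicit.
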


\begin{proof}
Immediate from Lemma~\ref{lem:RGI semisimple RCD}. 
\end{proof}

\section{Quasivarieties whose RSI form an almost universal class\label{sec:almost-universal-rsi}}

The aim of this section is to prove that every RCD algebra in a quasivariety
with a NU term and an almost universal class of RSIs is RGI-spectral.
We are already in a position to characterize the globally indecomposables
involved.
\begin{lem}
\label{lem:carac RGI RCD para NU y clase universal de SI}Suppose
that $\mathcal{Q}$ has a $(r+1)$-ary NU term and that $\mathcal{Q}_{\mathrm{RSI}}$
is almost universal. The following hold:
\begin{enumerate}
\item For an RCD algebra $\mathbf{F}\in\mathcal{Q}$ the following are equivalent:
\begin{enumerate}
\item $\mathbf{F}$ is RGI.
\item $\con_{\mathcal{Q}}(\mathbf{F})$ is $n$-atomic for some $n\in[1,r]$,
and if $n\geq2$ its atoms do not form a CRS.
\end{enumerate}
\item If $\mathbf{A}\in\mathcal{Q}$ is RCD, then for $\theta\in\con_{\mathcal{Q}}(\mathbf{A})$
the following are equivalent:
\begin{enumerate}
\item $\theta$ is an RGI-congruence of $\mathbf{A}$.
\item $\theta$ is CMI or $\left|\cov(\theta)\right|\in[2,r]$ and $\cov(\theta)$
is not a CRS. 
\end{enumerate}
\end{enumerate}
\end{lem}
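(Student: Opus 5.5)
The plan is to prove (1) first and then obtain (2) by passing to the quotient $\mathbf{A}/\theta$ via Lemma~\ref{lem:correspondencia}. Throughout, $\mathcal{S}:=(\mathcal{Q}_{\mathrm{RSI}})^{+}$ is the almost universal class supplied by the hypothesis. It is contained in $\mathcal{Q}$ and contains $\mathcal{Q}_{\mathrm{RSI}}$. For any $\mathbf{A}$ we have $\con_{\mathcal{S}}(\mathbf{A})=\cmic_{\mathcal{Q}}(\mathbf{A})\cup\{\nabla\}$. Trivial algebras are excluded throughout, and I will check this edge case separately.

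For (1), direction (b)$\Rightarrow$(a): if $n=1$, then $\Delta$ is completely meet-irreducible. Any $\te$-complete $\Sigma$ with $\bigcap\Sigma=\Delta$ must then contain $\Delta$, so $\mathbf{F}$ is RGI by Lemma~\ref{lem:carac congruencial de RGI}. If $n\geq2$, the claim is exactly Proposition~\ref{prop:RGI RCD ancho finite}.

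For (a)$\Rightarrow$(b), apply Theorem~\ref{thm:espectro global para NU} to $\mathbf{F}$ with this $\mathcal{S}$. This gives a global spectrum $\Sigma$. Since $\mathbf{F}$ is RGI, Lemma~\ref{lem:carac congruencial de RGI} yields $\Delta\in\Sigma$. Hence either $\Delta\in\cmic_{\mathcal{Q}}(\mathbf{F})$, in which case $\con_{\mathcal{Q}}(\mathbf{F})$ is $1$-atomic, or $\Delta=\bigcap\Omega$ for some $\Omega\sub\con_{\mathcal{S}}(\mathbf{F})$ with $|\Omega|\leq r$.

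In the second case, discard $\nabla$ from $\Omega$ and shrink to a minimal subfamily $\Omega_{0}\sub\cmic_{\mathcal{Q}}(\mathbf{F})$ with $\bigcap\Omega_{0}=\Delta$. Its members are pairwise incomparable, so Lemma~\ref{lem:atomos de Con A inducido por un conjunto finito de CMIs} makes $\con_{\mathcal{Q}}(\mathbf{F})$ $|\Omega_{0}|$-atomic, with $|\Omega_{0}|\leq r$. If $|\Omega_{0}|\geq2$, Proposition~\ref{prop:RGI RCD ancho finite}, applied to the RGI algebra $\mathbf{F}$, says the atoms are not a CRS. Note that the non-CRS property of $\Omega$ itself is not needed here.

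For (2), set $\mathbf{F}:=\mathbf{A}/\theta$. By Lemma~\ref{lem:correspondencia}, $\con_{\mathcal{Q}}(\mathbf{F})\cong[\theta)$. This isomorphism gives three things:
\begin{itemize}
\item $\mathbf{F}$ is RCD, since $[\theta)$ is a sublattice of a distributive lattice;
\item the atoms of $\con_{\mathcal{Q}}(\mathbf{F})$ correspond to $\cov(\theta)$, and CRSs correspond to CRSs;
\item $\Delta_{F}$ is CMI if and only if $\theta$ is CMI.
\end{itemize}
So (2) is the translation of (1), except for one point. Condition (b) of (2) only asserts that $\theta$ has between $2$ and $r$ covers, not that $[\theta)$ is atomic. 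The implication (a)$\Rightarrow$(b) is immediate from (1). For (b)$\Rightarrow$(a) I must show that $\mathbf{F}$ is RGI knowing only that $\con_{\mathcal{Q}}(\mathbf{F})$ has finitely many ($\leq r$) atoms, at least two, which do not form a CRS.

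This is the step I expect to be the main obstacle. My first attempt will argue by contradiction. Suppose $\Sigma$ is $\te$-complete, $\bigcap\Sigma=\Delta$, and $\Delta\notin\Sigma$. Replace $\Sigma$ by $\overline{\Sigma}^{\mathrm{ed}}$, which is still $\te$-complete by Lemma~\ref{lem:global en la clausura}. I then want to show that the set of atoms $\Omega$ minorizes it, because Lemma~\ref{lem: minoriza esp glo rcd implica crs} would then make $\Omega$ a CRS, contradicting the hypothesis.

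To get the minorization, I would try to show that every nonzero relative congruence of $\mathbf{F}$ lies above an atom, i.e.\ that $\con_{\mathcal{Q}}(\mathbf{F})$ is atomic. For this I would use the NU term together with the universality of $\mathcal{Q}_{\mathrm{RSI}}^{+}$. Via Lemmas~\ref{lem:compacidad 1} and~\ref{lem:completamente meet-prime}, $\cmic_{\mathcal{Q}}(\mathbf{F})$ is $\te$-compact. For each atom $\mu$ pick $\gamma_{\mu}\in\cmic_{\mathcal{Q}}(\mathbf{F})$ with $\mu\nsubseteq\gamma_{\mu}$. The goal is to prove that $\bigcap_{\mu}\gamma_{\mu}=\Delta$ by a compactness argument. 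Once that holds, Lemma~\ref{lem:atomos de Con A inducido por un conjunto finito de CMIs} forces atomicity.

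If atomicity cannot be proved directly, the fallback is to handle only those $\sigma\in\Sigma$ lying above no atom. The idea is to push them up to CMI congruences using CMI-saturation, as in Lemma~\ref{lem:CMI alcanza}. One can modify $\Sigma$ without losing $\te$-completeness, aiming to make the atoms minorize the modified family.
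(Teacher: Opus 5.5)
Your argument for (1), and for (a)$\Rightarrow$(b) in (2), is correct. In (1)(a)$\Rightarrow$(b) your route differs slightly from the paper's. You apply Theorem~\ref{thm:espectro global para NU} with $\mathcal{S}=\mathcal{Q}_{\mathrm{RSI}}^{+}$ and Lemma~\ref{lem:carac congruencial de RGI} to see that $\Delta$ is an intersection of at most $r$ CMI congruences, and then use Lemma~\ref{lem:atomos de Con A inducido por un conjunto finito de CMIs}. The paper instead goes through Corollary~\ref{cor:localizacion de RGI para nu a secas}, uses closure of $\mathcal{Q}_{\mathrm{RSI}}^{+}$ under subalgebras to bound $\sdw_{\mathcal{Q}}(\mathbf{F})\leq r$, and then applies Lemma~\ref{lem:equivalencias RCD ancho finito}. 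Both finish with Proposition~\ref{prop:RGI RCD ancho finite}; yours avoids the detour through subalgebras of products of ultraproducts.

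The gap is the step you flag yourself: (2)(b)$\Rightarrow$(a) without knowing that $[\theta)$ is atomic. Neither of your strategies can close it, because that implication is false. Take bounded distributive lattices, so $r=2$ and the subdirectly irreducibles plus trivial algebras form a universal class. Let $\mathbf{C}$ be the chain $0<c<1$, let $\mathbf{R}$ be the chain of rationals in $[0,1]$, and put $\mathbf{A}:=\mathbf{C}\times\mathbf{R}$.
By congruence distributivity, $\con(\mathbf{A})\cong\con(\mathbf{C})\times\con(\mathbf{R})$. Moreover, $\con(\mathbf{R})$ has no atoms: a block containing $a<b$ can always be shrunk to $[a,b']$ with $a<b'<b$. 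So $\cov(\Delta)$ consists exactly of the two atoms induced by the congruences of $\mathbf{C}$ with blocks $\{0,c\}$ and $\{c,1\}$. These do not permute, so $\cov(\Delta)$ is not a CRS and $\Delta$ satisfies (2)(b). Yet $\mathbf{A}$ is a direct product of two nontrivial algebras, which is a global representation with no injective projection. Hence $\Delta$ is not a GI-congruence.

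This also shows concretely where your attempts break. Every CMI congruence omitting an atom contains the kernel of $\mathbf{A}\to\mathbf{C}$, so your $\gamma_{\mu}$ can never meet to $\Delta$. No modification of $\Sigma$ can help either, since $\mathbf{A}$ genuinely decomposes.

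The paper's one-line derivation of (2) from (1) and Lemma~\ref{lem:correspondencia} tacitly makes the same atomicity assumption. The correct statement requires, in the second alternative of (2)(b), that $[\theta)$ be atomic (equivalently, that $\theta$ be an intersection of finitely many CMI relative congruences). With that amendment, (2) is exactly the translation of (1) you describe. The amendment is harmless downstream. The proof of Theorem~\ref{thm:espectro global para RCD almost univ} goes through verbatim with the correspondingly smaller set, because it only uses $\theta=\bigcap\Omega$ for finite antichains $\Omega$ of CMI congruences. For such $\theta$, atomicity of $[\theta)$ is supplied by Lemma~\ref{lem:atomos de Con A inducido por un conjunto finito de CMIs}.
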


\begin{proof}
(1). Suppose $\mathbf{F}\in\mathcal{Q}_{\mathrm{RGI}}$ is RCD.

(a)$\Rightarrow$(b). If $\mathbf{F}\in\mathcal{Q}_{\mathrm{RSI}}$,
we are done; so assume $\mathbf{F}\notin\mathcal{Q}_{\mathrm{RSI}}$.
Put $n:=\sdw_{\mathcal{Q}}(\mathbf{F})$; we claim that
\begin{equation}
n\in[2,r].\label{eq: sdw(F) acotado}
\end{equation}
Indeed, Corollary~\ref{cor:localizacion de RGI para nu a secas}
together with the fact that $\mathcal{Q}_{\mathrm{RSI}}$ is almost
universal imply 
\[
\mathbf{F}\cong\mathbf{G}\leq\mathbf{A}_{1}\times\ldots\times\mathbf{A}_{r}
\]
for some $\mathbf{A}_{1},\ldots,\mathbf{A}_{r}\in\mathcal{Q}^{+}_{\mathrm{RSI}}$
. As $\mathcal{Q}^{+}_{\mathrm{RSI}}$ is closed under subalgebras,
we have that 
\[
\mathbf{B}_{i}:=\pi_{i}[\mathbf{G}]\in\mathcal{Q}^{+}_{\mathrm{RSI}}\quad\text{for }i\in[1,r],
\]
hence 
\[
\mathbf{F}\hookrightarrow_{\mathrm{sd}}\mathbf{B}_{1}\times\ldots\times\mathbf{B}_{r},
\]
and it follows that $n\leq r$. Also, since $\mathbf{F}\in\mathcal{Q}_{\mathrm{RGI}}\setminus\mathcal{Q}_{\mathrm{RSI}}$,
we have $n\geq2$. Hence, (\ref{eq: sdw(F) acotado}) holds.

Lemma~\ref{lem:equivalencias RCD ancho finito} says that $\con_{\mathcal{Q}}(\mathbf{F})$
is $n$-atomic, and, by Proposition~\ref{prop:RGI RCD ancho finite},
the atoms of $\con_{\mathcal{Q}}(\mathbf{F})$ do not form a CRS.

(b)$\Rightarrow$(a). If $n=1$, then $\mathbf{F}\in\mathcal{Q}_{\mathrm{RSI}}\sub\mathcal{Q}_{\mathrm{RGI}}$.
For $n\geq2$ apply Proposition~\ref{prop:RGI RCD ancho finite}.

(2). This follows from (1) and Lemma~\ref{lem:correspondencia}.
\end{proof}

\begin{rem}
Recall that the congruence lattice of every algebra with a NU term
is distributive. Hence, when $\mathcal{Q}$ is a variety, the RCD
assumption in Lemma~\ref{lem:carac RGI RCD para NU y clase universal de SI}
is superfluous. The same observation applies to several results below. 
\end{rem}

The next two lemmas set up the proof of the main theorem in this section.
\begin{lem}
\label{lem:S r-soluble sobre CMI alcanza}Suppose that $\mathcal{Q}$
has a $(r+1)$-ary NU term and that $\mathcal{Q}_{\mathrm{RSI}}$
is almost universal. Let $\mathbf{A}\in\mathcal{Q}$, and let $\Sigma\sub\con_{\mathcal{Q}}(\mathbf{A})$
be $\te$-compact and CMI-saturated. If $\S$ is a $\te$-system on
$\Sigma$ such that $\S|_{[\Sigma)\cap\cmic_{\mathcal{Q}}(\mathbf{A})}$
is $r$-solvable, then $\S$ is solvable.
\end{lem}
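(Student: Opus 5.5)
The plan is to reduce solvability of $\S$ to solvability of its restriction to the completely meet-irreducible part, and then apply the infinitary Baker--Pixley theorem (Theorem~\ref{thm:BP congruencial}) to that restriction. Put
\[
\Gamma:=[\Sigma)\cap\cmic_{\mathcal{Q}}(\mathbf{A}).
\]
Since $\Sigma$ is CMI-saturated, $\Gamma\sub\Sigma$, so $\S|_{\Gamma}$ makes sense.

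First, by item~(1) of Lemma~\ref{lem:CMI alcanza}, $\S$ is a $\sqcup$-system. Then item~(2) of the same lemma says it suffices to show that $\S|_{\Gamma}$ is solvable. Next, $\S|_{\Gamma}$ is again a $\te$-system: any $\te$-cover of $\Sigma$ compatible with $\S$ also covers $\Gamma$ and is compatible with the restriction. By hypothesis $\S|_{\Gamma}$ is $r$-solvable. Hence Theorem~\ref{thm:BP congruencial} yields a solution, provided $\Gamma$ is $\te$-compact.

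The only real step is this compactness. I would take $\mathcal{S}:=\mathcal{Q}_{\mathrm{RSI}}$, which is almost universal by assumption. Then I would observe that
\[
\con_{\mathcal{S}}(\mathbf{A})=\cmic_{\mathcal{Q}}(\mathbf{A}),
\]
because $\mathbf{A}/\theta$ is RSI exactly when $\Delta_{A/\theta}$ is completely meet-irreducible in $\con_{\mathcal{Q}}(\mathbf{A}/\theta)$. By the Correspondence Lemma~\ref{lem:correspondencia}, this holds exactly when $\theta\in\cmic_{\mathcal{Q}}(\mathbf{A})$; note that $\nabla$ is excluded on both sides, since RSI algebras are nontrivial. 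Then $\Gamma=[\Sigma)\cap\con_{\mathcal{S}}(\mathbf{A})$. Since $\Sigma$ is $\te$-compact, item~(3) of Lemma~\ref{lem:compacidad 1} gives that $\Gamma$ is $\te$-compact.

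I expect no serious obstacle. The main point to check carefully is the identification $\con_{\mathcal{Q}_{\mathrm{RSI}}}(\mathbf{A})=\cmic_{\mathcal{Q}}(\mathbf{A})$, in particular that the trivial algebras adjoined in $\mathcal{Q}^{+}_{\mathrm{RSI}}$ play no role. This is what lets Lemma~\ref{lem:compacidad 1}, which is stated for almost universal classes, apply directly to the set $\Gamma$.
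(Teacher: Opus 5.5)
Your proposal is correct and follows essentially the same route as the paper. The paper obtains $\te$-compactness of $[\Sigma)\cap\cmic_{\mathcal{Q}}(\mathbf{A})$ from Lemma~\ref{lem:compacidad 1}, applies Theorem~\ref{thm:BP congruencial}, and concludes with Lemma~\ref{lem:CMI alcanza}. You additionally spell out two points the paper leaves tacit: the identification $\con_{\mathcal{Q}_{\mathrm{RSI}}}(\mathbf{A})=\cmic_{\mathcal{Q}}(\mathbf{A})$, and the fact that the restricted system is still a $\te$-system.
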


\begin{proof}
Suppose $\Sigma$ is as in the statement and put $\Gamma:=\cmic_{\mathcal{Q}}(\mathbf{A})$.
Note that $[\Sigma)\cap\Gamma$ is $\te$-compact by Lemma \ref{lem:compacidad 1}.

Let $\S$ be a $\te$-system on $\Sigma$ such that $\S|_{[\Sigma)\cap\Gamma}$
is $r$-solvable. Then, as $[\Sigma)\cap\Gamma$ is $\te$-compact,
Theorem~\ref{thm:BP congruencial} implies that $\S|_{[\Sigma)\cap\Gamma}$
is solvable. So, by Lemma~\ref{lem:CMI alcanza}, $\S$ is solvable.
\end{proof}

\begin{lem}
\label{lem:almost solvable}Let $\mathbf{A}$ be RCD. Suppose $\Omega\sub\cmic_{\mathcal{Q}}(\mathbf{A})$
is finite and $\cov(\bigcap\Omega)$ is a CRS. Let $\S$ be a $\sqcup$-system
on $\Omega$, and for each $\delta\in\Omega$ let $\S_{\delta}$ be
the system on $(\Omega\setminus\{\delta\})\cup\{\hat{\delta}\}$ defined
by 
\begin{align*}
\S_{\delta}(\hat{\delta}):= & \S(\delta);\\
\S_{\delta}(\gamma):= & \S(\gamma)\text{ for }\gamma\in(\Omega\setminus\{\delta\}).
\end{align*}
If $\S_{\delta}$ is solvable for every $\delta\in\Omega$, then $\S$
is solvable.
\end{lem}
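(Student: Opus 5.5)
The plan is to reduce to the case $\bigcap\Omega=\Delta$ with pairwise incomparable members. Then we glue solutions of the systems $\S_{\delta}$ along the atoms of $\con_{\mathcal{Q}}(\mathbf{A})$ and use that $\cov(\Delta)$ is a CRS.

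\emph{Step 1: reductions.} If $|\Omega|\leq1$ there is nothing to prove, since a one-element system is solved by its value.

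Let $\Omega_{0}$ be the set of minimal members of $\Omega$, and note that $\bigcap\Omega_{0}=\bigcap\Omega$. Any solution of $\S|_{\Omega_{0}}$ solves $\S$. Indeed, if $\gamma\subsetneq\gamma'$ are in $\Omega$, then $\langle\S(\gamma),\S(\gamma')\rangle\in\gamma\sqcup\gamma'=\gamma'$.

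For $\delta\in\Omega_{0}$, the system $(\S|_{\Omega_{0}})_{\delta}$ is a restriction of $\S_{\delta}$, so it is solvable. Hence we may assume that the members of $\Omega$ are pairwise incomparable and $|\Omega|\geq2$.

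Put $\theta:=\bigcap\Omega$. By Lemma~\ref{lem:correspondencia}, which preserves solvability, $\sqcup$-systems, CRSs, CMIs, covers and distributivity, we may pass to $\mathbf{A}/\theta$ and assume $\bigcap\Omega=\Delta$.

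Lemma~\ref{lem:atomos de Con A inducido por un conjunto finito de CMIs} now says that the atoms of $\con_{\mathcal{Q}}(\mathbf{A})$ are exactly the pairwise distinct congruences $\mu_{\delta}:=\hat{\delta}\cap\bigcap(\Omega\setminus\{\delta\})$, for $\delta\in\Omega$. Thus $\cov(\Delta)=\{\mu_{\delta}:\delta\in\Omega\}$ is a CRS.

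\emph{Step 2: the glued system.} For each $\delta\in\Omega$ fix a solution $a_{\delta}$ of $\S_{\delta}$, and define $\T(\mu_{\delta}):=a_{\delta}$. The main step is to check that $\T$ is a $\sqcup$-system.

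Fix $\delta\neq\varepsilon$ in $\Omega$.
\begin{itemize}
\item For $\gamma\in\Omega\setminus\{\delta,\varepsilon\}$, both $a_{\delta}$ and $a_{\varepsilon}$ are $\gamma$-related to $\S(\gamma)$.
\item $a_{\delta}\equiv\S(\delta)\pmod{\hat{\delta}}$, and $a_{\varepsilon}\equiv\S(\delta)\pmod{\delta}$ with $\delta\subseteq\hat{\delta}$. So $a_{\delta}\equiv a_{\varepsilon}\pmod{\hat{\delta}}$.
\item Symmetrically, $a_{\delta}\equiv a_{\varepsilon}\pmod{\hat{\varepsilon}}$.
\end{itemize}
Hence
\[
\langle a_{\delta},a_{\varepsilon}\rangle\in\hat{\delta}\cap\hat{\varepsilon}\cap\textstyle\bigcap(\Omega\setminus\{\delta,\varepsilon\}),
\]
and it remains to show this intersection is contained in $\mu_{\delta}\sqcup\mu_{\varepsilon}$.

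Since every relative congruence is the meet of the CMIs above it, it suffices to take $\gamma\in\cmic_{\mathcal{Q}}(\mathbf{A})$ with $\mu_{\delta}\sqcup\mu_{\varepsilon}\subseteq\gamma$. We show $\gamma$ contains one of $\hat{\delta}$, $\hat{\varepsilon}$, or a member of $\Omega\setminus\{\delta,\varepsilon\}$.

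By meet-primality (RCD), $\mu_{\delta}\subseteq\gamma$ gives $\hat{\delta}\subseteq\gamma$, or $\varepsilon\subseteq\gamma$, or some other member of $\Omega$ lies below $\gamma$. Symmetrically, $\mu_{\varepsilon}\subseteq\gamma$ gives $\hat{\varepsilon}\subseteq\gamma$, or $\delta\subseteq\gamma$, or some other member lies below $\gamma$.

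The only problematic case is $\delta,\varepsilon\subseteq\gamma$. By incomparability, $\gamma\neq\delta$, so $\delta\subsetneq\gamma$. Since $\delta$ is CMI with unique cover $\hat{\delta}$, this forces $\hat{\delta}\subseteq\gamma$. So in every case $\gamma$ contains the intersection, and $\T$ is a $\sqcup$-system.

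\emph{Step 3: conclusion.} Since $\cov(\Delta)$ is a CRS, $\T$ has a solution $a$. Given $\gamma\in\Omega$, choose $\delta\in\Omega\setminus\{\gamma\}$, which is possible because $|\Omega|\geq2$. Then $\mu_{\delta}\subseteq\gamma$, so
\[
a\equiv a_{\delta}\equiv\S(\gamma)\pmod{\gamma},
\]
and $a$ solves $\S$.

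I expect the only delicate point to be the containment argument in Step 2. It relies on pairwise incomparability, which is why the reduction in Step 1 is made first.
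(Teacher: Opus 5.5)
Your proof is correct and follows the paper's argument. It uses the same reductions (to minimal elements, then to the quotient by $\bigcap\Omega$), the same glued system $\T(\mu_\delta):=a_\delta$ on the atoms from Lemma~\ref{lem:atomos de Con A inducido por un conjunto finito de CMIs}, and the same final check via $\mu_\delta\subseteq\gamma$.

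The one difference is how you verify that $\T$ is a $\sqcup$-system. The paper computes $\mu_\gamma\sqcup\mu_\delta=(\hat\gamma\cap\hat\delta)\cap(\gamma\sqcup\delta)\cap\bigcap(\Omega\setminus\{\gamma,\delta\})$ by distributivity and uses the $\sqcup$-system property of $\S$ for the middle factor. Your meet-prime argument against CMIs shows that this factor is redundant once the members of $\Omega$ are incomparable, since then $\hat\delta\subseteq\delta\sqcup\varepsilon$.
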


\begin{proof}
Replacing $\Omega$ by its subset of minimal elements if necessary,
we can assume without loss of generality that the members of $\Omega$
are pairwise incomparable. Moreover, by Lemma~\ref{lem:correspondencia}
it is enough to consider the case $\bigcap\Omega=\Delta$. Also, if
$\Omega$ is a singleton, the lemma is trivial, so we assume $\left|\Omega\right|\geq2$.
After these reductions, applying Lemma~\ref{lem:atomos de Con A inducido por un conjunto finito de CMIs}
to $\Omega$ yields that
\[
\mu_{\gamma}:=\hat{\gamma}\cap\bigcap(\Omega\setminus\{\gamma\})
\]
is an atom of $\con_{\mathcal{Q}}(\mathbf{A})$ for each $\gamma\in\Omega$,
and every atom of $\con_{\mathcal{Q}}(\mathbf{A})$ is of this form.
Then, $\{\mu_{\gamma}:\gamma\in\Omega\}=\cov(\bigcap\Omega)$ is a
CRS by hypothesis.

Fix a $\sqcup$-system $\S$ on $\Omega$ and for each $\delta\in\Omega$
let $\S_{\delta}$ be as in the statement. Assume that each $\S_{\delta}$
has a solution $a_{\delta}\in A$. That is, 
\begin{align}
\langle a_{\delta},\S(\delta)\rangle & \in\hat{\delta}\quad\text{for all }\delta\text{, and}\label{eq:soluciones}\\
\langle a_{\delta},\S(\gamma)\rangle & \in\gamma\quad\text{ for all }\gamma\neq\delta.\nonumber 
\end{align}
We claim that 
\begin{equation}
\langle a_{\gamma},a_{\delta}\rangle\in\mu_{\gamma}\sqcup\mu_{\delta}\quad\text{for all }\gamma,\delta.\label{eq:supremos}
\end{equation}
Fix $\gamma\neq\delta$; by distributivity, 

\begin{align*}
\mu_{\gamma}\sqcup\mu_{\delta} & =(\hat{\gamma}\cap\delta\cap\bigcap(\Omega\setminus\{\gamma,\delta\}))\sqcup(\hat{\delta}\cap\gamma\cap\bigcap(\Omega\setminus\{\gamma,\delta\}))\\
 & =((\hat{\gamma}\cap\delta)\sqcup(\hat{\delta}\cap\gamma))\cap\bigcap(\Omega\setminus\{\gamma,\delta\})\\
 & =(\hat{\gamma}\cap\hat{\delta})\cap(\gamma\sqcup\delta)\cap\bigcap(\Omega\setminus\{\gamma,\delta\}).
\end{align*}
We show that $\langle a_{\gamma},a_{\delta}\rangle$ belongs to $(\hat{\gamma}\cap\hat{\delta})$,
$(\gamma\sqcup\delta)$, and $\bigcap(\Omega\setminus\{\gamma,\delta\})$.
By (\ref{eq:soluciones}) we have $a_{\gamma}\stackrel{\hat{\gamma}}{\sim}\S(\gamma)\stackrel{\gamma}{\sim}a_{\delta}$
and $a_{\delta}\stackrel{\hat{\delta}}{\sim}\S(\delta)\stackrel{\delta}{\sim}a_{\gamma}$,
which entails
\[
\langle a_{\gamma},a_{\delta}\rangle\in\hat{\gamma}\cap\hat{\delta}.
\]
From (\ref{eq:soluciones}) and the fact that $\mathsf{S}$ is a $\sqcup$-system,
it follows that $a_{\gamma}\stackrel{\delta}{\sim}\S(\delta)\stackrel{\gamma\sqcup\delta}{\sim}\S(\gamma)\stackrel{\gamma}{\sim}a_{\delta}$.
Hence,
\[
\langle a_{\gamma},a_{\delta}\rangle\in\gamma\sqcup\delta.
\]
By (\ref{eq:soluciones}) we have that, if $\rho\in\Omega\setminus\{\gamma,\delta\}$,
then $\langle a_{\gamma},\S(\rho)\rangle,\langle a_{\delta},\S(\rho)\rangle\in\rho$.
Therefore,
\[
\langle a_{\gamma},a_{\delta}\rangle\in\bigcap(\Omega\setminus\{\gamma,\delta\}).
\]
So (\ref{eq:supremos}) holds, which says that the system $\T$ on
$\{\mu_{\gamma}:\gamma\in\Omega\}$ given by $\T(\mu_{\gamma}):=a_{\gamma}$
for all $\gamma$, is a $\sqcup$-system. Now, since $\{\mu_{\gamma}:\gamma\in\Omega\}$
is a CRS, there is a solution $a\in A$ for $\mathsf{T}$. We conclude
by showing that $a$ is a solution for $\mathsf{S}$. Fix $\gamma\in\Omega$,
and let $\delta\in\Omega\setminus\{\gamma\}$. Then, by (\ref{eq:soluciones}),
we have $\langle a_{\delta},\S(\gamma)\rangle\in\gamma$, and since
$a$ is a solution for $\T$, we have $\langle a,a_{\delta}\rangle\in\mu_{\delta}\sub\gamma$.
Hence, $\langle a,\S(\gamma)\rangle\in\gamma$.
\end{proof}

Here is the main result of this section.
\begin{thm}
\label{thm:espectro global para RCD almost univ}Suppose that $\mathcal{Q}$
has a $(r+1)$-ary NU term and that $\mathcal{Q}_{\mathrm{RSI}}$
is almost universal. If $\mathbf{A}\in\mathcal{Q}$ is RCD, then 
\[
\cmic_{\mathcal{Q}}(\mathbf{A})\cup\{\theta\in\con_{\mathcal{Q}}(\mathbf{A}):\left|\cov(\theta)\right|\in[2,r]\text{ and }\cov(\theta)\text{ is not a CRS}\}
\]
is a global spectrum of $\mathbf{A}$.
\end{thm}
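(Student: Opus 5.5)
The plan is to verify the two conditions of Lemma~\ref{lem:global iff te-complete} for the displayed set $\Sigma$. First, $\bigcap\Sigma=\Delta$ is immediate, since $\cmic_{\mathcal{Q}}(\mathbf{A})\sub\Sigma$. Second, $\Sigma$ is CMI-saturated, again because it contains every CMI congruence. Put $\Gamma:=\cmic_{\mathcal{Q}}(\mathbf{A})$. Since $\mathcal{Q}_{\mathrm{RSI}}$ is almost universal, $\Gamma=\con_{\mathcal{Q}_{\mathrm{RSI}}}(\mathbf{A})$, and this set is $\te$-compact by Lemma~\ref{lem:compacidad 1}(2).

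Fix a $\te$-system $\S$ on $\Sigma$. By Lemma~\ref{lem:CMI alcanza}(1), $\S$ is a $\sqcup$-system. By Lemma~\ref{lem:CMI alcanza}(2) together with Theorem~\ref{thm:BP congruencial}, it suffices to show that $\S|_{\Gamma}$ is $r$-solvable. So let $\Omega\sub\Gamma$ with $|\Omega|\le r$. We may assume its members are pairwise incomparable (keep the minimal ones) and $|\Omega|\ge2$. Put $\theta:=\bigcap\Omega$. By Lemma~\ref{lem:atomos de Con A inducido por un conjunto finito de CMIs}, applied in $\mathbf{A}/\theta$ via Lemma~\ref{lem:correspondencia}, we get $\cov(\theta)=\{\mu_{\gamma}:\gamma\in\Omega\}$, where $\mu_\gamma=\hat\gamma\cap\bigcap(\Omega\setminus\{\gamma\})$, and $|\cov(\theta)|=|\Omega|\in[2,r]$. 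There are two cases.
\begin{enumerate}
\item If $\cov(\theta)$ is not a CRS, then $\theta\in\Sigma$. Since $\S$ is a $\sqcup$-system, $\langle\S(\theta),\S(\gamma)\rangle\in\theta\sqcup\gamma=\gamma$ for every $\gamma\in\Omega$, so $\S(\theta)$ solves $\S|_{\Omega}$.
\item If $\cov(\theta)$ is a CRS, Lemma~\ref{lem:almost solvable} reduces the task to solving each system $\S_\delta$ on $(\Omega\setminus\{\delta\})\cup\{\hat\delta\}$.
\end{enumerate}

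To solve $\S_\delta$ in case (2), consider
\[
\Psi_\delta:=(\Omega\setminus\{\delta\})\cup([\hat\delta)\cap\Gamma)\sub\Gamma\sub\Sigma,
\]
with $\S$ restricted to it. Any solution $a$ of $\S|_{\Psi_\delta}$ solves $\S_\delta$. Indeed, for $\gamma\in[\hat\delta)\cap\Gamma$ we have $\langle\S(\gamma),\S(\delta)\rangle\in\gamma\sqcup\delta=\gamma$, hence $\langle a,\S(\delta)\rangle\in\bigcap([\hat\delta)\cap\Gamma)=\hat\delta$. The set $\Psi_\delta$ is $\te$-compact: $[\hat\delta)\cap\Gamma$ is $\te$-compact by Lemma~\ref{lem:compacidad 1}(3), and we add finitely many points. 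So by Theorem~\ref{thm:BP congruencial}, solvability of $\S|_{\Psi_\delta}$ reduces to solvability of $\S|_{\Lambda}$ for every $\Lambda\sub\Psi_\delta$ with $|\Lambda|\le r$. Each such $\Lambda$ is again a subset of $\Gamma$ of size at most $r$, and $\bigcap\Lambda\supseteq\hat\delta\cap\bigcap(\Omega\setminus\{\delta\})=\mu_\delta\supsetneq\theta$. This suggests an argument by contradiction. Suppose some $\Omega\sub\Gamma$ with $|\Omega|\le r$ has $\S|_\Omega$ unsolvable, and choose one with $\bigcap\Omega$ maximal. Then case (1) is impossible, and in case (2) every relevant $\Lambda$ has strictly larger intersection, so $\S|_\Lambda$ is solvable. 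Hence each $\S_\delta$ is solvable, and Lemma~\ref{lem:almost solvable} shows $\S|_\Omega$ is solvable, a contradiction.

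The main obstacle is justifying the existence of such a maximal counterexample. My approach is Zorn's lemma on the set $B$ of all $\bigcap\Omega$ with $\Omega\sub\Gamma$, $|\Omega|\le r$, and $\S|_\Omega$ unsolvable, which is contained in $\Gamma^{(r)}$. First, $\Gamma\cup\{\nabla\}$ is $\ted$-closed by Lemma~\ref{lem:compacidad 1}(1), so $(\Gamma\cup\{\nabla\})^{(r)}$ is $\ted$-closed by Lemma~\ref{lem:compacidad 2}(1); note that it coincides with $\Gamma^{(r)}\cup\{\nabla\}$. By Lemma~\ref{lem:compacidad 2}(2) it is therefore closed under directed unions. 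The remaining point is that the union $\beta$ of a chain in $B$ lies in $B$ rather than being $\nabla$ or a solvable intersection. I would argue this by compactness. Suppose $\beta=\bigcap\Lambda$ with $\S|_\Lambda$ solved by $c$. Extract tuples $\Omega_i$ along the chain converging in $(\Gamma\cup\{\nabla\})^r$ under $\ted$ (pass to a subnet). Then use that $\S$ is a $\te$-system: local solutions are constant on $\te$-open neighbourhoods, which are up-sets. From this, for large $i$ the element $c$, or a local solution, should solve $\S|_{\Omega_i}$, contradicting $\bigcap\Omega_i\in B$. If this limit argument proves delicate, the fallback is to run the same maximality argument directly on tuples in the compact space $(\Gamma\cup\{\nabla\})^r$, ordered by the intersection map.
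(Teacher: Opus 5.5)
\paragraph{Verdict.} Your overall route works and differs from the paper's, but the step it rests on, the existence of a maximal element of $B$, is not established, and the limit sketch as written would not establish it.

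\paragraph{What is already correct.} Your reduction is sound. Once $B$ has a maximal element, the contradiction goes through exactly as you describe. Case (1) is excluded by the definition of $\Sigma$. In case (2), Theorem~\ref{thm:BP congruencial} applied on $\Psi_\delta$ solves each $\S_\delta$, and Lemma~\ref{lem:almost solvable} then solves $\S|_\Omega$.

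\paragraph{The gap.} This is the step you flag yourself. Two things go wrong in the sketch.

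First, you cannot argue from an arbitrary $\Lambda$ with $\bigcap\Lambda=\beta$. Meet-primeness only gives $\Omega_i\mins\Lambda$, which transports solutions from $\Omega_i$ to $\Lambda$, not back. So $\Lambda$ must be the $\ted$-limit of a subnet of the witnessing $r$-tuples.

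Second, consider coordinates $j$ where that limit is $\nabla$. There a solution $c$ of $\S|_{\Lambda\cap\Gamma}$ carries no information about the entries $\Omega_i(j)$. The cover members containing $\Omega_i(j)$, and hence the local solutions, may change with $i$. So neither ``$c$'' nor ``a local solution'' works without a uniformity argument.

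\paragraph{How to close it.} The missing ingredient is the finite subcover supplied by the $\te$-compactness of $\Gamma$.
\begin{enumerate}
\item Choose $u_1,\ldots,u_m\in\mathcal{C}$ covering $\Gamma$, and let $a_k$ be a solution of $\S|_{u_k}$.
\item Let $K\sub(\Gamma\cup\{\nabla\})^r$ be the set of tuples whose entries in $\Gamma$ carry an unsolvable subsystem.
\item Suppose $\vec\lambda\notin K$, solved by $c$. Define $N_j:=u_k\cap\mathrm{e}(c,a_k)$ for some $u_k\ni\lambda_j$ when $\lambda_j\in\Gamma$, and $N_j:=\bigcap_{k\le m}\mathrm{e}(c,a_k)$ when $\lambda_j=\nabla$. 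Then $c$ still solves on the $\ted$-open box $N_1\times\cdots\times N_r$.
\item Hence $K$ is $\ted$-closed, so compact. The map $\vec\gamma\mapsto\bigcap_j\gamma_j$ is $\ted$-continuous, so its image $B$ is compact and therefore $\ted$-closed. The all-$\nabla$ tuple is not in $K$, so $B\sub\Gamma^{(r)}$.
\item By Lemma~\ref{lem:compacidad 2}(2), $B$ is closed under directed unions, and Zorn's lemma applies.
\end{enumerate}
This is your ``fallback'', made precise; it needs no nets at all.

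\paragraph{Comparison with the paper.} With this inserted, your proof is a valid alternative. For finite $\Omega,\Omega'\sub\Gamma$, meet-primeness gives $\bigcap\Omega\sub\bigcap\Omega'$ iff $\Omega\mins\Omega'$. So you maximize the same order the paper uses, but over all sizes at once.

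The paper instead fixes the size $k$ of an obstacle. Its Claim I then gives a unique coordinatewise correspondence between comparable obstacles of the same size. This lets it handle chains coordinatewise, using unions of chains in $\Gamma\cup\{\nabla\}$, with no product topology.

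The price is that the paper's analogue of your case (2), Claim III, only yields a strictly $\mins$-larger obstacle, possibly of larger size. That forces the size-climbing argument of Claims III--IV, bounded by $r$. Your route collapses Claims III--IV into a single contradiction. In exchange it needs closedness of $B$ in the product topology, which is exactly where the finite subcover of $\Gamma$ enters.
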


\begin{proof}
Put $\Gamma:=\cmic_{\mathcal{Q}}(\mathbf{A})$; Lemma~\ref{lem:compacidad 1}
implies that 
\[
\Gamma\cup\{\nabla\}\text{ is }\ted\text{-closed and }\Gamma\text{ is }\te\text{-compact.}
\]
Let $\Sigma$ be the set of congruences in the statement and let $\mathsf{S}$
be a $\tau_{\mathrm{e}}$-system on $\Sigma$. By (1) of Lemma~\ref{lem:CMI alcanza}
we know that 
\[
\S\text{ is a }\sqcup\text{-system.}
\]
These facts are used several times throughout the proof.

A finite subset $\Omega\sub\Gamma$ is called an \emph{obstacle} if:
\begin{itemize}
\item $\S|_{\Omega}$ is unsolvable;
\item $\S|_{\Omega_{0}}$ is solvable for every $\Omega_{0}\varsubsetneq\Omega$.
\end{itemize}
Given a positive integer $k$ let $\mathcal{O}_{k}$ be the set of
all obstacles of size $k$. Of course $\mathcal{O}_{1}=\emptyset$.

Fix an arbitrary positive integer $k\geq2$.

\noindent\begin{customclaim}{I}If $\Lambda,\Omega\in\mathcal{O}_{k}$
and $\Lambda\mins\Omega$, then for every $\lambda\in\Lambda$ there
is a unique $\gamma_{\lambda}\in\Omega$ satisfying $\lambda\sub\gamma_{\lambda}$.\end{customclaim}

\noindent Note that $\Omega$ cannot be minorized by a proper subset
of $\Lambda$, as otherwise Lemma \ref{lem:minos} would render $\S|_{\Omega}$
solvable. Hence, as $\Lambda$ and $\Omega$ have the same finite
size, the claim follows.

\noindent\begin{customclaim}{II}If $\mathcal{O}_{k}$ is nonempty,
then it contains a $\mins$-maximal element.\end{customclaim}

\noindent Let $C\sub\mathcal{O}_{k}$ be a nonempty $\mins$-chain
and fix $\Lambda\in C$. Put $C_{\Lambda}:=\{\Omega\in C:\Lambda\mins\Omega\}$
and note that, by Claim I, for each $\Omega\in C_{\Lambda}$ and each
$\lambda\in\Lambda$ there is a unique $\lambda{}^{\Omega}\in\Omega$
satisfying $\lambda\sub\lambda^{\Omega}$. By the same claim, for
each $\lambda\in\Lambda$
\[
C_{\lambda}:=\{\lambda^{\Omega}:\Omega\in C_{\Lambda}\}
\]
 is a chain with respect to inclusion. So, for each $\lambda\in\Lambda$
\[
\lambda^{*}:=\bigcup C_{\lambda}
\]
is a $\mathcal{Q}$-congruence of $\mathbf{A}$. Define 
\[
\Lambda^{*}:=\{\lambda^{*}:\lambda\in\Lambda\},
\]
and note that, by Lemma \ref{lem:compacidad 2} we have 
\begin{equation}
\Lambda^{*}\sub\Gamma\cup\{\nabla\}.\label{eq: cmi o nabla}
\end{equation}

\noindent We claim that
\begin{equation}
\text{no proper subset of }\Lambda\text{ minorizes }\Lambda^{*}.\label{eq: ningun sub propio de Lambda min Lambda*}
\end{equation}
Suppose, aiming at a contradiction, that (\ref{eq: ningun sub propio de Lambda min Lambda*})
does not hold. Then, there is $\nu\in\Lambda$ such that $\Lambda\setminus\{\nu\}$
minorizes $\Lambda^{*}$. In particular, there is $\rho\in\Lambda\setminus\{\nu\}$
such that $\rho\sub\nu^{*}$. Since $\Lambda$ is an obstacle, there
is a solution $a\in A$ for $\S|_{\Lambda\setminus\{\nu\}}$. That
is, 
\[
\langle a,\S(\lambda)\rangle\in\lambda\text{ for all }\lambda\in\Lambda\setminus\{\nu\}.
\]
So, as $\langle\S(\rho),\S(\nu)\rangle\in\rho\sqcup\nu\sub\nu^{*}$
and $\langle a,\S(\rho)\rangle\in\nu^{*}$, we have 
\[
\langle a,\S(\nu)\rangle\in\nu^{*}.
\]
Then, by Claim I and the fact that $C_{\Lambda}$ is a $\mins$-chain,
there is $\mathsf{\Omega}\in C_{\Lambda}$ such that 
\begin{equation}
\langle a,\S(\lambda)\rangle\in\lambda^{\Omega}\text{ for all }\lambda\in\Lambda.\label{eq:paso 1}
\end{equation}

\noindent Hence, since 
\begin{equation}
\langle\S(\lambda),\S(\lambda^{\Omega})\rangle\in\lambda\sqcup\lambda^{\Omega}=\lambda^{\Omega}\text{ for all }\lambda\in\Lambda,\label{eq:paso 2}
\end{equation}
we have 
\begin{equation}
\langle a,\S(\lambda^{\Omega})\rangle\in\lambda^{\Omega}\text{ for all }\lambda\in\Lambda.\label{eq:paso3}
\end{equation}
This contradicts the fact that $\S|_{\Omega}$ is unsolvable.

It follows from (\ref{eq: ningun sub propio de Lambda min Lambda*})
that $\nabla\notin\Lambda^{*}$, which by (\ref{eq: cmi o nabla})
implies that $\Lambda^{*}\sub\Gamma$. Another consequence of (\ref{eq: ningun sub propio de Lambda min Lambda*})
is that $\left|\Lambda^{*}\right|=k$.

The argument used to prove (\ref{eq: ningun sub propio de Lambda min Lambda*})
also shows that 
\begin{equation}
\S|_{\Lambda^{*}}\text{ is not solvable}.\label{eq: T|Lambda* es insoluble}
\end{equation}
 Indeed, if there is $a\in A$ satisfying 
\[
\langle a,\S(\lambda^{*})\rangle\in\lambda^{*}\text{ for all }\lambda\in\Lambda,
\]
then, since $\langle\S(\lambda),\S(\lambda^{*})\rangle\in\lambda\sqcup\lambda^{*}=\lambda^{*}$,
we have 
\[
\langle a,\S(\lambda)\rangle\in\lambda^{*}\text{ for all }\lambda\in\Lambda.
\]
Now, taking a large enough $\Omega\in C_{\Lambda}$, steps (\ref{eq:paso 1})-(\ref{eq:paso3})
carry through and yield the same contradiction.

It remains to see that every proper subsystem of $\S|_{\Lambda^{*}}$
is solvable. Let $\Psi\varsubsetneq\Lambda^{*}$ and put $\Lambda_{\Psi}:=\{\lambda\in\Lambda:\lambda^{*}\in\Psi\}$.
Since $|\Lambda^{*}|=|\Lambda|$, we have $\Lambda_{\Psi}\varsubsetneq\Lambda$.
Thus, as $\Lambda$ is an obstacle, $\S|_{\Lambda_{\Psi}}$ has a
solution $a\in A$. If $\lambda^{*}\in\Psi$, then $\lambda\in\Lambda_{\Psi}$,
and hence $\langle a,\S(\lambda)\rangle\in\lambda\sub\lambda^{*}$.
Also, $\langle\S(\lambda),\S(\lambda^{*})\rangle\in\lambda\sqcup\lambda^{*}=\lambda^{*}$.
Therefore $\langle a,\S(\lambda^{*})\rangle\in\lambda^{*}$ for every
$\lambda^{*}\in\Psi$, and so $a$ is a solution for $\S|_{\Psi}$.

Collecting the facts obtained we can conclude that $\Lambda^{*}\in\mathcal{O}_{k}$.
As $\Lambda^{*}$ is an upper bound of $C$ with respect to the $\mins$
ordering, by Zorn's lemma, there is a $\mins$-maximal element in
$\mathcal{O}_{k}$.

\noindent\begin{customclaim}{III}If $k\leq r$ and $\mathcal{O}_{k}$
is nonempty, then $k<r$ and there is $\ell\in[k+1,r]$ such that
$\mathcal{O}_{\ell}$ is nonempty.\end{customclaim}

\noindent Suppose $\mathcal{O}_{k}$ is nonempty. Then, by Claim II,
there is a $\mins$-maximal element $\Omega$ in $\mathcal{O}_{k}$.
Note that $\bigcap\Omega\notin\Sigma$, since otherwise $\mathsf{S}(\bigcap\Omega)$
would be a solution for $\mathsf{S}|_{\Omega}$. Also, since $\Omega$
is an obstacle, its members are pairwise incomparable. So, Lemma~\ref{lem:atomos de Con A inducido por un conjunto finito de CMIs}
in combination with Lemma~\ref{lem:correspondencia} produce 
\[
2\leq|\cov(\bigcap\Omega)|\leq|\Omega|=k\leq r.
\]
Hence, in view of the definition of $\Sigma$, the set $\cov(\bigcap\Omega)$
must be a CRS. Thus, as $\mathsf{S}|_{\Omega}$ is a $\sqcup$-system,
Lemma~\ref{lem:almost solvable} produces $\delta\in\Omega$ such
that the system $\S_{\delta}$ on $(\Omega\setminus\{\delta\})\cup\{\hat{\delta}\}$
defined by 
\begin{align*}
\S_{\delta}(\hat{\delta}):= & \S(\delta),\\
\S_{\delta}(\gamma):= & \S(\gamma)\text{ for }\gamma\in(\Omega\setminus\{\delta\}),
\end{align*}
is not solvable. Notice that $\hat{\delta}\neq\nabla$. Indeed, if
$\hat{\delta}=\nabla$, then any solution of $\S|_{\Omega\setminus\{\delta\}}$
would be a solution of $\S_{\delta}$, contradicting the choice of
$\delta$. Define 
\[
\mu:=\hat{\delta}\cap\bigcap(\Omega\setminus\{\delta\}).
\]
We show next, by way of contradiction, that $\mathsf{S}|_{[\mu)\cap\Gamma}$
is not solvable. Suppose $a$ is a solution for $\mathsf{S}|_{[\mu)\cap\Gamma}$.
Take $\lambda\in\Gamma$ such that $\hat{\delta}\sub\lambda$. Since
$\lambda\in[\mu)$, we have $\langle a,\S(\lambda)\rangle\in\lambda$,
and as $\S$ is an $\sqcup$-system, we have $\langle\S(\delta),\S(\lambda)\rangle\in\lambda$.
Hence, $\langle a,\S(\delta)\rangle\in\lambda$, and as $\lambda$
is an arbitrary CMI congruence containing $\hat{\delta}$, it follows
that $\langle a,\S(\delta)\rangle\in\hat{\delta}$. That is,
\begin{equation}
\langle a,\S_{\delta}(\hat{\delta})\rangle\in\hat{\delta}.\label{eq:sol para delta barra}
\end{equation}
Furthermore, since $\Omega\setminus\{\delta\}\sub[\mu)\cap\Gamma$,
we have $\langle a,\S(\gamma)\rangle\in\gamma$ for all $\gamma\in\Omega\setminus\{\delta\}$.
Equivalently,
\begin{equation}
\langle a,\S_{\delta}(\gamma)\rangle\in\gamma\text{ for all }\gamma\in\Omega\setminus\{\delta\}.\label{eq:sol para el resto}
\end{equation}

\noindent It follows from (\ref{eq:sol para delta barra}) and (\ref{eq:sol para el resto}),
that $a$ is a solution for $\S_{\delta}$, a contradiction.

\noindent Note that $[\mu)\cap\Gamma$ is $\te$-compact by item (3)
of Lemma~\ref{lem:compacidad 1}, and $[\mu)\cap\Gamma$ is clearly
CMI-saturated. So, as $\mathsf{S}|_{[\mu)\cap\Gamma}$ is not solvable,
Lemma~\ref{lem:S r-soluble sobre CMI alcanza} says that it is not
$r$-solvable. Let $\Lambda\sub[\mu)\cap\Gamma$ be a set of minimum
cardinality such that $\mathsf{S}|_{\Lambda}$ is not solvable. Hence
$\Lambda$ is an obstacle, and
\[
\left|\Lambda\right|\leq r.
\]
Note that, since $\Lambda\sub[\mu)$ and every congruence in $\Lambda$
is meet-prime, we have $\Omega\mins\Lambda.$ Moreover, $\Lambda$
cannot be minorized by a proper subset of $\Omega$, as otherwise
Lemma~\ref{lem:minos} would say that $\S|_{\Lambda}$ is solvable.
Hence, 
\[
\left|\Lambda\right|\geq\left|\Omega\right|=k.
\]
Next, observe that since $\mu\nsubseteq\delta$ and $\mu$ is contained
in every congruence in $\Lambda$, no congruence in $\Lambda$ can
be contained in $\delta$. Thus $\Omega\mins\Lambda$, but $\Lambda\ntrianglelefteq\Omega$;
that is, $\Omega$ strictly minorizes $\Lambda$. If $|\Lambda|=|\Omega|$,
then $\Lambda\in\mathcal{O}_{k}$, contradicting the $\mins$-maximality
of $\Omega$. Hence $|\Lambda|>|\Omega|=k$. Since also $|\Lambda|\leq r$,
taking $\ell:=|\Lambda|$ yields 
\[
\ell\in\{k+1,\ldots,r\}\quad\text{and}\quad\mathcal{O}_{\ell}\neq\emptyset.
\]

Observe that, since $k\geq2$ was arbitrary, Claim III holds for every
$k\geq2$.

\noindent\begin{customclaim}{IV}$\mathcal{O}_{n}$ is empty for
all $n\in[2,r]$.\end{customclaim}

\noindent Suppose, for the sake of contradiction, that 
\[
\{n\in[2,r]:\mathcal{O}_{n}\neq\emptyset\}
\]
is nonempty, and let $m$ be its largest member. By Claim III, $m<r$
and there is $\ell\in[m+1,r]$ such that $\mathcal{O}_{\ell}\neq\emptyset$,
contradicting the maximality of $m$. Hence the claim is proved.

To conclude, note first that, by (2) of Lemma~\ref{lem:CMI alcanza},
\[
\S\text{ is solvable}\Longleftrightarrow\S|_{\Gamma}\text{ is solvable}.
\]
Moreover, since $\Gamma$ is $\tau_{\mathrm{e}}$-compact, Theorem~\ref{thm:BP congruencial}
gives 
\[
\S|_{\Gamma}\text{ is solvable}\Longleftrightarrow\S|_{\Gamma}\text{ is }r\text{-solvable}.
\]
The latter condition must hold. Otherwise, there would be $\Omega\subseteq\Gamma$
with $|\Omega|\leq r$ such that $\S|_{\Omega}$ is not solvable.
Taking such an $\Omega$ of minimum cardinality, we would obtain an
obstacle of size at most $r$. Since $\mathcal{O}_{1}=\emptyset$,
this contradicts Claim IV. Thus $\S$ is solvable. Since $\S$ was
arbitrary, $\Sigma$ is $\te$-complete. Therefore Lemma~\ref{lem:global iff te-complete}
implies that $\Sigma$ is a global spectrum of $\mathbf{A}$.
\end{proof}

The preceding Theorem immediately yields an optimal global representation
result for the quasivarieties in question.
\begin{cor}
\label{cor:A RCD implica RGI-spectral}Let $\mathcal{Q}$ be a quasivariety
with a $(r+1)$-ary NU term, and such that $\mathcal{Q}_{\mathrm{RSI}}$
is almost universal. If $\mathbf{A}\in\mathcal{Q}$ is RCD, then $\mathbf{A}$
is RGI-spectral.
\end{cor}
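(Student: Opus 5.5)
The corollary should follow by comparing the global spectrum of Theorem~\ref{thm:espectro global para RCD almost univ} with $\rgic_{\mathcal{Q}}(\mathbf{A})$. Let $\Sigma$ denote the set displayed in Theorem~\ref{thm:espectro global para RCD almost univ}. By item (2) of Lemma~\ref{lem:carac RGI RCD para NU y clase universal de SI}, applied to the RCD algebra $\mathbf{A}$, a relative congruence $\theta$ is an RGI-congruence exactly when $\theta$ is CMI, or $|\cov(\theta)|\in[2,r]$ and $\cov(\theta)$ is not a CRS. So
\[
\Sigma=\rgic_{\mathcal{Q}}(\mathbf{A}).
\]
Theorem~\ref{thm:espectro global para RCD almost univ} then says directly that $\rgic_{\mathcal{Q}}(\mathbf{A})$ is a global spectrum of $\mathbf{A}$, which is the definition of RGI-spectral.

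Hence the argument has only two steps. First, invoke Lemma~\ref{lem:carac RGI RCD para NU y clase universal de SI}(2) to identify $\Sigma$ with $\rgic_{\mathcal{Q}}(\mathbf{A})$. Second, invoke Theorem~\ref{thm:espectro global para RCD almost univ}.

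The one point to check is the hypothesis of item (2). Its proof rests on item (1) applied to $\mathbf{A}/\theta$ through the correspondence of Lemma~\ref{lem:correspondencia}, so it needs $\mathbf{A}/\theta$ to be RCD. This holds: by Lemma~\ref{lem:correspondencia}, $\con_{\mathcal{Q}}(\mathbf{A}/\theta)$ is isomorphic to the interval $[\theta)$ of the distributive lattice $\con_{\mathcal{Q}}(\mathbf{A})$. The same isomorphism carries covers of $\theta$ to atoms of the quotient and preserves the CRS property. With that observation the identification is complete, and I expect no real obstacle.
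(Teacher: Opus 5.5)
You are running exactly the paper's one-line argument: combine Theorem~\ref{thm:espectro global para RCD almost univ} with item~(2) of Lemma~\ref{lem:carac RGI RCD para NU y clase universal de SI}. However, the identification $\Sigma=\rgic_{\mathcal{Q}}(\mathbf{A})$ is false in general, and the check you describe is where it breaks. Item~(1)(b) requires $\con_{\mathcal{Q}}(\mathbf{A}/\theta)$ to be $n$-atomic, i.e.\ atomic \emph{and} with exactly $n$ atoms. The condition $|\cov(\theta)|=n$ gives only the second half, and item~(2) as printed drops the first, so citing it does not close the gap.

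A counterexample: let $\mathcal{Q}$ be the variety of distributive lattices (majority term, so $r=2$; $\mathcal{Q}_{\mathrm{SI}}=\mathbb{I}(\mathbf{2})$ is almost universal). Let $\mathbf{A}:=\mathbf{3}\times\mathbf{B}$, with $\mathbf{3}$ the three-element chain and $\mathbf{B}$ an atomless Boolean lattice. By congruence distributivity, $\con(\mathbf{A})\cong\con(\mathbf{3})\times\con(\mathbf{B})$. Since $\con(\mathbf{B})$ is the ideal lattice of $\mathbf{B}$, it has no atoms. Hence $\cov(\Delta_{A})$ consists of the two product congruences $\alpha\times\Delta_{B}$, where $\alpha$ ranges over the two atoms of $\con(\mathbf{3})$. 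These do not permute, so $\cov(\Delta_{A})$ is not a CRS and $\Delta_{A}\in\Sigma$. But $\mathbf{A}$ is a nontrivial direct product: its two factor congruences form a finite CRS with meet $\Delta_{A}$, so $\mathbf{A}$ is globally decomposable by Remark~\ref{rem:CRS implica descomponible}. Thus $\Delta_{A}\notin\rgic_{\mathcal{Q}}(\mathbf{A})$. In general only $\rgic_{\mathcal{Q}}(\mathbf{A})\sub\Sigma$ holds, and $\te$-completeness does not pass to subsets. So the \emph{statement} of the theorem does not yield the corollary.

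The corollary is still true, but you need the \emph{proof} of the theorem. Put $\Sigma':=\cmic_{\mathcal{Q}}(\mathbf{A})\cup\{\theta:[\theta)\text{ is }n\text{-atomic for some }n\in[2,r]\text{ and }\cov(\theta)\text{ is not a CRS}\}$. Your correspondence argument, including the observation that $\mathbf{A}/\theta$ is RCD, applied to item~(1) instead of item~(2), correctly gives $\Sigma'=\rgic_{\mathcal{Q}}(\mathbf{A})$.

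Now rerun the proof of Theorem~\ref{thm:espectro global para RCD almost univ} with $\Sigma'$ in place of $\Sigma$. CMI-saturation and $\bigcap\Sigma'=\Delta$ use only $\cmic_{\mathcal{Q}}(\mathbf{A})\sub\Sigma'$. The definition of $\Sigma$ is invoked only in Claim~III, for $\bigcap\Omega$ with $\Omega$ an obstacle. Such an $\Omega$ is an antichain of $k\in[2,r]$ CMI congruences. By Lemma~\ref{lem:atomos de Con A inducido por un conjunto finito de CMIs} and Lemma~\ref{lem:correspondencia}, the interval $[\bigcap\Omega)$ is then $k$-atomic. So $\bigcap\Omega\notin\Sigma'$ still forces $\cov(\bigcap\Omega)$ to be a CRS, and the rest of the argument is unchanged. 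Hence $\Sigma'=\rgic_{\mathcal{Q}}(\mathbf{A})$ is a global spectrum.
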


\begin{proof}
Combine Theorem~\ref{thm:espectro global para RCD almost univ} with
item~(2) of Lemma~\ref{lem:carac RGI RCD para NU y clase universal de SI}.
\end{proof}

\begin{cor}
\label{cor:super chela con NU es RGI espectral}Suppose that $\mathcal{Q}$
is RCD, has a $(r+1)$-ary NU term, and $\mathcal{Q}_{\mathrm{RSI}}$
is almost universal. Then $\mathcal{Q}$ is RGI-spectral and its class
of RGI members is
\[
\mathcal{Q}_{\mathrm{RGI}}=\mathcal{Q}_{\mathrm{RSI}}\cup\left\{ \mathbf{F}\in\mathcal{Q}:\begin{array}{l}
\operatorname{Con}_{\mathcal{Q}}(\mathbf{F})\text{ is }n\text{-atomic for some }n\in[2,r],\\
\text{and its set of atoms is not a CRS}
\end{array}\right\} .
\]
\end{cor}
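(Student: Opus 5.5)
The statement has two parts: that $\mathcal{Q}$ is RGI-spectral, and the explicit description of $\mathcal{Q}_{\mathrm{RGI}}$. Both should follow directly from results already proved, since $\mathcal{Q}$ is assumed RCD and so every $\mathbf{A}\in\mathcal{Q}$ is RCD.

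For RGI-spectrality, fix $\mathbf{A}\in\mathcal{Q}$. Since $\mathbf{A}$ is RCD, Corollary~\ref{cor:A RCD implica RGI-spectral} says $\mathbf{A}$ is RGI-spectral. As $\mathbf{A}$ was arbitrary, $\mathcal{Q}$ is RGI-spectral.

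For the description of $\mathcal{Q}_{\mathrm{RGI}}$, fix $\mathbf{F}\in\mathcal{Q}$; it is RCD. Apply item~(1) of Lemma~\ref{lem:carac RGI RCD para NU y clase universal de SI}: $\mathbf{F}$ is RGI if and only if $\con_{\mathcal{Q}}(\mathbf{F})$ is $n$-atomic for some $n\in[1,r]$, with the atoms not forming a CRS when $n\geq2$. Split according to $n$.
\begin{itemize}
\item \emph{Case $n=1$.} A $1$-atomic relative congruence lattice is exactly the condition $\mathbf{F}\in\mathcal{Q}_{\mathrm{RSI}}$, as noted before Lemma~\ref{lem:atomos de Con A inducido por un conjunto finito de CMIs}.
\item \emph{Case $n\in[2,r]$.} This is precisely the second set in the displayed union.
\end{itemize}
Hence $\mathcal{Q}_{\mathrm{RGI}}$ equals the displayed union. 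For the reverse inclusion, members of $\mathcal{Q}_{\mathrm{RSI}}$ are RGI because $\Delta$ is CMI, so Lemma~\ref{lem:carac congruencial de RGI} applies. Members of the second set are RGI by the (b)$\Rightarrow$(a) direction of the same lemma, or directly by Proposition~\ref{prop:RGI RCD ancho finite}.

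There is no real obstacle here; the only point to check is the equivalence between $1$-atomicity and relative subdirect irreducibility. This equivalence needs atomicity to give an atom below every non-diagonal congruence, and that atom is the monolith.
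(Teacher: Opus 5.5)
Your proposal is correct and follows the paper's own argument, which is likewise just Corollary~\ref{cor:A RCD implica RGI-spectral} (applied to each member of $\mathcal{Q}$, all RCD) combined with item~(1) of Lemma~\ref{lem:carac RGI RCD para NU y clase universal de SI}, with the $n=1$ case identified with $\mathcal{Q}_{\mathrm{RSI}}$ via the remark that $\mathbf{A}\in\mathcal{Q}_{\mathrm{RSI}}$ iff $\con_{\mathcal{Q}}(\mathbf{A})$ is $1$-atomic. Your separate check of the reverse inclusion is harmless but redundant, since that lemma is already an equivalence.
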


\begin{proof}
Immediate by Corollary~\ref{cor:A RCD implica RGI-spectral} and
item (1) of Lemma~\ref{lem:carac RGI RCD para NU y clase universal de SI}.
\end{proof}

\subsection{\label{subsec:contraejemplo}RFSIs almost universal is not enough}

The aim of this subsection is to show that the assumption ``$\mathcal{Q}_{\mathrm{RSI}}$
is almost universal'' cannot be weakened to ``$\mathcal{Q}_{\mathrm{RFSI}}$
is almost universal'' in Theorem~\ref{thm:espectro global para RCD almost univ}
and its corollaries. More precisely, we construct an RCD quasivariety
$\mathcal{B}$ of Boolean algebra expansions such that $\mathcal{B}_{\mathrm{RFSI}}$
is almost universal, but $\mathcal{B}$ is not RGI-spectral.

At the heart of the construction is an algebra $\mathbf{A}$ with
universe $A:=\{0,1\}^{\omega}$, whose congruence lattice is carefully
engineered. We start by defining the intended congruences of $\mathbf{A}$.
Put $\theta_{0}:=\nabla$ and for $n\geq1$ define $\theta_{n}$ by

\[
a\mathrel{\theta_{n}}b\Longleftrightarrow a|_{[0,n-1]}=b|_{[0,n-1]}.
\]
Also, fix a nonprincipal ultrafilter $\mathcal{U}$ on $\omega$ and
put
\[
\beta:=\{\langle a,b\rangle\in A^{2}:\mathrm{E}(a,b)\in\mathcal{U}\}.
\]
Note that:
\begin{itemize}
\item the $\theta_{n}$'s form an infinite downward chain with intersection
$\Delta$;
\item $\beta$ is a maximal equivalence relation incomparable with $\theta_{n}$
for $n\geq1$;
\item $(\theta_{k}\cap\beta)\vee\theta_{k+1}=(\theta_{k}\cap\beta)\circ\theta_{k+1}=\theta_{k}$.
\end{itemize}
Let $\mathbf{L}$ be the bounded sublattice of the lattice of equivalence
relations on $A$ generated by $\{\theta_{n}:n\geq1\}\cup\{\beta\}$.
It follows from the above observations that the universe of $\mathbf{L}$
is
\[
L=\{\theta_{n}:n\geq1\}\cup\{\theta_{n}\cap\beta:n\geq1\}\cup\{\beta,\Delta,\nabla\},
\]
and its ordering is fully described by the diagram in Figure~\ref{fig:ConA}.

\begin{figure}[h]
\centering
\begin{centering}
\includegraphics[scale=0.75]{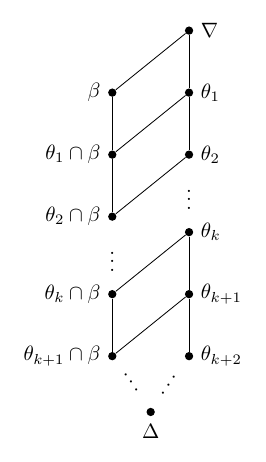}
\par\end{centering}
\caption{\label{fig:ConA}The congruence lattice of $\mathbf{A}$.}

\end{figure}

The next step is to endow $A$ with enough operations to ensure that
the resulting congruences are precisely the members of $L$. Let $\mathcal{F}$
be the set of all unary operations on $A$ compatible with the equivalence
relations in $\{\theta_{n}:n\geq1\}\cup\{\beta\}$, and let 
\[
\mathbf{A}_{0}:=\langle A,\mathcal{F}\rangle.
\]
For $a,b\in A$ let 
\[
\mathrm{D}(a,b):=\{i\in\omega:a_{i}\neq b_{i}\}
\]
 and define 
\[
\mu(a,b):=\begin{cases}
\min(\mathrm{D}(a,b)) & \text{if }a\neq b,\\
\infty & \text{if }a=b.
\end{cases}
\]

\begin{lem}
\label{lem:ConA0}The following hold:
\begin{enumerate}
\item If $a,b,c,d\in A$ are such that $\mu(a,b)\leq\mu(c,d)$ and either
$\mathrm{E}(c,d)\in\mathcal{U}$ or $\mathrm{D}(a,b)\in\mathcal{U}$,
then there is $f\in\mathcal{F}$ satisfying $f(a)=c$ and $f(b)=d$.
\item The congruence lattice of $\mathbf{A}_{0}$ is $\mathbf{L}$.
\end{enumerate}
\end{lem}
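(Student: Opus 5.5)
For item (1) I will build $f$ by hand. I will find a unary map sending $a\mapsto c$ and $b\mapsto d$ that preserves every $\theta_{n}$ and $\beta$; by definition it then lies in $\mathcal{F}$. Put $m:=\mu(a,b)$. Preserving all $\theta_{n}$ means that $f(x)|_{[0,n-1]}$ depends only on $x|_{[0,n-1]}$, i.e.\ $f$ is ``non-expanding'' for the ultrametric $2^{-\mu}$. Preserving $\beta$ means that $f$ induces a well-defined map on $A/\beta$.

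Case $\mathrm{E}(c,d)\in\mathcal{U}$: define $f(x):=c$ if $x|_{[0,m]}\neq b|_{[0,m]}$, and otherwise define $f(x)$ by $f(x)_{i}:=d_{i}$ for $i<\mu(c,d)$ (where $d$ agrees with $c$ up to $\mu(c,d)\geq m$) and $f(x)_{i}:=d_{i}$ beyond. More simply, let $f(x):=d$ if $x|_{[0,m]}=b|_{[0,m]}$ and $f(x):=c$ otherwise. Then $f(a)=c$, since $a$ differs from $b$ at $m$, and $f(b)=d$. For $\theta_{n}$: if $n\leq m$ then both $f$-values lie in $\{c,d\}$, which agree on $[0,\mu(c,d)-1]\supseteq[0,m-1]$, hence $\theta_{n}$-related. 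If $n>m$, then $x\,\theta_{n}\,y$ forces the same case, so $f(x)=f(y)$. For $\beta$: every value is in $\{c,d\}$ and $c\,\beta\,d$, so $f$ collapses everything into a single $\beta$-class.

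Case $\mathrm{D}(a,b)\in\mathcal{U}$: here the constant-on-pieces trick fails for $\beta$, since $c,d$ may be $\beta$-unrelated. Instead I will use coordinatewise maps. Write $f(x)_{i}:=g_{i}(x_{0},\dots,x_{i})$; any such $f$ preserves all $\theta_{n}$. I choose $g_{i}$ depending only on $x_{i}$ for $i\geq m$, and on nothing for $i<m$. Concretely, for $i<m$ set $f(x)_{i}:=c_{i}=d_{i}$, and for $i\geq m$ choose $g_{i}:\{0,1\}\to\{0,1\}$ with $g_{i}(a_{i})=c_{i}$ and $g_{i}(b_{i})=d_{i}$. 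This is possible whenever $a_{i}\neq b_{i}$; when $a_{i}=b_{i}$ it requires $c_{i}=d_{i}$, which may fail. To handle such $i\geq m$, I let $g_{i}$ also depend on $x_{m}$ (allowed, since $m\leq i$): $f(x)_{i}:=c_{i}$ if $x_{m}=a_{m}$ and $d_{i}$ otherwise. Preservation of $\beta$ needs care, because the $x_{m}$-dependent coordinates could break it. So I will set $f(x)_{i}=h_{i}(x_{i})$ on $\mathrm{D}(a,b)\cap[m,\infty)$ and $x_{m}$-dependent values on $\mathrm{E}(a,b)\cap[m,\infty)$. If $x\,\beta\,y$, the set where $x_{i}=y_{i}$ lies in $\mathcal{U}$, and $\mathrm{D}(a,b)\in\mathcal{U}$, so $f(x),f(y)$ agree on a $\mathcal{U}$-set. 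Hence $\beta$ is preserved. Checking $\theta_{n}$ only uses that coordinate $i$ depends on coordinates $\leq i$.

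For item (2), first note $L\subseteq\con(\mathbf{A}_{0})$: every $\theta_{n}$ and $\beta$ is preserved by $\mathcal{F}$ by definition, and congruences of a unary algebra are closed under meets, and under joins computed as equivalence joins. Conversely, take $\rho\in\con(\mathbf{A}_{0})$ and a pair $\langle a,b\rangle\in\rho$ with $a\neq b$. By (1), $\rho$ contains every $\langle c,d\rangle$ with $\mu(c,d)\geq\mu(a,b)$ and $\mathrm{E}(c,d)\in\mathcal{U}$, i.e.\ $\rho\supseteq\theta_{\mu(a,b)+1}\cap\beta$. If moreover $\mathrm{D}(a,b)\in\mathcal{U}$, then $\rho\supseteq\theta_{\mu(a,b)+1}$. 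Thus $\rho$ is the union of the sets $\theta_{\mu(a,b)+1}$ or $\theta_{\mu(a,b)+1}\cap\beta$ over its pairs, i.e.\ a join of members of the complete chain-like poset $L$. Using that $L$ is closed under arbitrary joins (unions of increasing families stabilise or reach $\beta$ or $\nabla$, since $\bigcup_{n}(\theta_{n}\cap\beta)=\beta$), I conclude $\rho\in L$. The main obstacle is the $\beta$-preservation in the second case of (1), especially coordinates in $\mathrm{E}(a,b)$ where $c_{i}\neq d_{i}$; the fix above relies on $\mathrm{D}(a,b)\in\mathcal{U}$ making those coordinates $\mathcal{U}$-negligible.
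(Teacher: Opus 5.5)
Your proof is correct and is essentially the paper's, with one variation in (1). In your second case the $h_i$ are forced (identity or negation on $\mathrm{D}(a,b)\cap\mathrm{D}(c,d)$, the constant $c_i$ on $\mathrm{D}(a,b)\cap\mathrm{E}(c,d)$), so your map coincides with the paper's $f$.

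The paper uses this one map under both hypotheses. It is constantly $c_i$ on $\mathrm{E}(c,d)$, so it preserves $\beta$ outright when $\mathrm{E}(c,d)\in\mathcal{U}$. Your separate two-valued step map for that case is valid and more transparent, but not needed. Likewise, your observation that output coordinate $i$ depends only on input coordinates $\leq i$ is a cleaner way to check the $\theta_n$ than the paper's inequality $\mu(u,v)\leq\mu(f(u),f(v))$. For (2) you argue as the paper does, via principal congruences and the shape of $\mathbf{L}$.

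Fix three small slips.
\begin{enumerate}
\item The set $\{\langle c,d\rangle:\mu(c,d)\geq\mu(a,b)\}$ is $\theta_{\mu(a,b)}$, not $\theta_{\mu(a,b)+1}$. As written, your claim that $\rho$ is the union of these sets over its pairs is false, because $\langle a,b\rangle\notin\theta_{\mu(a,b)+1}$; for $\rho=\theta_{1}$ it would produce $\theta_{2}$. With the correct index the argument goes through.
\item $L$ is closed under arbitrary joins because it has no infinite strictly ascending chains: going up in $L$ only lowers the index $n$. Hence any join in $\con(\mathbf{A}_{0})$ of members of $L$ is a finite join. The parenthetical $\bigcup_{n}(\theta_{n}\cap\beta)=\beta$ is not the reason. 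That family is decreasing, and its union over $n\geq1$ is $\theta_{1}\cap\beta$.
\item Your step map needs $m=\mu(a,b)<\infty$. The case $a=b$ forces $c=d$ and is handled by a constant map.
\end{enumerate}
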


\begin{proof}
(1). If $a=b$ the claim holds trivially, so assume $a\neq b$. Put
$k:=\mu(a,b)$ and define $f:A\rightarrow A$ by
\[
f(x)_{i}:=\begin{cases}
c_{i} & \text{if }i\in\mathrm{E}(c,d),\\
x_{i} & \text{if }i\in\mathrm{D}(c,d)\cap\mathrm{D}(a,b)\cap\mathrm{E}(a,c),\\
1-x_{i} & \text{if }i\in\mathrm{D}(c,d)\cap\mathrm{D}(a,b)\cap\mathrm{D}(a,c),\\
c_{i} & \text{if }i\in\mathrm{D}(c,d)\cap\mathrm{E}(a,b)\text{ and }x_{k}=a_{k},\\
d_{i} & \text{if }i\in\mathrm{D}(c,d)\cap\mathrm{E}(a,b)\text{ and }x_{k}=b_{k}.
\end{cases}
\]
Clearly we have $f(a)=c$ and $f(b)=d$. It remains to show that $f$
is compatible with every $\theta_{n}$ and $\beta$. Fix $u,v\in A$
and note that 
\begin{equation}
\mathrm{E}(c,d)\cup(\mathrm{D}(c,d)\cap\mathrm{D}(a,b)\cap\mathrm{E}(u,v))\sub\mathrm{E}(f(u),f(v)).\label{eq:inc clave}
\end{equation}
Suppose $\langle u,v\rangle\in\beta$, that is $\mathrm{E}(u,v)\in\mathcal{U}$.
If $\mathrm{E}(c,d)\in\mathcal{U}$, then (\ref{eq:inc clave}) yields
$\mathrm{E}(f(u),f(v))\in\mathcal{U}$. Otherwise, by assumption we
have $\mathrm{D}(a,b)\cap\mathrm{D}(c,d)\in\mathcal{U}$, and it follows
from (\ref{eq:inc clave}) that $\mathrm{E}(f(u),f(v))\in\mathcal{U}$.
This proves that $f$ preserves $\beta$.

We show next that 
\begin{equation}
\mu(u,v)\leq\mu(f(u),f(v)).\label{eq:Lipschitz}
\end{equation}
Suppose first that $u_{k}=v_{k}$. Then the last two clauses in the
definition of $f$ take the same value on $u$ and $v$, while the
second and third clauses preserve coordinatewise equality and inequality.
Consequently, 
\[
\mathrm{D}(f(u),f(v))=\mathrm{D}(u,v)\cap\mathrm{D}(c,d)\cap\mathrm{D}(a,b)\subseteq\mathrm{D}(u,v),
\]
and hence (\ref{eq:Lipschitz}) holds.

Suppose now that $u_{k}\neq v_{k}$. Then 
\[
\mu(u,v)\leq k\leq\mu(c,d).
\]
Moreover, $f(u)$ and $f(v)$ agree at every coordinate in $\mathrm{E}(c,d)$,
and therefore 
\[
\mu(c,d)\leq\mu(f(u),f(v)).
\]
Thus (\ref{eq:Lipschitz}) holds in this case as well. Since (\ref{eq:Lipschitz})
is equivalent to the preservation of every $\theta_{n}$, we conclude
that $f\in\mathcal{F}$.

(2). By the shape of $\mathbf{L}$ it suffices to show that for all
$a\neq b$ in $A$ we have
\[
\cg^{\mathbf{A}_{0}}(a,b)=\begin{cases}
\theta_{\mu(a,b)}\cap\beta & \text{if }\langle a,b\rangle\in\beta,\\
\theta_{\mu(a,b)} & \text{if }\langle a,b\rangle\notin\beta.
\end{cases}
\]
Fix distinct $a,b\in A$. Since in either case $\langle a,b\rangle$
belongs to the congruence on the right-hand side, the inclusion from
left-to-right holds. Suppose first that $\langle a,b\rangle\in\beta$
and take $\langle c,d\rangle\in\theta_{\mu(a,b)}\cap\beta$. Then
$\mu(a,b)\leq\mu(c,d)$ and $\mathrm{E}(c,d)\in\mathcal{U}$. So (1)
produces $f\in\mathcal{F}$ with $f(a)=c$ and $f(b)=d$. It follows
that $\langle c,d\rangle\in\cg^{\mathbf{A}_{0}}(a,b)$. The case $\langle a,b\rangle\notin\beta$
is analogous.
\end{proof}

The next step is to add operations to $\mathbf{A}_{0}$ without disturbing
its congruence lattice and so that the resulting algebra generates
a quasivariety with the desired properties. Let 
\[
\mathbf{A}:=\langle\{0,1\}^{\omega},\mathcal{F},\wedge,\vee,\neg,\bot,\top,q\rangle,
\]
where $\wedge,\vee,\neg,\bot,\top$ are the usual coordinatewise operations
on $\{0,1\}^{\omega}$ and $q$ is the quaternary operation given
by:
\[
q(a,b,c,d)_{i}:=\begin{cases}
1 & \text{if }i=\max(\mu(a,b),\mu(c,d)),\\
0 & \text{otherwise}.
\end{cases}
\]
Notice that if either $a=b$ or $c=d$, then $q(a,b,c,d)=\bot$.
\begin{lem}
The congruence lattice of $\mathbf{A}$ is $\mathbf{L}$.
\end{lem}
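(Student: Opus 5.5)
Since $\mathbf{A}$ is an expansion of $\mathbf{A}_{0}$, we have $\con(\mathbf{A})\sub\con(\mathbf{A}_{0})=\mathbf{L}$ by Lemma~\ref{lem:ConA0}. It therefore suffices to show that every member of $L$ is compatible with the added operations $\wedge,\vee,\neg,\bot,\top,q$. The constants need no argument. Because $\Delta$ and $\nabla$ are trivially congruences and $L$ is generated by $\{\theta_{n}:n\geq1\}\cup\{\beta\}$ under finite meets, and intersections of compatible equivalences are compatible, it is enough to check the generators $\theta_{n}$ ($n\geq1$) and $\beta$.

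The Boolean operations are coordinatewise. So if $\langle u,v\rangle,\langle u',v'\rangle$ agree on a set $X$ of coordinates, then $u\wedge u'$ and $v\wedge v'$ agree on $X$, and likewise for $\vee$ and $\neg$. For $\theta_{n}$ take $X=[0,n-1]$; for $\beta$ take $X$ to be the intersection of two sets in $\mathcal{U}$, which is again in $\mathcal{U}$. This settles the lattice part.

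The main work is $q$. Its output is either $\bot$ or a single point $e_{m}$, where $m=\max(\mu(a,b),\mu(c,d))$. Any two such vectors differ in at most two coordinates. Since $\mathcal{U}$ is nonprincipal, any two values of $q$ are $\beta$-related, so $q$ preserves $\beta$ trivially.

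For $\theta_{n}$, suppose $a\,\theta_{n}\,a'$, $b\,\theta_{n}\,b'$, $c\,\theta_{n}\,c'$ and $d\,\theta_{n}\,d'$; we must show $q(a,b,c,d)$ and $q(a',b',c',d')$ agree on $[0,n-1]$. The key observation is that $\mu(a,b)<n$ holds iff $\mu(a',b')<n$, and in that case the two values are equal, because $a,b$ agree with $a',b'$ on $[0,n-1]$. Likewise, $\mu(a,b)\geq n$ iff $\mu(a',b')\geq n$. The same holds for the pair $c,d$. Hence $m:=\max(\mu(a,b),\mu(c,d))$ and $m':=\max(\mu(a',b'),\mu(c',d'))$ satisfy one of two cases: either $m=m'<n$, or both $m,m'\geq n$ (including $\infty$). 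In the first case the outputs are equal. In the second, both outputs vanish on $[0,n-1]$. Either way they agree on $[0,n-1]$, and $q$ preserves $\theta_{n}$.

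This case analysis is the only step needing care, but it is elementary. Together with $\con(\mathbf{A})\sub\mathbf{L}$, these compatibility checks yield $\con(\mathbf{A})=\mathbf{L}$.
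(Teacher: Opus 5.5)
Your proof is correct and follows essentially the same route as the paper. You reduce via $\con(\mathbf{A})\sub\con(\mathbf{A}_{0})=\mathbf{L}$ to checking the added operations, dispose of $\beta$ by noting that every value of $q$ is cofinitely equal to every other (the paper compares each value with $\bot$), and handle $\theta_{n}$ by the same two-case analysis of $m=\max(\mu(a,b),\mu(c,d))$ against $n$, with your explicit reduction to the generators via closure under intersection being a detail the paper leaves implicit.
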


\begin{proof}
Since $\mathbf{A}$ is an expansion of $\mathbf{A}_{0}$, by item
(2) of Lemma~\ref{lem:ConA0} it suffices to show that the added
operations preserve the congruences in $L$. This is easy to check
for the Boolean operations. Moreover, as $\langle q(a,b,c,d),\bot\rangle\in\beta$
for all $a,b,c,d\in A$, we clearly have that $q$ preserves $\beta$.
It remains to show that $q$ preserves each $\theta_{n}$. Fix $n\geq1$
and suppose $\langle a,a'\rangle,\langle b,b'\rangle,\langle c,c'\rangle,\langle d,d'\rangle\in\theta_{n}$.
Let 
\[
\ell:=\mu(a,b),\quad\ell':=\mu(a',b'),\quad k:=\mu(c,d),\quad k':=\mu(c',d').
\]
Note that 
\begin{equation}
\ell<n\Rightarrow\ell=\ell'\text{ and }\ell\geq n\Rightarrow\ell'\geq n,\label{eq:eles y enes}
\end{equation}
and the same holds for $k$ and $k'$. Now put 
\[
m:=\max(\ell,k)\quad m':=\max(\ell',k').
\]
There are two cases.

If $m<n$, then~(\ref{eq:eles y enes}) yields $\ell=\ell'$ and
$k=k'$. Thus $m=m'$, which implies $q(a,b,c,d)=q(a',b',c',d')$.

If $m\geq n$, then~(\ref{eq:eles y enes}) says that either $\ell'\geq n$
or $k'\geq n$, and hence $m'\geq n$. It follows that 
\[
q(a,b,c,d)_{i}=0=q(a',b',c',d')_{i}\quad\text{for }i\in[0,n-1].
\]
\end{proof}

We can now introduce the quasivariety. For $n\geq1$ define

\[
\mathbf{A}_{n}:=\mathbf{A}/\theta_{n}\text{ and }\mathbf{B}:=\mathbf{A}/\beta,
\]
and let $\mathcal{B}$ be the quasivariety generated by $\{\mathbf{A}_{n}:n\geq1\}\cup\{\mathbf{B}\}$.
\begin{prop}
The following hold:
\begin{enumerate}
\item $\mathcal{B}$ is RCD, has a majority term and $\mathcal{B}_{\mathrm{RFSI}}$
is universal.
\item $\mathbf{A}\in\mathcal{B}$ and $\rgic_{\mathcal{B}}(\mathbf{A})=\{\theta_{n}:n\geq1\}\cup\{\beta\}$.
\item $\mathbf{A}$ is not RGI-spectral.
\end{enumerate}
\end{prop}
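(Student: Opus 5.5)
Item (1) comes first. The majority term will be the Boolean median $m(x,y,z)=(x\wedge y)\vee(x\wedge z)\vee(y\wedge z)$, computed coordinatewise. It is a majority term on $\mathbf{A}$, hence on every quotient, and so on the whole of $\mathcal{B}$.

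For relative congruence distributivity I would use the standard fact that a quasivariety with a NU term is RCD. It suffices that relative congruence lattices embed into lattices of the form $\con_{\mathcal{K}}$ with $\mathcal{K}$ a class of algebras having a NU term; alternatively I would cite that such quasivarieties are relatively congruence distributive.

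For the universality of $\mathcal{B}_{\mathrm{RFSI}}$, note that each generator is finitely subdirectly irreducible. Indeed $\con(\mathbf{A}/\theta_n)\cong[\theta_n)$ is a chain together with the extra elements $\theta_k\cap\beta\sqcup\theta_n$; since $\theta_k\supseteq\theta_n\cup(\theta_k\cap\beta)$, this collapses to $\{\theta_k:k\le n\}$, a chain. Also $\mathbf{B}$ is simple. By the relative Jónsson lemma, $\mathcal{B}_{\mathrm{RFSI}}\sub\mathbb{ISP}_u(\{\mathbf{A}_n\}\cup\{\mathbf{B}\})$.

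The key point is a universal sentence singling out RFSIs. I would use $q$. In $\mathbf{A}_n$ and $\mathbf{B}$, the sentence
\[
a\neq b\wedge c\neq d\rightarrow q(a,b,c,d)\neq\bot
\]
should hold. In $\mathbf{A}_n$ the value $\mu$ is computed on $[0,n-1]$, so the position $\max$ lies in $[0,n-1]$. In $\mathbf{B}$ this needs care, because $q(\dots)\,\beta\,\bot$ always. So instead I would expect the universal class to be the one satisfying a suitable Horn-free disjunction that encodes ``$\cg(a,b)\cap\cg(c,d)\neq\Delta$'' witnessed uniformly by $q$ or by $a\wedge\neg b$-type Boolean terms. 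Concretely, I would show that in every RFSI member either the Boolean reduct is two-element, or $q$ witnesses nontrivial meets of principal congruences. Since relative principal congruences here are definable via the unary operations $\mathcal{F}$, this is a universal condition, and the relative Jónsson lemma then makes $\mathcal{B}_{\mathrm{RFSI}}\cup\{\text{trivial}\}=\mathbb{ISP}_u$ of the generators. This is the step I expect to be the main obstacle, namely pinning down the correct universal sentence and checking it against ultraproducts.

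For item (2), $\mathbf{A}$ lies in $\mathcal{B}$ since $\bigcap_n\theta_n=\Delta$, which gives $\mathbf{A}\hookrightarrow\prod_n\mathbf{A}_n$. Every congruence in $L$ is a $\mathcal{B}$-congruence except possibly $\Delta$, $\nabla$ and the $\theta_n\cap\beta$; the quotient by $\theta_n\cap\beta$ embeds into $\mathbf{A}_n\times\mathbf{B}$. Next I would apply Lemma~\ref{lem:carac RGI RCD para NU y clase universal de SI}(2) logic via Proposition~\ref{prop:RGI RCD ancho finite}. The quotients $\mathbf{A}_n$ and $\mathbf{B}$ are RSI, hence RGI. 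For $\theta_n\cap\beta$, its covers are $\theta_n$ and $\theta_{n+1}\cap\beta$ (or $\beta$), and these permute, as observed: $(\theta_k\cap\beta)\circ\theta_{k+1}=\theta_k$. So the pair is a CRS by Lemma~\ref{lem: relativamente aritmetica sii todo finito es CRS}, and by Remark~\ref{rem:CRS implica descomponible} the quotient is decomposable. $\Delta$ is handled by item (3), and $\nabla$ is excluded because it is trivial.

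For item (3), $\{\theta_n\}\cup\{\beta\}$ is not $\te$-complete. Take the system $\S$ with $\S(\theta_n)=\bot$ and $\S(\beta)=\top$. The cover $\{\mathrm{e}(\bot,\top)\cap\ldots\}$ is compatible: $\beta$ lies in the $\te$-open set $\mathrm{e}(\bot,c)$, where $c$ has support a finite-free set outside $\mathcal{U}$ but in each $\theta_n$-class of $\top$. More simply, I would use $u=\mathrm{e}(\top,t)$ with $t$ equal to $\top$ on an ultrafilter set and $\bot$ elsewhere, chosen so that no $\theta_n$ lies in $u$ for large $n$. A solution would be some $a$ with $a\,\theta_n\,\bot$ for all $n$, forcing $a=\bot$, while $\langle\bot,\top\rangle\notin\beta$. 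This contradiction shows the set is not $\te$-complete.
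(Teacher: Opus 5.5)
\paragraph{Item (1): RCD and universality.}
This item has a genuine gap. The claim that a quasivariety with an NU term is automatically RCD is false. An NU term makes $\con(\mathbf{A})$ distributive, but $\con_{\mathcal{B}}(\mathbf{A})$ is only a meet-subsemilattice of $\con(\mathbf{A})$ with a different join, so distributivity does not transfer. This is why the paper keeps RCD as a separate hypothesis for quasivarieties and calls it superfluous only for varieties. Nothing in your proposal therefore establishes RCD.

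The universality of $\mathcal{B}_{\mathrm{RFSI}}$ is also left open. The relative Jónsson lemma gives only $\mathcal{B}_{\mathrm{RFSI}}\subseteq\mathbb{ISP}_u(\mathcal{K})$. The converse needs a universal formula, valid in $\mathcal{K}$, that forces relative finite subdirect irreducibility. Saying that ``principal congruences are definable via $\mathcal{F}$'' does not yield such a formula for arbitrary members of $\mathcal{B}$.

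The missing idea is to pair $q$ with the Boolean term $p(x,y,z,w):=(x\oplus y)\wedge(z\oplus w)$, which does for $\mathbf{B}$ what $q$ does for the $\mathbf{A}_n$. One checks
\[
\mathcal{K}\models(x=y\text{ or }z=w)\leftrightarrow\bigl(q(x,y,z,w)=\bot\text{ and }p(x,y,z,w)=\bot\bigr).
\]
This universal formula passes to $\mathbb{ISP}_u(\mathcal{K})\supseteq\mathcal{B}_{\mathrm{RFSI}}$. By Czelakowski--Dziobiak the two pairs then witness EDPM, which gives RCD and the universality of $\mathcal{B}_{\mathrm{RFSI}}$ at once.

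\paragraph{Item (2): the congruence $\Delta$.}
This item has a second gap, and the argument is circular. You defer $\Delta\notin\rgic_{\mathcal{B}}(\mathbf{A})$ to item (3). But your item (3) only shows that $\{\theta_n:n\ge1\}\cup\{\beta\}$ is not $\te$-complete. That yields ``not RGI-spectral'' only after one already knows $\Delta\notin\rgic_{\mathcal{B}}(\mathbf{A})$, since any family containing $\Delta$ is trivially $\te$-complete.

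The tools you reach for do not help here. Lemma~\ref{lem:carac RGI RCD para NU y clase universal de SI} needs $\mathcal{B}_{\mathrm{RSI}}$ to be almost universal, and it is not; that is the point of the example. Proposition~\ref{prop:RGI RCD ancho finite} needs atoms, and $\con(\mathbf{A})$ has none.

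What you need is an explicit nontrivial global decomposition of $\mathbf{A}$, namely that $\{\theta_n:n\ge1\}$ is $\te$-complete.
\begin{enumerate}
\item Given a compatible cover, choose $u_n\ni\theta_n$ and a solution $a_n$ of $\mathsf{S}|_{u_n}$.
\item Since $u_{n+1}$ is an up-set, it contains $\theta_n$, so $\langle a_n,a_{n+1}\rangle\in\theta_n$.
\item Hence the $a_n$ agree on longer and longer initial segments and define a limit sequence that solves $\mathsf{S}$.
\end{enumerate}
Lemma~\ref{lem:carac congruencial de RGI} then shows that $\mathbf{A}$ is not RGI.

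\paragraph{Smaller slips.}
\begin{itemize}
\item The second cover of $\theta_n\cap\beta$ is $\theta_{n-1}\cap\beta$ (or $\beta$ when $n=1$), not $\theta_{n+1}\cap\beta$. More simply, $\theta_n$ and $\beta$ permute, so $\mathbf{A}/(\theta_n\cap\beta)\cong\mathbf{A}_n\times\mathbf{B}$.
\item In (3) your system agrees with the paper's up to $\theta_n$-equivalence, since $0^n1^\omega\mathrel{\theta_n}\bot$. However, the cover you describe is garbled: $\mathrm{e}(\bot,\top)$ contains only $\nabla$. A working cover is $u_n:=\mathrm{e}(0^n1^\omega,\bot)$ with solution $\bot$, together with $v:=\mathrm{e}(10^\omega,\bot)$ with solution $\top$.
\end{itemize}
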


\begin{proof}
(1). Define $\mathcal{K}:=\{\mathbf{A}_{n}:n\geq1\}\cup\{\mathbf{B}\}$.
Let $\cp$ denote the symmetric difference operation. Let 
\[
p(x,y,z,w):=(x\cp y)\wedge(z\cp w).
\]
A direct computation gives 
\begin{equation}
\mathcal{K}\vDash(x=y\text{ or }z=w)\longleftrightarrow\bigl(q(x,y,z,w)=\bot\text{ and }p(x,y,z,w)=\bot\bigr).\label{eq:traduccion}
\end{equation}
By the relative Jónsson lemma, 
\[
\mathcal{B}_{\mathrm{RFSI}}\subseteq\mathbb{ISP}_{u}(\mathcal{K}).
\]
Since the condition displayed in (\ref{eq:traduccion}) is universal,
it holds throughout $\mathcal{B}_{\mathrm{RFSI}}$. Hence the pairs
\[
\bigl(q(x,y,z,w),\bot\bigr)\quad\text{and}\quad\bigl(p(x,y,z,w),\bot\bigr)
\]
witness EDPM for $\mathcal{B}$ by \cite[Theorem~2.3]{CzelakowskiDziobiak1990}.
In particular, $\mathcal{B}$ is RCD and $\mathcal{B}_{\mathrm{RFSI}}$
is universal. Finally, the term $M(x,y,z):=(x\vee y)\wedge(x\vee z)\wedge(y\vee z)$
is a majority term for $\mathcal{B}$.

(2). Let $\Sigma:=\{\theta_{n}:n\geq1\}$. Since $\bigcap\Sigma=\Delta$,
we have that $\mathbf{A}$ embeds in $\prod_{n\geq1}\mathbf{A}_{n}$,
and thus $\mathbf{A}\in\mathcal{B}$. Moreover, 
\[
\con_{\mathcal{B}}(\mathbf{A})=\con(\mathbf{A}).
\]
We claim that $\Sigma$ is $\te$-complete. Let $\S$ be a $\te$-system
on $\Sigma$ and $\mathcal{C}$ a $\te$-cover of $\Sigma$ compatible
with $\S$. For each $n\geq1$ choose $u_{n}\in\mathcal{C}$ with
$\theta_{n}\in u_{n}$ and fix a solution $a_{n}\in A$ for $\S|_{u_{n}}$.
Note that $\theta_{n}\in u_{n+1}$ and hence 
\[
\langle a_{n},a_{n+1}\rangle\in\theta_{n}\quad\text{for all }n\geq1.
\]
Equivalently, the sequences $a_{n}$ and $a_{n+1}$ agree on their
first $n$ coordinates. Hence, there is a unique sequence $a\in A$
satisfying $a|_{[0,n-1]}=a_{n}|_{[0,n-1]}$ for all $n\geq1$. It
is straightforward to check that $a$ is a solution for $\S$. 

It follows that $\Sigma$ is $\te$-complete, and Lemma~\ref{lem:carac congruencial de RGI}
says that $\mathbf{A}\notin\mathcal{B}_{\mathrm{RGI}}$; that is $\Delta\notin\rgic_{\mathcal{B}}(\mathbf{A})$.
As the congruences $\theta_{n}$ and $\beta$ permute for each $n\geq1$,
we have that $\mathbf{A}/(\theta_{n}\cap\beta)\cong\mathbf{A}_{n}\times\mathbf{B}$;
hence $\theta_{n}\cap\beta\notin\rgic_{\mathcal{B}}(\mathbf{A})$.
So, since every CMI congruence is RGI, we have $\Sigma\cup\{\beta\}=\rgic_{\mathcal{B}}(\mathbf{A}).$

(3). We show that $\Gamma:=\{\theta_{n}:n\geq1\}\cup\{\beta\}$ is
not $\te$-complete. Let $\S:\Gamma\rightarrow A$ be the system given
by $\S(\beta):=\top$ and $\S(\theta_{n}):=0^{n}1^{\omega}$ for all
$n\geq1$. We claim that $\S$ is a $\te$-system. For each $n\geq1$
define $u_{n}:=\mathrm{e}(0^{n}1^{\omega},\bot)$. Note that
\[
\theta_{n}\in u_{n}\text{ and }\beta\notin u_{n}\quad\text{for all }n\geq1.
\]
Put $v:=\mathrm{e}(10^{\omega},\bot)$ and observe that
\[
\beta\in v\text{ and }\theta_{n}\notin v\quad\text{for all }n\geq1.
\]
It follows that $\mathcal{C}:=\{u_{n}:n\geq1\}\cup\{v\}$ is a $\te$-cover
of $\Gamma$. Moreover, it is compatible with $\S$ since $\top$
is a solution for $\S|_{v}$ and $0^{n}1^{\omega}$ solves $\S|_{u_{n}}$
for each $n\geq1$. 

Finally, note that $\S$ is not solvable, as the only solution for
$\S|_{\{\theta_{n}:n\geq1\}}$ is $\bot$ and $\langle\bot,\top\rangle\notin\beta$.
\end{proof}

\subsection{Global representations with bounded width factors require a NU term}

In this section we provide a converse to Corollary \ref{cor:super chela con NU es RGI espectral}.
\begin{lem}
\label{lem:varias cosas implican NU}Let $r\geq2$. Suppose that the
$\mathcal{Q}$-free algebra on $r+1$ generators is RCD, and that
every $(r+1)$-generated algebra in $\mathcal{Q}$ has a global representation
whose factors have relative subdirect width at most $r$. Then $\mathcal{Q}$
has a $(r+1)$-ary NU term.
\end{lem}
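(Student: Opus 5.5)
Let $\mathbf{F}$ be the $\mathcal{Q}$-free algebra on generators $x_{1},\ldots,x_{r+1}$. We will show that the system
\[
\mathsf{S}:\ \theta_{i}\mapsto x_{i+1}\ (\text{indices mod }r+1),
\]
defined on $\theta_{i}:=\cg^{\mathbf{F}}_{\mathcal{Q}}(\{\langle x_{j},x_{k}\rangle:j,k\neq i\})$, $i\in[1,r+1]$, is solvable. More precisely, we use the system $\mathsf{S}(\theta_{i}):=x_{j}$ for any $j\neq i$, which is well defined modulo $\theta_{i}$. A solution $t(x_{1},\ldots,x_{r+1})\in F$ satisfies $\langle t,x_{j}\rangle\in\theta_{i}$ for $j\neq i$. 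Since $\mathbf{F}/\theta_{i}$ is the free algebra in which all variables except $x_{i}$ are identified, this says exactly that $t(x,\ldots,x,y,x,\ldots,x)\approx x$ holds in $\mathcal{Q}$ with $y$ in position $i$. So $t$ is an NU term, and the problem reduces to showing that $\Omega:=\{\theta_{1},\ldots,\theta_{r+1}\}$ admits a solution for this particular system.

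\textbf{Step 1: $\mathsf{S}$ is a $\sqcup$-system.} For $i\neq i'$ the join $\theta_{i}\sqcup\theta_{i'}$ identifies all generators, since $r+1\ge3$. Hence all values of $\mathsf{S}$ are related by $\theta_{i}\sqcup\theta_{i'}$, and in fact every $r$-element subsystem is solvable.

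\textbf{Step 2: pass to a global representation.} By hypothesis, $\mathbf{F}$ has a global spectrum $\Sigma$ (Lemma~\ref{lem:global iff te-complete}) whose members $\sigma$ satisfy $\sdw_{\mathcal{Q}}(\mathbf{F}/\sigma)\le r$. By Lemma~\ref{lem: minoriza esp glo rcd implica crs}, since $\mathbf{F}$ is RCD, it suffices to find a finite set $\Omega'$ that minorizes $\Sigma$, is a CRS-witnessing set, and for which $\mathsf{S}$ extends suitably. Concretely, take $\Omega':=\Omega^{(r)}\cup\Omega$ or a similar finite family of meets, and show that every $\sigma\in\Sigma$ lies above some element for which the system is already solvable. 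The key observation: for $\sigma\in\Sigma$, write $\sigma=\gamma_{1}\cap\cdots\cap\gamma_{m}$ with $m\le r$ and $\gamma_{j}$ completely meet-irreducible. Since $\bigcap\Omega\subseteq\sigma$ and each $\gamma_{j}$ is meet-prime by distributivity, each $\gamma_{j}$ contains some $\theta_{i_{j}}$. With only $m\le r$ indices chosen, some $i_{0}\in[1,r+1]$ is missed. Then the $r$-element subsystem on $\Omega\setminus\{\theta_{i_{0}}\}$ has a solution, namely $x_{i_{0}}$, and this solution works modulo each $\gamma_{j}$ and hence modulo $\sigma$. This gives local solutions: for each $\sigma$, the element $x_{i_{0}(\sigma)}$ solves $\mathsf{S}$ "at $\sigma$". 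What remains is to make them patch.

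\textbf{Step 3: patching (main obstacle).} We build the $\te$-system on $\Sigma$ that assigns to $\sigma$ the element $x_{i_{0}}$, a solution of $\mathsf{S}|_{\Omega\setminus\{\theta_{i_{0}}\}}$ modulo $\sigma$. For compatibility, cover $\Sigma$ by the finitely many open upsets $u_{i}:=\bigcap_{p\in F_{i}}\mathrm{e}(p)$, where $F_{i}$ is a finite generating set of $\bigcap_{j\neq i}\theta_{j}$; this meet is compact because each $\theta_{j}$ is finitely generated, and a suitable version of compactness applies. The point to check is that $\sigma$ contains such a meet, which follows because $\sigma\supseteq\bigcap_{j}\gamma_{j}\supseteq\bigcap_{j\neq i_{0}}\theta_{j}$. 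On $u_{i}$ the constant value $x_{i}$ is then a solution, so the system with $\sigma\mapsto x_{i}$ for $\sigma\in u_{i}$ is a $\te$-system. It is well defined modulo $\sigma$ when $\sigma$ lies in several $u_{i}$, because then $\sigma$ contains a join of two such meets, and in that join all generators are identified when $r+1\ge3$.

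The delicate point is the passage from the solution $a$ of this $\te$-system back to a solution of $\mathsf{S}$ on $\Omega$. I would argue as in the end of the proof of Lemma~\ref{lem: minoriza esp glo rcd implica crs}. Take $\gamma$ completely meet-irreducible with $\theta_{i}\subseteq\gamma$. By Lemma~\ref{lem:completamente meet-prime} applied to $\overline{\Sigma}^{\mathrm{ed}}$, using Lemma~\ref{lem:global en la clausura}, there is $\sigma\subseteq\gamma$ with $\sigma\in\overline{\Sigma}^{\mathrm{ed}}$. Then $a\equiv x_{k}$ modulo $\sigma$ for some $k$, and the value $x_{k}$ agrees with $\mathsf{S}(\theta_{i})$ modulo $\gamma$, because $\gamma$ contains $\theta_{i}$, which identifies all $x_{j}$ with $j\neq i$. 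The catch is the case $k=i$: then we need $\sigma$ to lie in a cover set other than $u_{i}$, or $\gamma$ to identify $x_{i}$ with the others. I expect this case split to be the crux. I would handle it by choosing the cover more carefully, using only the sets $u_{i}$ for which $\theta_{i}\not\subseteq\gamma$ is forced, and refining via the width bound $m\le r$ as above. Finally, intersecting over all such $\gamma$ gives $\langle a,\mathsf{S}(\theta_{i})\rangle\in\theta_{i}$, which yields the NU term.
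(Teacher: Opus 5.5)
Your overall plan (pigeonhole on at most $r$ CMI components, then patching, then intersecting over CMIs) has the right shape, but it breaks at Step 2. You apply the hypothesis to $\mathbf{F}$ itself, so the global spectrum satisfies $\bigcap\Sigma=\Delta$. You then assert $\bigcap\Omega\subseteq\sigma$ for every $\sigma\in\Sigma$. That would force $\delta:=\theta_1\cap\cdots\cap\theta_{r+1}=\Delta$, which is false in general: any two distinct ternary majority terms are $\delta$-related, e.g.\ $q(x_1,x_2,x_3)$ and $q(x_1,x_3,x_2)$ in the variety of a three-element dual discriminator algebra. If $\sigma\not\supseteq\delta$, then no $\bigcap_{j\neq i}\theta_j$ is contained in $\sigma$, since each of them contains $\delta$. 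So your sets $u_i$ do not cover $\Sigma$.

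The missing idea is to apply the hypothesis to $\mathbf{F}/\delta$ instead. It is $(r+1)$-generated, and it is RCD because $\con_{\mathcal{Q}}(\mathbf{F}/\delta)\cong[\delta)$. Pulling its spectrum back via Lemma~\ref{lem:correspondencia} gives a $\te$-complete $\Sigma\subseteq\cmic_{\mathcal{Q}}(\mathbf{F})^{(r)}$ with $\bigcap\Sigma=\delta$. Now your pigeonhole argument is valid and shows $\{\delta_1,\ldots,\delta_{r+1}\}\trianglelefteq\Sigma$, where $\delta_i:=\bigcap_{k\neq i}\theta_k$.

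Step 3 also has problems. First, a finite meet of compact congruences need not be compact, so your $u_i$ need not be open. Second, your compatibility argument claims that $\delta_i\sqcup\delta_j$ identifies all generators. This is false, since $\delta_i\sqcup\delta_j\subseteq\theta_m$ for any $m\notin\{i,j\}$.

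What is true, and is the real key step, is $\langle x_i,x_j\rangle\in\delta_i\sqcup\delta_j$. By distributivity, $\delta_i\sqcup\delta_j=\bigcap_{k\neq i,\ell\neq j}(\theta_k\sqcup\theta_\ell)$, and each term contains $\langle x_i,x_j\rangle$. When $k\neq\ell$ this goes through a third generator $x_m$ with $m\notin\{k,\ell\}$; when $k=\ell\notin\{i,j\}$ it is immediate.

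With this in hand, drop the system on the $\theta_i$ and use $\delta_i\mapsto x_i$ instead. It is a $\sqcup$-system, and it is well defined because the $\delta_i$ are pairwise incomparable when $\mathcal{Q}$ is nontrivial. Since $\{\delta_i/\delta\}$ minorizes the global spectrum of $\mathbf{F}/\delta$, Lemma~\ref{lem: minoriza esp glo rcd implica crs} makes it a CRS. That lemma carries out exactly your patching and CMI-intersection argument, with the compact approximations handled internally.

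Working with CMIs above $\delta_i$ rather than above $\theta_i$ also removes your unresolved case $k=i$. In your own framing that case is settled by $\delta_i\sqcup\theta_i=\bigcap_{k\neq i}(\theta_k\sqcup\theta_i)$, which identifies all generators. Finally, a solution $s$ satisfies $\langle s,x_j\rangle\in\delta_j\subseteq\theta_i$ for $j\neq i$. This is the NU identity with the odd variable in position $i$.
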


\begin{proof}
Let $\mathbf{F}$ be the $\mathcal{Q}$-free algebra freely generated
by $x_{1},\ldots,x_{r+1}$. For each $k\in[1,r+1]$ define 
\[
\theta_{k}:=\bigsqcup\{\cg^{\mathbf{F}}_{\mathcal{Q}}(x_{i},x_{j}):i,j\neq k\},
\]
and put
\[
\delta:=\bigcap^{r+1}_{k=1}\theta_{k}.
\]
By assumption, the algebra $\mathbf{F}/\delta$ has a global spectrum
$\tilde{\Sigma}\sub\cmic_{\mathcal{Q}}(\mathbf{F}/\delta)^{(r)}$.
So $\tilde{\Sigma}$ is $\te$-complete by Lemma~\ref{lem:global iff te-complete},
and Lemma~\ref{lem:correspondencia} says that 
\[
\Sigma:=\{\theta\in\con_{\mathcal{Q}}(\mathbf{F}):\delta\sub\theta\text{ and }\theta/\delta\in\tilde{\Sigma}\}
\]
is $\te$-complete. Moreover, we have that $\bigcap\Sigma=\delta$
and $\Sigma\sub\cmic_{\mathcal{Q}}(\mathbf{F})^{(r)}$.

Next, for $i\in[1,r+1]$ define 
\[
\delta_{i}:=\bigcap_{k\neq i}\theta_{k}.
\]
We claim that 
\begin{equation}
\langle x_{i},x_{j}\rangle\in\delta_{i}\sqcup\delta_{j}\text{ for all }i,j.\label{eq:supremos de deltas}
\end{equation}
The case $i=j$ is trivial, so suppose that $i\neq j$. By distributivity,

\[
\delta_{i}\sqcup\delta_{j}=\bigcap_{k\neq i,\ell\neq j}\theta_{k}\sqcup\theta_{\ell}.
\]
Fix $k\neq i$ and $\ell\neq j$. If $k\neq\ell$, choose $m\in[1,r+1]\setminus\{k,\ell\}$.
Then $\langle x_{i},x_{m}\rangle\in\theta_{k}$ and $\langle x_{m},x_{j}\rangle\in\theta_{\ell}$,
which implies $\langle x_{i},x_{j}\rangle\in\theta_{k}\sqcup\theta_{\ell}$.
If $k=\ell$, then $k\notin\{i,j\}$ and hence $\langle x_{i},x_{j}\rangle\in\theta_{k}$.
So, 
\[
\langle x_{i},x_{j}\rangle\in\bigcap_{k\neq i,\ell\neq j}\theta_{k}\sqcup\theta_{\ell},
\]
 which proves (\ref{eq:supremos de deltas}).

We may assume that $\mathcal{Q}$ is nontrivial. We next observe that
the congruences 
\begin{equation}
\delta_{1},\ldots,\delta_{r+1}\text{ are pairwise incomparable.}\label{eq:las deltas son distintas}
\end{equation}
Indeed, suppose that $i\neq j$ and $\delta_{j}\sub\delta_{i}$. By~
(\ref{eq:supremos de deltas}) we have $\langle x_{i},x_{j}\rangle\in\delta_{i}\sqcup\delta_{j}=\delta_{i}\sub\theta_{j}.$
Take a nontrivial algebra $\mathbf{A}\in\mathcal{Q}$ and distinct
elements $a,b\in A$. By freeness, there is a homomorphism $h\colon\mathbf{F}\to\mathbf{A}$
such that $h(x_{j})=b$ and $h(x_{k})=a$ for every $k\neq j$. Then
$\theta_{j}\sub\ker h$, whereas $\langle x_{i},x_{j}\rangle\notin\ker h$,
a contradiction.

The next step is to prove that
\begin{equation}
\{\delta_{i}:i\in[1,r+1]\}\mins\Sigma.\label{eq:los deltas minorizan Sigma}
\end{equation}
Let $\theta\in\Sigma$ and take $\Theta\sub\cmic_{\mathcal{Q}}(\mathbf{F})$,
with $|\Theta|\leq r$, and such that $\theta=\bigcap\Theta$. Since
\[
\bigcap^{r+1}_{k=1}\theta_{k}\sub\theta=\bigcap\Theta,
\]
 and the congruences in $\Theta$ are meet-prime, it follows that
for each $\gamma\in\Theta$ there is $k_{\gamma}\in[1,r+1]$ satisfying
$\theta_{k_{\gamma}}\sub\gamma$. So, as $|\Theta|\leq r$, there
is 
\[
i\in[1,r+1]\setminus\{k_{\gamma}:\gamma\in\Theta\},
\]
and hence 
\[
\delta_{i}\sub\bigcap\Theta=\theta,
\]
as desired.

Put 
\[
\Omega:=\{\delta_{i}:i\in[1,r+1]\}\quad\text{and}\quad\widetilde{\Omega}:=\{\delta_{i}/\delta:i\in[1,r+1]\}.
\]
By~(\ref{eq:los deltas minorizan Sigma}), the set $\widetilde{\Omega}$
minorizes $\widetilde{\Sigma}$. So, Lemma~\ref{lem: minoriza esp glo rcd implica crs}
implies that $\widetilde{\Omega}$ is a CRS. Therefore, Lemma~\ref{lem:correspondencia}
shows that $\Omega$ is a CRS.

The map $\mathsf{S}:\Omega\rightarrow F$, given by $\mathsf{S}(\delta_{i}):=x_{i}$,
is well-defined by (\ref{eq:las deltas son distintas}) and it is
a $\sqcup$-system by (\ref{eq:supremos de deltas}). Hence, there
is a solution $s\in F$ for $\mathsf{S}$. Since $\mathbf{F}$ is
generated by $x_{1},\ldots,x_{r+1}$, there is a $(r+1)$-ary term
$M$ such that 
\[
s=M^{\mathbf{F}}(x_{1},\ldots,x_{r+1}).
\]
To conclude, we show that $M$ is a NU term for $\mathcal{Q}$. Let
$\mathbf{A}\in\mathcal{Q}$ and fix $a,b\in A$. By symmetry, it suffices
to prove that $a=M^{\mathbf{A}}(b,a,\ldots,a)$. Let $h:\mathbf{F}\rightarrow\mathbf{A}$
be a homomorphism extending the map $x_{1},x_{2},\ldots,x_{r+1}\mapsto b,a,\ldots,a$.
Note that 
\[
\langle s,x_{2}\rangle\in\delta_{2}\sub\theta_{1}\sub\ker h,
\]
and hence 
\[
a=h(x_{2})=M^{\mathbf{A}}(h(x_{1}),h(x_{2}),\ldots,h(x_{r+1}))=M^{\mathbf{A}}(b,a,\ldots,a).
\]
\end{proof}

Combining the preceding lemma with the forward representation theorem,
we obtain the following characterization.
\begin{thm}
\label{thm:global ancho finito sii NU}Let $r\geq2$. Suppose that
$\mathcal{Q}$ is RCD and that $\mathcal{Q}_{\mathrm{RSI}}$ is almost
universal. The following are equivalent:
\begin{enumerate}
\item $\mathcal{Q}$ has an $(r+1)$-ary NU term.
\item Every algebra in $\mathcal{Q}$ is isomorphic to a global subdirect
product whose factors have relative subdirect width at most $r$.
\item Every $(r+1)$-generated algebra in $\mathcal{Q}$ is isomorphic to
a global subdirect product whose factors have relative subdirect width
at most $r$.
\end{enumerate}
\end{thm}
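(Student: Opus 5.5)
The plan is to prove the cycle (1)$\Rightarrow$(2)$\Rightarrow$(3)$\Rightarrow$(1).

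\emph{(1)$\Rightarrow$(2).} Since $\mathcal{Q}$ is RCD, has an $(r+1)$-ary NU term, and $\mathcal{Q}_{\mathrm{RSI}}$ is almost universal, Corollary~\ref{cor:super chela con NU es RGI espectral} applies. It shows that every $\mathbf{A}\in\mathcal{Q}$ is RGI-spectral and that each factor lies in $\mathcal{Q}_{\mathrm{RGI}}$. That class consists of $\mathcal{Q}_{\mathrm{RSI}}$ (width $1$) together with algebras whose relative congruence lattice is $n$-atomic for some $n\in[2,r]$. For the latter, Lemma~\ref{lem:equivalencias RCD ancho finito} (each factor is RCD, being in the RCD quasivariety $\mathcal{Q}$) gives $\sdw_{\mathcal{Q}}=n\le r$. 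Trivial factors can be discarded, or handled as the degenerate case. Alternatively one could cite Theorem~\ref{thm:espectro global para RCD almost univ} directly, noting that for $\theta$ with $|\cov(\theta)|\in[2,r]$ the quotient $\mathbf{A}/\theta$ is $|\cov(\theta)|$-atomic by Lemma~\ref{lem:correspondencia}.

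\emph{(2)$\Rightarrow$(3).} This is trivial.

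\emph{(3)$\Rightarrow$(1).} This is exactly Lemma~\ref{lem:varias cosas implican NU}. Its hypotheses are that the $\mathcal{Q}$-free algebra on $r+1$ generators is RCD, which holds because $\mathcal{Q}$ is RCD, and that every $(r+1)$-generated algebra has a global representation with factors of relative subdirect width at most $r$, which is (3). The one point to check is the translation used inside that lemma. A global subdirect product with factors of width at most $r$ yields a global spectrum contained in $\cmic_{\mathcal{Q}}^{(r)}$. To see this, take the kernels of the projections. By Lemma~\ref{lem:global iff te-complete} they form a $\te$-complete set with trivial intersection. Each kernel $\eta_i$ has $\mathbf{A}/\eta_i$ of width at most $r$, so by Lemma~\ref{lem:correspondencia} $\eta_i$ is an intersection of at most $r$ CMI congruences of $\mathbf{A}$. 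The factors must be in $\mathcal{Q}$ so that the kernels are relative congruences; that is implicit in ``relative subdirect width.''

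I expect no real obstacle. The only care needed is bookkeeping: isomorphic representations should be treated as spectra, and width-$0$ (trivial) factors should be handled, which is harmless since $\nabla$ can be removed from or added to a $\te$-complete set without affecting completeness.
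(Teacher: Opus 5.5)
Your argument is correct and is the paper's proof. The paper proves (1)$\Rightarrow$(2) by Corollary~\ref{cor:super chela con NU es RGI espectral}, with the width bound from Lemma~\ref{lem:equivalencias RCD ancho finito}; (2)$\Rightarrow$(3) is trivial; and (3)$\Rightarrow$(1) is Lemma~\ref{lem:varias cosas implican NU}. Your bookkeeping about kernels, trivial factors and $\nabla$ is fine.

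Drop the ``alternative'' of citing Theorem~\ref{thm:espectro global para RCD almost univ} directly, though. Lemma~\ref{lem:correspondencia} only says that $\con_{\mathcal{Q}}(\mathbf{A}/\theta)$ has exactly $|\cov(\theta)|$ atoms, not that it is atomic, and the claimed atomicity is false. For a counterexample, take distributive lattices with $r=2$, and $\mathbf{A}=\mathbf{3}\times\mathbf{B}$, where $\mathbf{3}$ is the three-element chain and $\mathbf{B}$ a nontrivial atomless Boolean algebra.
\begin{itemize}
\item We have $\con(\mathbf{A})\cong\con(\mathbf{3})\times\con(\mathbf{B})$, and $\con(\mathbf{B})$ has no atoms.
\item Hence $\Delta$ has exactly two covers, coming from the two atoms of $\con(\mathbf{3})$, and these do not permute.
\item So $\Delta$ lies in the spectrum of that theorem.
\item Yet $\mathbf{A}$ is infinite, so it has infinite subdirect width.
\end{itemize}
You therefore really must pass through $\rgic_{\mathcal{Q}}(\mathbf{A})$ and item~(1) of Lemma~\ref{lem:carac RGI RCD para NU y clase universal de SI}, as your main line does.

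The same example shows that item~(2) of that lemma, which the paper uses to derive Corollary~\ref{cor:A RCD implica RGI-spectral}, needs the extra hypothesis that $[\theta)$ be atomic. The corollary you rely on still holds, for the following reason. The proof of the theorem only ever asks whether $\bigcap\Omega$ lies in the spectrum, for $\Omega$ a finite antichain of CMI congruences. The upper interval $[\bigcap\Omega)$ is atomic by Lemma~\ref{lem:atomos de Con A inducido por un conjunto finito de CMIs}. So the proof goes through verbatim with $\rgic_{\mathcal{Q}}(\mathbf{A})$ in place of the displayed set.
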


\begin{proof}
The implication (1)$\Rightarrow$(2) follows from Corollary~\ref{cor:super chela con NU es RGI espectral},
(2)$\Rightarrow$(3) is immediate, and Lemma~\ref{lem:varias cosas implican NU}
yields (3)$\Rightarrow$(1).
\end{proof}

\subsection{Congruence permutability}

As discussed in \cite{vaggione_2022}, global representation results
have proven to be valuable tools for characterizing RCP algebras,
and Corollary~\ref{cor:super chela con NU es RGI espectral} is no
exception. To provide some context, recall Nachbin's classical characterization
\cite{nachbin_1947}: a distributive lattice is CP if and only if
it omits the three-element chain as a quotient. Since the two- and
three-element chains are precisely the globally indecomposable distributive
lattices, this can be rephrased by saying that a distributive lattice
is CP if and only if all its globally indecomposable quotients are
subdirectly irreducible. Similar results have been obtained for lattice
expansions and lattice-ordered structures \cite{GramagliaVaggione1996,GramagliaVaggione1997},
implication algebras \cite{Campercholi2010Implication}, and $\mathrm{MS}$-algebras
\cite{CampercholiVaggione2007MS}. A particularly relevant recent
example is the finite RCD case treated in \cite[Theorem 5.2]{CampercholiVaggioneZigaran:RCDGlobal}.
\begin{thm}
\label{thm:tipo Nachbin para superchela}Let $r\geq2$. Suppose that
$\mathcal{Q}$ has a $(r+1)$-ary NU term and that $\mathcal{Q}_{\mathrm{RSI}}$
is almost universal. For $\mathbf{A}\in\mathcal{Q}$ the following
are equivalent:
\begin{enumerate}
\item $\mathbf{A}$ is RCP.
\item $\mathbf{A}$ is RCD and for all $\theta,\delta\in\cmic_{\mathcal{Q}}(\mathbf{A})^{(r)}$
we have $\theta\sqcup\delta=\theta\circ\delta$.
\item Every subset of $\cmic_{\mathcal{Q}}(\mathbf{A})$ with at most $r$
elements is a CRS.
\item Every $\te$-compact CMI-saturated subset of $\con_{\mathcal{Q}}(\mathbf{A})$
is $\te$-complete.
\item Every finite subset of $\con_{\mathcal{Q}}(\mathbf{A})$ is a CRS.
\item $\mathbf{A}$ is RCD and $\rgic_{\mathcal{Q}}(\mathbf{A})=\cmic_{\mathcal{Q}}(\mathbf{A})$
or, equivalently, every RGI homomorphic image of $\mathbf{A}$ is
RSI.
\end{enumerate}
\end{thm}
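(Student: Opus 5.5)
I would prove the equivalences via the cycle (1)$\Rightarrow$(2)$\Rightarrow$(3)$\Rightarrow$(4)$\Rightarrow$(5)$\Rightarrow$(1), and then treat (6) separately by showing (1)$\Leftrightarrow$(6).

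\textbf{(1)$\Rightarrow$(2).} Since $\mathcal{Q}$ has an NU term, I would use the standard fact that an NU term together with congruence permutability forces distributivity, so $\mathbf{A}$ is RCD. The remaining part of (2) is a special case of (1). More precisely, an RCP algebra whose relative congruences all live in $\con_{\mathcal{Q}}(\mathbf{A})$, where the NU term acts, is arithmetic. If the relative version of this argument is delicate, I would instead get RCD through Lemma~\ref{lem: relativamente aritmetica sii todo finito es CRS} once arithmeticity is in hand.

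\textbf{(2)$\Rightarrow$(3).} Let $\Omega\sub\cmic_{\mathcal{Q}}(\mathbf{A})$ with $|\Omega|\leq r$, and let $\S$ be a $\sqcup$-system on $\Omega$. I would solve it by induction, the classical Chinese-remainder argument. First find $a$ solving $\S$ on $\Omega\setminus\{\gamma\}$. Then combine it with $\S(\gamma)$ through the permutability of $\bigcap(\Omega\setminus\{\gamma\})\in\cmic_{\mathcal{Q}}(\mathbf{A})^{(r)}$ and $\gamma$. The required pair lies in $\bigcap(\Omega\setminus\{\gamma\})\sqcup\gamma$, which equals $\bigcap_{\rho}(\rho\sqcup\gamma)$ by distributivity, and $\S$ being a $\sqcup$-system gives exactly this. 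Permutability then produces the common solution.

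\textbf{(3)$\Rightarrow$(4).} Let $\Sigma$ be $\te$-compact and CMI-saturated, and let $\S$ be a $\te$-system on $\Sigma$. By item (1) of Lemma~\ref{lem:CMI alcanza}, $\S$ is a $\sqcup$-system. Hence every restriction of $\S$ to at most $r$ members of $[\Sigma)\cap\cmic_{\mathcal{Q}}(\mathbf{A})$ is solvable by (3). Lemma~\ref{lem:S r-soluble sobre CMI alcanza} then gives solvability.

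\textbf{(4)$\Rightarrow$(5).} Take a finite $\Omega$ and let $\Sigma:=[\Omega)$, which is $\te$-compact because it is finitely generated as an up-set. This set is $\te$-compact (any cover of $\Omega$ by up-sets covers $[\Omega)$) and is clearly CMI-saturated. Extend a $\sqcup$-system on $\Omega$ to $[\Omega)$ using Lemma~\ref{lem:extender sistema para arriba}. By Lemma~\ref{lem:min por sistema finito implica te sistema} the extension is a $\te$-system. It is therefore solvable, and so is its restriction to $\Omega$.

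\textbf{(5)$\Rightarrow$(1).} This is Lemma~\ref{lem: relativamente aritmetica sii todo finito es CRS}.

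\textbf{(1)$\Leftrightarrow$(6).} For (1)$\Rightarrow$(6), $\mathbf{A}$ is RCD, so item (2) of Lemma~\ref{lem:carac RGI RCD para NU y clase universal de SI} describes the RGI-congruences. Any $\theta$ with $|\cov(\theta)|\in[2,r]$ has $\cov(\theta)$ a CRS by (5), so $\theta$ is not an RGI-congruence. Hence the RGI-congruences are exactly the CMI ones. For (6)$\Rightarrow$(3), take $\Omega\sub\cmic_{\mathcal{Q}}(\mathbf{A})$ with $|\Omega|\leq r$ that is not a CRS, chosen minimal and hence with pairwise incomparable members. Pass to $\mathbf{A}/\bigcap\Omega$ via Lemma~\ref{lem:correspondencia}. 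By Lemma~\ref{lem:atomos de Con A inducido por un conjunto finito de CMIs} its relative congruence lattice is $|\Omega|$-atomic.

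\textbf{Main obstacle.} The hardest step is transferring the failure of the CRS property from $\Omega$ to the atoms. I would do this with a Lemma~\ref{lem:almost solvable}-style argument: if the atoms formed a CRS, then an unsolvable $\sqcup$-system on a minimal non-CRS $\Omega$ would produce an unsolvable $\S_\delta$ on a family that still has at most $r$ CMI members, contradicting minimality. It follows that the atoms are not a CRS, so $\bigcap\Omega$ is a non-CMI RGI-congruence by Lemma~\ref{lem:carac RGI RCD para NU y clase universal de SI}, contradicting (6).
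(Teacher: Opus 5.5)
\paragraph{Verdict on the cycle.} Your cycle (1)$\Rightarrow$(2)$\Rightarrow$(3)$\Rightarrow$(4)$\Rightarrow$(5)$\Rightarrow$(1) and your implication (1)$\Rightarrow$(6) are sound. In (4)$\Rightarrow$(5), using the whole up-set $[\Omega)$ is a legitimate and slightly shorter variant of the paper's $[\Omega)\cap\cmic_{\mathcal{Q}}(\mathbf{A})$; it is $\te$-compact because $\te$-open sets are up-sets.

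One small correction in (1)$\Rightarrow$(2). The right reason for RCD is that under RCP the join $\theta\sqcup\delta=\theta\circ\delta$ is also the join in $\con(\mathbf{A})$. Hence $\con_{\mathcal{Q}}(\mathbf{A})$ is a sublattice of $\con(\mathbf{A})$, which is distributive by the NU term alone. Your fallback via Lemma~\ref{lem: relativamente aritmetica sii todo finito es CRS} is circular, since arithmeticity already includes RCD.

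\paragraph{The gap in (6)$\Rightarrow$(3).} The problem is the case where the atoms of $\con_{\mathcal{Q}}(\mathbf{A}/\bigcap\Omega)$ do form a CRS. Lemma~\ref{lem:almost solvable} then gives $\delta\in\Omega$ with $\S_{\delta}$ unsolvable on $(\Omega\setminus\{\delta\})\cup\{\hat{\delta}\}$, but this contradicts nothing. First, $\hat{\delta}$ is merely the unique cover of $\delta$ and need not be completely meet-irreducible. Second, even if it were, the new family has exactly $|\Omega|$ members and is not contained in $\Omega$. So neither inclusion-minimality nor cardinality-minimality of $\Omega$ is violated.

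To return to CMI congruences you must pass to $[\mu)\cap\cmic_{\mathcal{Q}}(\mathbf{A})$ with $\mu=\hat{\delta}\cap\bigcap(\Omega\setminus\{\delta\})$. There you use compactness and the infinitary Baker--Pixley theorem (Lemma~\ref{lem:S r-soluble sobre CMI alcanza}). This produces an unsolvable $\Lambda$ with $|\Omega|\leq|\Lambda|\leq r$ lying strictly $\mins$-above $\Omega$. That is a new obstacle, not a smaller one, so nothing stops the process. Making it terminate is exactly the Zorn argument on $\mins$-maximal obstacles (Claims I--IV) in the proof of Theorem~\ref{thm:espectro global para RCD almost univ}. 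Your sketch omits this, and it is the real content of the implication.

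\paragraph{How the paper handles it.} The paper avoids redoing this by proving (6)$\Rightarrow$(1) with the main theorem.
\begin{enumerate}
\item Under (6), item~(2) of Lemma~\ref{lem:carac RGI RCD para NU y clase universal de SI} shows that the spectrum in Theorem~\ref{thm:espectro global para RCD almost univ} is just $\Gamma=\cmic_{\mathcal{Q}}(\mathbf{A})$. So $\Gamma$ is $\te$-complete with $\bigcap\Gamma=\Delta$.
\item Quotient so that $\theta\cap\delta=\Delta$. Since CMI congruences are meet-prime, $\{\theta,\delta\}$ minorizes $\Gamma$.
\item Lemma~\ref{lem: minoriza esp glo rcd implica crs} then says $\{\theta,\delta\}$ is a CRS, i.e.\ $\theta\sqcup\delta=\theta\circ\delta$.
\end{enumerate}
Citing the main theorem in this way would repair your proof.
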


\begin{proof}
Put $\Gamma:=\cmic_{\mathcal{Q}}(\mathbf{A})$.

(1)$\Rightarrow$(2). As $\mathbf{A}$ is RCP, we have that $\con_{\mathcal{Q}}(\mathbf{A})$
is a sublattice of $\con(\mathbf{A})$, which is distributive because
$\mathbf{A}$ has a NU-term.

(2)$\Rightarrow$(3). We prove by induction that: if $k\leq r$, then
every $k$-element subset of $\Gamma$ is a CRS. Let $\gamma_{1},\ldots,\gamma_{k}\in\Gamma$.
If $k=1$ the claim is trivial. Assume $k>1$ and fix a $\sqcup$-system
\begin{equation}
\gamma_{1},\ldots,\gamma_{k}\mapsto a_{1},\ldots,a_{k}\label{eq:sistema}
\end{equation}
By the induction hypothesis there is a solution $b$ for the system
\[
\gamma_{2},\ldots,\gamma_{k}\mapsto a_{2},\ldots,a_{k}.
\]
Then, $\langle a_{1},b\rangle\in\gamma_{1}\sqcup\gamma_{i}$ for all
$i\geq2$, and by distributivity we have 
\[
\langle a_{1},b\rangle\in\gamma_{1}\sqcup\bigcap_{i\geq2}\gamma_{i}.
\]
In view of (2), there is $a\in A$ such that $\langle a_{1},a\rangle\in\gamma_{1}$
and $\langle a,b\rangle\in\bigcap_{i\geq2}\gamma_{i}.$ It is easy
to see that $a$ is a solution for (\ref{eq:sistema}).

(3)$\Rightarrow$(4). Let $\Sigma$ be a $\te$-compact CMI-saturated
subset of $\con_{\mathcal{Q}}(\mathbf{A})$, and let $\mathsf{S}$
be a $\te$-system on $\Sigma$. By Lemma \ref{lem:CMI alcanza},
$\S$ is a $\sqcup$-system, so (3) says that $\S|_{[\Sigma)\cap\Gamma}$
is $r$-solvable. Hence $\mathsf{S}$ is solvable by Lemma~\ref{lem:S r-soluble sobre CMI alcanza},
which proves that $\Sigma$ is $\te$-complete.

(4)$\Rightarrow$(5). Fix a finite $\Omega\sub\con_{\mathcal{Q}}(\mathbf{A})$
and put $\Sigma:=[\Omega)\cap\Gamma$. By item (3) of Lemma~\ref{lem:compacidad 1},
$\Sigma$ is $\te$-compact, and it is clearly CMI-saturated. Hence,
$\Sigma$ is $\te$-complete by (4).

Let $\mathsf{S}$ be a $\sqcup$-system on $\Omega$. By Lemma~\ref{lem:extender sistema para arriba},
it extends to a $\sqcup$-system $\mathsf{S}^{+}$ on $\Sigma$ such
that $\mathsf{S}\mins\mathsf{S}^{+}$. Since $\mathsf{S}$ is finite,
Lemma~\ref{lem:min por sistema finito implica te sistema} says that
$\mathsf{S}^{+}$ is a $\te$-system. Thus, it has a solution $a\in A$.

Fix $\mu\in\Omega$. If $\gamma\in[\mu)\cap\Gamma$, then $\gamma\in\Sigma$,
and hence 
\[
\langle a,\mathsf{S}^{+}(\gamma)\rangle,\langle\mathsf{S}(\mu),\mathsf{S}^{+}(\gamma)\rangle\in\gamma.
\]
Therefore, 
\[
\langle a,\mathsf{S}(\mu)\rangle\in\bigcap([\mu)\cap\Gamma)=\mu.
\]
Thus, $a$ is a solution for $\mathsf{S}$, and $\Omega$ is a CRS.

(5)$\Rightarrow$(6). Lemma~\ref{lem: relativamente aritmetica sii todo finito es CRS}
implies that $\mathbf{A}$ is RCD, so it follows from condition (5)
and item (2) of Lemma~\ref{lem:carac RGI RCD para NU y clase universal de SI}
that $\rgic_{\mathcal{Q}}(\mathbf{A})=\cmic_{\mathcal{Q}}(\mathbf{A})$.

(6)$\Rightarrow$(1). Let $\theta$ and $\delta$ be $\mathcal{Q}$-congruences
of $\mathbf{A}$. Observe that by Lemma~\ref{lem:correspondencia}
we may assume $\theta\cap\delta=\Delta$. Since every congruence in
$\Gamma$ is meet-prime, Lemma~\ref{lem:completamente meet-prime}
says that $\{\theta,\delta\}$ minorizes $\Gamma$. By (6) and Theorem~\ref{thm:espectro global para RCD almost univ},
$\Gamma$ is $\te$-complete. Moreover, $\bigcap\Gamma=\Delta$. Therefore,
Lemma~\ref{lem: minoriza esp glo rcd implica crs} implies that $\{\theta,\delta\}$
is a CRS; that is, $\theta\sqcup\delta=\theta\circ\delta$.
\end{proof}

In particular, the preceding theorem recovers Nachbin's theorem for
bounded distributive lattices. It also yields the corresponding characterizations
for lattice expansions and lattice-ordered structures \cite{GramagliaVaggione1996,GramagliaVaggione1997}
and $\mathrm{MS}$-algebras \cite{CampercholiVaggione2007MS}.

\subsection{The semisimple case}

\subsubsection{Filtral quasivarieties}

The quasivariety $\mathcal{Q}$ is \emph{filtral} \cite{CamRaf17-RelCongForm}
if it is RCD and relatively semisimple, and $\mathcal{Q}_{\mathrm{RS}}$
is almost universal.
\begin{cor}
\label{cor:filtral implica RGI spectral}If $\mathcal{Q}$ is filtral
and has a $(r+1)$-ary NU term, then $\mathcal{Q}$ is RGI-spectral
and 
\[
\mathcal{Q}_{\mathrm{RGI}}=\mathcal{Q}_{\mathrm{RS}}\cup\{\mathbf{F}\in\mathcal{Q}:\mathbf{F}\text{ is a relative }g\text{-algebra and }\sdw_{\mathcal{Q}}(\mathbf{F})\leq r\}.
\]
\end{cor}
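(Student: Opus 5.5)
The plan is to derive the corollary from Corollary~\ref{cor:super chela con NU es RGI espectral} and then sharpen the description of the non-RSI members using relative semisimplicity together with Lemma~\ref{lem:RGI semisimple RCD}.

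\emph{Step 1: the hypotheses of Corollary~\ref{cor:super chela con NU es RGI espectral} hold.} Since $\mathcal{Q}$ is filtral, it is RCD. Relative semisimplicity gives $\mathcal{Q}_{\mathrm{RSI}}\sub\mathcal{Q}_{\mathrm{RS}}$. The reverse inclusion always holds, because $\Delta$ maximal implies $\Delta$ completely meet-irreducible. Hence $\mathcal{Q}_{\mathrm{RSI}}=\mathcal{Q}_{\mathrm{RS}}$, which is almost universal. That corollary then shows that $\mathcal{Q}$ is RGI-spectral and that
\[
\mathcal{Q}_{\mathrm{RGI}}=\mathcal{Q}_{\mathrm{RS}}\cup\{\mathbf{F}:\con_{\mathcal{Q}}(\mathbf{F})\text{ is }n\text{-atomic},\ n\in[2,r],\text{ atoms not a CRS}\}.
\]
It remains to identify the second set with the set of relative $g$-algebras of relative subdirect width at most $r$.

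\emph{Step 2: the second set consists of $g$-algebras.} Let $\mathbf{F}$ belong to the second set. By Lemma~\ref{lem:equivalencias RCD ancho finito}, $\sdw_{\mathcal{Q}}(\mathbf{F})=n\in[2,r]$. That lemma also gives an irredundant subdirect representation $\mathbf{F}\leq_{\mathrm{isd}}\mathbf{S}_{1}\times\cdots\times\mathbf{S}_{n}$ with RSI factors, and these factors are relatively simple by semisimplicity. Since $\mathbf{F}$ is RCD and not a CRS on its atoms, Proposition~\ref{prop:RGI RCD ancho finite} shows that $\mathbf{F}$ is RGI. Lemma~\ref{lem:RGI semisimple RCD} then produces a tuple almost in the image. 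So $\mathbf{F}$ is a relative $g$-algebra with width at most $r$.

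\emph{Step 3: $g$-algebras of width at most $r$ lie in the second set.} Let $\mathbf{F}\cong\mathbf{G}\leq_{\mathrm{isd}}\mathbf{S}_{1}\times\cdots\times\mathbf{S}_{n}$ witness the $g$-algebra property. The kernels $\ker\pi_{i}$ are maximal, hence CMI. They are pairwise incomparable, because each is maximal and irredundancy forces them to be distinct. Their intersection is $\Delta$. Lemma~\ref{lem:atomos de Con A inducido por un conjunto finito de CMIs} therefore makes $\con_{\mathcal{Q}}(\mathbf{F})$ $n$-atomic, and Lemma~\ref{lem:equivalencias RCD ancho finito} gives $n=\sdw_{\mathcal{Q}}(\mathbf{F})\leq r$, with $n\geq2$. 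Finally, $\mathbf{F}$ is RGI by Corollary~\ref{cor:toda g-algebra es RGI}, so Proposition~\ref{prop:RGI RCD ancho finite} shows that its atoms do not form a CRS.

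The only point needing care is Step 3. There one must check that the $n$ in the $g$-representation coincides with the width. This follows from Lemma~\ref{lem:equivalencias RCD ancho finito}, since an irredundant representation with RSI factors counts atoms. Everything else is direct citation.
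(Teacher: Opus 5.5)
Your proof is correct and follows essentially the same route as the paper. You use $\mathcal{Q}_{\mathrm{RSI}}=\mathcal{Q}_{\mathrm{RS}}$ to invoke Corollary~\ref{cor:super chela con NU es RGI espectral}, then Lemmas~\ref{lem:equivalencias RCD ancho finito} and \ref{lem:RGI semisimple RCD} for the forward inclusion. Your Step~3 does more work than needed, though: the paper settles the reverse inclusion just by noting that $\mathcal{Q}_{\mathrm{RS}}\subseteq\mathcal{Q}_{\mathrm{RSI}}\subseteq\mathcal{Q}_{\mathrm{RGI}}$ and that relative $g$-algebras are RGI by Corollary~\ref{cor:toda g-algebra es RGI}, so re-checking $n$-atomicity and the non-CRS condition is unnecessary.
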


\begin{proof}
Since $\mathcal{Q}$ is relatively semisimple, we have $\mathcal{Q}_{\mathrm{RSI}}=\mathcal{Q}_{\mathrm{RS}}$.
Hence, Corollary~\ref{cor:super chela con NU es RGI espectral} implies
that $\mathcal{Q}$ is RGI-spectral. Let $\mathbf{F}\in\mathcal{Q}_{\mathrm{RGI}}\setminus\mathcal{Q}_{\mathrm{RS}}$.
By the same corollary and Lemma~\ref{lem:equivalencias RCD ancho finito},
there is an irredundant subdirect representation $\mathbf{F}\cong\mathbf{G}\leq_{\mathrm{isd}}\mathbf{S}_{1}\times\cdots\times\mathbf{S}_{n}$,
where $n=\sdw_{\mathcal{Q}}(\mathbf{F})\in[2,r]$ and $\mathbf{S}_{1},\ldots,\mathbf{S}_{n}\in\mathcal{Q}_{\mathrm{RS}}$.
Since $\mathbf{F}$ is RGI, Lemma~\ref{lem:RGI semisimple RCD} implies
that some tuple is almost in $G$. Thus, $\mathbf{F}$ is a relative
$g$-algebra. The converse is provided by Corollary~\ref{cor:toda g-algebra es RGI}.
\end{proof}

\subsubsection{Dual discriminator varieties}

A variety $\mathcal{V}$ is a \emph{dual discriminator} variety if
there are a class $\mathcal{S}\subseteq\mathcal{V}$ and a term $q(x,y,z)$
such that: 
\begin{itemize}
\item $\mathcal{V}$ is generated by $\mathcal{S}$;
\item For all $\mathbf{S}\in\mathcal{S}$ the interpretation $q^{\mathbf{S}}$
is the \emph{dual discriminator function}. That is, 
\[
q^{\mathbf{S}}(a,b,c)=\begin{cases}
a & \text{if }a=b,\\
c & \text{otherwise,}
\end{cases}
\]
for all $a,b,c\in S$.
\end{itemize}
Note that $q$ is a majority term for $\mathcal{V}$, and hence $\mathcal{V}$
is congruence-distributive.

As shown in \cite{Pixley_Fried_Dual_Disc}, every dual discriminator
variety is semisimple, and its simple members constitute an almost
universal class. In particular, dual discriminator varieties are filtral.

The dual discriminator severely constrains the form of irredundant
subdirect products with simple factors.
\begin{lem}
\label{lem:sub square}Let $\mathcal{V}$ be a dual discriminator
variety and suppose $\mathbf{C}\leq_{\mathrm{isd}}\mathbf{S}\times\mathbf{S}'$
for some $\mathbf{S},\mathbf{S}'\in\mathcal{V}_{\mathrm{S}}$. Then
exactly one of the following holds:
\begin{enumerate}
\item $\mathbf{C}=\mathbf{S}\times\mathbf{S}'$;
\item There exist $s\in S$ and $s'\in S'$ such that 
\[
C=(\{s\}\times S')\cup(S\times\{s'\}).
\]
\end{enumerate}
\end{lem}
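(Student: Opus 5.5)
The plan is to work directly with the relation $C\sub S\times S'$, using only the dual discriminator $q$ and the fact that $S$ and $S'$ are simple. First observe that (1) and (2) are mutually exclusive. A set of the form in (2) is all of $S\times S'$ only if $|S|=1$ or $|S'|=1$, and simple algebras are nontrivial. So, once we show that $C$ is the whole product or has the form (2), the word ``exactly'' is automatic.

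Assume $C\neq S\times S'$ and fix $\langle s,s'\rangle\in(S\times S')\setminus C$. The key computation applies $q$ coordinatewise to elements of $C$. For $\langle a,a'\rangle,\langle b,b'\rangle,\langle c,c'\rangle\in C$ we get
\[
q(\langle a,a'\rangle,\langle b,b'\rangle,\langle c,c'\rangle)=\langle q^{\mathbf{S}}(a,b,c),q^{\mathbf{S}'}(a',b',c')\rangle\in C .
\]
Choosing $a=b$ and $a'\neq b'$ yields $\langle a,c'\rangle\in C$. The symmetric choice gives the mirrored fact. Call this the \emph{transfer principle}: if $\langle a,a'\rangle,\langle a,b'\rangle\in C$ with $a'\neq b'$, then $\langle a,x'\rangle\in C$ for every $x'\in S'$ (by subdirectness each $x'$ occurs as a second coordinate $c'$ of some element of $C$). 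In words, any fiber $C_a:=\{x':\langle a,x'\rangle\in C\}$ with at least two elements is all of $S'$, and the same holds for the fibers $C^{a'}$ over the second coordinate.

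Next I classify the fibers. Irredundancy says neither projection of $C$ is injective. So some fiber over $S$ has two elements, and hence some $a_0$ has $C_{a_0}=S'$; likewise some $a'_0$ has $C^{a'_0}=S$. Now I show there is only one full fiber on each side. Suppose $C_{a_0}=C_{a_1}=S'$ with $a_0\neq a_1$. Every $x\in S$ has some partner $\langle x,y'\rangle\in C$, and then $C^{y'}\ni a_0,a_1,x$ has at least two elements, so $C^{y'}=S$. This should force $C$ to be everything, but it does not follow immediately. I would argue as follows. If some $x$ has $|C_x|\ge2$ then $C_x=S'$. Otherwise $C_x=\{y'\}$ with $C^{y'}=S$. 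If the set of such $y'$ has at least two elements, pick $x,z$ with $C_x=\{y'\}$ and $C_z=\{w'\}$, $y'\neq w'$; then $\langle x,w'\rangle\in C^{w'}=S\times\{w'\}$ gives $w'\in C_x$, a contradiction. So at most one $y'$ occurs in this way. This is the step I expect to be the main obstacle, and I would finish it by invoking simplicity. The set $\{x: C_x=S'\}$ should be a block-like subset: its complement is mapped to a single $s'$. I expect the kernel of $C\to S$ composed with the relation to give a congruence on $\mathbf{S}$ that identifies $\{x:C_x=S'\}$. If there are two or more full fibers, simplicity should force all fibers to be full, contradicting $C\neq S\times S'$.

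For that argument I would use that, for $x,z\in S$, $\{\langle x,z\rangle: C_x=C_z=S' \text{ or } x=z\}$ is invariant under the operations. This follows by applying operations to pairs in $C$ and using the transfer principle. The relation is then a congruence of $\mathbf{S}$ that is neither $\Delta$ nor $\nabla$ unless the full fibers are everything or a singleton. If a direct congruence check fails, the fallback is the standard fact that in a dual discriminator variety every subalgebra of $\mathbf{S}\times\mathbf{S}'$ is determined by its fibers via the Fried--Pixley description of compatible relations, and I would cite \cite{Pixley_Fried_Dual_Disc} for it. With a unique full fiber $C_s=S'$ and a unique $C^{s'}=S$, every remaining element of $C$ has a singleton fiber on both sides. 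By the transfer principle such an element must lie in $\{s\}\times S'$ or in $S\times\{s'\}$, which gives $C=(\{s\}\times S')\cup(S\times\{s'\})$.
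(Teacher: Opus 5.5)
Your overall route is sound, and once completed it gives a short self-contained proof where the paper simply defers to the proof of Fried and Pixley's Theorem~2.4. The route is: apply $q$ coordinatewise to get the transfer principle, use irredundancy to get a full fiber on each side, show these full fibers are unique, and read off the cross.

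As written, however, the uniqueness step is not proved. You call it the main obstacle and sketch a congruence on $\mathbf{S}$ whose compatibility with the operations you assert but never check. That check is not routine: for a basic operation that is constant on $\mathbf{S}'$, applying it to elements of $C$ does not by itself place its values in $\{x:C_x=S'\}$. You then fall back on a citation.

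The missing observation is that uniqueness follows at once from your own transfer principle. You even derived $C^{y'}=S$ in this situation. It holds for \emph{every} $y'\in S'$, not just for partners of some chosen $x$. Indeed, if $a_0\neq a_1$ and $C_{a_0}=C_{a_1}=S'$, then for every $y'\in S'$ both $\langle a_0,y'\rangle$ and $\langle a_1,y'\rangle$ lie in $C$. The mirrored transfer principle then gives $S\times\{y'\}\subseteq C$. Letting $y'$ range over $S'$ gives $C=S\times S'$, contradicting your standing assumption. The same argument works on the other side, and no simplicity is needed here.

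With uniqueness in hand, your last paragraph is correct. For $x\neq s$ the fiber $C_x$ is not full, so it has at most one element. It contains $s'$ because $C^{s'}=S$. Hence $C_x=\{s'\}$, and $C$ is the cross.

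A second point needs justification. By definition, $q$ is the dual discriminator only on the generating class $\mathcal{S}$, but you use it on arbitrary $\mathbf{S},\mathbf{S}'\in\mathcal{V}_{\mathrm{S}}$. This is exactly where simplicity of the factors enters:
\begin{enumerate}
\item $\mathcal{V}$ is congruence distributive, so by J\'onsson's lemma every simple member of $\mathcal{V}$ lies in $\mathbb{HSP}_{u}(\mathcal{S})$.
\item The property that $q$ is the dual discriminator is expressed by a universal sentence, so it holds throughout $\mathbb{SP}_{u}(\mathcal{S})$.
\item Any nontrivial algebra on which $q$ is the dual discriminator is simple, so passing to homomorphic images adds only trivial algebras.
\end{enumerate}
Thus $q$ is the dual discriminator on every simple member of $\mathcal{V}$.

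Finally, the pair $\langle s,s'\rangle\notin C$ that you fix at the outset is never used and clashes with the centre of the cross; drop it.
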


\begin{proof}
This follows from the proof of \cite[Theorem 2.4]{Pixley_Fried_Dual_Disc}. 
\end{proof}

If $\mathbf{C}$ is as in (2) of Lemma \ref{lem:sub square} we say
that $\mathbf{C}$ is a \emph{simple cross}.
\begin{cor}
\label{cor:global dd}If $\mathcal{V}$ is a dual discriminator variety,
then $\mathcal{V}$ is GI-spectral\footnote{When $\mathcal{V}$ is a variety $\con_{\mathcal{V}}(\mathbf{A})=\con(\mathbf{A})$
for every $\mathbf{A}\in\mathcal{V}$. So in that case we omit the
adjective ``relative'' and the prefix $\mathrm{R}$ from the corresponding
terminology and notation.} and
\[
\mathcal{V}_{\mathrm{GI}}=\mathcal{V}_{\mathrm{S}}\cup\{\mathbf{C}\in\mathcal{V}:\mathbf{C}\text{ is isomorphic to a simple cross}\}.
\]
\end{cor}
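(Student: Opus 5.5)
We apply Corollary~\ref{cor:filtral implica RGI spectral} with $r=2$. A dual discriminator variety is filtral, as recalled above from \cite{Pixley_Fried_Dual_Disc}. The term $q$ is a majority term, that is, a ternary NU term, so $r=2$. The corollary then gives at once that $\mathcal{V}$ is GI-spectral, and also
\[
\mathcal{V}_{\mathrm{GI}}=\mathcal{V}_{\mathrm{S}}\cup\{\mathbf{F}\in\mathcal{V}:\mathbf{F}\text{ is a }g\text{-algebra and }\sdw(\mathbf{F})\leq2\}.
\]
It therefore remains to show that the $g$-algebras of subdirect width at most $2$ are, up to isomorphism, exactly the simple crosses.

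By definition a $g$-algebra has width at least $2$. So if $\mathbf{F}$ is a $g$-algebra with $\sdw(\mathbf{F})\leq 2$, then by Lemma~\ref{lem:equivalencias RCD ancho finito} it has an irredundant subdirect representation $\mathbf{F}\cong\mathbf{C}\leq_{\mathrm{isd}}\mathbf{S}\times\mathbf{S}'$ with $\mathbf{S},\mathbf{S}'$ simple. Here we use that $\mathcal{V}$ is semisimple, so the SI factors are simple.

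\textbf{Step 1: such a representation witnesses the $g$-property.} The definition of $g$-algebra only asks for some representation with a tuple almost in $C$. However, Lemma~\ref{lem:RGI semisimple RCD} shows that for any irredundant representation by simple factors, the existence of a tuple almost in $C$ is equivalent to $\mathbf{F}$ being GI. Since $\mathbf{F}$ is GI by Corollary~\ref{cor:toda g-algebra es RGI}, the width-two representation above does have a tuple almost in $C$. By Remark~\ref{rem:g-algebras de ancho 2}, this means $C\neq S\times S'$.

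\textbf{Step 2: $\mathbf{C}$ is a simple cross.} Lemma~\ref{lem:sub square} now forces $\mathbf{C}$ to be a simple cross.

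\textbf{Step 3: every simple cross is a $g$-algebra of width $2$.} Take $C=(\{s\}\times S')\cup(S\times\{s'\})$. It is a subdirect product of $\mathbf{S}\times\mathbf{S}'$, and it is irredundant by the dichotomy in Lemma~\ref{lem:sub square}. It is CD because $\mathcal{V}$ is CD. For a $g$-algebra we need a tuple almost in $C$, which by Remark~\ref{rem:g-algebras de ancho 2} amounts to $C\neq S\times S'$. This holds provided $|S|,|S'|\geq 2$: simple algebras are nontrivial, so pick $t\neq s$ and $t'\neq s'$, and then $\langle t,t'\rangle\notin C$. Finally, Lemma~\ref{lem:equivalencias RCD ancho finito} gives width exactly $2$. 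Hence every simple cross lies in $\mathcal{V}_{\mathrm{GI}}$, and the displayed equality follows.

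The main point needing care is Step 1, namely that the particular width-two representation we pick inherits the almost-in tuple. Lemma~\ref{lem:RGI semisimple RCD} handles this. Beyond that, the only delicate matter is irredundancy, which I expect to be routine.
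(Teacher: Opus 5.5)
Your proof is correct and takes the paper's route: it combines Corollary~\ref{cor:filtral implica RGI spectral} (with $r=2$, using the majority term $q$) with Lemma~\ref{lem:sub square}, and you spell out the bookkeeping that the paper's one-line proof leaves implicit. One small fix: irredundance of a cross does not follow from the dichotomy in Lemma~\ref{lem:sub square}, which assumes it, but it is immediate, since $C\supseteq\{s\}\times S'$ and $C\supseteq S\times\{s'\}$ with $|S|,|S'|\geq 2$, so neither projection is injective.
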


\begin{proof}
Combine Corollary~\ref{cor:filtral implica RGI spectral} and Lemma~\ref{lem:sub square}.
\end{proof}

\subsubsection{Boolean representations in discriminator varieties}

A variety $\mathcal{V}$ is a \emph{discriminator variety} if there
are a class $\mathcal{S}\sub\mathcal{V}$ and a term $t(x,y,z)$ such
that:
\begin{itemize}
\item $\mathcal{V}$ is generated by $\mathcal{S}$;
\item For all $\mathbf{S}\in\mathcal{S}$ the interpretation $t^{\mathbf{S}}$
is the \emph{ternary discriminator function}. That is, 
\[
t^{\mathbf{S}}(a,b,c)=\begin{cases}
c & \text{if }a=b,\\
a & \text{otherwise,}
\end{cases}
\]
for all $a,b,c\in S$.
\end{itemize}
Note that in this case the interpretation of $q(x,y,z):=t(x,t(x,y,z),z)$
is the dual discriminator function in every member of $\mathcal{S}$.
Hence, discriminator varieties are dual discriminator varieties as
well. In fact, discriminator varieties are exactly the congruence-permutable
dual discriminator varieties \cite[Lemma 2.2]{Pixley_Fried_Dual_Disc}.

A seminal result by Bulman-Fleming, Keimel and Werner says that every
algebra in a discriminator variety is isomorphic to a Boolean product
with trivial or simple factors \cite{KeimelWerner1974,BulmanFlemingWerner1977,Werner1978}.
We show next how this theorem can be derived from the results in this
paper.
\begin{cor}
Every algebra in a discriminator variety is isomorphic to a Boolean
product with trivial or simple factors.
\end{cor}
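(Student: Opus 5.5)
Let $\mathcal{V}$ be a discriminator variety and $\mathbf{A}\in\mathcal{V}$; we aim to apply Proposition~\ref{prop:global incomparable es booleano} to $\Sigma:=\cmic(\mathbf{A})$. First we collect the relevant structural facts. As noted above, $\mathcal{V}$ is a dual discriminator variety, so it is congruence distributive, has a majority term (take $r=2$), is semisimple, and $\mathcal{V}_{\mathrm{S}}=\mathcal{V}_{\mathrm{SI}}$ is almost universal. Moreover $\mathcal{V}$ is congruence permutable, so every $\mathbf{A}\in\mathcal{V}$ is arithmetic. Theorem~\ref{thm:tipo Nachbin para superchela}, applied with $r=2$, then gives $\rgic(\mathbf{A})=\cmic(\mathbf{A})$. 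One could equally argue that the cover sets $\cov(\theta)$ are CRSs by Lemma~\ref{lem: relativamente aritmetica sii todo finito es CRS}. Either way, Theorem~\ref{thm:espectro global para RCD almost univ} shows that the spectrum $\Sigma$ consists exactly of the completely meet-irreducible congruences and is a global spectrum of $\mathbf{A}$.

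Next we verify the remaining hypotheses of Proposition~\ref{prop:global incomparable es booleano}. By semisimplicity, every $\gamma\in\cmic(\mathbf{A})$ satisfies $\mathbf{A}/\gamma\in\mathcal{V}_{\mathrm{S}}$, so $\gamma$ is a maximal congruence. Hence the members of $\Sigma$ are pairwise incomparable. Furthermore $\cmic(\mathbf{A})=\con_{\mathcal{V}_{\mathrm{S}}}(\mathbf{A})$: the inclusion $\supseteq$ holds because a simple quotient is subdirectly irreducible, and the inclusion $\subseteq$ was just shown. Since $\mathcal{V}_{\mathrm{S}}$ is almost universal, item~(1) of Lemma~\ref{lem:compacidad 1} shows that $\Sigma\cup\{\nabla\}$ is $\ted$-closed.

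Proposition~\ref{prop:global incomparable es booleano} now yields that $\Sigma\cup\{\nabla\}$ is a Boolean spectrum of $\mathbf{A}$. Its factors are the quotients $\mathbf{A}/\gamma$ for $\gamma\in\Sigma$, which are simple, together with $\mathbf{A}/\nabla$, which is trivial. So $\mathbf{A}$ is isomorphic to a Boolean product of trivial or simple algebras, which is the claim.

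The step needing the most care is the identification $\cmic(\mathbf{A})=\con_{\mathcal{V}_{\mathrm{S}}}(\mathbf{A})$ together with the closedness claim. This requires that almost universality of $\mathcal{V}_{\mathrm{S}}$ is exactly what Lemma~\ref{lem:compacidad 1} needs, and that $\nabla$ is handled separately because trivial algebras are not simple. The rest is a direct chaining of the cited results. If the Nachbin-type theorem seems heavy here, the more direct route is Theorem~\ref{thm:espectro global para RCD almost univ} combined with Pixley's lemma, since arithmeticity makes every two-element set of covers a CRS and so no extra congruences enter $\Sigma$.
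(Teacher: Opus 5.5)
Your proof is correct. Its outer structure matches the paper's: show that the maximal congruences (by semisimplicity, the same as the completely meet-irreducible ones) form a global spectrum, then conclude with Proposition~\ref{prop:global incomparable es booleano}, using pairwise incomparability and item~(1) of Lemma~\ref{lem:compacidad 1}.

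The two arguments differ in how they eliminate the non-CMI members of the spectrum in Theorem~\ref{thm:espectro global para RCD almost univ}. The paper goes through the dual discriminator layer. By Corollary~\ref{cor:global dd}, which rests on the Fried--Pixley analysis in Lemma~\ref{lem:sub square}, the only nonsimple globally indecomposable algebras are simple crosses. A simple cross cannot occur, because its two projection kernels would be permuting maximal congruences, forcing $\mathbf{C}=\mathbf{S}\times\mathbf{S}'$.

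You instead plug arithmeticity straight into the main theorem. By Lemma~\ref{lem: relativamente aritmetica sii todo finito es CRS}, every $\cov(\theta)$ is a CRS, so the extra part of the spectrum is empty. Your alternative appeal to Theorem~\ref{thm:tipo Nachbin para superchela} is really the same argument, since its implication (1)$\Rightarrow$(6) is proved through (5), i.e.\ through Pixley's lemma.

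What each route buys:
\begin{itemize}
\item Your route bypasses Lemma~\ref{lem:sub square} and the dual discriminator classification entirely. It uses only arithmeticity of $\mathbf{A}$, semisimplicity, a near-unanimity term, and almost universality of the simple members. So it applies verbatim to any congruence-permutable algebra in a filtral variety with a near-unanimity term.
\item The paper's route needs permutability only for a single pair of maximal congruences. It is chosen to present the classical Boolean representation theorem as the congruence-permutable specialization of the dual discriminator description, namely as the disappearance of simple crosses.
\end{itemize}
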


\begin{proof}
Let $\mathcal{V}$ be a discriminator variety. Let $\mathbf{C}\leq_{\mathrm{isd}}\mathbf{S}\times\mathbf{S}'$
for some $\mathbf{S},\mathbf{S}'\in\mathcal{V}_{\mathrm{S}}$, and
let $\gamma$ and $\gamma'$ be the kernels of the coordinate projections.
Then, as $\gamma$ and $\gamma'$ are permuting maximal congruences
of $\mathbf{C}$, it must be that $\mathbf{C}=\mathbf{S}\times\mathbf{S}'$.
Thus, there are no simple crosses in $\mathcal{V}$, and Corollary~\ref{cor:global dd}
implies that $\maxc(\mathbf{A})$ is a global spectrum for every $\mathbf{A}\in\mathcal{V}$.
Moreover, if $\mathbf{A}\in\mathcal{V}$, then $\bigcap\maxc(\mathbf{A})=\Delta$,
the members of $\maxc(\mathbf{A})$ are pairwise incomparable, and
item (1) of Lemma~\ref{lem:compacidad 1} says that $\maxc(\mathbf{A})\cup\{\nabla\}$
is $\ted$-closed. Proposition~\ref{prop:global incomparable es booleano}
now shows that $\maxc(\mathbf{A})\cup\{\nabla\}$ is a Boolean spectrum
of $\mathbf{A}$. Its corresponding factors are simple, except for
the trivial factor $\mathbf{A}/\nabla$.
\end{proof}

\section{Ado-semilattices: a case study\label{sec:ado-semilattices}}

\subsection{Ado-semilattices}

Let $X$ and $Y$ be sets, and let $\operatorname{Par}(X,Y)$ denote
the set of partial functions from $X$ to $Y$. For $f,g\in\operatorname{Par}(X,Y)$
the \emph{override} of $f$ by $g$ is defined by 
\[
f\vartriangleright g:=f\cup\left(g|_{\dom(g)\setminus\dom(f)}\right).
\]
Thus, $f\vartriangleright g$ agrees with $f$ wherever $f$ is defined,
and otherwise agrees with $g$. We also consider the usual intersection
$f\cap g$ of the graphs of $f$ and $g$.

An \emph{ado-semilattice} is an associative distributive $o$-semilattice
in the sense of C\={\i}rulis \cite{Cirulis2011,Stokes2024}. Ado-semilattices,
considered as algebras in the language $\{\cap,\vartriangleright\}$,
where both operation symbols are binary, form a finitely based variety
which we denote by $\mathcal{A}$. They are precisely the algebras
representable by partial functions, with $\cap$ interpreted as intersection
and $\vartriangleright$ as override \cite[Theorem~4.10]{Stokes2024}.
The override operation has also received considerable attention in
theoretical computer science, where it arises in program-specification
contexts and is closely connected with the if--then--else construct
\cite{JacksonStokes2021,Stokes2024}.

An ado-semilattice $\mathbf{A}$ is \emph{flat} if the order induced
by $\cap$ has a least element $0$ and every element distinct from
$0$ is maximal. The operations of a flat ado-semilattice are given
by 
\[
x\cap y=\begin{cases}
x, & x=y,\\
0, & x\neq y,
\end{cases}\qquad x\vartriangleright y=\begin{cases}
x, & x\neq0,\\
y, & x=0.
\end{cases}
\]
Every flat ado-semilattice is simple, and every ado-semilattice is
a subdirect product of flat ones \cite[Corollary~4.8 and Lemma~4.9]{Stokes2024}.
Consequently, $\mathcal{A}$ is semisimple and its simple members
are precisely the nontrivial flat ado-semilattices. Moreover, the
class $\mathcal{A}_{\mathrm{S}}$ of simple members is almost universal:
subalgebras and ultraproducts of flat algebras are again flat, up
to the trivial algebra.

Finally, $\mathcal{A}$ has the majority term 
\[
M(x,y,z):=\bigl((x\cap y)\vartriangleright(x\cap z)\bigr)\vartriangleright(y\cap z).
\]
Indeed, in every functional representation the three pairwise intersections
are compatible, so the iterated override appearing above coincides
with their union. It follows that $\mathcal{A}$ is congruence distributive.
Therefore, $\mathcal{A}$ is a filtral variety with a majority term.

To characterize the globally indecomposable ado-semilattices we need
the following combinatorial analysis of twofold irredundant subdirect
products.
\begin{lem}
\label{lem:ado g-relation binaria}Let $\mathbf{A}\leq\mathbf{B}$
be nontrivial flat ado-semilattices with a common bottom element $0$.
Suppose that 
\[
\mathbf{G}\leq_{\mathrm{isd}}\mathbf{A}\times\mathbf{B}\text{ and }G\neq A\times B.
\]
Then the following hold.
\begin{enumerate}
\item If $A=B=\{0,1\}$, then 
\[
G=\{\langle0,0\rangle,\langle0,1\rangle,\langle1,1\rangle\}\text{ or }G=\{\langle0,0\rangle,\langle1,0\rangle,\langle1,1\rangle\}.
\]
\item If $A=\{0,1\}\subsetneq B$, then there is $C\subsetneq B\setminus\{0\}$
such that 
\[
G=(\{1\}\times B)\cup\bigl(\{0\}\times(C\cup\{0\})\bigr).
\]
\item If $|A|>2$, then there are $a\in A\setminus\{0\}$ and $b\in B\setminus\{0\}$
such that 
\[
G=(\{a\}\times B)\cup(A\times\{b\})\cup\{\langle0,0\rangle\}.
\]
\end{enumerate}
\end{lem}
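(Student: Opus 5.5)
The plan is to read off the structure of $G$ directly from closure under the coordinatewise operations $\cap$ and $\vartriangleright$ of a flat algebra, and then to use subdirectness, irredundance and $G\neq A\times B$ to cut down to the three listed shapes. Two elementary moves do most of the work. First, $\langle x,y\rangle\cap\langle x,y'\rangle=\langle x,0\rangle$ whenever $y\neq y'$. Second, $\langle0,y\rangle\vartriangleright\langle a,b\rangle=\langle a,y\rangle$ for $a\neq0$, and symmetrically $\langle x,0\rangle\vartriangleright\langle a,b\rangle=\langle x,b\rangle$ for $x\neq0$. For $x\in A$ write $G_{x}:=\{y:\langle x,y\rangle\in G\}$, and similarly $G^{y}$ for $y\in B$. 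Put
\[
C:=G_{0}\setminus\{0\},\qquad D:=G^{0}\setminus\{0\}.
\]

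The first step is to record the basic facts.
\begin{enumerate}
\item $\langle0,0\rangle\in G$. By irredundance the first projection is not injective, so some $x$ has two partners, hence $\langle x,0\rangle\in G$. Intersecting with any pair whose first coordinate differs from $x$ (which exists since $A$ is nontrivial and $G$ is subdirect) gives $\langle0,0\rangle$.
\item $A\times C\subseteq G$ and $D\times B\subseteq G$, by the override move together with subdirectness.
\item If $a\neq0$ has two distinct partners, then $a\in D$, by the intersection move. Dually, if $b\neq0$ has two distinct partners, then $b\in C$.
\end{enumerate}
Consequently every nonzero $a\notin D$ has exactly one partner. Since $A\times C\subseteq G$, this forces $|C|\leq1$ as soon as $D\neq A\setminus\{0\}$, and in that case the unique partner of such an $a$ is the element of $C$ when $C\neq\emptyset$. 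Symmetrically, if $C\neq B\setminus\{0\}$, then $|D|\leq1$ and every nonzero $b\notin C$ has exactly one partner.

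Next I translate the global hypotheses. We have $G=A\times B$ exactly when $C=B\setminus\{0\}$ and $D=A\setminus\{0\}$, so the hypothesis $G\neq A\times B$ says one of these fails. Irredundance says that some element of $A$ and some element of $B$ each have two partners, which (given $\langle0,0\rangle\in G$) amounts to $C\neq\emptyset$ or $D\neq\emptyset$, together with its mirror statement for $B$. I will then check that one of $C,D$ being full forces $G=A\times B$ unless $|A|=2$. Suppose $D=A\setminus\{0\}$ and $C\neq B\setminus\{0\}$. Then $|D|\leq1$, so $|A|=2$, and $G=(\{1\}\times B)\cup(\{0\}\times(C\cup\{0\}))$ with $C\subsetneq B\setminus\{0\}$. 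This gives case (2), and case (1) when also $|B|=2$; in case (1) the second listed set arises from the mirror situation.

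In the remaining situation neither $C$ nor $D$ is full, so $|C|,|D|\leq1$. The main case analysis is then on whether $C$ and $D$ are empty. If both are empty, $G$ is the graph of a bijection together with $\langle0,0\rangle$, and both projections are injective, contradicting irredundance. If exactly one is empty, say $C=\emptyset$ and $D=\{a\}$, then every $b\neq0$ has partners $a$ and its unique other partner. By the fact in item (3) above, such a $b$ then lies in $C$ unless that other partner is also $a$; so all nonzero $b$ are paired only with $a$, and then the remaining nonzero elements of $A$ have no nonzero partner, which is impossible since each has exactly one nonzero partner. Hence $A=\{0,a\}$ and $D$ is full, returning us to the previous paragraph. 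So $C=\{b\}$ and $D=\{a\}$. Every nonzero $x\neq a$ then has $b$ as its unique partner, and every nonzero $y\neq b$ has $a$ as its unique partner. This yields exactly $(\{a\}\times B)\cup(A\times\{b\})\cup\{\langle0,0\rangle\}$, which is case (3).

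I expect the main obstacle to be the bookkeeping in these edge cases: ensuring that $|A|=2$ is exactly what separates (1) and (2) from (3), and that the assumption $A\leq B$ is used only to fix the orientation. I would handle it by treating the ``$C$ or $D$ full'' branch first and explicitly tabulating the partner sets in each branch.
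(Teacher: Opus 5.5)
Your argument is correct and is essentially the paper's: your $D$ and $C$ are its $A_{0}$ and $B_{0}$, and you use the same intersection and override moves, the same fact that a nonzero element with two partners lies in $A_{0}$ (resp.\ $B_{0}$), and irredundance in the same way. The only difference is that you organize the case analysis by whether $C$ and $D$ are full or empty rather than by the sizes of $A$ and $B$. There are two cosmetic slips: the move $\langle0,y\rangle\vartriangleright\langle a,b\rangle=\langle a,y\rangle$ requires $y\neq0$ rather than $a\neq0$ (harmless, since you only apply it with $y\in C$), and in case (1) your $D$-full branch produces the second listed set, so it is the first one that comes from the mirror situation.
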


\begin{proof}
We begin by establishing several facts that follow from the assumptions
of the lemma. Each of the following statements has a symmetric counterpart
obtained by interchanging $\mathbf{A}$ and $\mathbf{B}$, so we state
and prove only one version.
\begin{enumerate}
\item \label{fact:zero}$\langle0,0\rangle\in G$.

Since $G$ is subdirect, $\langle a,0\rangle\in G$ and $\langle0,b\rangle\in G$
for some $a,b$. Hence $\langle0,0\rangle=\langle a,0\rangle\cap\langle0,b\rangle\in G$.
\end{enumerate}
Define 
\[
A_{0}:=\{a\in A\setminus\{0\}:\langle a,0\rangle\in G\}\quad B_{0}:=\{b\in B\setminus\{0\}:\langle0,b\rangle\in G\}.
\]

\begin{enumerate}[resume]
\item \label{fact:A0xB}$A_{0}\times B\subseteq G$.

Fix $a\in A_{0}$ and $b\in B$. Since $G$ is subdirect, there is
$a'\in A$ such that $\langle a',b\rangle\in G$. Thus, $\langle a,b\rangle=\langle a,0\rangle\vartriangleright\langle a',b\rangle\in G$.
\item \label{fact:large A0 imp 0xB} If $|A_{0}|>1$, then $\{0\}\times B\subseteq G$.

\noindent Suppose $a,a'\in A_{0}$ are distinct and let $b\in B$.
By \ref{fact:A0xB} we have $\langle a,b\rangle,\langle a',b\rangle\in G$,
and hence $\langle0,b\rangle=\langle a,b\rangle\cap\langle a',b\rangle\in G$.
\item \label{fact:A0 a lo sumo uno} If $|B|>2$, then $|A_{0}|\leq1$.

Suppose, towards a contradiction, that $|A_{0}|>1$. By \ref{fact:large A0 imp 0xB},
we have $\{0\}\times B\sub G$, and hence $B_{0}=B\setminus\{0\}$.
The symmetric version of \ref{fact:A0xB} therefore gives $A\times(B\setminus\{0\})\sub G$.
Since $|B_{0}|>1$, the symmetric version of \ref{fact:large A0 imp 0xB}
also gives $A\times\{0\}\sub G$. Thus $G=A\times B$, a contradiction.
\end{enumerate}
Define 
\[
A_{*}:=\{a\in A:\exists b\neq b'\,\langle a,b\rangle,\langle a,b'\rangle\in G\}\quad B_{*}:=\{b\in B:\exists a\neq a'\,\langle a,b\rangle,\langle a',b\rangle\in G\}.
\]

\begin{enumerate}[resume]
\item \label{fact:A*-0 es A0.}$A_{*}\setminus\{0\}=A_{0}$.

Suppose $a\in A_{*}\setminus\{0\}$ and choose $b\neq b'$ such that
$\langle a,b\rangle,\langle a,b'\rangle\in G$. Then $\langle a,0\rangle=\langle a,b\rangle\cap\langle a,b'\rangle\in G$,
and thus $a\in A_{0}$. Conversely, if $a\in A_{0}$, then $\{a\}\times B\sub G$
by \ref{fact:A0xB}. Since $B$ is nontrivial, this says that $a\in A_{*}$.
\item \label{fact:A* no vacio}$A_{*}\neq\emptyset$.

Note that if $G$ is the graph of a function, then $G$ is redundant.
\item \label{fact:A0 singleton}If $|B|>2$, then $\left|A_{0}\right|=1$.

By \ref{fact:A0 a lo sumo uno} it is enough to show that $A_{0}\neq\emptyset$.
Suppose otherwise. Then $0\notin B_{*}$, and as $B_{*}\neq\emptyset$
by \ref{fact:A* no vacio}, there is $b\in B_{*}\setminus\{0\}$.
So by \ref{fact:A*-0 es A0.} we have $b\in B_{0}$, and the symmetric
version of \ref{fact:A0xB} gives $A\times\{b\}\sub G$. Take $b'\in B\setminus\{0,b\}$
and pick $a\in A$ with $\langle a,b'\rangle\in G$. Necessarily $a\neq0$,
since on the contrary $|B_{0}|>1$. Now, $\langle a,0\rangle=\langle a,b\rangle\cap\langle a,b'\rangle\in G$,
a contradiction.
\end{enumerate}
We now apply these facts to prove the assertions in the lemma.

(1). Suppose $A=B=\{0,1\}$. By \ref{fact:A* no vacio}, $A_{*}\neq\emptyset$.
If $1\in A_{*}$, then $1\in A_{0}$ by \ref{fact:A*-0 es A0.}, and
hence $\{1\}\times B\sub G$ by \ref{fact:A0xB}. Together with \ref{fact:zero}
and the properness of $G$, this yields 
\[
G=\{\langle0,0\rangle,\langle1,0\rangle,\langle1,1\rangle\}.
\]
Otherwise, $0\in A_{*}$, so $\langle0,1\rangle\in G$ and therefore
$1\in B_{0}$. The symmetric version of \ref{fact:A0xB} now gives
$A\times\{1\}\sub G$, and properness yields 
\[
G=\{\langle0,0\rangle,\langle0,1\rangle,\langle1,1\rangle\}.
\]

(2). Suppose $A=\{0,1\}\subsetneq B$. Note that \ref{fact:A0 singleton}
gives $A_{0}=\{1\}$, and thus $\{1\}\times B\sub G$ by \ref{fact:A0xB}.
Setting $C:=B_{0}$, we obtain 
\[
G=(\{1\}\times B)\cup(\{0\}\times(C\cup\{0\})).
\]
Finally, $C\subsetneq B\setminus\{0\}$ because $G\neq A\times B$.

(3). Assume $\left|A\right|>2$. By \ref{fact:A0 a lo sumo uno} and
its symmetric version, the sets $A_{0}$ and $B_{0}$ are singletons;
say $A_{0}=\{a\}$ and $B_{0}=\{b\}$. We claim that 
\[
G=(\{a\}\times B)\cup(A\times\{b\})\cup\{\langle0,0\rangle\}.
\]
The right-to-left inclusion follows from \ref{fact:zero} and \ref{fact:A0xB}.
Conversely, let $\langle a',b'\rangle\in G$. If $a'=a$ or $b'=b$,
then $\langle a',b'\rangle$ belongs to the right-hand side. Otherwise,
flatness gives $\langle0,b'\rangle,\langle a',0\rangle\in G$. So,
as $a'\notin A_{0}$ and $b'\notin B_{0}$, we have $\langle a',b'\rangle=\langle0,0\rangle$,
as required.

We call an ado-semilattice a \emph{one-sided ado-wing} if it is isomorphic
to one of the algebras described in items (1) and (2) of Lemma~\ref{lem:ado g-relation binaria},
and a \emph{two-sided ado-wing} if it is isomorphic to an algebra
described in item (3). An \emph{ado-wing} is a one- or two-sided ado-wing.
\end{proof}

\begin{figure}[H]
\centering
\begin{tabular}{>{\centering}p{0.48\textwidth}>{\centering}p{0.48\textwidth}}
\includegraphics[width=1\linewidth]{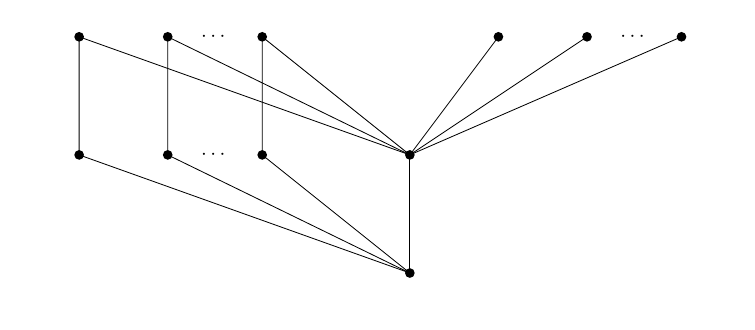} & \includegraphics[width=1\linewidth]{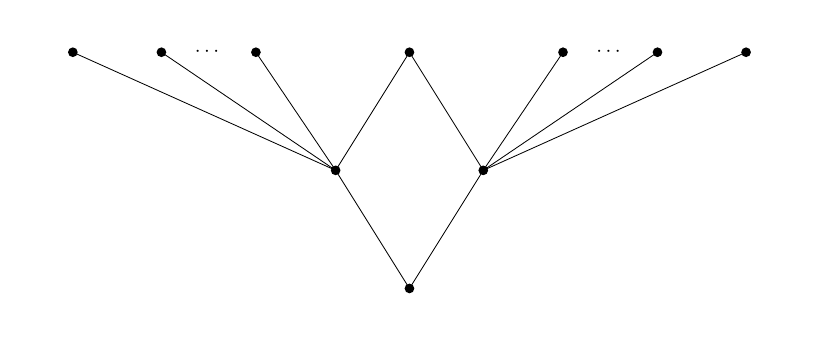}\tabularnewline
\end{tabular}

\caption{One- and two-sided wings.}

\end{figure}

\begin{lem}
\label{lem:ado-wings}An ado-semilattice is GI if and only if its
semilattice reduct is a wing.
\end{lem}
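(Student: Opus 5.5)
The plan is to reduce the statement to the classification of GI ado-semilattices that the earlier results already provide, and then to compare semilattice reducts. I read ``wing'' as the $\cap$-semilattice shapes in the figure, i.e.\ the meet-reducts of one- and two-sided ado-wings, with the flat semilattices (reducts of simple members) counted as degenerate wings. If the paper's definition excludes the flat case, the simple members must be treated separately, and the plan adapts in the obvious way.

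\emph{Step 1: GI members are the simples and the ado-wings.} $\mathcal{A}$ is filtral with a majority term, so $r=2$. Corollary~\ref{cor:filtral implica RGI spectral} then gives that the GI members of $\mathcal{A}$ are the simple ones together with the $g$-algebras of subdirect width $2$. By Remark~\ref{rem:g-algebras de ancho 2}, the latter are exactly the algebras $\mathbf{G}\leq_{\mathrm{isd}}\mathbf{A}\times\mathbf{B}$ with $\mathbf{A},\mathbf{B}$ nontrivial flat and $G\neq A\times B$. Up to isomorphism we may assume $|A|\leq|B|$, and we may embed both factors in a common flat algebra with bottom $0$, so that $\mathbf{A}\leq\mathbf{B}$. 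Lemma~\ref{lem:ado g-relation binaria} then shows these are precisely the ado-wings. Hence an ado-semilattice is GI if and only if it is flat and nontrivial or it is an ado-wing.

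\emph{Step 2: forward direction.} For flat algebras the reduct is flat by definition. For ado-wings I would read off the $\cap$-order directly from the explicit forms of $G$ in items (1)--(3), using the coordinatewise order of $\mathbf{A}\times\mathbf{B}$, and check that it matches the pictured one- or two-sided wing. This is a routine computation.

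\emph{Step 3: converse.} Let $\mathbf{D}$ be an ado-semilattice whose reduct is a wing; I must show $\mathbf{D}$ is flat or an ado-wing. The strategy is as follows.
\begin{itemize}
\item[(a)] By \cite[Corollary~4.8, Lemma~4.9]{Stokes2024}, $\mathbf{D}$ is a subdirect product of flat algebras. Every ado-congruence is in particular a $\cap$-congruence, and the kernel of a projection onto a flat factor induces a $\cap$-homomorphism onto a flat semilattice.
\item[(b)] A wing has a small, explicitly known family of such flat $\cap$-quotients separating points. Using this, I would show that two projections $p,p'$ already separate points of $\mathbf{D}$, and that they can be taken to be ado-homomorphisms. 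Granting this, $\mathbf{D}\leq_{\mathrm{sd}}\mathbf{A}\times\mathbf{B}$ with flat factors.
\item[(c)] Since the wing is not a product shape, this representation is proper. It is also irredundant, for otherwise $\mathbf{D}$ would be flat.
\item[(d)] Lemma~\ref{lem:ado g-relation binaria} then identifies $\mathbf{D}$ as an ado-wing, and in particular its override is the coordinatewise one.
\end{itemize}

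The main obstacle is step (b): showing that the maximal ado-congruences of $\mathbf{D}$ are forced by the $\cap$-order. Override is not in general determined by the meet order, so one cannot simply transfer the semilattice structure. I would first try to show that in a wing the maximal $\cap$-congruences with flat quotient are compatible with $\vartriangleright$, using the identities $x\leq x\vartriangleright y$ and $(x\vartriangleright y)\cap y\leq y$. The point is that in a wing these identities pin down $x\vartriangleright y$ up to the few elements comparable with $x$ and $y$. If that fails, the fallback is a case analysis on the two wing shapes, computing every possible override compatible with the ado axioms.
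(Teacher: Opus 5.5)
Your Step~1 is exactly the argument of the corollary that follows this lemma in the paper; the lemma itself is stated there without proof. Step~2 is a routine check.

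\textbf{The gap is Step~3(b).} You grant it rather than prove it, yet it is the whole content of the converse, and the tools you propose cannot supply it.
\begin{itemize}
\item The inequality $(x\vartriangleright y)\cap y\le y$ holds in any semilattice for any binary operation, so it carries no information.
\item The inequality $x\le x\vartriangleright y$ leaves several candidates open. In a two-sided wing, take atoms $p,q$, their unique common upper bound $t$, and maximal elements $u\ge p$, $v\ge q$ other than $t$. Every choice $p\vartriangleright v\in\{p,t,u\}$ is consistent with both inequalities.
\item Similarly, in a one-sided wing with atoms $c\neq c'$ below distinct maximal elements $m_c,m_{c'}$, they do not decide between $c\vartriangleright c'=c$ and $c\vartriangleright c'=m_c$.
\end{itemize}
These values matter. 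Consider the $\cap$-congruence with classes $\{0,p\}$, $\{q,t\}\cup\{\text{all }v\}$, and singletons for the remaining maximal elements. It is compatible with $\vartriangleright$ only if $p\vartriangleright v=t$. So deciding which flat $\cap$-quotients are ado-quotients, and hence extracting two separating ado-homomorphisms, presupposes knowing $\vartriangleright$. As planned, the argument is circular.

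\textbf{The missing idea:} on a wing-shaped (or flat) meet-semilattice there is at most one ado-structure. Work in a subdirect representation of $\mathbf{D}$ by nontrivial flat algebras, where $\vartriangleright$ is computed coordinatewise and the bottom is $0$ in every coordinate. From this one reads off three facts:
\begin{enumerate}
\item if $x,y$ have a common upper bound, then $x\vartriangleright y$ is their least upper bound;
\item $x\vartriangleright y=x$ whenever $x$ is maximal;
\item $\vartriangleright$ is monotone in its second argument.
\end{enumerate}
These force all the open values:
\begin{itemize}
\item Two-sided wing: $p\vartriangleright v\ge p\vartriangleright q=t$, and $t$ is maximal, so $p\vartriangleright v=t$.
\item One-sided wing, with $e$ the atom below all maximal elements: for every maximal $m$, $c\vartriangleright m\ge c\vartriangleright e=m_c$, so $c\vartriangleright m=m_c$.
\item One-sided wing, $c\vartriangleright c'$: suppose $c\vartriangleright c'=m_c\neq c$. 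Then some coordinate has $c=0\neq c'$, and there $m_c$ agrees with $c'$. So $m_c\cap c'$ is a non-bottom element below the atom $c'$, forcing $c'\le m_c$, which is impossible. Hence $c\vartriangleright c'=c$.
\end{itemize}
Thus $\vartriangleright$ is determined by the order. Any $\mathbf{D}$ whose reduct is a wing is therefore isomorphic to the ado-wing with that reduct, and so it is GI by Step~1. This replaces your steps (b)--(d) entirely.
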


We are now ready to obtain a neat global representation result for
$\mathcal{A}$.
\begin{cor}
The variety $\mathcal{A}$ is GI-spectral and 
\[
\mathcal{A}_{\mathrm{GI}}=\mathcal{A}_{\mathrm{S}}\cup\{\mathbf{F}\in\mathcal{A}:\mathbf{F}\text{ is and ado-wing}\}.
\]
\end{cor}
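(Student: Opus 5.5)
The corollary follows from Corollary~\ref{cor:filtral implica RGI spectral} applied with $r=2$, together with Lemma~\ref{lem:RGI semisimple RCD} and Lemma~\ref{lem:ado g-relation binaria}.

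\emph{Hypotheses.} As established at the start of this section, $\mathcal{A}$ is a variety. It is congruence distributive, semisimple, has the majority term $M$, and its class $\mathcal{A}_{\mathrm{S}}$ of simple members is almost universal. Hence $\mathcal{A}$ is filtral with a ternary NU term, so $r=2$.

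\emph{Applying the filtral corollary.} Corollary~\ref{cor:filtral implica RGI spectral} gives that $\mathcal{A}$ is GI-spectral and that
\[
\mathcal{A}_{\mathrm{GI}}=\mathcal{A}_{\mathrm{S}}\cup\{\mathbf{F}:\mathbf{F}\text{ is a }g\text{-algebra with }\sdw(\mathbf{F})\leq 2\}.
\]
It remains to show that the $g$-algebras of width at most $2$ are exactly the ado-wings, up to isomorphism.

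\emph{Reducing to $G\neq A\times B$.} By Remark~\ref{rem:g-algebras de ancho 2}, a $g$-algebra of width two is an algebra $\mathbf{G}\leq_{\mathrm{isd}}\mathbf{S}\times\mathbf{S}'$ with $\mathbf{S},\mathbf{S}'$ simple and $G\neq S\times S'$. Simple ado-semilattices are nontrivial flat ones. Flat algebras are determined up to isomorphism by their cardinality. We may therefore relabel so that $\mathbf{A}\leq\mathbf{B}$ share the bottom $0$ and, after swapping factors if needed, $|A|\leq|B|$.

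\emph{Case analysis.} Lemma~\ref{lem:ado g-relation binaria} splits by size.
\begin{itemize}
\item If $|A|=|B|=2$, item (1) applies.
\item If $|A|=2<|B|$, item (2) applies.
\item If $|A|>2$, item (3) applies.
\end{itemize}
In every case $\mathbf{G}$ is an ado-wing.

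\emph{Converse.} Each algebra listed in items (1)--(3) is a subalgebra of $\mathbf{A}\times\mathbf{B}$; closure under $\cap$ and $\vartriangleright$ is routine. It is subdirect, and it is proper because it is not the full product. It is irredundant because it is not the graph of a function in either direction.
\begin{itemize}
\item For (1), the two coordinates are not functions of each other.
\item For (2), $C\neq\emptyset$ is not required: the pairs $\langle1,b\rangle$ for all $b$ already prevent the first coordinate from being a function of the second. Conversely, the second coordinate is not a function of the first since $B\setminus\{0\}$ has at least two elements.
\item For (3), the analogous argument applies using $\{a\}\times B$ and $A\times\{b\}$.
\end{itemize}
Thus each such algebra is an irredundant proper subdirect product of two simples, hence a $g$-algebra of width two by Remark~\ref{rem:g-algebras de ancho 2}, and it lies in $\mathcal{A}_{\mathrm{GI}}$ by Corollary~\ref{cor:toda g-algebra es RGI}. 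Width exactly $2$ follows from Lemma~\ref{lem:equivalencias RCD ancho finito}.

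\emph{Main obstacle.} The only delicate point is that the forward direction of Corollary~\ref{cor:filtral implica RGI spectral} yields $\sdw=n$ with $n\in[2,r]=\{2\}$ exactly. The bookkeeping needed is the isomorphism normalization of the flat factors, so that Lemma~\ref{lem:ado g-relation binaria} applies as stated.
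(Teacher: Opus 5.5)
Your proposal is correct and follows the paper's route. Corollary~\ref{cor:filtral implica RGI spectral} with $r=2$, together with Remark~\ref{rem:g-algebras de ancho 2}, reduces the nonsimple GI members to proper irredundant subdirect products of two simple (nontrivial flat) algebras, and Lemma~\ref{lem:ado g-relation binaria} then identifies these as ado-wings.

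Your extra bookkeeping is sound: normalizing the flat factors, and checking that each listed form really is a proper irredundant subalgebra, which the paper leaves implicit in the definition of ado-wing. One small slip in case (2): the second projection is non-injective because of $\langle1,0\rangle$ together with $\langle0,0\rangle$, not because of $\{1\}\times B$ alone.
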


\begin{proof}
By Corollary~\ref{cor:filtral implica RGI spectral}, Remark~\ref{rem:g-algebras de ancho 2},
and Corollary~\ref{cor:toda g-algebra es RGI}, the nonsimple globally
indecomposable members of $\mathcal{A}$ are precisely the proper
irredundant subdirect products of two simple algebras. Since the simple
ado-semilattices are precisely the nontrivial flat ones, the result
follows from Lemma~\ref{lem:ado g-relation binaria}.
\end{proof}

We can also characterize congruence-permutable ado-semilattices.
\begin{cor}
For ado-semilattice $\mathbf{A}$ the following are equivalent:
\begin{enumerate}
\item $\mathbf{A}$ is congruence-permutable
\item $\mathbf{A}$ has no ado-wings as quotients.
\end{enumerate}
\end{cor}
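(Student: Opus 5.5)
The plan is to combine Theorem~\ref{thm:tipo Nachbin para superchela} with the description of $\mathcal{A}_{\mathrm{GI}}$ obtained in the preceding corollary. First I check that the hypotheses of Theorem~\ref{thm:tipo Nachbin para superchela} hold for $\mathcal{A}$ with $r=2$. The term $M$ displayed above is a majority term, so $\mathcal{A}$ has a $3$-ary NU term. Since $\mathcal{A}$ is semisimple, $\mathcal{A}_{\mathrm{SI}}=\mathcal{A}_{\mathrm{S}}$, and this class is almost universal, as noted above. Moreover, $\mathcal{A}$ is a variety with a majority term, so every $\mathbf{A}\in\mathcal{A}$ is congruence distributive. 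Therefore the RCD clause in item~(6) of Theorem~\ref{thm:tipo Nachbin para superchela} is automatic, and we get: $\mathbf{A}$ is congruence-permutable if and only if every GI homomorphic image of $\mathbf{A}$ is SI.

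Next I unpack this criterion. By the preceding corollary, $\mathcal{A}_{\mathrm{GI}}=\mathcal{A}_{\mathrm{S}}\cup\{\text{ado-wings}\}$, and in $\mathcal{A}$ the SI members are exactly the simple ones. Hence "every GI quotient of $\mathbf{A}$ is SI" says precisely that no quotient of $\mathbf{A}$ is an ado-wing. Here I must check that an ado-wing is never SI, so that the dichotomy is genuine. An ado-wing is an irredundant subdirect product of two simple algebras, so $\Delta$ is the intersection of two incomparable maximal congruences. By Lemma~\ref{lem:atomos de Con A inducido por un conjunto finito de CMIs} its congruence lattice is $2$-atomic, so an ado-wing is not SI. This gives (1)$\Leftrightarrow$(2).

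The main point needing care is the bookkeeping, not any new argument. Condition~(2) should be read as "no homomorphic image of $\mathbf{A}$ is isomorphic to an ado-wing"; since ado-wings are defined up to isomorphism, this matches the RGI-congruence formulation $\rgic(\mathbf{A})=\cmic(\mathbf{A})$ in item~(6). Both definitions count a quotient $\mathbf{A}/\theta$ once it is isomorphic to a member of the class, so quotients and homomorphic images can be identified. Beyond this check, the proof is a direct specialization of Theorem~\ref{thm:tipo Nachbin para superchela}, in the same spirit as the recovery of Nachbin's theorem for distributive lattices.
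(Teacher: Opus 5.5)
Your proposal is correct and follows the paper's route: specialize item~(6) of Theorem~\ref{thm:tipo Nachbin para superchela} to $\mathcal{A}$ with $r=2$, and combine it with the description of the globally indecomposable ado-semilattices. The paper cites Lemma~\ref{lem:ado-wings} for that description, whereas you use the preceding corollary and check explicitly, via Lemma~\ref{lem:atomos de Con A inducido por un conjunto finito de CMIs}, that ado-wings are not SI, a detail the paper leaves implicit.
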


\begin{proof}
This follows from Theorem~\ref{thm:tipo Nachbin para superchela}
together with Lemma~\ref{lem:ado-wings}.
\end{proof}

\section*{Declaration of generative AI and AI-assisted technologies in the
manuscript preparation process}

During the preparation of this work, the author used OpenAI's ChatGPT
for two specific purposes. First, ChatGPT suggested the initial idea
underlying the counterexample presented in Section\textasciitilde\ref{subsec:contraejemplo},
in response to the problem and constraints formulated by the author.
The author then substantially modified the proposed construction and
developed its final form and proof. Second, ChatGPT was used for language
editing and proofreading of portions of the manuscript. All AI-generated
suggestions were critically reviewed by the author, who independently
verified the mathematical content and takes full responsibility for
the article.

\bibliographystyle{plain}
\bibliography{global}

\end{document}